\newtheorem{theorem}{Theorem}[section]
\newtheorem{corollary}[theorem]{Corollary}
\newtheorem{lemma}[theorem]{Lemma}
\newtheorem{proposition}[theorem]{Proposition}
\theoremstyle{definition}
\newtheorem{definition}[theorem]{Definition}
\newtheorem{remark}[theorem]{Remark}
\newtheorem{notation}[theorem]{Notation}
\newtheorem{example}[theorem]{Example}
\newtheorem{question}[theorem]{Question}
\newtheorem{conjecture}[theorem]{Conjecture}
\numberwithin{equation}{section}
\newcommand{\htopol}{{\text{\rm h}}_{\text{\rm top}}}
\newcommand{\cB}{{\mathcal B}}
\newcommand{\cL}{{\mathcal L}}
\newcommand{\cM}{{\mathcal M}}
\newcommand{\Cb}{{\mathbb C}}
\newcommand{\Zb}{{\mathbb Z}}
\newcommand{\Tb}{{\mathbb T}}
\newcommand{\Qb}{{\mathbb Q}}
\newcommand{\Rb}{{\mathbb R}}
\newcommand{\Nb}{{\mathbb N}}
\newcommand{\fM}{{\mathfrak M}}
\newcommand{\fN}{{\mathfrak N}}
\newcommand{\supp}{{\rm supp}}
\newcommand{\IE}{{\rm IE}}
\newcommand{\oA}{{\boldsymbol{A}}}
\newcommand{\ox}{{\boldsymbol{x}}}
\newcommand{\oy}{{\boldsymbol{y}}}
\newcommand{\M}{M}
\newcommand{\rh}{{\rm h}}
\begin{document}

\title{Homoclinic groups, IE groups, and expansive algebraic actions}

\author{Nhan-Phu Chung}
\author{Hanfeng Li}
\address{\hskip-\parindent
Nhan-Phu Chung, Department of Mathematics, SUNY at Buffalo,
Buffalo, NY \text{14260-2900, U.S.A.} 
\break
{\it Current address:} Max Planck Institute-Mathematics in the Sciences, Inselstra{\ss}e 22, 04103 Leipzig, Germany}
\email{chung@mis.mpg.de}

\address{\hskip-\parindent
Hanfeng Li, Chongqing Institute of Mathematics, Center of Mathematics, Chongqing \text{University,
Chongqing 401331, China}
\break
Department of Mathematics, SUNY at Buffalo,
Buffalo, NY 14260-2900, U.S.A.}
\email{hfli@math.buffalo.edu}

\subjclass[2010]{Primary 22D40, 37A15, 37B40, 20C07, 43A20; Secondary 22D15}
\keywords{Algebraic actions, expansive actions, homoclinic points, specification, local entropy theory, entropy, completely positive entropy, duality, polycyclic-by-finite}

\date{April 15, 2014}

\begin{abstract}
We give algebraic characterizations for expansiveness of algebraic actions of countable groups. The notion of
$p$-expansiveness is introduced for algebraic actions, and we show that for countable amenable groups,
a finitely presented algebraic action is $1$-expansive exactly when it has finite entropy. We also study the local entropy theory for actions of
countable amenable groups on compact groups by automorphisms, and show that the IE group determines the Pinsker factor for such actions.
For an expansive algebraic action of  a polycyclic-by-finite group on $X$, it is shown that the entropy of the action is equal to the entropy of the induced action on the Pontryagin dual of the homoclinic group, the homoclinic group is a dense subgroup of the IE group, the homoclinic group is nontrivial exactly when the action has positive entropy, and the homoclinic group is dense in $X$
exactly when the action has completely positive entropy.
\end{abstract}

\maketitle

\section{Introduction} \label{S-introduction}

 Given a (continuous) action of a countable discrete amenable group $\Gamma$ on a compact metrizable space $X$, one has the topological entropy $\htopol(X)$ of the action, lying in $[0, +\infty]$. Besides being an invariant of the action, the entropy also gives us a lot of information about the action itself.
 Indeed, the intuition about the entropy is that the larger the entropy is, the more complicated the action is. Thus it is very natural to ask for the relation between entropy properties of the action and the asymptotic behavior of orbits of the action, i.e. the asymptotics  of $\rho(sx, sy)$ as elements $s$ of $\Gamma$ go to infinity, where $\rho$ is a compatible metric on $X$ and $x, y\in X$. 

 A well-known result in this direction is that of Blanchard et al. \cite{BGKM} in the case of $\Gamma=\Zb$. They showed that positive entropy implies Li-Yorke chaos.
That is, if the $\Zb$-action is generated by a homeomorphism $T:X\rightarrow X$ and $\htopol(X)>0$, then there exists an uncountable subset $Z$ of $X$
such that for any distinct $x, y$ in $Z$ one has $\limsup_{n\to +\infty}\rho(T^nx, T^ny)>0$ and $\liminf_{n\to +\infty}\rho(T^nx, T^ny)=0$.

In this article we concentrate on the phenomenon $\lim_{s\to \infty}\rho(sx, sy)=0$ for points under
the action of a general group $\Gamma$. A pair $(x, y)$ of points in $X$ satisfying
$\lim_{s\to \infty}\rho(sx, sy)=0$ is called {\it asymptotic} or {\it homoclinic}.
In the case $\Gamma=\Zb$ and the action is generated by a homeomorphism
$T:X\rightarrow X$, a pair $(x, y)$ of points in $X$ satisfying
$\lim_{n\to +\infty}\rho(T^nx, T^ny)=0$ is called {\it positively asymptotic} or {\it positively homoclinic}.
Thus, one of the questions we want to address is the relation between $\htopol(X)>0$ and the existence of non-diagonal asymptotic pairs.

A positive result on this question is that of Blanchard et al. \cite{BHR}. They showed that in the case $\Gamma=\Zb$, when $\htopol(X)>0$, there exist non-diagonal {\it positively} asymptotic pairs. On the other hand, Lind and Schmidt constructed examples of $\Zb$-actions (actually toral automorphisms) which have positive entropy but no non-diagonal asymptotic pairs \cite[Example 3.4]{LS99}. Thus one has to add further conditions.

The condition we are going to add is expansiveness. An action $\Gamma \curvearrowright X$ is called {\it expansive} if there exists $r>0$ such that
$\sup_{s\in \Gamma}\rho(sx, sy)\ge r$ for all distinct $x, y$ in $X$. For example, for any $k\in \Nb$ and $A\in M_k(\Zb)$ being invertible in $M_k(\Zb)$,
the toral automorphism of $\Rb^k/\Zb^k=(\Rb/\Zb)^k$ defined by $x+\Zb^k\mapsto Ax+\Zb^k$ for $x\in \Rb^k$ is expansive if and only if $A$ has no eigenvalues with absolute value $1$ \cite[page 143]{Walters}. Bryant showed that for expansive $\Zb$-actions, when $X$ is infinite, there are non-diagonal positively asymptotic pairs \cite[Theorem 2]{Bryant}.
Schmidt showed that when $\Gamma=\Zb^d$ for some $d\in \Nb$, every subshift of finite type (which is always expansive) with positive entropy has non-diagonal asymptotic pairs \cite[Proposition 2.1]{Schmidt95}.
These results lead us to ask  the following question:

\begin{question} \label{Q-homoclinic vs IE}
Let a countable discrete amenable group $\Gamma$ act on a compact metrizable space $X$  expansively.
If $\htopol(X)>0$, then must there be a non-diagonal asymptotic pair in $X$?
\end{question}

Despite all the  evidence above, we do not know the answer to Question~\ref{Q-homoclinic vs IE} even in the case $\Gamma=\Zb$.
One of the main results of this article is that Question~\ref{Q-homoclinic vs IE} has an affirmative answer for algebraic actions of polycyclic-by-finite groups.

Actions of countable discrete groups $\Gamma$ on compact (metrizable) groups $X$ by (continuous) automorphisms
are a rich class of dynamical systems, and have drawn much attention since the beginning of ergodic theory. Among such actions, the so called {\it algebraic actions}, meaning that $X$ is abelian in which case the action is completely determined by the module structure of the Pontryagin dual $\widehat{X}$ of $X$ over the integral group ring $\Zb\Gamma$ of $\Gamma$, is especially important because of the beautiful interplay between dynamics, Fourier analysis, and commutative or noncommutative algebra.

The $\Zb$-actions on compact groups by automorphisms are well understood now (cf. \cite{Lind77, MT78, Yuz1, Yuz2}). After investigation during the last few decades, much is also known for such actions of $\Zb^d$ (cf. \cite{EW05, KKS, KS, KS00, LS99, LS02, LSV, LSW, RS, Schmidt, Schmidt01, SW}). The fact that the integral group ring of $\Zb^d$ is a commutative factorial Noetherian ring plays a vital role for such study, as it makes the machinery of commutative algebra available.
In the last several years, much progress has been made towards  understanding the algebraic actions of general countable groups $\Gamma$ (cf. \cite{BG, Bow4, Den, DS, ER, KerLi10, Li, MW}). It is somehow surprising that operator algebras, especially the group $C^*$-algebras or group von Neumann algebras of $\Gamma$, turn out to be important for such a study.

Let a countable group $\Gamma$ act on a compact  group $X$ by automorphisms, and denote by $e_X$ the identity element of $X$.
A point $x\in X$ is called {\it homoclinic} if the pair $(x, e_X)$ is asymptotic, i.e. $sx\to e_X$ when $\Gamma \ni s\to \infty$. When $\Gamma$ is amenable, a point $x\in X$ is called {\it IE} if, for any neighborhoods $U_1$ and $U_2$ of $x$ and $e_X$ respectively, there exists $c>0$ such that for any sufficiently left invariant nonempty finite set $F\subseteq \Gamma$ one can find some $F'\subseteq F$ with $|F'|\ge c|F|$ being an independence set for $(U_1, U_2)$ in the sense that for any map $\sigma: F'\rightarrow \{1, 2\}$ one has $\bigcap_{s\in F'}s^{-1}U_{\sigma(s)}\neq \emptyset$.
The set of all homoclinic points (resp. IE points), denoted by $\Delta(X)$ (resp. $\IE(X)$), is a $\Gamma$-invariant subgroup of $X$.
It is easy to see that $\Delta(X)$ describes all the asymptotic pairs of $X$ in the sense that
a pair $(x, y)$ of points in $X$ is asymptotic if and only if $xy^{-1}$ lies in $\Delta(X)$.
A group $\Gamma$ is called {\it polycyclic-by-finite} \cite[page 422]{Passman} if there is a sequence of subgroups $\Gamma=\Gamma_1\rhd \Gamma_2\rhd\cdots \rhd \Gamma_n=\{e_\Gamma\}$ such
that $\Gamma_j/\Gamma_{j+1}$ is finite or cyclic for every $j=1, \dots, n-1$. The polycyclic-by-finite groups are exactly the virtually solvable groups each of whose subgroups is finitely generated (cf. \cite[pages 2 and 4]{Segal}). One of our main results is

\begin{theorem} \label{T-main}
Let $\Gamma$ be a polycyclic-by-finite group.
 Let $\Gamma$ act on a compact abelian group $X$ expansively by automorphisms.
Then the following hold:
\begin{enumerate}
\item Let $G$ be a  $\Gamma$-invariant subgroup of $\Delta(X)$ such that
$G$ and $\Delta(X)$ have the same closure. Treat $G$ as a discrete abelian group and consider the induced $\Gamma$-action on the Pontryagin dual $\widehat{G}$.
Then the actions $\Gamma\curvearrowright X$ and $\Gamma \curvearrowright \widehat{G}$ have the same entropy.

\item $\Delta(X)$ is a dense subgroup of $\IE(X)$.

\item The action has positive entropy if and only if $\Delta(X)$ is nontrivial.

\item The action has completely positive entropy with respect to the normalized Haar measure of $X$ if and only if $\Delta(X)$ is dense in $X$.
\end{enumerate}
\end{theorem}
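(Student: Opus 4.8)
The plan is to transport all four assertions through Pontryagin duality into the module theory of $\Zb\Gamma$ and the local entropy theory of the preceding sections, treating part (1) as the computational core from which (2)--(4) follow by soft arguments. Write $M=\widehat{X}$, a module over $\Zb\Gamma$; since $\Gamma$ is polycyclic-by-finite, $\Zb\Gamma$ is Noetherian, so $M$ is finitely presented, say by a matrix $A$ over $\Zb\Gamma$, and every submodule of $M$ is finitely generated. Because polycyclic-by-finite groups are amenable, the entropy theory for algebraic actions and the local entropy theory quoted above both apply. Expansiveness of $\Gamma\curvearrowright X$ is equivalent to invertibility of $A$ over the convolution algebra $\ell^1(\Gamma)$, and this is exactly what makes homoclinic points summable; I would first record the resulting identification of $\Delta(X)$ with the image in $X=\widehat{M}$ of the $\ell^1$-solutions of the dual equation, which realizes $\Delta(X)$ as an explicit finitely generated $\Zb\Gamma$-module whose Pontryagin dual can be compared with $X$.

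For part (1) I would split the argument into a reduction and a computation. The reduction uses the addition formula for entropy of algebraic actions of amenable groups applied to $0\to\widehat{\Delta(X)/G}\to\widehat{\Delta(X)}\to\widehat{G}\to 0$: the hypothesis that $G$ and $\Delta(X)$ have the same closure is precisely what forces the kernel $\widehat{\Delta(X)/G}$ to carry zero entropy, so that $\htopol(\widehat{G})=\htopol(\widehat{\Delta(X)})$ and it suffices to treat $G=\Delta(X)$. The computation is the identity $\htopol(\widehat{\Delta(X)})=\htopol(X)$. Here I would use that the entropy of an expansive algebraic action of an amenable group equals the logarithm of the Fuglede--Kadison determinant over $\cL\Gamma$ of its defining operator, and that dualizing the summable homoclinic module yields the adjoint algebraic action, defined by $A^{*}$; since the Fuglede--Kadison determinant is invariant under passage to the adjoint, the two determinants, and hence the two entropies, agree.

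Parts (2)--(4) then follow formally. The inclusion $\Delta(X)\subseteq\IE(X)$ I would prove directly: for a nonzero $x\in\Delta(X)$ and neighborhoods $U_1\ni x$, $U_2\ni e_X$, summability of $x$ means that over a sufficiently spread-out finite set $F'\subseteq\Gamma$ the translates $\{sx\}_{s\in F'}$ are almost orthogonal, so each partial sum $\sum_{s\in S}sx$ with $S\subseteq F'$ lies in $U_1$ at the coordinates indexed by $S$ and in $U_2$ elsewhere, exhibiting $F'$ as an independence set of positive density. For density I would show the quotient $X/\overline{\Delta(X)}$ has zero entropy: closed invariant subgroups of expansive actions are expansive, and $\Delta(X)$ is a dense $\Gamma$-invariant subgroup of the homoclinic group of the subsystem $\overline{\Delta(X)}$, so part (1) applied both to $X$ and to $\overline{\Delta(X)}$ gives $\htopol(\overline{\Delta(X)})=\htopol(\widehat{\Delta(X)})=\htopol(X)$, whence the addition formula forces $\htopol(X/\overline{\Delta(X)})=0$. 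Since IE points project to IE points under factor maps and a zero-entropy action has trivial IE group, $\IE(X)$ maps into the identity of $X/\overline{\Delta(X)}$, i.e. $\IE(X)\subseteq\overline{\Delta(X)}$; combined with $\Delta(X)\subseteq\IE(X)$ this is the density assertion (2) and yields $\overline{\Delta(X)}=\overline{\IE(X)}$. Part (3) is immediate, since $\htopol(X)>0$ is equivalent to $\IE(X)\neq\{e_X\}$, which by (2) holds exactly when $\Delta(X)\neq\{e_X\}$. Part (4) follows from the quoted fact that the IE group determines the Pinsker factor: completely positive entropy means the Pinsker factor is trivial, equivalently $\overline{\IE(X)}=X$, equivalently $\overline{\Delta(X)}=X$ by (2).

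The main obstacle is part (1), and within it the step identifying $\widehat{\Delta(X)}$ with the adjoint algebraic action while matching Fuglede--Kadison determinants, together with the verification that the closure hypothesis makes $\widehat{\Delta(X)/G}$ have zero entropy. For commutative coefficients, $\Gamma=\Zb^d$, these rest on unique factorization and explicit Mahler-measure computations, but a general polycyclic-by-finite group supplies only a noncommutative Noetherian group ring, so the determinant must be controlled through the trace on $\cL\Gamma$. Pinning down the $\Zb\Gamma$-module structure of the summable homoclinic solutions, and showing their dual carries an operator with the same determinantal invariant as $A$ without the crutch of commutative algebra, is the delicate technical heart; once (1) is in place, the remaining parts are comparatively soft.
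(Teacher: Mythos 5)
There is a genuine gap, and it sits exactly at what you yourself identify as the heart of the matter: the computation $\rh(X)=\rh(\widehat{\Delta(X)})$ in part (1). Your setup conflates the presenting matrix of $\widehat{X}$ with the invertible matrix furnished by expansiveness. Theorem~\ref{T-algebraic characterization of expansive} only says $\widehat{X}=(\Zb\Gamma)^k/J$ where $J$ \emph{contains} the rows of some $A\in M_k(\Zb\Gamma)$ invertible in $M_k(\ell^1(\Gamma))$; in general $J\supsetneq (\Zb\Gamma)^kA$, so a general expansive action is only a closed invariant subgroup of $X_A$ and has no ``defining operator''. Consequently your two key steps both fail off the special case $X=X_A$: there is no Fuglede--Kadison formula computing $\rh(X)$ (the cited result of Li is for principal actions $X_f$ with $f\in\Zb\Gamma$, and even a matrix version would treat only $X_A$ itself), and the identification of $\widehat{\Delta(X)}$ with the adjoint action of $A^*$ is Lemma~\ref{L-key expansive homoclinic}, valid only for $X_A$; for a proper subgroup, $\Delta^1(X)$ merely embeds into $(\Zb\Gamma)^n/(\Zb\Gamma)^kB^*$ (Lemma~\ref{L-p-homoclinic finitely generated}) and its dual is not an adjoint action of any single matrix. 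The paper's route is structurally different: it proves the one-sided inequality $\rh(X)\ge \rh(\widehat{\Delta^1(X)})$ for finitely presented $\widehat{X}$ by a direct counting argument (Peters' formula, Theorem~\ref{T-Peters}, plus the volume estimate Lemma~\ref{L-ball}, with the needed $\ell^1$-injectivity tied to finite entropy through Theorem~\ref{T-finite entropy vs 1-expansive} and Elek's theorem), and then obtains \emph{equality} by a sandwich: density of the homoclinic group yields an injection $\widehat{Y}\hookrightarrow \Delta^1(\widehat{G_1})$ via the pairing Lemma~\ref{L-pairing}, so applying the inequality twice pins both entropies; the addition formula (Proposition~\ref{P-addition formula}) and the embedding $\Delta^1(Y)/\Delta^1(X)\hookrightarrow \Delta^1(Y/X)$ then transfer the equality from $Y=X_A$ down to the subgroup $X$. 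Remark~\ref{R-dual} notes explicitly that the logic runs opposite to yours: the duality argument \emph{reproves} the determinant identities, rather than being derived from them.

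Your reduction of part (1) to $G=\Delta(X)$ is also unjustified, indeed circular. You assert that the same-closure hypothesis ``forces'' $\rh(\widehat{\Delta(X)/G})=0$, but density of $G$ in $\overline{\Delta(X)}$ only kills the \emph{continuous} characters of $\overline{\Delta(X)}$ vanishing on $G$; the discrete group $\Delta(X)/G$ can have a large dual, and no soft argument bounds its entropy. Since all the entropies here are finite, the addition formula shows this zero-entropy claim is \emph{equivalent} to the conclusion $\rh(\widehat{G})=\rh(\widehat{\Delta(X)})$ — it is an output of Theorem~\ref{T-main}(1), not an available input. The paper instead handles general $G$ by restricting the (still expansive) action to $\overline{G}=\overline{\Delta(X)}$ and running the same sandwich with the dense subgroup $G$ in place of $\Delta^1$, which works because the inequality $\rh\ge\rh(\widehat{\,\cdot\,})$ applies to arbitrary dense invariant subgroups of the homoclinic group. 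By contrast, your parts (2)--(4), granted (1), do match the paper: the direct almost-orthogonality construction for $\Delta(X)\subseteq\IE(X)$ is Proposition~\ref{P-l1 to IE}, and the zero-entropy-quotient argument plus Theorem~\ref{T-topological Pinsker}, Theorem~\ref{T-mIE=IE}, and Corollary~\ref{C-CPE} is exactly how the paper concludes.
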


Note that the assertion (3) of Theorem~\ref{T-main} answers Question~\ref{Q-homoclinic vs IE} affirmatively for algebraic actions of polycyclic-by-finite groups.
In the case $\Gamma=\Zb^d$ for some $d\in \Nb$, the assertions (3) and (4) are the main results of Lind and Schmidt in \cite{LS99}, and the assertion (1) was proved by Einsiedler and Schmidt for $G=\Delta(X)$ \cite{ES}. As we mentioned above, the work in \cite{LS99, ES} relies heavily on the machinery of commutative algebra.
When $\Gamma$ is nonabelian, we do not have such tools available anymore. Instead, the $\ell^1$-group algebra $\ell^1(\Gamma)$ and the group von Neumann  algebra of $\Gamma$ play a crucial role.

This paper has three parts:
expansive algebraic actions of countable groups, the local entropy theory for actions of countable amenable groups on compact groups by automorphisms, and duality for algebraic actions of countable amenable groups. Theorem~\ref{T-main} is the outcome of these three parts.

In Section~\ref{S-expansive} we give algebraic characterizations for expansiveness of algebraic actions of a countable group $\Gamma$. Given a matrix $A\in M_k(\Zb\Gamma)$ being invertible in $M_k(\ell^1(\Gamma))$, the canonical $\Gamma$-action on the Pontryagin dual $X_A$ of the $\Zb\Gamma$-module
$(\Zb\Gamma)^k/(\Zb\Gamma)^kA$ is expansive. Dynamically, our characterization says that the expansive algebraic actions of $\Gamma$ are exactly the restriction to closed $\Gamma$-invariant subgroups of $X_A$ for all such $A$.

The notion of {\it $p$-expansiveness} is introduced in Section~\ref{S-p version} for algebraic actions of a countable group $\Gamma$, for $1\le p\le +\infty$.
This yields a hierarchy of expansiveness: $q$-expansiveness implies $p$-expansiveness for $p<q$ and $+\infty$-expansiveness is exactly the ordinary expansiveness.
We show that, for an algebraic action of $\Gamma$ on $X$, if $\Gamma$ is amenable and
$\widehat{X}$ is a finitely presented $\Zb\Gamma$-module, then the action has finite entropy if and only if
it is $1$-expansive. This relies on a result of Elek \cite{Elek03} about the analytic zero divisor conjecture, the proof of which uses the group von Neumann algebra of $\Gamma$.

In Section~\ref{S-p-homoclinic} we study the group $\Delta^p(X)$ of {\it $p$-homoclinic} points for an algebraic action of a countable group $\Gamma$ and $1\le p<+\infty$. They are subgroups of $\Delta(X)$.  For expansive algebraic actions, using our characterization for such actions, we show that $\Delta^1(X)=\Delta(X)$.

Section~\ref{S-specification} gives another application of our characterization for expansive algebraic actions of a countable group $\Gamma$. We show that various specification properties are equivalent and imply that $\Delta(X)$ is dense in $X$ for such actions.

Initiated by Blanchard \cite{Blanchard}, the local entropy theory for continuous actions of a countable amenable group $\Gamma$ on compact spaces developed quickly during the last two decades (cf. \cite{BGH, BGKM, BHMMR, BHR, Glasner, GY, HLSY, HMRY, HY06, HY09, HYZ}), and has been found to be related to combinatorial independence \cite{Ind, MInd}, which appeared first in Rosenthal's work on Banach spaces containing $\ell^1$ \cite{ell1}. We develop the local entropy theory for actions of a countable amenable group $\Gamma$ on compact groups $X$ by automorphisms in Sections~\ref{S-IE} and \ref{S-Pinsker}. It turns out that $\IE(X)$ determines the local entropy theory and the Pinsker factor for such actions.
Furthermore, when $X$ is abelian, one has $\Delta^1(X)\subseteq \IE(X)$. In particular, the ``if'' parts of the assertions (3) and (4) of Theorem~\ref{T-main} actually hold for all countable amenable groups $\Gamma$. This also enables us to give a partial answer to a question of Deninger about the Fuglede-Kadison determinant (see Corollary~\ref{C-answer to Deninger}), which is an application to the study of the group von Neumann algebra of $\Gamma$.
We also show that, for finite entropy actions on compact groups by automorphisms, having completely positive entropy is equivalent to having a unique maximal measure.

For an algebraic action of a countable group $\Gamma$ on $X$, we treat $\widehat{X}$ and $\Delta^p(X)$ as a dual pair of discrete abelian groups, with the expectation that
the dynamical properties of the $\Gamma$-actions on $X$ and $\widehat{\Delta^p(X)}$ would be reflected in each other.
We discuss the relation of the entropy properties for such pairs of actions in Section~\ref{S-duality}, for countable amenable groups $\Gamma$.
It turns out that $\Delta^1(X)$ and $\Delta^2(X)$ are more closely related to the entropy properties of
$\Gamma \curvearrowright X$ than any other $\Delta^p(X)$ or $\Delta(X)$. In particular, when $\widehat{X}$ is a finitely presented $\Zb\Gamma$-module,
the entropy of $\Gamma \curvearrowright X$ is bounded below by that for $\Gamma \curvearrowright \widehat{\Delta^1(X)}$. This depends on the equivalence between $1$-expansiveness and finite entropy mentioned above. Actually we establish a general result (see Theorem~\ref{T-dual entropy for Noetherian 1-homoclinic})
stating the relation between $\Delta^1(X)$ and the entropy properties of $\Gamma \curvearrowright X$, much as the assertions of Theorem~\ref{T-main}. Then
Theorem~\ref{T-main} is just a consequence of this result and our characterizations of expansive algebraic actions.

In fact Theorem~\ref{T-main} holds whenever $\Gamma$ is amenable and $\Zb\Gamma$ is left Noetherian. It is known that a polycyclic-by-finite group is amenable and its integral group ring is left Noetherian \cite{Hall} \cite[Theorem 10.2.7]{Passman}. On the other hand, it is a long standing open question whether $\Zb\Gamma$ is left Noetherian implies that $\Gamma$ is polycyclic-by-finite.

\medskip

\noindent{\it Acknowledgements.}
The second named author was partially supported by NSF
grants DMS-0701414 and DMS-1001625. He is grateful to Doug Lind and Klaus Schmidt for
very interesting discussions. We thank David Kerr for helpful comments.

\section{Preliminary} \label{S-preliminary}

In this section we set up some notations and recall some basic facts about group rings, algebraic actions, entropy theory, and local entropy theory.

Throughout this paper, $\Gamma$ will be a
countable discrete group. All compact spaces are assumed to be metrizable and all automorphisms of compact groups are assumed to be continuous.
For a group $G$, we write $e_G$ for the identity element of $G$. When $G$ is abelian, sometimes we also write $0_G$.

\subsection{Group Rings} \label{SS-group ring}

For a unital ring $R$, we denote by $R\Gamma$ the group ring of $\Gamma$ with coefficients in $R$. It consists of finitely supported $R$-valued functions
$f$
on $\Gamma$,  which we shall write as $\sum_{s\in \Gamma}f_ss$. The algebraic structure of $R\Gamma$ is defined by $(\sum_{s\in \Gamma} f_s s)+(\sum_{s\in \Gamma} g_s s)=\sum_{s\in \Gamma}(f_s+g_s)s$ and $(\sum_{s\in \Gamma}f_ss)(\sum_{s\in \Gamma}g_ss)=\sum_{s\in \Gamma}(\sum_{t\in \Gamma}f_tg_{t^{-1}s})s$.

We denote by $\ell^{\infty}(\Gamma)$ the Banach space of all bounded $\Rb$-valued functions on $\Gamma$, equipped with the $\ell^{\infty}$-norm
$\| \cdot \|_{\infty}$. We also denote by $\ell^1(\Gamma)$ the Banach algebra of all absolutely summable $\Rb$-valued functions on $\Gamma$, equipped with
the $\ell^1$-norm $\|\cdot \|_1$. Note that $\ell^1(\Gamma)$ has a canonical algebra structure extending that of $\Rb\Gamma$, and is a Banach algebra.
We shall write $f\in \ell^1(\Gamma)$ as $\sum_{s\in \Gamma}f_ss$. Note that $\ell^1(\Gamma)$ has an involution $f\mapsto f^*$ defined by
$(\sum_{s\in \Gamma}f_ss)^*=\sum_{s\in \Gamma}f_ss^{-1}$.

For each $k\in \Nb$, we endow $\Rb^k$ the supremum norm $\|\cdot \|_\infty$. For each $1\le p\le +\infty$, we endow
$\ell^p(\Gamma, \Rb^k)=(\ell^p(\Gamma))^k$ with the $\ell^p$-norm
\begin{align} \label{E-infinity norm}
\| (f_1, \dots, f_k)\|_p=\|\Gamma\ni s\mapsto \|(f_1(s), \dots, f_k(s))\|_\infty\|_p.
\end{align}
We shall write elements of $(\ell^p(\Gamma))^k$ as row vectors.

The algebraic structures of $\Zb\Gamma$ and $\ell^1(\Gamma)$ also extend to some other situations naturally. For example, $(\Rb/\Zb)^\Gamma$ becomes
a right $\Zb\Gamma$-module naturally.

For any $n, k\in \Nb$, we also endow $M_{n\times k}(\ell^1(\Gamma))$ with the norm
$$\|(f_{i, j})_{1\le i\le n, 1\le j\le k}\|_1:=\sum_{1\le i\le n, 1\le j\le k}\|f_{i, j}\|_1.$$
The involution of $\ell^1(\Gamma)$ also extends naturally to an isometric linear map $M_{n\times k}(\ell^1(\Gamma))\rightarrow M_{k\times n}(\ell^1(\Gamma))$
by
$$ (f_{i, j})_{1\le i\le n, 1\le j\le k}^*:=(f_{j, i}^*)_{1\le i\le k, 1\le j\le n}.$$
To simplify the notation, we shall write $M_k(\cdot)$ for $M_{k\times k}(\cdot)$. Note that $M_k(\ell^1(\Gamma))$  is a Banach algebra.

\subsection{Algebraic Actions} \label{SS-algebraic actions}

By an {\it algebraic action} of $\Gamma$, we mean an action of $\Gamma$ on a compact abelian group by automorphisms.

For a locally compact abelian group $X$, we denote by $\widehat{X}$ its Pontryagin dual. Then for any compact abelian group $X$, there is a natural one-to-one correspondence between algebraic actions of $\Gamma$ on $X$  and actions of $\Gamma$ on $\widehat{X}$ by automorphisms. There is also a natural one-to-one correspondence between the latter and left $\Zb\Gamma$-module structure on $\widehat{X}$.
Thus, when we have an algebraic action of $\Gamma$ on $X$, we shall talk about the left $\Zb\Gamma$-module $\widehat{X}$. And when we have a left $\Zb\Gamma$-module $W$, we shall treat $W$ as a discrete abelian group and talk about the algebraic action of $\Gamma$ on $\widehat{W}$.

Note that for each $k\in \Nb$, we may identify the Pontryagin dual $\widehat{(\Zb\Gamma)^k}$  of $(\Zb\Gamma)^k$ with $((\Rb/\Zb)^k)^\Gamma=((\Rb/\Zb)^\Gamma)^k$ naturally. Under this identification,
the canonical action of $\Gamma$ on $\widehat{(\Zb\Gamma)^k}$ is just the left shift action on $((\Rb/\Zb)^k)^\Gamma$. If $J$ is a left $\Zb\Gamma$-submodule
of $(\Zb\Gamma)^k$, then $\widehat{(\Zb\Gamma)^k/J}$ is identified with
$$ \{(x_1, \dots, x_k)\in ((\Rb/\Zb)^\Gamma)^k: x_1g_1^*+\cdots+x_kg_k^*=0_{(\Rb/\Zb)^\Gamma}, \mbox{ for all } (g_1, \dots, g_k)\in J\}.$$

We denote by $\rho$ the canonical metric on $\Rb/\Zb$ defined by
$$\rho(t + \Zb,  s+\Zb):=\min_{m\in \Zb}|t-s-m|.$$
For $k\in \Nb$, we denote by $\rho_\infty$ the metric on $(\Rb/\Zb)^k$ defined by
\begin{align} \label{E-metric on torus}
\rho_\infty((t_1, \dots, t_k), (s_1, \dots, s_k)):=\max_{1\le j\le k}\rho(t_j, s_j).
\end{align}

An action of $\Gamma$ on a compact space $X$ is called {\it expansive} if there is a constant $c>0$ such that $\sup_{s\in \Gamma} \rho(sx, sy)>c$ for all distinct $x, y$ in $X$, where $\rho$ is a compatible metric on $X$. It is easy to see that the definition does not depend on the choice of $\rho$.
If $\widehat{X}=(\Zb\Gamma)^k/J$ for some $k\in \Nb$ and some left $\Zb\Gamma$-submodule $J$ of $(\Zb\Gamma)^k$, then the $\Gamma$-action on $X$ is expansive exactly when there exists $c>0$ such that the only $x\in X$ satisfying
$$ \sup_{s\in \Gamma}\rho_\infty(x_s, 0_{(\Rb/\Zb)^k})<c$$
is $0_X$.

We shall need the following result \cite[Propositions 2.2 and Corollary 2.16]{Schmidt}.

\begin{proposition} \label{P-expansive to subshift}
Let $\Gamma$ act on a compact abelian group $X$ expansively by automorphisms. Then $\widehat{X}$ is a finitely generated left $\Zb\Gamma$-module.
\end{proposition}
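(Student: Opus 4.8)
The plan is to pass to the Pontryagin dual and extract finite generation from a single basic neighbourhood that witnesses expansiveness. Write $M=\widehat{X}$, a discrete abelian group carrying the left $\Zb\Gamma$-module structure dual to the action, and identify $X$ with $\widehat{M}$ equipped with the topology of pointwise convergence on $M$. Since $\Gamma$ acts on the compact abelian group $X$ by automorphisms, I may take the compatible metric $\rho$ to be translation invariant (this changes nothing, as expansiveness is independent of the choice of $\rho$). Then $\rho(sx,sy)=\rho\big(s(x-y),0_X\big)$, so expansiveness provides a $c>0$ for which the closed neighbourhood $V:=\{z\in X:\rho(z,0_X)\le c\}$ of $0_X$ satisfies $\bigcap_{s\in\Gamma}s^{-1}V=\{0_X\}$.

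Next I would replace $V$ by a basic neighbourhood. In the pointwise-convergence topology on $X=\widehat{M}$, the sets $U_{F,\varepsilon}:=\{x\in X:\rho(\langle x,m\rangle,0)<\varepsilon\text{ for all }m\in F\}$, indexed by finite subsets $F\subseteq M$ and $\varepsilon>0$, form a neighbourhood base at $0_X$; here $\langle x,m\rangle\in\Rb/\Zb$ denotes the pairing and $\rho$ the circle metric. Choose $F$ and $\varepsilon$ with $U:=U_{F,\varepsilon}\subseteq V$, so that $\bigcap_{s}s^{-1}U\subseteq\bigcap_{s}s^{-1}V=\{0_X\}$. Unwinding the duality of the action via $\langle sx,m\rangle=\langle x,s^{-1}m\rangle$, one sees that $x\in\bigcap_{s}s^{-1}U$ precisely when $\rho(\langle x,g\rangle,0)<\varepsilon$ for every $g$ in the $\Gamma$-orbit $\Gamma F$.

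The final step is the annihilator computation. Let $N:=\Zb\Gamma F$ be the $\Zb\Gamma$-submodule generated by the finite set $F$, and let $N^{\perp}:=\{x\in X:\langle x,m\rangle=0\text{ for all }m\in N\}$. The main point is that every $x\in N^{\perp}$ vanishes \emph{exactly}, not merely to within $\varepsilon$, on $\Gamma F\subseteq N$; hence $N^{\perp}\subseteq\bigcap_{s}s^{-1}U=\{0_X\}$. Because $N^{\perp}$ is canonically identified with $\widehat{M/N}$, its triviality forces $M/N=0$, i.e. $M=N=\Zb\Gamma F$. Thus $\widehat{X}$ is generated by the finite set $F$, as claimed.

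I expect the substantive content to be conceptual rather than computational: the delicate bookkeeping lies only in choosing the basic neighbourhoods correctly and keeping the contragredient action conventions straight. What makes the argument go through without any $\varepsilon$-errors accumulating over the infinite group $\Gamma$ is precisely that the elements of $N^{\perp}$ satisfy the defining inequalities of $\bigcap_s s^{-1}U$ with zero slack, so the quantitative role of $\varepsilon$ disappears once attention is restricted to the annihilator. I would regard identifying that cancellation as the one genuinely load-bearing idea, the rest being standard Pontryagin duality.
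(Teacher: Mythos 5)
Your proof is correct as written: the passage to a translation-invariant metric, the reduction of expansiveness to $\bigcap_{s\in\Gamma}s^{-1}V=\{0_X\}$, the replacement of $V$ by a basic neighbourhood $U_{F,\varepsilon}$ of the pointwise topology on $X=\widehat{\widehat{X}}$ (which is the given topology, by Pontryagin duality), and the annihilator step $N^{\perp}\subseteq\bigcap_s s^{-1}U=\{0_X\}$ with $N=\Zb\Gamma F$ all check out, and you correctly isolate the one load-bearing point, namely that elements of $N^{\perp}$ satisfy the defining inequalities of $U_{F,\varepsilon}$ with zero slack, so no $\varepsilon$-losses accumulate over the infinite group. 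Note, however, that the paper contains no proof of this proposition: it is quoted from Schmidt's book \cite[Propositions 2.2 and Corollary 2.16]{Schmidt}, so the honest comparison is with Schmidt's argument, which takes a genuinely different route. Schmidt's Proposition 2.2 is essentially the neighbourhood reformulation of expansiveness that you rederive, but his finite generation statement is deduced from a structure theorem: an expansive action of $\Gamma$ on a compact (not necessarily abelian) group is conjugate to the shift on a closed invariant subgroup of $Y^\Gamma$ for a compact Lie group $Y$, proved via the no-small-subgroups characterization of Lie groups and an inverse-limit argument; in the abelian case one then dualizes the embedding $X\hookrightarrow((\Rb/\Zb)^k)^\Gamma$ to exhibit $\widehat{X}$ as a quotient of $(\Zb\Gamma)^k$. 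That machinery buys the non-abelian statement as well, whereas your argument is confined to abelian $X$; in exchange, yours is considerably more elementary and self-contained, using nothing beyond basic Pontryagin duality (annihilators, and the fact that a discrete abelian group with trivial dual is trivial). Two cosmetic remarks: you should say a word about the degenerate case $F=\emptyset$ (which only occurs when $X$ is trivial), and strictly speaking one should fix the convention $\left<sx,m\right>=\left<x,s^{-1}m\right>$ once, though, as you observe, quantifying over all $s\in\Gamma$ makes the choice immaterial.
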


\subsection{Entropy} \label{SS-entropy}

We refer the reader to \cite{JMO, OW} for details on the entropy theory of countable amenable groups.
Throughout this subsection $\Gamma$ will be a countable amenable group.

Let $\Gamma$ act on a compact space $X$ continuously. Fix a compatible metric $\rho$ on $X$ and a left F{\o}lner sequence $\{F_n\}_{n\in \Nb}$ in $\Gamma$, i.e.,
each $F_n$ is a nonempty finite subset of $\Gamma$ and $\frac{|KF_n\setminus F_n|}{|F_n|}\to 0$ as $n\to \infty$ for every finite set $K\subseteq \Gamma$.
For  a finite subset $F$ of $\Gamma$ and $\varepsilon>0$, we say that a set $Z\subseteq X$ is {\it $(\rho, F, \varepsilon)$-separated} if for any distinct
$y, z\in Z$ one has $\max_{s\in F}\rho(sy, sz)>\varepsilon$.  Denote by $N_{\rho, F, \varepsilon}(X)$ the maximal cardinality of $(\rho, F, \varepsilon)$-separated subsets of $X$. Then the {\it topological entropy} of the action $\Gamma\curvearrowright X$ is defined as
\begin{align} \label{E-entropy}
 \htopol(X)=\sup_{\varepsilon>0}\limsup_{n\to \infty}\frac{\log N_{\rho, F_n, \varepsilon}(X)}{|F_n|},
\end{align} 
and does not depend on the choice of the F{\o}lner sequence $\{F_n\}_{n\in \Nb}$ and the metric $\rho$. 

For a measure-preserving action of $\Gamma$ on a probability measure space $(X, \cB_X, \mu)$, one also has the {\it measure entropy} or {\it Kolmogorov-Sinai entropy} $\rh_\mu(X)$ defined.

When $\Gamma$ acts on a compact group $X$ by automorphisms, the topological entropy and the measure entropy with respect to the normalized Haar measure
$\mu_X$ on $X$ coincide \cite[Theorem 2.2]{Den}. Thus we shall simply denote by $\rh(X)$ this common value, and refer to it the entropy of the action.

One has the following Yuzvinski\u{i} addition formula \cite[Corollary 6.3]{Li}:

\begin{proposition} \label{P-addition formula}
Let $\Gamma$ act on a compact group $X$ by automorphisms. Let $Y$ be a closed $\Gamma$-invariant normal subgroup of $X$. Consider the restriction of the $\Gamma$-action to $Y$ and the induced $\Gamma$-action on $X/Y$. Then
$$ \rh(X)=\rh(Y)+\rh(X/Y).$$
\end{proposition}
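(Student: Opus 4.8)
The plan is to pass to measure entropy and exploit the relative addition formula over the factor $X/Y$. Since the $\Gamma$-action is by automorphisms of the compact group $X$, by \cite[Theorem~2.2]{Den} we have $\rh(X)=\rh_{\mu_X}(X)$, and likewise $\rh(Y)=\rh_{\mu_Y}(Y)$ and $\rh(X/Y)=\rh_{\mu_{X/Y}}(X/Y)$, where $\mu_X,\mu_Y,\mu_{X/Y}$ denote the normalized Haar measures. The quotient homomorphism $\pi\colon X\to X/Y$ is continuous, surjective, $\Gamma$-equivariant, and pushes $\mu_X$ forward to $\mu_{X/Y}$; thus it is a factor map of measure-preserving $\Gamma$-systems. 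First I would record that the disintegration of $\mu_X$ over $\pi$ has conditional measures equal to the translates of $\mu_Y$ supported on the fibers $\pi^{-1}(z)=xY$ (where $\pi(x)=z$), which is just translation invariance of Haar measure on $X$.

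Next I would invoke the relative addition formula for the measure entropy of amenable group actions: writing $\cF=\pi^{-1}(\cB_{X/Y})$ for the $\Gamma$-invariant sub-$\sigma$-algebra coming from the factor, one has
$$\rh_{\mu_X}(X)=\rh_{\mu_{X/Y}}(X/Y)+\rh_{\mu_X}(X\mid \cF),$$
where $\rh_{\mu_X}(X\mid \cF)=\sup_{\alpha}\limsup_{n}\frac{1}{|F_n|}H_{\mu_X}\!\big(\bigvee_{s\in F_n}s^{-1}\alpha\,\big|\,\cF\big)$ is the relative entropy, the supremum running over finite Borel partitions $\alpha$ of $X$ and $\{F_n\}$ a left F{\o}lner sequence. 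This is the amenable-group generalization of the Abramov--Rokhlin formula, available from the entropy theory of countable amenable groups (see \cite{OW}) once one knows that conditional entropy behaves subadditively along a F{\o}lner sequence.

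The main step, and the main obstacle, is to identify the relative entropy with the entropy of the restricted action: $\rh_{\mu_X}(X\mid \cF)=\rh(Y)$. Here I would use the homogeneity of the extension. For each $s\in \Gamma$ the automorphism sends the fiber $xY$ onto the fiber $\pi^{-1}(s\pi(x))=(sx)Y$, and in the trivializations $Y\ni y\mapsto xy$ of the source fiber and $Y\ni y\mapsto (sx)y$ of the target fiber it acts simply as the restricted automorphism $s|_Y$ of $Y$. Thus, up to the (non-equivariant) choice of a measurable cross-section $X/Y\to X$ — which exists since $X$ is compact metrizable — the relative system is a measurable bundle with base $(X/Y,\mu_{X/Y})$, fiber $(Y,\mu_Y)$, and fiberwise dynamics given by the automorphism action $\Gamma\curvearrowright Y$. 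Because the conditional measures are all copies of $\mu_Y$ and the fiberwise action is independent of the base point, the conditional entropies $H_{\mu_X}(\bigvee_{s\in F_n}s^{-1}\alpha\mid \cF)$ reduce, fiber by fiber and after integration against $\mu_{X/Y}$, to the unconditioned quantities $H_{\mu_Y}(\bigvee_{s\in F_n}s^{-1}\beta)$ for the partition $\beta$ of $Y$ induced by $\alpha$ through the trivialization; taking the supremum over $\alpha$ (equivalently over $\beta$) and dividing by $|F_n|$ yields exactly $\rh(Y)$. Combining the three displays gives $\rh(X)=\rh(X/Y)+\rh(Y)$.

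I would keep in reserve the more hands-on topological route as a sanity check. Fixing a bi-invariant compatible metric on $X$, the inequality $\rh(X)\le \rh(Y)+\rh(X/Y)$ follows by a straightforward counting argument, in which an $(F_n,\varepsilon)$-separated subset of $X$ is controlled by its image in $X/Y$ together with at most $N_{F_n,\varepsilon}(Y)$ points in each $Y$-coset. The reverse inequality is the delicate one: it requires lifting an $(F_n,\varepsilon)$-separated set of $X/Y$ through a Borel section and multiplying it, coset by coset, by an $(F_n,\varepsilon)$-separated set of $Y$, while controlling the metric distortion introduced by the section under the $\Gamma$-action. I expect this superadditive direction to be the genuine difficulty in the topological approach, which is precisely why I would route the lower bound through the measure-theoretic relative entropy, where the homogeneity of the conditional measures makes the matching with $\rh(Y)$ automatic.
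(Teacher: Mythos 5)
There is a genuine gap, and it sits exactly at the step you call ``automatic.'' Your reduction of the relative entropy $\rh_{\mu_X}(X\mid \cF)$ to $\rh(Y)$ rests on the claim that, after trivializing the fibers by a measurable cross-section, the fiberwise dynamics is the fixed automorphism action $\Gamma\curvearrowright Y$, independent of the base point. That is false unless the section can be chosen $\Gamma$-equivariantly, which in general it cannot. Concretely, if $\sigma\colon X/Y\to X$ is a Borel section and you identify $\pi^{-1}(z)$ with $Y$ via $y\mapsto \sigma(z)y$, then the action of $s\in\Gamma$ reads $(z,y)\mapsto\bigl(sz,\; c(s,z)\,s(y)\bigr)$ with cocycle $c(s,z)=\sigma(sz)^{-1}\,\bigl(s\,\sigma(z)\bigr)\in Y$: a skew product whose fiber maps are the automorphism $s|_Y$ composed with a base-dependent translation. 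Your observation that the fiber map is exactly $s|_Y$ is correct only when you base the target fiber at $sx$, i.e.\ for one $s$ and one fiber at a time; a single section cannot achieve this simultaneously for all $s$ and all $z$. Consequently, on the fiber over $z$ the partition $\bigvee_{s\in F}s^{-1}\alpha$ corresponds to a join of \emph{translated} partitions $(s|_Y)^{-1}\bigl(c(s,z)^{-1}\beta\bigr)$. Haar invariance makes each factor have the same static entropy as the untranslated one, but translations do not respect the join, so the conditional entropy does not reduce to $H_{\mu_Y}\bigl(\bigvee_{s\in F}(s|_Y)^{-1}\beta\bigr)$, and the identity $\rh_{\mu_X}(X\mid\cF)=\rh(Y)$ does not follow.

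This identification is in fact the entire content of the proposition: the Abramov--Rokhlin relative formula you invoke is indeed available for amenable group actions (Ward--Zhang), and the passage to measure entropy via \cite{Den} is fine, but once those are granted, ``$\rh(X)=\rh(Y)+\rh(X/Y)$'' is \emph{equivalent} to ``relative entropy of the skew product equals $\rh(Y)$'' --- the superadditive direction you yourself flagged as the genuine difficulty in the topological route. Your measure-theoretic detour does not dissolve that difficulty; it relocates it into the cocycle and then dismisses it with the invalid homogeneity claim. Even for $\Gamma=\Zb$ this step is a nontrivial theorem (Yuzvinski\u{\i} \cite{Yuz1}, and R.~K.~Thomas's addition theorem for skew products), and for general countable amenable $\Gamma$ it was a long-standing open problem. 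Accordingly, the paper does not prove this proposition at all: it quotes it as \cite[Corollary 6.3]{Li}, whose proof requires substantially different machinery. As written, your proposal is not a proof but a reduction of the statement to itself.
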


\subsection{Local Entropy Theory} \label{SS-local}

The local entropy theory was initiated by Blanchard \cite{Blanchard}. We refer the readers to \cite{Glasner, GY} for a nice account for the case $\Gamma=\Zb$.
In \cite{Ind, MInd} Kerr and the second named author gave a systematic combinatorial approach to the local entropy theory for general countable amenable groups. Here we follow the terminologies in \cite{Ind, MInd}.
Throughout this subsection, $\Gamma$ will be a countable amenable group.

\begin{definition} \label{D-IE}
Let $\Gamma$ act on a compact space $X$ continuously. For a tuple $\oA=(A_1, \dots, A_k)$ of subsets of $X$, we say that a finite set $F\subseteq \Gamma$ is an {\it independence set for $\oA$} if for every function $\sigma:F\rightarrow \{1, 2, \dots, k\}$ one has $\bigcap_{s\in F}s^{-1}A_{\sigma(s)}\neq \emptyset$.
We call a tuple $\ox=(x_1, \dots, x_k)\in X^k$ an {\it IE-tuple} if for every product neighborhood $U_1\times \cdots \times U_k$ of $\ox$, there exist a nonempty finite set $K\subseteq \Gamma$ and $c, \varepsilon>0$ such that for any finite set $F\subseteq \Gamma$ with $|KF\setminus F|\le \varepsilon|F|$ the tuple $(U_1,\dots, U_k)$ has an independence set $F'\subseteq F$ with $|F'|\ge c|F|$. We denote the set of IE-tuples of length $k$ by $\IE_k(X)$.
\end{definition}

We need the following properties of IE-tuples \cite[Proposition 3.9, Proposition 3.12, Theorem 3.15]{Ind}. For a continuous action of $\Gamma$ on a compact space $X$, we denote by $\cM(X, \Gamma)$ the set of $\Gamma$-invariant Borel probability measures on $X$.

\begin{theorem} \label{T-basic IE}
Let $\Gamma$ act on compact spaces $X$ and $Y$ continuously. Let $k\in \Nb$. Then the following hold:
\begin{enumerate}
\item $\IE_k(X)$ is a closed $\Gamma$-invariant subset of $X^k$, for the product $\Gamma$-action on $X^k$.

\item $\IE_2(X)$ has non-diagonal elements if and only if $\rh(X)>0$.

\item $\IE_1(X)$ is the closure of $\bigcup_{\mu \in \cM(X, \Gamma)}\supp(\mu)$.

\item Let $\pi:X\rightarrow Y$ be a $\Gamma$-equivariant continuous surjective map. Then $(\pi \times \cdots \times \pi)(\IE_k(X))=\IE_k(Y)$.

\item $\IE_k(X\times Y)=\IE_k(X)\times \IE_k(Y)$, where we take the product $\Gamma$-action on $X\times Y$, and identify $(X\times Y)^k$ with $X^k\times Y^k$ naturally.
\end{enumerate}
\end{theorem}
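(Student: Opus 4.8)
The plan is to route all five assertions through a single invariant, the \emph{independence density} of a tuple of subsets, and to reformulate membership in $\IE_k(X)$ in terms of it. For a tuple $\oA=(A_1,\dots,A_k)$ of subsets of $X$ and a finite $F\subseteq\Gamma$, let $\varphi_{\oA}(F)$ denote the largest cardinality of an independence set for $\oA$ contained in $F$. The set function $F\mapsto\varphi_{\oA}(F)$ is subadditive and $\Gamma$-equivariant up to boundary terms, so by the Ornstein--Weiss lemma the limit $\overline{I}(\oA)=\lim_n \varphi_{\oA}(F_n)/|F_n|$ exists along any F{\o}lner sequence and is independent of it. The definition then says exactly that $\ox\in\IE_k(X)$ if and only if $\overline{I}(\oU)>0$ for every product neighborhood $\oU=U_1\times\cdots\times U_k$ of $\ox$. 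With this reformulation, assertion (1) is almost immediate: closedness follows because a neighborhood of a limit point contains a neighborhood of a nearby IE-tuple, so the density bound passes to the limit, and $\Gamma$-invariance follows because left translation of a F{\o}lner set changes $\varphi$ by a boundary term negligible in the limit.

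Assertion (2) is the combinatorial heart. The key device is the Sauer--Shelah lemma (in the multivalued Karpovsky--Milman form): an independence set for $\oA$ is precisely a set \emph{shattered} by the itinerary map $z\mapsto(\text{index }i\text{ with }sz\in A_i)_{s}$, so independence density measures the density of the largest shattered coordinate set, while $\htopol(X)$ is governed by the exponential growth of the image of the itinerary map. If $\htopol(X)>0$ I would choose a finite cover realizing exponential growth and apply Sauer--Shelah to extract, for each large F{\o}lner $F$, a shattered subset of density bounded below uniformly; pairing two of the cover blocks and passing to a limit over a neighborhood basis (using compactness of $X^2$) produces a non-diagonal pair of disjoint closed sets of positive independence density, hence a non-diagonal IE-pair. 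Conversely, a non-diagonal IE-pair yields disjoint neighborhoods $U_1,U_2$ with $\overline{I}(U_1,U_2)>0$, and a shattered set of size $\approx c|F|$ forces $2^{c|F|}$ points that are pairwise $(F,\varepsilon)$-separated, giving $\htopol(X)\ge c\log 2>0$.

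For assertion (3) the case $k=1$ degenerates: a set $F'$ is an independence set for a single $U$ exactly when $\bigcap_{s\in F'}s^{-1}U\neq\emptyset$, i.e.\ some orbit meets $U$ along all of $F'$. Thus $x\in\IE_1(X)$ iff every neighborhood $U$ of $x$ is visited with positive density by some orbit, and I would convert such visiting witnesses into an invariant measure by forming empirical averages $\frac1{|F'|}\sum_{s\in F'}\delta_{sz}$ and taking a weak$^*$ limit along the F{\o}lner sequence; this produces $\mu\in\cM(X,\Gamma)$ with $\mu(U)>0$ for every neighborhood of $x$, so $x\in\supp(\mu)$, and the reverse inclusion comes from the recurrence of points in $\supp(\mu)$ for an invariant $\mu$. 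Assertion (4) splits into two inclusions: the image of an IE-tuple is an IE-tuple because $\pi^{-1}$ carries product neighborhoods in $Y^k$ to product neighborhoods in $X^k$ and preserves independence sets, giving $\subseteq$; for $\supseteq$ I would lift, choosing for each member of a neighborhood basis of a given $\oy\in\IE_k(Y)$ a preimage witness in $X$ via surjectivity, and extract by compactness (closedness of $\pi$) a limit tuple $\ox$ with $\pi$-image $\oy$ whose neighborhoods inherit positive independence density.

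Assertion (5) then follows from (4) in one direction: applying (4) to the two coordinate projections $X\times Y\to X$ and $X\times Y\to Y$ gives $\IE_k(X\times Y)\subseteq\IE_k(X)\times\IE_k(Y)$. The reverse inclusion is where I expect the main obstacle to lie. The decisive observation is that, since $s^{-1}(U\times V)=s^{-1}U\times s^{-1}V$, a finite set $F'$ is an independence set for the product tuple $(U_1\times V_1,\dots,U_k\times V_k)$ if and only if it is \emph{simultaneously} an independence set for $(U_1,\dots,U_k)$ in $X$ and for $(V_1,\dots,V_k)$ in $Y$. Hence the problem reduces to a purely combinatorial statement: two hereditary families of subsets of $F$, each containing independence sets of density bounded below by a positive constant, must share a common member of density bounded below by a positive constant depending only on those two constants. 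This is not a formal consequence---disjointly placed large members are consistent with a small common intersection---so one needs a genuine density version of Sauer--Shelah guaranteeing that a family which shatters a set of linear density in fact shatters enough such sets that the two families overlap on a shattered set of positive density; establishing this intersection lemma and then feeding it into the limit definition of $\overline{I}$ to conclude that $\overline{I}$ of the product tuple is positive is the step I would budget the most effort for.
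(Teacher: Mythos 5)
First, a structural note: the paper does not prove this theorem at all --- it is quoted verbatim from Kerr--Li \cite[Propositions 3.9, 3.12, Theorem 3.15]{Ind}, so your attempt can only be measured against that source's argument. Your reformulation via Ornstein--Weiss limits of $\varphi_{\oA}(F)/|F|$ is sound (the function is monotone, subadditive, and invariant under right translation of $F$, since $F'$ is an independence set for $\oA$ if and only if $F't$ is), and your treatments of (1), (3), and the easy directions of (2), (4), (5) are essentially correct.

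The genuine gap is the ``intersection lemma'' on which you hang assertion (5): it is \emph{false}, even in the strengthened shattering form you propose. Take $F=H_1\sqcup H_2$ with $|H_1|=|H_2|$, $S_1=\{1,2\}^{H_1}\times\{1\}^{H_2}$ and $S_2=\{1\}^{H_1}\times\{2,1\}^{H_2}$: each family shatters a set of density $1/2$, but the only set shattered by both is empty. No finite, translation-free combinatorial statement can close this gap --- the counterexample corresponds exactly to $U$ being visited only in the ``first half'' of a window and $V$ only in the ``second half'', which dynamics forbids but fixed-$F$ combinatorics does not. The correct mechanism, which is how Kerr--Li's setup works, is to use invariance first: since independence sets are stable under right translation and under passing to subsets, an averaging/pigeonhole argument over the translates $Es\subseteq F$ of an \emph{arbitrary} finite set $E$ inside a sufficiently left-invariant $F$ upgrades your F{\o}lner-form density to the statement that \emph{every} finite $E\subseteq\Gamma$ contains an independence set of density $\ge c'$. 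Once the density is quantified over all finite subsets, (5) becomes trivial: inside any $F$ pick an $\oA$-independence set $F_1$ of density $c_A'$, then apply the $\oB$-density property \emph{to the set $F_1$ itself} to get $F_2\subseteq F_1$ of density $c_B'|F_1|$; by heredity $F_2$ is simultaneously independent, hence independent for the product tuple at density $c_A'c_B'$. A secondary instance of the same gap occurs in your (4)$\supseteq$: an arbitrary preimage witness does not ``inherit'' positive independence density (small neighborhoods of a random lift are small pieces of the large set $\pi^{-1}(U_i)$), and localizing positive density from a closed tuple down to a point genuinely requires the union-splitting lemma --- if $A_1=A_1'\cup A_1''$ and the tuple has positive density, then one of the refined tuples does --- which is where the Karpovsky--Milman generalization of Sauer--Shelah actually enters in Kerr--Li, alongside its role in your sketch of (2).
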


\begin{definition} \label{D-measure IE}
Let $\Gamma$ act on a compact space $X$ continuously and let $\mu\in \cM(X, \Gamma)$. For a tuple $\oA=(A_1, \dots, A_k)$ of subsets of $X$ and a subset $D$ of $X$, we say that a finite set $F\subseteq \Gamma$ is an {\it independence set for $\oA$ relative to $D$} if for every function $\sigma:F\rightarrow \{1, 2, \dots, k\}$ one has $D\cap \bigcap_{s\in F}s^{-1}A_{\sigma(s)}\neq \emptyset$.
We call a tuple $\ox=(x_1, \dots, x_k)\in X^k$ a {\it $\mu$-IE-tuple} if for every product neighborhood $U_1\times \cdots \times U_k$ of $\ox$, there exist
$c, \delta>0$ such that for any nonempty finite set $K\subseteq \Gamma$ and $\varepsilon>0$ one can find
a nonempty finite set $F\subseteq \Gamma$ with $|KF\setminus F|\le \varepsilon|F|$ such that for every Borel set $D\subseteq X$ with $\mu(D)\ge 1-\delta$ the tuple $(U_1,\dots, U_k)$ has an independence set $F'\subseteq F$ relative to $D$ with $|F'|\ge c|F|$. We denote the set of $\mu$-IE-tuples of length $k$ by $\IE_k^\mu(X)$.
\end{definition}

We need the following properties of measure IE-tuples \cite[Proposition 2.16, Theorem 2.21, Theorem 2.30]{MInd}:

\begin{theorem} \label{T-basic measure IE}
Let $\Gamma$ act on compact spaces $X$ and $Y$ continuously. Let $\mu\in \cM(X, \Gamma)$ and $\nu\in \cM(Y, \Gamma)$. Let $k\in \Nb$. Then the following hold:
\begin{enumerate}
\item $\IE_k^\mu(X)$ is a closed $\Gamma$-invariant subset of $X^k$, for the product $\Gamma$-action on $X^k$.

\item $\IE_2^\mu(X)$ has non-diagonal elements if and only if $\rh_\mu(X)>0$.

\item $\IE_1^\mu(X)=\supp(\mu)$.

\item Let $\pi:X\rightarrow Y$ be a $\Gamma$-equivariant continuous surjective map. Then $(\pi \times \cdots \times \pi)(\IE_k^\mu(X))=\IE_k^{\pi_*(\mu)}(Y)$.

\item $\IE_k^{\mu \times \nu}(X\times Y)=\IE_k^\mu(X)\times \IE_k^\nu(Y)$, where we take the product $\Gamma$-action on $X\times Y$, and identify $(X\times Y)^k$ with $X^k\times Y^k$ naturally.

\item $\IE_k(X)$ is the closure of $\bigcup_{\mu \in \cM(X, \Gamma)}\IE_k^\mu(X)$.
\end{enumerate}
\end{theorem}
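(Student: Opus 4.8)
The plan is to route everything through a measure-theoretic independence density and its comparison with the Kolmogorov--Sinai entropy. For a tuple $\oU=(U_1,\dots,U_k)$ of subsets of $X$, a F{\o}lner sequence $\{F_n\}$, and $\delta>0$, I would define the lower density $\underline{I}_\delta(\oU)$ of those $F'\subseteq F_n$ that are independence sets for $\oU$ relative to \emph{every} Borel set $D$ with $\mu(D)\ge 1-\delta$, and declare $\ox$ a $\mu$-IE-tuple exactly when $\inf_\delta \underline{I}_\delta(\oU)>0$ for each product neighborhood $\oU$ of $\ox$; this repackages Definition~\ref{D-measure IE}. With this in hand, assertions (1) and (3) are the soft ones. $\Gamma$-invariance follows because translating a F{\o}lner set by a group element changes independence densities only negligibly; closedness follows because any product neighborhood of a limit point $\ox$ contains a product neighborhood of some nearby $\mu$-IE-tuple and hence inherits positive density; and the length-one statement (3) reduces to the observation that a single set $U$ has positive independence density relative to large-measure sets precisely when $\mu(U)>0$, i.e. when $x\in\supp(\mu)$.

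The heart is assertion (2), the equivalence of non-diagonal $\mu$-IE-pairs with $\rh_\mu(X)>0$. For the converse direction I would show that a pair of disjoint closed sets with positive independence density relative to large-measure sets forces a finite partition whose atoms can be distinguished on a positive-density set of coordinates, yielding by a direct counting estimate a positive lower bound for $\rh_\mu(X)$. The forward direction is where the real work lies: starting from a partition $\cP$ with $\rh_\mu(\cP)>0$, I would apply a measure-theoretic, density version of the Sauer--Shelah / Karpovsky--Milman lemma to produce, along the F{\o}lner sequence, a subset of coordinates of positive density on which $\cP$ is combinatorially independent relative to sets of measure close to $1$; collapsing $\cP$ to a two-element tuple then extracts a non-diagonal $\mu$-IE-pair. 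Keeping the ``relative to $D$'' clause under control throughout this extraction is the main obstacle, since the usual Sauer--Shelah argument must be upgraded so that independence survives after deleting an arbitrary set of small measure.

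Functoriality (4) and the product formula (5) I would then derive from the density formalism. For a factor map $\pi:X\to Y$, independence sets for a product neighborhood $\oU$ in $X$ that is pulled back from $Y$ coincide with independence sets for the corresponding neighborhood in $Y$, and ``relative to $D$'' in $X$ matches ``relative to $\pi(D)$'' in $Y$ with $\pi_*(\mu)(\pi(D))\ge\mu(D)$; comparing the two densities gives both inclusions in $(\pi\times\cdots\times\pi)(\IE_k^\mu(X))=\IE_k^{\pi_*(\mu)}(Y)$. For (5) the inclusion $\IE_k^{\mu\times\nu}(X\times Y)\subseteq\IE_k^\mu(X)\times\IE_k^\nu(Y)$ comes from projecting, while the reverse inclusion uses a Fubini argument: a set of measure $\ge 1-\delta$ in $X\times Y$ has, on a set of $x$ of nearly full $\mu$-measure, fibres of nearly full $\nu$-measure, so independence sets from the two factors can be amalgamated.

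Finally, assertion (6), that $\IE_k(X)$ is the closure of $\bigcup_{\mu\in\cM(X,\Gamma)}\IE_k^\mu(X)$, is a local variational principle. One inclusion is immediate since every $\mu$-IE-tuple is a topological IE-tuple. For the other I would show that the topological independence density of a tuple equals the supremum over $\mu\in\cM(X,\Gamma)$ of the corresponding measure independence densities; given a topological IE-tuple $\ox$, an approximation and compactness argument for $\cM(X,\Gamma)$ produces an invariant measure $\mu$ witnessing positive measure independence density at every neighborhood of $\ox$, placing $\ox$ in the closure of the measure IE-tuples. As throughout, the delicate point is matching the purely combinatorial densities with the measure-theoretic entropy, so that the same Sauer--Shelah-type lemma underlying (2) can be invoked uniformly over the measures.
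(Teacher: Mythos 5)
The first thing to say is that the paper contains no proof of Theorem~\ref{T-basic measure IE} at all: it is imported verbatim from Kerr and Li \cite[Proposition 2.16, Theorem 2.21, Theorem 2.30]{MInd}, so the only meaningful comparison is with that reference, whose architecture (independence densities over F{\o}lner sets, a Karpovsky--Milman/Sauer--Shelah combinatorial core, and a local variational principle for assertion (6)) your outline does mirror in shape. As a proof sketch, however, it has two genuine defects.

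First, your ``repackaging'' of Definition~\ref{D-measure IE} swaps the quantifiers. The definition fixes a sufficiently invariant $F$ and then, for \emph{each} Borel $D$ with $\mu(D)\ge 1-\delta$, asks for an independence set $F'\subseteq F$ relative to that $D$, with $F'$ allowed to depend on $D$; the relevant density is a $\min$ over $D$ of a $\max$ over $F'$. You instead count those $F'$ that are independence sets relative to \emph{every} such $D$ simultaneously. A single $F'$ has that simultaneous property if and only if $\mu\bigl(\bigcap_{s\in F'}s^{-1}U_{\sigma(s)}\bigr)>\delta$ for every $\sigma$ (if some intersection has measure $\le\delta$, take $D$ to be its complement), which is strictly stronger. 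With your notion even assertion (3) fails: in a Bernoulli shift with $U$ a cylinder of measure $p<1$, one has $\mu\bigl(\bigcap_{s\in F'}s^{-1}U\bigr)=p^{|F'|}$, so no fixed $\delta>0$ admits positive-density such $F'$, and your $\IE_1^\mu(X)$ would be a proper subset of $\supp(\mu)$. Since all six assertions are then argued from this density, the foundation must be repaired before anything downstream is trusted.

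Second, the two hard assertions are named rather than proved, and one proposed mechanism fails outright. For (5), the Fubini amalgamation does not work as described: to produce an independence set for the product tuple relative to $D\subseteq X\times Y$ you must, for each of the $k^{|F'|}$ maps $\sigma$, choose a point $x_\sigma$ in the $X$-factor and then operate relative to the fibre $D_{x_\sigma}$ in $Y$; controlling all these fibres at once (say by intersecting them) costs a factor exponential in $|F'|$, while $\delta$ is fixed in advance, so the estimate collapses --- which is precisely why the product formula is a substantial theorem in \cite{MInd} and not a formal consequence of Fubini. Likewise the forward direction of (2), extracting from $\rh_\mu(\cP)>0$ positive-density independence relative to all sets of nearly full measure, is the central technical result of \cite{MInd}; you correctly flag that Sauer--Shelah ``must be upgraded'' so independence survives deletion of a small-measure set, but that upgrade \emph{is} the theorem, and no mechanism is offered, so the same gap propagates into your proof of (6). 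Finally, in (4) note that $\pi(D)$ is in general only analytic (a fixable measurability point), and the inclusion $\IE_k^{\pi_*(\mu)}(Y)\subseteq(\pi\times\cdots\times\pi)(\IE_k^\mu(X))$ requires a lifting argument --- a pigeonhole over finite covers of the fibres together with a compactness argument over shrinking neighborhoods --- not merely a comparison of densities.
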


\section{Algebraic Characterizations of Expansive Algebraic Actions} \label{S-expansive}

In this section we prove the following algebraic characterizations for expansiveness of algebraic actions.
Throughout this section $\Gamma$ will be a countable discrete group. For a unital ring $R$, a right $R$-module $\fM$ and
a left $R$-module $\fN$, we denote by $\fM\otimes_R\fN$ the tensor product of $\fM$ and $\fN$ \cite[Section 19]{AF}, which is an abelian group.

\begin{theorem} \label{T-algebraic characterization of expansive}
Let $\Gamma$ act on a compact abelian group $X$ by automorphisms. Then  the following are equivalent:
\begin{enumerate}
\item the action is expansive;

\item the left $\Zb\Gamma$-module $\widehat{X}$ is finitely generated, and if we identify $\widehat{X}$ with
$(\Zb\Gamma)^k/J$ for some $k\in \Nb$ and some left $\Zb\Gamma$-submodule $J$ of $(\Zb\Gamma)^k$, then there exists $A\in M_k(\Zb\Gamma)$ being invertible
in $M_k(\ell^1(\Gamma))$ such that the rows of $A$ are contained in $J$;

\item there exist some $k\in \Nb$, some left $\Zb\Gamma$-submodule $J$ of $(\Zb\Gamma)^k$, and some  $A\in M_k(\Zb\Gamma)$ being invertible in $M_k(\ell^1(\Gamma))$ such that
the left $\Zb\Gamma$-module $\widehat{X}$ is isomorphic to $(\Zb\Gamma)^k/J$ and the rows of $A$ are contained in $J$;

\item  the  left $\Zb\Gamma$-module $\widehat{X}$ is finitely generated, and $\ell^1(\Gamma)\otimes_{\Zb\Gamma}\widehat{X}=\{0\}$.
\end{enumerate}
\end{theorem}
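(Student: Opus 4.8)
The plan is to fix a presentation, identify $\widehat X$ with $(\Zb\Gamma)^k/J$ and $X$ with $\{x\in((\Rb/\Zb)^\Gamma)^k : xg^{*}=0\ \text{for all }g\in J\}$, writing points as row vectors, and then prove the cycle (2)$\Rightarrow$(3)$\Rightarrow$(1)$\Rightarrow$(4)$\Rightarrow$(2). The key reformulation is that for $c\in(0,1/2)$ a point $x\in X$ with $\sup_{s}\rho_\infty(x_s,0)<c$ lifts uniquely to $\tilde x\in\ell^\infty(\Gamma,\Rb^k)$ with $\|\tilde x\|_\infty<c$, and membership $x\in X$ becomes $\tilde x w^{*}\in\Zb^\Gamma$ for every $w\in J$; so expansiveness with constant $c$ says exactly that $\|\tilde x\|_\infty<c$ together with $\tilde x w^{*}\in\Zb^\Gamma$ for all $w\in J$ forces $\tilde x=0$. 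The mechanism used at every step is that an integer-valued function of sup-norm $<1$ must vanish.

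The easy implications come first. (2)$\Rightarrow$(3) is immediate. For (3)$\Rightarrow$(1), set $B=(A^{*})^{-1}\in M_k(\ell^1(\Gamma))$ and take $c<1/2$ with $c\|A^{*}\|_1<1$; if $x\in X$ has a lift with $\|\tilde x\|_\infty<c$ then, since the rows of $A$ lie in $J$, one has $\tilde x A^{*}\in(\Zb^\Gamma)^k$ with $\|\tilde x A^{*}\|_\infty<1$, so $\tilde x A^{*}=0$ and hence $\tilde x=(\tilde x A^{*})B=0$. For (1)$\Rightarrow$(4), $\widehat X$ is finitely generated by Proposition~\ref{P-expansive to subshift}, and $\ell^1(\Gamma)\otimes_{\Zb\Gamma}\widehat X=\ell^1(\Gamma)^k/\ell^1(\Gamma)J$, so it suffices to prove $\ell^1(\Gamma)J=\ell^1(\Gamma)^k$. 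Density of $\ell^1(\Gamma)J$ follows from expansiveness: otherwise Hahn--Banach gives a nonzero $\phi\in\ell^\infty(\Gamma,\Rb^k)=(\ell^1(\Gamma)^k)^{*}$ annihilating $\ell^1(\Gamma)J$, which via the duality $(\ell^1(\Gamma))^{*}=\ell^\infty(\Gamma)$ translates into $\phi w^{*}=0$ in $\ell^\infty(\Gamma)$ for all $w\in J$, and scaling $\phi$ to small sup-norm contradicts expansiveness. Given density, choose $y_i\in\ell^1(\Gamma)J$ with $\sum_i\|e_i-y_i\|_1<1$, so the matrix $Y$ with rows $y_i$ is invertible in $M_k(\ell^1(\Gamma))$; factoring $Y=CW$ with the finitely many $w_l\in J$ occurring as rows of $W$ and $C\in M_{k\times m}(\ell^1(\Gamma))$ gives $(Y^{-1}C)W=I_k$, whence each standard generator $e_i\in\ell^1(\Gamma)J$ and therefore $\ell^1(\Gamma)J=\ell^1(\Gamma)^k$.

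The substance is in (4)$\Rightarrow$(2). Applied to the given presentation, the computation above shows that (4) produces $w_1,\dots,w_m\in J$ and $C\in M_{k\times m}(\ell^1(\Gamma))$ with $CW=I_k$, where $W\in M_{m\times k}(\Zb\Gamma)$ has rows $w_l$. (This already re-proves expansiveness: a small lift satisfies $\tilde x W^{*}\in(\Zb^\Gamma)^m$ of sup-norm $<1$, hence $\tilde x W^{*}=0$, and then $\tilde x=\tilde x(W^{*}C^{*})=0$ since $W^{*}C^{*}=(CW)^{*}=I_k$.) What remains, and is the main obstacle, is to manufacture from this data a genuinely square matrix $A\in M_k(\Zb\Gamma)$ with \emph{integer} coefficients, rows in $J$, and invertible in $M_k(\ell^1(\Gamma))$. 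The natural candidate is the self-adjoint combination $A=W^{*}W$, whose $i$-th row $\sum_l (w_l)_i^{*}\,w_l$ is a left $\Zb\Gamma$-combination of the $w_l$ and hence lies in $J$; moreover $CW=I_k$ forces $W^{*}W$ to be bounded below, so $A$ is at least invertible in the group von Neumann algebra $M_k(\cL\Gamma)$.

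The genuine difficulty is that invertibility in $M_k(\cL\Gamma)$ (or in $M_k(C^{*}_{\mathrm r}(\Gamma))$) does \emph{not} imply invertibility in $M_k(\ell^1(\Gamma))$ for a general countable group $\Gamma$, whereas condition (2) demands the inverse back in $\ell^1$. This is precisely the hard point already visible in the principal case $J=\Zb\Gamma f$, where one must show expansiveness forces $f^{-1}\in\ell^1(\Gamma)$. I expect to resolve it not through the crude positivity estimate but by producing a \emph{summable} fundamental solution directly: using the expansiveness gap constant $c$, one argues that the formal inverse of $A$, a priori only a bounded convolver, in fact decays in $\ell^1$. Concretely, one seeks summable homoclinic lifts $p^{(1)},\dots,p^{(k)}\in\ell^1(\Gamma,\Rb^k)$ with $Ap^{(i)}\equiv e_i$, forced to exist by a Banach-space compactness argument exploiting the gap; assembling the $p^{(i)}$ as the columns of $A^{-1}$ places the inverse in $M_k(\ell^1(\Gamma))$ and closes the cycle. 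Thus the heart of the theorem is exactly this upgrade from von Neumann (or $C^{*}$) invertibility to honest $\ell^1$-invertibility.
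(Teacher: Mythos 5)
Your soft steps are sound and in fact streamline the paper's route: (2)$\Rightarrow$(3)$\Rightarrow$(1) is the paper's argument (Lemma~\ref{L-key expansive}), and your (1)$\Rightarrow$(4) via Hahn--Banach applied to all of $J$ at once neatly bypasses the paper's net over finitely generated submodules. But there is a genuine gap at the decisive step (4)$\Rightarrow$(2), and moreover you have mischaracterized where the difficulty lies. From $\ell^1(\Gamma)J=(\ell^1(\Gamma))^k$ you correctly extract $w_1,\dots,w_m\in J$ and $C\in M_{k\times m}(\ell^1(\Gamma))$ with $CW=I_k$; at that point you abandon this data, pass to the candidate $A=W^*W$, and then propose to upgrade its invertibility in $M_k(\cL\Gamma)$ to invertibility in $M_k(\ell^1(\Gamma))$ by ``producing a summable fundamental solution \dots forced to exist by a Banach-space compactness argument exploiting the gap.'' No such argument is given, and none is available at this level of generality: $\ell^1$-balls have no useful compactness for extracting summable solutions, and a Wiener-type passage from $\cL\Gamma$- (or $C^*$-) invertibility to $\ell^1$-invertibility fails for general countable $\Gamma$. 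Worse, the assertion you would need --- that expansiveness forces summable homoclinic lifts --- is in this paper a \emph{consequence} of the very theorem you are proving (Theorem~\ref{T-homoclinic are 1-homoclinic} is deduced from Theorem~\ref{T-algebraic characterization of expansive} via Lemma~\ref{L-key expansive homoclinic}), so your plan is circular as well as incomplete.

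The fix is elementary and uses exactly the data you already have, namely $CW=I_k$; this is the content of (3)$\Rightarrow$(1) of Lemma~\ref{L-torsion}. Since the invertible elements of the unital Banach algebra $M_k(\ell^1(\Gamma))$ form an open set, and $\Qb\Gamma$ is dense in $\ell^1(\Gamma)$, approximate $C$ by $C'\in M_{k\times m}(\Qb\Gamma)$ closely enough that $C'W$ is still invertible in $M_k(\ell^1(\Gamma))$; then choose $N\in\Nb$ with $NC'\in M_{k\times m}(\Zb\Gamma)$ and set $A=(NC')W$. This $A$ lies in $M_k(\Zb\Gamma)$, its rows are left $\Zb\Gamma$-combinations of $w_1,\dots,w_m$ and hence lie in $J$, and $A$ is invertible in $M_k(\ell^1(\Gamma))$ because multiplication by the nonzero integer $N$ preserves invertibility. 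No passage through $\cL\Gamma$ occurs anywhere; the identity $I_k$ is already $\ell^1$-invertible, and one merely perturbs. Note that you yourself used this openness-of-invertibles device in your (1)$\Rightarrow$(4) step (the $\sum_i\|e_i-y_i\|_1<1$ Neumann-series trick), so the missing idea was available in your own argument --- it simply needed to be applied once more, to $C$, instead of detouring through $W^*W$.
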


Previously, characterizations of expansiveness for algebraic actions have been obtained in various special cases, such as
the case $\Gamma=\Zb^d$ for $d\in \Nb$ \cite{Schmidt90}, the case $\Gamma$ is abelian \cite{Miles}, the case $\widehat{X}=\Zb\Gamma/J$ for a finitely generated left ideal $J$ of $\Zb\Gamma$ \cite{ER}, the case $X$ is connected and finite-dimensional \cite{Bhattacharya},  and the case $\widehat{X}=\Zb\Gamma/\Zb \Gamma f$ for some $f\in \Zb\Gamma$ \cite{DS}.

When $\Gamma$ is abelian, we have the following characterization of expansive algebraic actions.

\begin{corollary} \label{C-characterization of expansive for abelian}
Suppose that $\Gamma$ is abelian. Let $\Gamma$ act on a compact abelian group by automorphisms. Then the action is expansive if and only if $\widehat{X}$ is a finitely generated $\Zb\Gamma$-module and there exists $f\in \Zb\Gamma$ being invertible in $\ell^1(\Gamma)$ such that $f\widehat{X}=\{0\}$.
\end{corollary}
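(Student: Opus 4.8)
The plan is to deduce this corollary directly from Theorem~\ref{T-algebraic characterization of expansive}, exploiting the fact that when $\Gamma$ is abelian both $\Zb\Gamma$ and $\ell^1(\Gamma)$ are \emph{commutative} rings, so that determinants are available and behave multiplicatively. Throughout I would write $\widehat{X}=(\Zb\Gamma)^k/J$ with $J$ a submodule of $(\Zb\Gamma)^k$, whose elements I regard as row vectors; the left action of $a\in\Zb\Gamma$ is then entrywise multiplication, so that $f\widehat{X}=\{0\}$ is equivalent to $f(\Zb\Gamma)^k\subseteq J$, i.e. to $fe_i\in J$ for each standard basis row vector $e_i$. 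This reformulation is the bridge between the scalar statement of the corollary and the matrix statement of the theorem.

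For the ``if'' direction, suppose $\widehat{X}$ is finitely generated and $f\in\Zb\Gamma$ is invertible in $\ell^1(\Gamma)$ with $f\widehat{X}=\{0\}$. Then the scalar matrix $A=fI_k\in M_k(\Zb\Gamma)$ is invertible in $M_k(\ell^1(\Gamma))$ with inverse $f^{-1}I_k$, and its rows are exactly the vectors $fe_i$, which lie in $J$ by the reformulation above. Hence condition (2) of Theorem~\ref{T-algebraic characterization of expansive} is satisfied, and the action is expansive.

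For the ``only if'' direction I would start from an expansive action and invoke Theorem~\ref{T-algebraic characterization of expansive}, which (after identifying $\widehat{X}=(\Zb\Gamma)^k/J$) furnishes a matrix $A\in M_k(\Zb\Gamma)$ invertible in $M_k(\ell^1(\Gamma))$ whose rows lie in $J$, together with finite generation of $\widehat{X}$. The key idea is to set $f=\det A$, computed in the commutative ring $\Zb\Gamma$ (this agrees with the determinant computed in $\ell^1(\Gamma)$ since $\Zb\Gamma\subseteq\ell^1(\Gamma)$). Because $\det$ is multiplicative over the commutative algebra $\ell^1(\Gamma)$ and $AA^{-1}=I_k$, I obtain $f\cdot\det(A^{-1})=\det(I_k)=1$ in $\ell^1(\Gamma)$, so $f$ is invertible in $\ell^1(\Gamma)$. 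To verify $f\widehat{X}=\{0\}$ it suffices, by the reformulation, to show $fe_i\in J$, and here I would use the adjugate (Cramer) identity $\mathrm{adj}(A)\,A=(\det A)I_k$ valid over any commutative ring: writing $v=e_i\,\mathrm{adj}(A)\in(\Zb\Gamma)^k$ gives $vA=e_i\,\mathrm{adj}(A)\,A=(\det A)\,e_i=fe_i$, so that $fe_i=vA=\sum_j v_j\,(\text{row}_j A)$ is a $\Zb\Gamma$-combination of the rows of $A$, hence lies in $J$.

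The argument is essentially routine once Theorem~\ref{T-algebraic characterization of expansive} is in hand; the only genuine content is the passage from the matrix $A$ to the single element $f=\det A$. I expect the main point to be careful about is the bookkeeping of the left-module structure on row vectors, so that the adjugate identity really places $fe_i$ inside the submodule generated by the rows of $A$. Commutativity of $\Zb\Gamma$ is exactly what makes $\det A$ well defined and this manipulation legitimate; for nonabelian $\Gamma$ no such reduction to a single $f$ is available, which is why the corollary is special to the abelian case.
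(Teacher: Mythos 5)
Your proof is correct, and in the substantive (``only if'') direction it is the same as the paper's: the paper also sets $f=\det(A)$, uses commutativity of $\ell^1(\Gamma)$ to deduce invertibility of $\det(A)$ from invertibility of $A$, and uses the adjugate formula --- in the form $A^{-1}=(\det A)^{-1}B$ with $B\in M_k(\Zb\Gamma)$, giving $\det(A)\,b=b\,B\,A\in J$ for every $b\in (\Zb\Gamma)^k$ --- which is exactly your Cramer identity $f e_i=e_i\,\mathrm{adj}(A)\,A$. The only divergence is in the easy ``if'' direction: the paper verifies condition (4) of Theorem~\ref{T-algebraic characterization of expansive} by the one-line tensor computation $h\otimes a=hf^{-1}\otimes fa=0$, so that $\ell^1(\Gamma)\otimes_{\Zb\Gamma}\widehat{X}=\{0\}$, whereas you verify condition (3) directly with the scalar matrix $A=fI_k$, whose rows $fe_i$ lie in $J$ precisely because $f\widehat{X}=\{0\}$. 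Both checks are immediate; yours stays inside a fixed presentation $(\Zb\Gamma)^k/J$ (where, as you correctly flag, commutativity is also what lets you pass from $fe_i\in J$ for all $i$ to $f(\Zb\Gamma)^k\subseteq J$, since $fa=\sum_i a_i(fe_i)$), while the paper's tensor-product check is presentation-free. Either route closes the proof, and your bookkeeping of the left-module structure on row vectors is accurate throughout.
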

\begin{proof} Suppose that the action is expansive. By Theorem~\ref{T-algebraic characterization of expansive} we can write the $\Zb\Gamma$-module $\widehat{X}$ as $(\Zb\Gamma)^k/J$ for some $k\in \Nb$ and some left $\Zb\Gamma$-submodule $J$ of $(\Zb\Gamma)^k$,
and find $A\in M_k(\Zb\Gamma)$ being invertible in $M_k(\ell^1(\Gamma))$ such that the rows of $A$ are contained in $J$.

Since $\Gamma$ is abelian, $\ell^1(\Gamma)$ is a commutative algebra. Thus we can talk about the determinant $\det(B)\in \ell^1(\Gamma)$ for any $B\in M_k(\ell^1(\Gamma))$, and $B$ is invertible in $M_k(\ell^1(\Gamma))$ exactly when $\det(B)$ is invertible in $\ell^1(\Gamma)$. Furthermore, the formula of $A^{-1}$ in terms of $\det (A)$ and the minors of $A$ shows that  $A^{-1}$ is of the form $(\det(A))^{-1}B$ for some $B\in M_k(\Zb\Gamma)$.

For any $b\in (\Zb\Gamma)^k$, we have
$$ \det(A) b=\det(A) b A^{-1}A=b (\det(A) \cdot A^{-1})A=b B A\in J.$$
Thus
$\det(A)\widehat{X}=\{0\}$. This proves the ``only if'' part.

Now suppose that $\widehat{X}$ is a finitely generated $\Zb\Gamma$-module and there exists $f\in \Zb\Gamma$ being invertible in $\ell^1(\Gamma)$ such that $f\widehat{X}=\{0\}$. For any $h\in \ell^1(\Gamma)$ and $a\in \widehat{X}$ we have
$$ h\otimes a=hf^{-1}\otimes fa=0$$
in $\ell^1(\Gamma)\otimes_{\Zb\Gamma}\widehat{X}$. Thus $\ell^1(\Gamma)\otimes_{\Zb\Gamma}\widehat{X}=\{0\}$. By Theorem~\ref{T-algebraic characterization of expansive} the action is expansive. This proves the ``if'' part.
\end{proof}

From  Corollary~\ref{C-characterization of expansive for abelian} we have the following consequence, which can also be deduced from \cite[Theorem 3.3]{Miles}.

\begin{corollary} \label{C-abelian expansive quotient}
Suppose that $\Gamma$ is abelian. If $\Gamma$ acts expansively on a compact abelian group $X$ by automorphisms and $Y$ is a closed $\Gamma$-invariant subgroup of $X$ with $\widehat{X/Y}$ being a finitely generated $\Zb\Gamma$-module, then the induced $\Gamma$-action on $X/Y$ is expansive.
\end{corollary}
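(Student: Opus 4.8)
The plan is to reduce the statement to the purely module-theoretic criterion supplied by Corollary~\ref{C-characterization of expansive for abelian}, which, for abelian $\Gamma$, characterizes expansiveness of an algebraic action by two conditions: finite generation of the dual module, together with the existence of an element of $\Zb\Gamma$, invertible in $\ell^1(\Gamma)$, that annihilates the dual module. The first step is to apply the ``only if'' direction of that corollary to the given expansive action $\Gamma \curvearrowright X$. This produces an element $f \in \Zb\Gamma$ that is invertible in $\ell^1(\Gamma)$ and satisfies $f\widehat{X} = \{0\}$.

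The second step is to transport $f$ to the quotient by means of Pontryagin duality. The quotient map $X \to X/Y$ is a $\Gamma$-equivariant continuous surjection of compact abelian groups, so dualizing the short exact sequence $0 \to Y \to X \to X/Y \to 0$ yields a $\Gamma$-equivariant injection $\widehat{X/Y} \hookrightarrow \widehat{X}$, whose image is the annihilator of $Y$ in $\widehat{X}$. In module-theoretic terms this identifies $\widehat{X/Y}$ with a $\Zb\Gamma$-submodule of $\widehat{X}$. Since $f$ annihilates $\widehat{X}$, it therefore annihilates the submodule as well, so that $f\widehat{X/Y} = \{0\}$.

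The final step is to invoke the ``if'' direction of Corollary~\ref{C-characterization of expansive for abelian} for the induced action $\Gamma \curvearrowright X/Y$: by hypothesis $\widehat{X/Y}$ is a finitely generated $\Zb\Gamma$-module, and we have just exhibited $f \in \Zb\Gamma$, invertible in $\ell^1(\Gamma)$, with $f\widehat{X/Y} = \{0\}$, whence the action on $X/Y$ is expansive. There is essentially no serious obstacle in this argument; it is a short diagram chase combined with the criterion. The only point demanding care is the translation between the topological short exact sequence and its Pontryagin dual, ensuring that the embedding $\widehat{X/Y} \hookrightarrow \widehat{X}$ is genuinely $\Zb\Gamma$-linear so that annihilation by $f$ descends to the submodule. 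It is also worth remarking why the finite-generation hypothesis on $\widehat{X/Y}$ cannot be dropped: for general abelian $\Gamma$ the ring $\Zb\Gamma$ need not be Noetherian, so a submodule of the finitely generated module $\widehat{X}$ need not itself be finitely generated, yet finite generation of the dual module is necessary for expansiveness by Proposition~\ref{P-expansive to subshift}.
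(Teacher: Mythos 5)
Your proof is correct and is precisely the argument the paper intends: the corollary is stated as an immediate consequence of Corollary~\ref{C-characterization of expansive for abelian}, obtained by producing $f\in\Zb\Gamma$ invertible in $\ell^1(\Gamma)$ with $f\widehat{X}=\{0\}$, identifying $\widehat{X/Y}$ via Pontryagin duality with the $\Zb\Gamma$-submodule of $\widehat{X}$ annihilating $Y$, and applying the ``if'' direction to the quotient. Your closing remark on why finite generation of $\widehat{X/Y}$ must be assumed (since $\Zb\Gamma$ need not be Noetherian for general abelian $\Gamma$) is a correct and worthwhile observation, consistent with the paper's Remark~\ref{R-quotient not expansive} and its use of Noetherianity in Corollary~\ref{C-expansive quotient}.
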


\begin{remark} \label{R-quotient not expansive}
Corollary~\ref{C-abelian expansive quotient} fails for any $\Gamma$ containing a free subgroup with $2$ generators. Indeed, if elements $s$ and $t$ of $\Gamma$ generate a free subgroup $\Gamma'$, then
$\Rb\Gamma (2-s)\cap \Rb\Gamma (2-t)=\{0\}$. This can be proved first for the case $\Gamma'=\Gamma$ using the arguments in the proof of \cite[Corollary 10.3.7.(iv)]{Passman}, and then extended to the general case using the fact that $\Rb\Gamma$ is a free right $\Rb\Gamma'$-module. It follows that the left $\Zb\Gamma$-submodule $\fM$ of $\Zb\Gamma/\Zb\Gamma (2-t)$ generated by $2-s+\Zb\Gamma (2-t)$ is isomorphic to $\Zb\Gamma$. Note that $2-t$ is invertible in $\ell^1(\Gamma)$, thus the canonical action of
$\Gamma$ on $X=\widehat{\Zb\Gamma/\Zb\Gamma (2-t)}$ is expansive. But $\widehat{\fM}$ is of the form $X/Y$ for some closed invariant subgroup $Y$ of $X$, and the canonical action of $\Gamma$ on $\widehat{\fM}=\widehat{\Zb\Gamma}=(\Rb/\Zb)^\Gamma$ is not expansive.
\end{remark}

For $\Gamma=\Zb^d$ with $d\in\Nb$, the group ring $\Zb \Gamma$ is the ring of Laurent polynomials with $d$ variables and integer coefficients, which is Noetherian \cite[Corollary IV.4.2]{Lang}. Thus, from Corollary~\ref{C-abelian expansive quotient} we obtain a new proof of the following result of Klaus Schmidt \cite[Corollary 3.11]{Schmidt90}:

\begin{corollary} \label{C-expansive quotient}
Let $d\in \Nb$. If $\Zb^d$ acts expansively on a compact abelian group $X$ by automorphisms and $Y$ is a closed $\Zb^d$-invariant subgroup of $X$, then the induced $\Zb^d$-action on $X/Y$ is expansive.
\end{corollary}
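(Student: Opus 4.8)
The plan is to reduce this statement to the already-established Corollary~\ref{C-abelian expansive quotient}. Since $\Gamma=\Zb^d$ is abelian, that corollary applies to the pair $(X,Y)$ provided its single nontrivial hypothesis holds, namely that $\widehat{X/Y}$ is a finitely generated $\Zb\Gamma$-module. So the whole task is to verify this finite-generation condition, and the mechanism that makes it automatic is the fact (quoted just above the statement) that $\Zb\Gamma=\Zb[x_1^{\pm 1},\dots,x_d^{\pm 1}]$ is Noetherian.

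First I would invoke Proposition~\ref{P-expansive to subshift}: because $\Gamma\curvearrowright X$ is expansive, $\widehat{X}$ is a finitely generated left $\Zb\Gamma$-module. Next I would translate the data $Y\subseteq X$ across Pontryagin duality. The $\Gamma$-equivariant quotient map $X\to X/Y$ dualizes to an injective $\Zb\Gamma$-module homomorphism $\widehat{X/Y}\hookrightarrow \widehat{X}$, whose image is the annihilator $\{\chi\in\widehat{X}:\chi|_Y\equiv 1\}$; thus $\widehat{X/Y}$ is (isomorphic to) a $\Zb\Gamma$-submodule of $\widehat{X}$. It is important that duality presents $\widehat{X/Y}$ as a \emph{submodule}, and not a quotient, of $\widehat{X}$, since it is exactly submodules that Noetherianity controls.

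Now comes the finite-generation step: a finitely generated module over a Noetherian ring is Noetherian, so every submodule of $\widehat{X}$ is itself finitely generated. Applying this to $\widehat{X/Y}$ supplies precisely the hypothesis of Corollary~\ref{C-abelian expansive quotient}, and that corollary then yields expansiveness of the induced action $\Gamma\curvearrowright X/Y$.

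There is no genuine obstacle here: the argument is purely structural, with all the real work hidden in Corollary~\ref{C-abelian expansive quotient} (which itself rests on Theorem~\ref{T-algebraic characterization of expansive} via the $\ell^1(\Gamma)$-invertible matrix $A$). The only points deserving a moment's care are the duality bookkeeping above and the observation that Noetherianity of $\Zb[x_1^{\pm 1},\dots,x_d^{\pm 1}]$ is exactly what removes the finite-generation hypothesis that had to be imposed by hand in the general abelian setting of Corollary~\ref{C-abelian expansive quotient}; Remark~\ref{R-quotient not expansive} shows this automatic step genuinely fails once $\Gamma$ contains a nonabelian free subgroup, so the use of Noetherianity is not a mere convenience.
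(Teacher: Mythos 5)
Your proposal is correct and follows exactly the paper's route: the paper likewise deduces Corollary~\ref{C-expansive quotient} from Corollary~\ref{C-abelian expansive quotient} by noting that $\Zb[\Zb^d]$ is a Noetherian Laurent polynomial ring, so that $\widehat{X/Y}$, being (via Pontryagin duality) a submodule of the finitely generated module $\widehat{X}$, is automatically finitely generated. Your write-up merely makes explicit the duality bookkeeping that the paper leaves implicit, and your closing observation about Remark~\ref{R-quotient not expansive} accurately explains why the Noetherian hypothesis is essential.
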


We remark that, based on Corollary~\ref{C-expansive quotient}, Schmidt also showed that if  $\Zb^d$ acts expansively on a compact (not necessarily abelian) group $X$ by automorphisms and $Y$ is a closed $\Zb^d$-invariant normal subgroup of $X$, then the induced $\Zb^d$-action on $X/Y$ is expansive \cite[Corollary 6.15]{Schmidt}.

Recall that a unital ring $R$ is said to be {\it left Noetherian} if every left ideal of $R$ is finitely generated. In general, we have

\begin{conjecture} \label{Conj-expansive quotient}
Suppose that $\Gamma$ is amenable and $\Zb\Gamma$ is left Noetherian. If  $\Gamma$ acts expansively on a compact group $X$ by automorphisms and $Y$ is a closed $\Gamma$-invariant normal subgroup of $X$, then the induced $\Gamma$-action on $X/Y$ is expansive.
\end{conjecture}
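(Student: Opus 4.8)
The plan is to convert the statement, via Pontryagin duality, into a purely algebraic assertion about finitely generated $\Zb\Gamma$-modules and the functor $\ell^1(\Gamma)\otimes_{\Zb\Gamma}-$, and then to isolate the single homological input that genuinely requires amenability. First I would dispose of the reduction to abelian $X$. Following Schmidt's passage from the abelian statement \cite[Corollary 3.11]{Schmidt90} to the general one \cite[Corollary 6.15]{Schmidt}, one peels off the abelian layers of the compact group $X$: the restriction of an expansive action to a closed invariant subgroup is again expansive (the same expansiveness constant works), and $X/Y$ is assembled from abelian quotients through a chain of closed invariant normal subgroups, so the general case follows from the abelian case together with the additivity of this decomposition. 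Thus the heart of the matter is the case in which $X$ is abelian.

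Assume then that $X$ is abelian and write $M=\widehat{X}$, a left $\Zb\Gamma$-module. By Theorem~\ref{T-algebraic characterization of expansive}(4), expansiveness of $\Gamma\curvearrowright X$ is equivalent to $M$ being finitely generated with $\ell^1(\Gamma)\otimes_{\Zb\Gamma}M=\{0\}$. The closed $\Gamma$-invariant subgroup $Y$ corresponds to its annihilator $N:=Y^\perp\subseteq M$, a $\Zb\Gamma$-submodule with $\widehat{X/Y}=N$ and $\widehat{Y}=M/N$. Because $\Zb\Gamma$ is left Noetherian, $N$ is automatically finitely generated; this is the \emph{only} place the Noetherian hypothesis is used. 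Moreover $\Gamma\curvearrowright Y$ is expansive by restriction, so $M/N$ is finitely generated with $\ell^1(\Gamma)\otimes_{\Zb\Gamma}(M/N)=\{0\}$. By Theorem~\ref{T-algebraic characterization of expansive}(4) once more, it remains exactly to prove $\ell^1(\Gamma)\otimes_{\Zb\Gamma}N=\{0\}$.

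The homological crux is the following. If $\ell^1(\Gamma)$ were flat as a right $\Zb\Gamma$-module, then applying $\ell^1(\Gamma)\otimes_{\Zb\Gamma}-$ to the inclusion $N\hookrightarrow M$ would keep it injective, giving an embedding $\ell^1(\Gamma)\otimes_{\Zb\Gamma}N\hookrightarrow \ell^1(\Gamma)\otimes_{\Zb\Gamma}M=\{0\}$, and we would be done. In general, applying the functor to $0\to N\to M\to M/N\to 0$ and using $\ell^1(\Gamma)\otimes_{\Zb\Gamma}M=\{0\}$, the long exact sequence collapses to $ {\rm Tor}_1^{\Zb\Gamma}(\ell^1(\Gamma),M/N)\to \ell^1(\Gamma)\otimes_{\Zb\Gamma}N\to 0$, so $\ell^1(\Gamma)\otimes_{\Zb\Gamma}N$ is a quotient of ${\rm Tor}_1^{\Zb\Gamma}(\ell^1(\Gamma),M/N)$. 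The whole problem thus reduces to showing that this ${\rm Tor}_1$ term maps to zero, and the extra information that $M/N$ itself satisfies the vanishing $\ell^1(\Gamma)\otimes_{\Zb\Gamma}(M/N)=\{0\}$ should be brought to bear in controlling it.

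This last step is where I expect the real difficulty to lie, and it is presumably why the statement is only a conjecture. For $\Gamma$ containing a nonabelian free subgroup the analogous vanishing fails (consistent with Remark~\ref{R-quotient not expansive}), so amenability must enter in an essential analytic way and no purely module-theoretic manipulation can suffice. The natural route is to establish a flatness-type property of $\ell^1(\Gamma)$ over $\Zb\Gamma$ against the relevant finitely generated modules, either directly by a F\o{}lner/approximate-inverse construction, or by passing to the group von Neumann algebra $L\Gamma$ and invoking Lück's dimension-flatness of $\Zb\Gamma\to L\Gamma$ for amenable $\Gamma$, which forces $\dim_{L\Gamma}{\rm Tor}_1^{\Zb\Gamma}(L\Gamma,M/N)=0$. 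One would then have to close two gaps: descending from $L\Gamma$ back to $\ell^1(\Gamma)$, and upgrading ``dimension zero'' to honest vanishing — the latter presumably by exploiting the invertibility in $M_k(\ell^1(\Gamma))$ furnished by Theorem~\ref{T-algebraic characterization of expansive}(2) together with the analytic zero-divisor results of Elek (cf.\ \cite{Elek03}) used elsewhere in this paper. Making this bridge precise is the main obstacle.
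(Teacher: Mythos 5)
You have not proved the statement, and you could not be expected to: this is Conjecture~\ref{Conj-expansive quotient}, which is \emph{open} in the paper. The authors give no proof; the only thing they record is exactly your first reduction, namely that the proof of \cite[Corollary 6.15]{Schmidt} reduces the conjecture to the case of abelian $X$. Your subsequent translation is correct and is the natural one: by Theorem~\ref{T-algebraic characterization of expansive}(4), with $M=\widehat{X}$ and $N=Y^\perp=\widehat{X/Y}$, the Noetherian hypothesis gives finite generation of $N$ for free, expansiveness of $X$ and of $Y$ give $\ell^1(\Gamma)\otimes_{\Zb\Gamma}M=\{0\}$ and $\ell^1(\Gamma)\otimes_{\Zb\Gamma}(M/N)=\{0\}$, and the long exact sequence exhibits $\ell^1(\Gamma)\otimes_{\Zb\Gamma}N$ as a quotient of $\mathrm{Tor}_1^{\Zb\Gamma}(\ell^1(\Gamma),M/N)$. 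But the decisive step --- killing that $\mathrm{Tor}_1$ contribution --- is precisely what you leave open, as you yourself acknowledge, so the proposal is a reduction plus a research program, not a proof. That the gap is genuine is corroborated by the one case the paper does settle, Corollary~\ref{C-abelian expansive quotient} for abelian $\Gamma$: its proof hinges on the determinant in the commutative Banach algebra $\ell^1(\Gamma)$ (producing a single $f\in\Zb\Gamma$, invertible in $\ell^1(\Gamma)$, annihilating $\widehat{X}$ and hence every submodule), a device with no noncommutative substitute; your homological reformulation stops exactly where that substitute would be needed.

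Two technical cautions about the bridge you sketch. First, L\"{u}ck's dimension-flatness (see \cite{Luck}) concerns $\Cb\Gamma\to\cN(\Gamma)$ and yields only $\dim_{\cN(\Gamma)}\mathrm{Tor}_1=0$, not vanishing; upgrading dimension zero to honest vanishing is not a formality, since finitely generated modules of von Neumann dimension zero can be nonzero (e.g.\ torsion phenomena as in $\widehat{X_f}$ with $f$ invertible in $\cL\Gamma$ but not in $\Zb\Gamma$). Second, the ring maps run $\Zb\Gamma\to\ell^1(\Gamma)\to\cL\Gamma$, so information over $\cL\Gamma$ transfers back to $\ell^1(\Gamma)$ only if the natural map $\ell^1(\Gamma)\otimes_{\Zb\Gamma}N\to\cL\Gamma\otimes_{\Zb\Gamma}N$ is injective --- a statement of essentially the same depth as the conjecture itself. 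Your observation that the vanishing fails for groups with free subgroups (Remark~\ref{R-quotient not expansive}) correctly shows amenability must enter analytically, but note that such $\Gamma$ already violate the Noetherian hypothesis, so this does not by itself locate where amenability is used; indeed, since it is open whether $\Zb\Gamma$ left Noetherian forces $\Gamma$ polycyclic-by-finite, the conjecture's two hypotheses may not be independent.
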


The proof of \cite[Corollary 6.15]{Schmidt} shows that, in order to prove Conjecture~\ref{Conj-expansive quotient}, it suffices to consider the case $X$ is abelian.

We start preparation for the proof of  Theorem~\ref{T-algebraic characterization of expansive}.
We describe first a class of expansive algebraic actions, which are analogues of the expansive principal algebraic actions studied in \cite{DS}. Conditions (2) and (3) of Theorem~\ref{T-algebraic characterization of expansive} state that these actions are the largest expansive algebraic actions in the sense that every expansive algebraic action is the restriction of one of these actions to a closed invariant subgroup.

\begin{lemma} \label{L-key expansive}
Let $k\in \Nb$, and  $A\in M_k(\Zb\Gamma)$ be invertible in $M_k(\ell^1(\Gamma))$.
Then the canonical action
of $\Gamma$ on $X_A:=\widehat{(\Zb\Gamma)^k/(\Zb\Gamma)^kA}$ is expansive.
\end{lemma}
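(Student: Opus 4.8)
The plan is to show directly that the action $\Gamma \curvearrowright X_A$ satisfies the expansiveness condition stated just before Proposition~\ref{P-expansive to subshift}, namely that there is a constant $c>0$ such that the only $x\in X_A$ with $\sup_{s\in \Gamma}\rho_\infty(x_s, 0_{(\Rb/\Zb)^k})<c$ is $0_{X_A}$. Recall that $X_A$ is identified with the subgroup of $((\Rb/\Zb)^\Gamma)^k$ consisting of those $x=(x_1,\dots,x_k)$ satisfying the defining equations $x\,a^*=0$ for all rows $a$ of $A$; equivalently, the row of $A$-relations forces $xA^*=0$ in $((\Rb/\Zb)^\Gamma)^k$. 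The key object is the inverse $A^{-1}\in M_k(\ell^1(\Gamma))$, whose existence is the entire hypothesis, and I expect its $\ell^1$-norm to supply the expansiveness constant.

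First I would lift: given $x\in X_A$ with all coordinates small, choose for each $s\in \Gamma$ and each $j$ a real representative $\tilde x_j(s)\in \Rb$ of $x_j(s)\in \Rb/\Zb$ with $|\tilde x_j(s)|<c$, so that $\tilde x=(\tilde x_1,\dots,\tilde x_k)\in \ell^\infty(\Gamma,\Rb^k)$ has $\|\tilde x\|_\infty < c$. The defining relations $xA^*=0$ in $(\Rb/\Zb)$-coordinates say that $\tilde x A^*$ has integer coordinates, i.e. $y:=\tilde x A^*\in \ell^\infty(\Gamma,\Zb^k)\subseteq \ell^\infty(\Gamma,\Rb^k)$. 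The point is now to recover $\tilde x$ from $y$ using the inverse. Since $A$ is invertible in $M_k(\ell^1(\Gamma))$ with inverse $A^{-1}$, its adjoint $A^*$ is invertible in $M_k(\ell^1(\Gamma))$ with inverse $(A^{-1})^*$, and multiplication by an element of $M_k(\ell^1(\Gamma))$ is a bounded operator on $\ell^\infty(\Gamma,\Rb^k)$ with operator norm controlled by the $\ell^1$-norm of the matrix. Writing $M:=\|(A^{-1})^*\|_1$ (the relevant operator-norm bound), one gets $\tilde x = y\,(A^*)^{-1} = y\,(A^{-1})^*$.

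The crux is the following dichotomy. From $\tilde x = y (A^{-1})^*$ we obtain $\|y\|_\infty \le \|A^*\|_1\,\|\tilde x\|_\infty \le \|A^*\|_1\, c$; so choosing $c < 1/\|A^*\|_1$ forces $\|y\|_\infty < 1$, and since $y$ is integer-valued this means $y=0$. Then $\tilde x = y (A^{-1})^* = 0$, whence every $\tilde x_j(s)=0$ and so $x=0_{X_A}$. Thus $c:=\tfrac{1}{2}\min(1,\|A^*\|_1^{-1})$ works (the factor $\tfrac12$ and the $\min$ with $1$ just guarantee the representatives $\tilde x_j(s)$ are the genuine small lifts and that $c$ is a legitimate expansiveness constant), and expansiveness follows.

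I expect the main obstacle to be bookkeeping about conventions rather than any deep difficulty: one must get straight the left/right module conventions (elements of $(\ell^p(\Gamma))^k$ are row vectors acting on the right, and the defining relations for $\widehat{(\Zb\Gamma)^k/(\Zb\Gamma)^kA}$ involve $A^*$, as spelled out in the identification preceding Proposition~\ref{P-expansive to subshift}), and verify that right multiplication by a fixed element of $M_k(\ell^1(\Gamma))$ on $\ell^\infty(\Gamma,\Rb^k)$ is bounded with the stated $\ell^1$-norm estimate. Both are routine: the boundedness is Young's inequality (convolution of an $\ell^\infty$ function against an $\ell^1$ function is $\ell^\infty$), and the adjoint of an invertible element is invertible with inverse the adjoint of the inverse, since the involution on $M_k(\ell^1(\Gamma))$ is an anti-automorphism. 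Once the conventions are pinned down, the integrality argument $\|y\|_\infty < 1 \Rightarrow y = 0$ is immediate and gives expansiveness in one line.
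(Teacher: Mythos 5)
Your proof is correct and is essentially the paper's argument in contrapositive form: both lift $x$ to $\tilde x\in\ell^\infty(\Gamma,\Rb^k)$, observe that $\tilde xA^*$ is integer-valued, and combine the estimate $\|\tilde xA^*\|_\infty\le\|A\|_1\|\tilde x\|_\infty$ with the injectivity of right multiplication by $A^*$ (which follows from invertibility in $M_k(\ell^1(\Gamma))$) and the integrality dichotomy $\|y\|_\infty<1\Rightarrow y=0$ to extract the expansiveness constant $\|A\|_1^{-1}$. The only blemish is a slip of attribution: the bound $\|y\|_\infty\le\|A^*\|_1\|\tilde x\|_\infty$ follows from $y=\tilde xA^*$, not from $\tilde x=y(A^{-1})^*$ as you wrote, but this is immaterial to the argument.
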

\begin{proof}
Let $x\in X_A$ be nonzero. Take $y\in ([-1/2, 1/2]^k)^\Gamma\subseteq (\ell^{\infty}(\Gamma))^k$ with $P(y)=x$, where $P$ denotes the canonical map
 $(\ell^\infty(\Gamma))^k\rightarrow ((\Rb/\Zb)^k)^\Gamma$. Then $y\neq 0$, and $yA^*\in \ell^{\infty}(\Gamma, \Zb^k)$. Since $A$ is invertible in $M_k(\ell^1(\Gamma))$, so is $A^*$. Thus we have $yA^*\neq 0$, and hence $\|yA^*\|_\infty\ge 1$.
Note that $\|yA^*\|_\infty\le \|y\|_\infty \cdot \|A^*\|_1=\|y\|_\infty\cdot \|A\|_1$. Therefore $\|y\|_\infty\ge \|A\|_1^{-1}$.
Then $\sup_{s\in \Gamma}\rho_{\infty}(x_s, 0_{(\Rb/\Zb)^k})\ge \|A\|_1^{-1}$, where $\rho_\infty$ is the metric on $(\Rb/\Zb)^k$ defined in \eqref{E-metric on torus}, and hence the canonical action of $\Gamma$ on $X_A$ is expansive.
\end{proof}

The following lemma is inspired by a question raised by Doug Lind and Klaus Schmidt,
and uses the technique of \cite[Theorem 3.2]{DS}.

\begin{lemma} \label{L-kernel}
Let $k, n\in \Nb$, and $B\in M_{n\times k}(\Zb\Gamma)$.
Then the following are equivalent:
\begin{enumerate}
\item  the canonical action of $\Gamma$ on $\widehat{(\Zb\Gamma)^k/(\Zb\Gamma)^nB}$ is expansive;

\item the linear map $\varphi: (\ell^\infty(\Gamma))^k\rightarrow (\ell^\infty(\Gamma))^n$ sending $y$ to $yB^*$ is injective;

\item the linear map $\psi: (\ell^1(\Gamma))^n\rightarrow (\ell^1(\Gamma))^k$ sending $z$ to $zB$ has dense image;

\item there exists $A\in M_k(\Zb\Gamma)$ being invertible in $M_k(\ell^1(\Gamma))$ such that the rows of $A$ are contained in $(\Zb\Gamma)^nB$.
\end{enumerate}
\end{lemma}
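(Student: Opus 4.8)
The plan is to prove the cycle $(1)\Rightarrow(2)\Rightarrow(3)\Rightarrow(4)\Rightarrow(1)$, after first recording the concrete description of the system. Writing $J=(\Zb\Gamma)^nB$, the left $\Zb\Gamma$-submodule of $(\Zb\Gamma)^k$ generated by the rows of $B$, the identification recalled in Section~\ref{SS-algebraic actions} presents $\widehat{(\Zb\Gamma)^k/J}$ as $X_B=\{x\in((\Rb/\Zb)^\Gamma)^k:xB^*=0\}$, where $x$ is a row vector and $xB^*\in((\Rb/\Zb)^\Gamma)^n$. Throughout I will use the coordinatewise lifting map $P:(\ell^\infty(\Gamma))^k\to((\Rb/\Zb)^\Gamma)^k$, together with the facts that $M_k(\ell^1(\Gamma))$ is a Banach algebra and that the involution is isometric.

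For $(1)\Leftrightarrow(2)$ I argue by lifting. First suppose $\varphi$ is injective and set $c=\min(1/2,\,1/(2\|B\|_1))$. If $x\in X_B$ satisfies $\sup_{s}\rho_\infty(x_s,0_{(\Rb/\Zb)^k})<c$, then each coordinate of $x$ has a unique representative of modulus $<c$, giving $y\in(\ell^\infty(\Gamma))^k$ with $\|y\|_\infty<c$ and $P(y)=x$; since $P(yB^*)=xB^*=0$, the element $yB^*$ is integer valued, while $\|yB^*\|_\infty\le\|y\|_\infty\|B\|_1<1$, so $yB^*=0$. Injectivity forces $y=0$, hence $x=0$, which is expansiveness. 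Conversely, if $\varphi$ is not injective, pick $0\ne y\in(\ell^\infty(\Gamma))^k$ with $yB^*=0$; for small $\lambda>0$ the point $P(\lambda y)$ lies in $X_B$, is nonzero, and has $\sup_s\rho_\infty$-distance to $0$ at most $\lambda\|y\|_\infty$, which is arbitrarily small, so the action is not expansive.

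The equivalence $(2)\Leftrightarrow(3)$ is Banach-space duality. Under the pairings $((\ell^1(\Gamma))^m)^*=(\ell^\infty(\Gamma))^m$ and the identity $\langle fh,g\rangle=\langle f,gh^*\rangle$ on $\ell^1(\Gamma)\times\ell^\infty(\Gamma)$, a direct computation gives $\langle \psi(z),y\rangle=\langle z,yB^*\rangle$, that is $\varphi=\psi^*$. Since a bounded operator has dense image exactly when its adjoint is injective, $(3)$ holds if and only if $(2)$ does. For $(4)\Rightarrow(1)$, Lemma~\ref{L-key expansive} makes $\Gamma\curvearrowright X_A$ expansive; the inclusion $(\Zb\Gamma)^kA\subseteq J$ of submodules yields a quotient map $(\Zb\Gamma)^k/(\Zb\Gamma)^kA\twoheadrightarrow(\Zb\Gamma)^k/J$, hence dually a $\Gamma$-equivariant closed embedding $X_B\hookrightarrow X_A$, and a closed invariant subgroup of an expansive action is expansive.

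The main work is $(3)\Rightarrow(4)$: I must produce $A\in M_k(\Zb\Gamma)$, invertible in $M_k(\ell^1(\Gamma))$, with rows in $J$; equivalently $A=CB$ for some $C\in M_{k\times n}(\Zb\Gamma)$. By $(3)$ the image of $z\mapsto zB$ is dense in $(\ell^1(\Gamma))^k$, so applying this row by row the map $M_{k\times n}(\ell^1(\Gamma))\to M_k(\ell^1(\Gamma))$, $C\mapsto CB$, has dense image; I choose $\tilde C$ with $\|\tilde CB-I_k\|_1<1$, so that $\tilde CB$ is invertible. The obstacle is that $\tilde C$ has $\ell^1$ entries rather than integer ones, and $M_{k\times n}(\Zb\Gamma)$ is \emph{not} dense in $M_{k\times n}(\ell^1(\Gamma))$, so one cannot simply approximate $\tilde C$ over $\Zb\Gamma$. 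The key device is to pass through $\Qb$: since $M_{k\times n}(\Qb\Gamma)$ \emph{is} dense in $M_{k\times n}(\ell^1(\Gamma))$, I may replace $\tilde C$ by $C'\in M_{k\times n}(\Qb\Gamma)$ with $C'B$ still invertible (the invertibles being open), and then clear denominators, taking $C=mC'\in M_{k\times n}(\Zb\Gamma)$ for a common denominator $m$. Then $A:=CB=m\,C'B\in M_k(\Zb\Gamma)$ is invertible in $M_k(\ell^1(\Gamma))$ and has its rows in $J$, giving $(4)$. This denominator-clearing step is exactly where integrality of the conclusion is recovered from the analytic density hypothesis, and is the only point that needs to be checked with care.
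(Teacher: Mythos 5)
Your proposal is correct and follows essentially the same route as the paper's proof: the same Hahn--Banach duality identifying $\varphi$ as the dual of $\psi$ for $(2)\Leftrightarrow(3)$, the same openness-of-invertibles argument with $\Qb\Gamma$-approximation and denominator clearing for $(3)\Rightarrow(4)$, and the same appeal to Lemma~\ref{L-key expansive} for $(4)\Rightarrow(1)$. The only deviation is your direct lifting proof of $(2)\Rightarrow(1)$, which is correct but logically redundant, since the paper closes the cycle via $(2)\Rightarrow(3)\Rightarrow(4)\Rightarrow(1)$.
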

\begin{proof} (1)$\Rightarrow$(2):
Let $y\in (\ell^\infty(\Gamma))^k$ with $yB^*=0$. Then $(\lambda y)B^*=0$ for every $\lambda \in \Rb$. If we denote by $P$ the canonical map
$(\ell^\infty(\Gamma))^k\rightarrow ((\Rb/\Zb)^k)^\Gamma$, then $P(\lambda y)\in \widehat{(\Zb\Gamma)^k/(\Zb\Gamma)^nB}$ for every $\lambda \in \Rb$. Since
the canonical action of $\Gamma$ on $\widehat{(\Zb\Gamma)^k/(\Zb\Gamma)^nB}$ is expansive, we conclude that $y=0$.

(2)$\Rightarrow$(3):
Note that we can identify $(\ell^\infty(\Gamma))^n$ and $(\ell^\infty(\Gamma))^k$ with the dual spaces of $(\ell^1(\Gamma))^n$ and $(\ell^1(\Gamma))^k$ respectively in a canonical way. For instance, for $y=(y_1, \dots, y_k)\in (\ell^{\infty}(\Gamma))^k$
and $z=(z_1, \dots, z_k)\in (\ell^1(\Gamma))^k$, the pairing
is given by
\begin{align} \label{E-pairing}
 \left<y, z\right>:=(yz^*)_{e_\Gamma}=(\sum_{1\le j\le k}y_jz^*_j)_{e_\Gamma}.
\end{align}
Under such identification $\varphi$ is the dual of $\psi$. If the image of $\psi$
is not dense in $(\ell^1(\Gamma))^k$, then by the Hahn-Banach theorem we can find some nonzero bounded linear functional $y$ on $(\ell^1(\Gamma))^k$, vanishing
at the image of $\psi$. Since $y$ is an element of $(\ell^\infty(\Gamma))^k$, this means  $yB^*=0$, which contradicts (2). Thus the image of $\psi$ is dense in $(\ell^1(\Gamma))^k$.

(3)$\Rightarrow$(4):
Since $M_k(\ell^1(\Gamma))$ is a Banach algebra, the set of invertible matrices in $M_k(\ell^1(\Gamma))$ is open. Thus we can find $z_1, \dots, z_k\in (\ell^1(\Gamma))^n$ such
that the matrix
\begin{eqnarray*}
\begin{pmatrix} z_1 \\ \vdots \\ z_k
\end{pmatrix}\cdot B
\end{eqnarray*}
in $M_k(\ell^1(\Gamma))$ is close enough to the identity matrix for being invertible. Since $\Qb\Gamma$ is dense in $\ell^1(\Gamma)$, we may require that $z_1, \dots, z_k\in (\Qb \Gamma)^n$. Then we can find some $N\in \Nb$ such that $Nz_1, \dots, Nz_k\in (\Zb \Gamma)^k$.
Now we can set
\begin{eqnarray*}
A=N\begin{pmatrix} z_1 \\ \vdots \\ z_k
\end{pmatrix}\cdot B.
\end{eqnarray*}

(4)$\Rightarrow$(1):  Define $X_A$ as in Lemma~\ref{L-key expansive}. Then $\widehat{(\Zb\Gamma)^k/(\Zb\Gamma)^nB}\subseteq X_A$. Since the canonical action of $\Gamma$ on $X_A$ is expansive by
Lemma~\ref{L-key expansive}, its restriction on $\widehat{(\Zb\Gamma)^k/(\Zb\Gamma)^nB}$ is also expansive.
\end{proof}

\begin{lemma} \label{L-torsion}
Let $k\in \Nb$ and $J$ be a left $\Zb\Gamma$-submodule of $(\Zb\Gamma)^k$. Then the following are equivalent:
\begin{enumerate}
\item there exists $A\in M_k(\Zb\Gamma)$ being invertible in $M_k(\ell^1(\Gamma))$ such that the rows of $A$ are contained in $J$;

\item $\ell^1(\Gamma)\otimes_{\Zb\Gamma}((\Zb\Gamma)^k/J)=\{0\}$;

\item the left $\ell^1(\Gamma)$-module $(\ell^1(\Gamma))^k$ is generated by $J$.
\end{enumerate}
\end{lemma}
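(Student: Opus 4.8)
The plan is to prove the three equivalences by establishing (2)$\Leftrightarrow$(3) through the right-exactness of the tensor functor, (1)$\Rightarrow$(3) by a one-line computation, and (3)$\Rightarrow$(1) by reducing to the perturbation argument already carried out in Lemma~\ref{L-kernel}. Since (1)$\Rightarrow$(3)$\Rightarrow$(1) and (2)$\Leftrightarrow$(3), this yields the mutual equivalence of all three conditions.

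For (2)$\Leftrightarrow$(3), I would first record the canonical identification $\ell^1(\Gamma)\otimes_{\Zb\Gamma}(\Zb\Gamma)^k\cong (\ell^1(\Gamma))^k$, an isomorphism of left $\ell^1(\Gamma)$-modules coming from the freeness of $(\Zb\Gamma)^k$ as a left $\Zb\Gamma$-module, under which $h\otimes(a_1,\dots,a_k)$ corresponds to $(ha_1,\dots,ha_k)$. Applying the right-exact functor $\ell^1(\Gamma)\otimes_{\Zb\Gamma}(-)$ to the short exact sequence $0\to J\to (\Zb\Gamma)^k\to (\Zb\Gamma)^k/J\to 0$ produces an exact sequence identifying $\ell^1(\Gamma)\otimes_{\Zb\Gamma}((\Zb\Gamma)^k/J)$ with the cokernel $(\ell^1(\Gamma))^k/M$, where $M$ is the image of $\ell^1(\Gamma)\otimes_{\Zb\Gamma}J$, i.e.\ the set of finite sums $\sum_l h_l\bar{\jmath}_l$ with $h_l\in\ell^1(\Gamma)$ and $\bar{\jmath}_l$ the image of an element of $J$ in $(\ell^1(\Gamma))^k$. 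This $M$ is exactly the left $\ell^1(\Gamma)$-submodule generated by $J$, so the vanishing in (2) is precisely the statement $M=(\ell^1(\Gamma))^k$ of (3).

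For (1)$\Rightarrow$(3), given $A\in M_k(\Zb\Gamma)$ invertible in $M_k(\ell^1(\Gamma))$ with rows $A_1,\dots,A_k\in J$, I would write, for arbitrary $v\in (\ell^1(\Gamma))^k$, the identity $v=(vA^{-1})A=\sum_{i=1}^k (vA^{-1})_i A_i$, exhibiting $v$ as a left $\ell^1(\Gamma)$-combination of the rows $A_i\in J$; hence $J$ generates $(\ell^1(\Gamma))^k$. For (3)$\Rightarrow$(1), since $(\ell^1(\Gamma))^k$ is generated as a left $\ell^1(\Gamma)$-module by the finitely many standard generators $e_1,\dots,e_k$, and each $e_i$ is by (3) a finite $\ell^1(\Gamma)$-combination of elements of $J$, I can extract finitely many $w_1,\dots,w_m\in J$ whose $\ell^1(\Gamma)$-span is all of $(\ell^1(\Gamma))^k$. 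Forming $W\in M_{m\times k}(\Zb\Gamma)$ with these rows, the map $(\ell^1(\Gamma))^m\to (\ell^1(\Gamma))^k$, $z\mapsto zW$, is then surjective and in particular has dense image. This is condition (3) of Lemma~\ref{L-kernel} (with $B=W$ and $n=m$), so that lemma furnishes an $A\in M_k(\Zb\Gamma)$ invertible in $M_k(\ell^1(\Gamma))$ with rows in $(\Zb\Gamma)^m W$; as $w_1,\dots,w_m\in J$ and $J$ is a left $\Zb\Gamma$-submodule, $(\Zb\Gamma)^m W\subseteq J$, so the rows of $A$ lie in $J$, which is (1).

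The only genuinely nontrivial ingredient is the step of producing an \emph{integral} invertible matrix with rows in $J$, but this is exactly what Lemma~\ref{L-kernel} delivers, via the openness of the invertibles in the Banach algebra $M_k(\ell^1(\Gamma))$ and the density of $\Qb\Gamma$ in $\ell^1(\Gamma)$; once the reduction to the finitely generated submodule generated by $w_1,\dots,w_m$ is made, the argument is routine. The main point requiring care is the bookkeeping of the left/right module conventions (row vectors with left scalar action), so that the identification of the tensor cokernel with the $\ell^1(\Gamma)$-span of $J$ and the invocation of Lemma~\ref{L-kernel} are internally consistent.
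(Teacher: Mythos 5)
Your proof is correct, and it reorganizes the paper's argument rather than following it step for step. The paper proves the cycle $(1)\Rightarrow(2)\Rightarrow(3)\Rightarrow(1)$: the step $(1)\Rightarrow(2)$ is an explicit computation inside the tensor product using the entries of $A^{-1}$, the step $(2)\Rightarrow(3)$ is done by hand via a surjective homomorphism from $\ell^1(\Gamma)\otimes_{\Zb\Gamma}((\Zb\Gamma)^k/J)$ onto $(\ell^1(\Gamma))^k/J'$, and the step $(3)\Rightarrow(1)$ carries out, inline, exactly the perturbation argument you delegate to Lemma~\ref{L-kernel} (stack finitely many elements of $J$ into a matrix $C$ with a left inverse $B$ over $\ell^1(\Gamma)$, approximate $B$ in $M_{k\times n}(\Qb\Gamma)$ using openness of invertibles in the Banach algebra $M_k(\ell^1(\Gamma))$, and clear denominators). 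Your route differs in three ways, all legitimate. First, you obtain $(2)\Leftrightarrow(3)$ in one stroke from right-exactness of $\ell^1(\Gamma)\otimes_{\Zb\Gamma}(-)$, identifying the tensor product with the cokernel $(\ell^1(\Gamma))^k/J'$; this contains the paper's $(2)\Rightarrow(3)$ map and also yields the converse, which the paper only gets by going around the cycle. Second, your $(1)\Rightarrow(3)$ via $v=(vA^{-1})A=\sum_i(vA^{-1})_iA_i$ is simpler and more transparent than the paper's tensor computation for $(1)\Rightarrow(2)$, which then follows from your $(3)\Rightarrow(2)$. Third, your $(3)\Rightarrow(1)$ is a sound reduction to Lemma~\ref{L-kernel}: extracting finitely many $w_1,\dots,w_m\in J$ whose $\ell^1(\Gamma)$-span is all of $(\ell^1(\Gamma))^k$ is fine because (3) is algebraic generation, so $z\mapsto zW$ is surjective and in particular has dense image, which is condition (3) of that lemma; the citation is non-circular since Lemma~\ref{L-kernel} is proved before and independently of the present lemma; and $(\Zb\Gamma)^mW\subseteq J$ because $J$ is a left $\Zb\Gamma$-submodule. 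What your organization buys is economy (the one genuinely analytic ingredient, the Banach-algebra perturbation, is proved once and reused) and a cleaner homological packaging of $(2)\Leftrightarrow(3)$; what the paper's cyclic structure buys is self-containedness of this lemma's proof at the cost of repeating that perturbation argument.
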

\begin{proof}
(1)$\Rightarrow$(2):
Write the $j$-th row of $A$ as $g_j$.
Denote by $e_1, \dots, e_k$ the standard basis of $(\Zb\Gamma)^k$. Let $w\in \ell^1(\Gamma)$, $1\le i\le k$, and $f\in \Zb\Gamma$. Then in $\ell^1(\Gamma)\otimes_{\Zb\Gamma}((\Zb\Gamma)^k/J)$ we have
\begin{align*}
w\otimes (fe_i+J)&=\sum_{1\le m\le k}wf\delta_{i, m}\otimes (e_m+J) \\
&=\sum_{1\le m\le k}wf(\sum_{1\le j\le k}(A^{-1})_{i, j}A_{j, m})\otimes (e_m+J)\\
&=\sum_{1\le j\le k}wf(A^{-1})_{i, j}\otimes \sum_{1\le m\le k}(A_{j, m}e_m+J)\\
&= \sum_{1\le j\le k}wf(A^{-1})_{i, j}\otimes (g_j+J)=0.
\end{align*}

(2)$\Rightarrow$(3): Denote by $J'$ the left $\ell^1(\Gamma)$-submodule of $(\ell^1(\Gamma))^k$ generated by $J$. Consider the map
$\ell^1(\Gamma)\times ((\Zb\Gamma)^k/J)\rightarrow (\ell^1(\Gamma))^k/J'$ sending $(w, f+J)$ to $wf+J'$. It induces a group homomorphism
$\varphi: \ell^1(\Gamma)\otimes_{\Zb\Gamma} ((\Zb\Gamma)^k/J)\rightarrow (\ell^1(\Gamma))^k/J'$ defined by $\varphi(w\otimes (f+J))=wf+J'$
for all $w\in \ell^1(\Gamma)$ and $f\in (\Zb\Gamma)^k$. Clearly $\varphi$ is surjective. Since $\ell^1(\Gamma)\otimes_{\Zb\Gamma} ((\Zb\Gamma)^k/J)=\{0\}$,
we conclude that $(\ell^1(\Gamma))^k/J'=\{0\}$. That is, $(\ell^1(\Gamma))^k=J'$.

(3)$\Rightarrow$(1): The condition (3) says that every $g\in (\ell^1(\Gamma))^k$ can be written as $a_1f_1+\dots +a_nf_n$ for some
$n\in \Nb$, $a_1, \dots, a_n\in \ell^1(\Gamma)$, and $f_1, \dots, f_n\in J$. Taking $g$ to the standard basis of the $\ell^1(\Gamma)$-module
$(\ell^1(\Gamma))^k$,
we find some $B\in M_{k\times n}(\ell^1(\Gamma))$ for some $n\in \Nb$, and some $f_1, \dots, f_n\in J$ such that
$BC$ is the identity matrix in $M_k(\ell^1(\Gamma))$, where
\begin{eqnarray*}
C=\begin{pmatrix} f_1 \\ \vdots \\ f_n
\end{pmatrix}.
\end{eqnarray*}
Since $M_k(\ell^1(\Gamma))$ is a Banach algebra, the set of invertible matrices in $M_k(\ell^1(\Gamma))$ is open.
Approximating $B$ by some $B'\in M_{k\times n}(\Qb\Gamma)$, we may assume that $B'C$ is invertible in $M_k(\ell^1(\Gamma))$.
Take some suitable $N\in \Nb$ such that $NB'\in M_{k\times n}(\Zb \Gamma)$. Then we may set $A=(NB')C$.
\end{proof}

We are ready to prove Theorem~\ref{T-algebraic characterization of expansive}.

\begin{proof}[Proof of Theorem~\ref{T-algebraic characterization of expansive}]
(1)$\Rightarrow$(2): By Proposition~\ref{P-expansive to subshift} we know that $\widehat{X}$ is a finitely generated left $\Zb\Gamma$-module.

Identify $\widehat{X}$ with $(\Zb\Gamma)^k/J$ for some $k\in \Nb$ and some left $\Zb\Gamma$-submodule $J$ of $(\Zb\Gamma)^k$.
Denote by $\Omega$ the set of finitely generated $\Zb\Gamma$-submodules of $J$. We claim that there exists some $\omega \in \Omega$ such that the canonical action of $\Gamma$ on $\widehat{(\Zb\Gamma)^k/\omega}$ is expansive.
Suppose that this fails. Let $\omega\in \Omega$.
By Lemma~\ref{L-kernel} we can find some nonzero $y^\omega=(y^\omega_1, \dots, y^\omega_k)\in (\ell^\infty(\Gamma))^k$
such that $\left<y^\omega, f\right>=0$ for every $f$ in a finite generating set $W$ of $\omega$, where the pairing $\left<y^\omega, f\right>$ is given by the equation (\ref{E-pairing}).
Since $\omega$ is a left $\Zb\Gamma$-module, we get $y^\omega f^*=0$ for all $f\in W$, and hence $y^\omega f^*=0$ for all $f\in \omega$.
Replacing $y^\omega$ by $s y^\omega$ for some $s \in \Gamma$, we may assume that
$\|y^\omega_{e_{\Gamma}}\|_\infty\ge \|y^\omega\|_\infty/2$. Replacing  $y^\omega$ by $\lambda y^\omega$ for some $\lambda \in \Rb$, we may assume that $\|y^\omega_{e_{\Gamma}}\|_\infty=1/2$.
Then $y^\omega\in ([-1,1]^k)^\Gamma$. Note that $\Omega$ is directed by inclusion.
Since the space $([-1,1]^k)^\Gamma$ is compact under the product topology, we can take a limit point $z$ of the net $(y^\omega)_{\omega \in \Omega}$.
Then $zf^*=0$ for every $f\in J$, and hence $P(\lambda z)\in \widehat{(\Zb\Gamma)^k/J}$ for every $\lambda \in \Rb$, where $P$ denotes the canonical map from $(\ell^{\infty}(\Gamma))^k$ to $((\Rb/\Zb)^\Gamma)^k$.
We also have $\|z_{e_{\Gamma}}\|_\infty=1/2$, and hence $P(\lambda z)\neq 0$ for all $0<\lambda\le 1$. This contradicts the expansiveness of  the canonical action of $\Gamma$ on $\widehat{(\Zb\Gamma)^k/J}$. Thus our claim holds.

So let $f_1, \dots, f_n\in J$ such that the canonical action of $\Gamma$ on $\widehat{(\Zb\Gamma)^k/\omega}$ is expansive for
$\omega=\Zb\Gamma f_1+\dots+\Zb\Gamma f_n$. By Lemma~\ref{L-kernel} we can find some
$A\in M_k(\Zb\Gamma)$ being invertible in $M_k(\ell^1(\Gamma))$ such that the rows of $A$ are contained in $\omega$, and hence in $J$.
This proves (2).

(2)$\Rightarrow$(3) is trivial.

(3)$\Rightarrow$(1) can be proved as in the proof of (4)$\Rightarrow$(1) of Lemma~\ref{L-kernel}.

(3)$\Leftrightarrow$(4) follows from Lemma~\ref{L-torsion}.
\end{proof}

\section{P-expansiveness} \label{S-p version}

In this section we study $p$-expansiveness, a weak version of expansiveness. Throughout this section $\Gamma$ will be a countable discrete group.

\begin{notation} \label{N-function}
Let $\Gamma$ act on a compact abelian group $X$ by automorphisms. For every $x\in X$ and $\varphi\in \widehat{X}$, we denote by $\Psi_{x, \varphi}$ the function
on $\Gamma$ defined by
$$ \Psi_{x, \varphi}(s)=\left<sx, \varphi\right>-1, \quad s\in \Gamma,$$
where $\Tb$ denotes the unit circle in $\Cb$ and $\left< \cdot, \cdot\right>:X\times \widehat{X}\rightarrow \Tb$ denotes the canonical pairing between $X$ and $\widehat{X}$.
\end{notation}

\begin{definition} \label{D-p-expansive}
Let $1\le p\le +\infty$. We say that an action of $\Gamma$ on a compact abelian group $X$ by automorphisms is {\it $p$-expansive} if there exist a finite subset $W$ of $\widehat{X}$ and $\varepsilon>0$ such that $e_X$ is the only point $x$ in $X$ satisfying
$$ \sum_{\varphi \in W}\|\Psi_{x, \varphi}\|_p<\varepsilon.$$
\end{definition}

In the following we collect some basic properties of $p$-expansiveness. Assertion (4) below justifies our terminology of $p$-expansiveness.
Recall that for a unital ring $R$, a left $R$-module $\fM$ is called {\it finitely presented} if $\fM=R^k/J$ for some $k\in \Nb$ and some finitely generated left $R$-submodule $J$ of $R^k$ \cite[Definition 4.25]{Lam}. If $R$ is left Noetherian, then every finitely generated left $R$-module is finitely presented \cite[Proposition 4.29]{Lam}.

\begin{proposition} \label{P-basic p-expansive}
Let $\alpha$ be an action of $\Gamma$ on a compact abelian group by automorphisms. Let $1\le p\le +\infty$. Then the following hold:
\begin{enumerate}
\item If $\alpha$ is $p$-expansive, then it is $q$-expansive for all $1\le q\le p$.

\item If $\alpha$ is $p$-expansive, then $\widehat{X}$ is a finitely generated left $\Zb\Gamma$-module.

\item If $\alpha$ is $p$-expansive, then for any finite subset $W$ of $\widehat{X}$ generating $\widehat{X}$ as a left $\Zb\Gamma$-module, there exists $\varepsilon>0$ such that $e_X$ is the only point $x$ in $X$ satisfying
$ \sum_{\varphi \in W}\|\Psi_{x, \varphi}\|_p<\varepsilon$.

\item $\alpha$ is $\infty$-expansive if and only if it is expansive.

\item Suppose that $\widehat{X}$ is a finitely presented left $\Zb\Gamma$-module. Write $\widehat{X}$ as $(\Zb\Gamma)^k/(\Zb\Gamma)^nA$ for some $k, n\in \Nb$ and
$A\in M_{n\times k}(\Zb\Gamma)$. Then $\alpha$ is $p$-expansive if and only if
    the linear map $(\ell^p(\Gamma))^k\rightarrow (\ell^p(\Gamma))^n$ sending $a$ to $aA^*$ is injective.
\end{enumerate}
\end{proposition}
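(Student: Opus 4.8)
The plan rests on a small ``calculus'' for the functions $\Psi_{x,\varphi}$, which I would record first. Writing $\langle sx,\varphi\rangle=e^{2\pi i\theta}$ with $\theta=\theta(s,x,\varphi)\in\Rb/\Zb$, the elementary estimate $4\,\rho(\theta,0)\le|e^{2\pi i\theta}-1|\le 2\pi\,\rho(\theta,0)$ shows that $|\Psi_{x,\varphi}(s)|$ is comparable, up to universal constants, to $\rho(\theta,0)$; hence $\|\Psi_{x,\varphi}\|_p$ is comparable to the $\ell^p$-norm of $s\mapsto\rho(\theta(s,x,\varphi),0)$. Next, since the pairing turns addition in $\widehat{X}$ into multiplication in $\Tb$ and $|u|=1$ for $u\in\Tb$, one has the pointwise bound $|\Psi_{x,\varphi_1+\varphi_2}(s)|\le|\Psi_{x,\varphi_1}(s)|+|\Psi_{x,\varphi_2}(s)|$ and $|\langle sx,\varphi\rangle^m-1|\le|m|\,|\langle sx,\varphi\rangle-1|$ for $m\in\Zb$, while $\langle sx,s_0\varphi\rangle=\langle s_0^{-1}sx,\varphi\rangle$ shows that $\Psi_{x,s_0\varphi}$ is a left translate of $\Psi_{x,\varphi}$. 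As the $\ell^p$-norm is translation invariant, these combine to give the key estimate
\begin{align*}
\|\Psi_{x,g\varphi}\|_p\le\|g\|_1\,\|\Psi_{x,\varphi}\|_p\qquad(g\in\Zb\Gamma,\ \varphi\in\widehat{X}),
\end{align*}
together with subadditivity of $\varphi\mapsto\|\Psi_{x,\varphi}\|_p$. Everything below is built on these.

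Assertions (1)--(4) then follow quickly. For (1), since $\|f\|_p\le\|f\|_q$ whenever $q\le p$, the defining inequality for $p$-expansiveness with data $(W,\varepsilon)$ is implied by the same inequality with $\|\cdot\|_q$, so the very same $(W,\varepsilon)$ witnesses $q$-expansiveness. For (2), let $(W_0,\varepsilon_0)$ witness $p$-expansiveness and put $M_0=\Zb\Gamma\cdot W_0$. Any $x$ annihilating $M_0$ has $\Psi_{x,\varphi}\equiv 0$ for all $\varphi\in W_0$, hence $\sum_{\varphi\in W_0}\|\Psi_{x,\varphi}\|_p=0<\varepsilon_0$ and so $x=e_X$; thus the annihilator of $M_0$ in $X$ is trivial, and by Pontryagin duality $\widehat{X}=M_0$ is finitely generated (and $W_0$ generates it). For (3), given any finite generating set $W$, write each $\varphi\in W_0$ as $\sum_{w\in W}g_{\varphi,w}\,w$ and apply the key estimate to get $\sum_{\varphi\in W_0}\|\Psi_{x,\varphi}\|_p\le C\sum_{w\in W}\|\Psi_{x,w}\|_p$ with $C=\max_\varphi\sum_w\|g_{\varphi,w}\|_1$; then $\varepsilon=\varepsilon_0/C$ works. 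For (4), since $X$ is compact, $\widehat{X}$ is discrete and the sets $\{x:\ |\langle x,\varphi\rangle-1|<\delta\ \text{for }\varphi\in W\}$ form a neighborhood basis of $e_X$; the requirement that ``$\max_{\varphi\in W}\|\Psi_{x,\varphi}\|_\infty$ small forces $x=e_X$'' is exactly $\bigcap_{s}s^{-1}U=\{e_X\}$ for such a basic neighborhood $U$ (using a translation-invariant metric), which is expansiveness, and $\max\le\sum\le|W|\max$ lets one pass between the $\sum$-form of Definition~\ref{D-p-expansive} and the $\max$-form.

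The substance is in (5). Using (3) I would test $p$-expansiveness with the generating set $W=\{e_1,\dots,e_k\}$ from the presentation $\widehat{X}=(\Zb\Gamma)^k/(\Zb\Gamma)^nA$. In the model $X=\{x\in((\Rb/\Zb)^\Gamma)^k:\ xA^*=0\}$ of Subsection~\ref{SS-algebraic actions}, one computes $|\Psi_{x,e_j}(s)|=|e^{2\pi i x_j(s^{-1})}-1|$, so after reindexing and the comparison above, $\sum_j\|\Psi_{x,e_j}\|_p$ is comparable to $\|\tilde x\|_p$, where $\tilde x\in(\ell^p(\Gamma))^k$ is the lift of $x$ with entries in $(-1/2,1/2]$. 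Thus $p$-expansiveness says precisely that $\|\tilde x\|_p$ small forces $x=e_X$. To see ``$T\colon a\mapsto aA^*$ injective $\Rightarrow$ $p$-expansive'' I argue contrapositively: if $p$-expansiveness fails there are $x^{(m)}\neq e_X$ in $X$ with $\|\widetilde{x^{(m)}}\|_p\to 0$; each lift $\widetilde{x^{(m)}}\in(\ell^p(\Gamma))^k$ is nonzero, and as $A^*$ has entries in $\Zb\Gamma\subseteq\ell^1(\Gamma)$, Young's inequality gives $\widetilde{x^{(m)}}A^*\in(\ell^p(\Gamma))^n$ with $\|\widetilde{x^{(m)}}A^*\|_p\le\|A^*\|_1\|\widetilde{x^{(m)}}\|_p\to 0$, while $P(\widetilde{x^{(m)}}A^*)=x^{(m)}A^*=0$ forces $\widetilde{x^{(m)}}A^*$ to be integer valued; for large $m$ its norm is $<1$, hence it vanishes, so $\widetilde{x^{(m)}}$ is a nonzero element of $\ker T$. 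Conversely, to see $p$-expansive $\Rightarrow T$ injective, take $a\in\ker T$: for small $\lambda>0$ the point $P(\lambda a)$ lies in $X$ with $\sum_j\|\Psi_{P(\lambda a),e_j}\|_p\to 0$ as $\lambda\to 0$, so $p$-expansiveness forces $P(\lambda a)=e_X$, i.e. $\lambda a$ is integer valued; taking $\lambda$ also with $\lambda\|a\|_\infty<1$ gives $\lambda a=0$, whence $a=0$.

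The one mechanism doing the real work---and the step I expect to require the most care---is the passage from an approximate identity to an exact one via integrality: a $\Zb$-valued element of $(\ell^p(\Gamma))^n$ of norm $<1$ must be $0$. This is what lets me conclude that the lifts $\widetilde{x^{(m)}}$ honestly lie in $\ker T$ rather than merely approximately, and it is the reason no compactness or weak-limit argument is needed---which is fortunate, since the unit ball of $(\ell^1(\Gamma))^k$ is not weakly compact and weak limits of normalized sequences could vanish. The remaining points to check carefully are that right multiplication by $A^*$ commutes with the reduction $P$ (this uses that $A^*$ has integer coefficients) and the explicit constants in the comparison $\|\Psi_{x,e_j}\|_p\asymp\|\tilde x_j\|_p$ near the cut point $1/2$; both are routine once the $\Psi$-calculus of the first paragraph is in place.
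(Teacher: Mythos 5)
Your proof is correct, and for parts (1)--(3) and (5) it follows essentially the paper's own argument: your ``$\Psi$-calculus'' estimate $\|\Psi_{x,g\varphi}\|_p\le\|g\|_1\|\Psi_{x,\varphi}\|_p$ is exactly the paper's inequality \eqref{E-basic p-expansive 2}, the comparison between $\sum_j\|\Psi_{x,e_j}\|_p$ and the $\ell^p$-norm of the reduced lift $\tilde x$ is its inequality \eqref{E-basic p-expansive}, and in (5) both directions --- rescaling a kernel element $a$ to get points $P(\lambda a)\in X$ that violate $p$-expansiveness, and the integrality trick that $\tilde x A^*$ is $\Zb^n$-valued with $\|\tilde x A^*\|_\infty\le\|\tilde x A^*\|_p\le\|A\|_1\|\tilde x\|_p<1$, hence zero --- are precisely the steps in the paper's proof. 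The one genuine divergence is (4): the paper first invokes part (2) together with Proposition~\ref{P-expansive to subshift} to reduce to finitely generated $\widehat X$, embeds $X$ as a subshift in $((\Rb/\Zb)^k)^\Gamma$, and reads off the equivalence from the $p=\infty$ case of \eqref{E-basic p-expansive} and part (3); you instead argue intrinsically that $\infty$-expansiveness says exactly $\bigcap_{s\in\Gamma}s^{-1}U=\{e_X\}$ for a basic character neighborhood $U$, and that such sets form a neighborhood basis of $e_X$ --- note this is true only with $W$ ranging over \emph{all} finite subsets of $\widehat X$ (for a fixed finite $W$ they are not a basis), and your sentence should be read that way, after which one shrinks a given expansive neighborhood to a basic one. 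Your route for (4) is slightly more self-contained, since it never needs Schmidt's finite-generation result (the definition of $p$-expansiveness does not require $W$ to generate $\widehat X$), whereas the paper's route simply recycles the comparison inequality it already needs for (5); both arguments are sound.
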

\begin{proof}
(1). This follows from the fact that for any $f\in \ell^q(\Gamma)$ with $\|f\|_q\le 1$, one has $\|f\|_p^p\le \|f\|_q^q$ when $p<+\infty$ and $\|f\|_\infty\le \|f\|_q$ when $p=+\infty$.

(2). Suppose that $\alpha$ is $p$-expansive. Let $W$ and $\varepsilon$ be as in Definition~\ref{D-p-expansive}. Denote by $G$ the $\Zb\Gamma$-submodule of $\widehat{X}$ generated by $W$. If $x\in X$ satisfies $\left<x, \psi\right>=1$ for all $\psi \in G$, then $\Psi_{x, \varphi}=0$ for all $\varphi \in W$ and hence $x=e_X$.
By Pontryagin duality we have $G=\widehat{X}$.

(3). This follows from the fact that for any $x\in X$, $\varphi, \psi\in \widehat{X}$, and $a, b\in \Zb\Gamma$ one has
\begin{align} \label{E-basic p-expansive 2}
\|\Psi_{x, a\varphi+b\psi}\|_p\le \|a\|_1\|\Psi_{x, \varphi}\|_p+\|b\|_1\|\Psi_{x, \psi}\|_p.
\end{align}

To prove the assertions (4) and (5), we observe a general fact first. Suppose that $\widehat{X}$ is a finitely generated left $\Zb\Gamma$-module, and write $\widehat{X}$ as $(\Zb\Gamma)^k/J$ for some $k\in \Nb$ and some left $\Zb\Gamma$-submodule $J$ of $(\Zb\Gamma)^k$.
For each $x\in X$ denote by $\Phi_x$ the function $s\mapsto \rho_\infty(x_s, 0_{(\Rb/\Zb)^k})$ on $\Gamma$, where $\rho_\infty$ is the canonical metric on $(\Rb/\Zb)^k$ defined in
the equation \eqref{E-metric on torus}. Denote by $e_1, \dots, e_k$ the standard basis
of $(\Zb\Gamma)^k$. Set $W=\{e_1+J, \dots, e_k+J\}$.
Note that there exists $C>0$ such that
$$ C|t|\le |e^{2\pi i t}-1|\le C^{-1}|t|$$
for all $t\in [-1/2, 1/2]$. It follows that there exists $C_1>0$ such that
\begin{align} \label{E-basic p-expansive}
 C_1\|\Phi_x\|_p\le \sum_{\varphi \in W}\|\Psi_{x, \varphi}\|_p\le C_1^{-1}\|\Phi_x\|_p
 \end{align}
for all $x\in X$.

(4). By the assertion (2) and Proposition~\ref{P-expansive to subshift} both $\infty$-expansiveness and expansiveness imply that $\widehat{X}$ is a finitely generated left $\Zb\Gamma$-module.
Now the assertion (4) follows from the $p=\infty$ case of the inequalities \eqref{E-basic p-expansive} and the assertion (3).

(5). Denote by $P$ the canonical map $\ell^\infty(\Gamma, \Rb^k)\rightarrow ((\Rb/\Zb)^k)^\Gamma$.
Suppose that the linear map $(\ell^p(\Gamma))^k\rightarrow (\ell^p(\Gamma))^n$ sending $a$ to $aA^*$ is not injective. Take a nonzero $a\in (\ell^p(\Gamma))^k$ with $aA^*=0$. Then for any $\lambda\in \Rb$, one has $\lambda a A^*=0$ and hence $P(\lambda a)\in X$. When $\lambda \to 0$, one has $\|\Phi_{P(\lambda a)}\|_p\to 0$, and hence $\sum_{\varphi \in W}\|\Psi_{P(\lambda a), \varphi}\|_p\to 0$. Since $a\neq 0$, when $|\lambda|$ is sufficiently small and nonzero,
$P(\lambda a)\neq e_X$. Thus $\alpha$ is not $p$-expansive. This proves the ``only if'' part.

Suppose that $\alpha$ is not $p$-expansive. Then we can find a nonzero $x\in X$ such that $\|\Phi_x\|_p<\|A\|_1^{-1}$. Take a lift $\tilde{x}$ of $x$ in
$\ell^p(\Gamma, \Rb^k)$ with $\|\tilde{x}\|_p=\|\Phi_x\|_p$. Then $\tilde{x}A^*\in \ell^\infty(\Gamma, \Zb^n)$, and
$$\|\tilde{x}A^*\|_\infty\le \|\tilde{x}A^*\|_p\le \|\tilde{x}\|_p\|A^*\|_1=\|\Phi_x\|_p\|A\|_1<1.$$
It follows that $\tilde{x}A^*=0$. Since $P(\tilde{x})=x$ is nonzero, $\tilde{x}\neq 0$. This proves the ``if'' part.
\end{proof}

\begin{notation} \label{N-principal}
For $f\in \Zb\Gamma$, we denote by $\alpha_f$ the canonical $\Gamma$-action on $X_f:=\widehat{\Zb\Gamma/\Zb\Gamma f}$. We also denote by $C_0(\Gamma)$ the space
of $\Cb$-valued functions on $\Gamma$ vanishing at infinity.
\end{notation}

By Proposition~\ref{P-basic p-expansive}.(5), for any $f\in \Zb\Gamma$ and $1\le p\le +\infty$, the action $\alpha_f$ is $p$-expansive if and only if for any nonzero $g\in \ell^p(\Gamma)$ one has $gf^*\neq 0$. The latter is related to the {\it analytic zero divisor conjecture} and has been studied extensively in \cite{Linnell91, Linnell92, Linnell93, Linnell98, LP, Puls}.

\begin{example} \label{Ex-elementary amenable}
Recall that the class of {\it elementary amenable groups} is the smallest class of groups containing all finite groups and all abelian groups and is closed under taking subgroups, quotient groups, extensions, and inductive limits. Suppose that $\Gamma$ is torsion free and elementary amenable. Then for any nonzero $f\in \Cb\Gamma$ and nonzero $g\in \ell^2_\Cb(\Gamma)$, one has $gf\neq 0$ \cite[Theorem 2]{Linnell91}. Thus $\alpha_f$ is $2$-expansive for every nonzero $f\in \Zb\Gamma$. On the other hand, by \cite[Theorem 3.2]{DS}, for $f\in \Zb\Gamma$, $\alpha_f$ is expansive exactly when $f$ is invertible in $\ell^1(\Gamma)$.
\end{example}

\begin{example} \label{Ex- p-expansive not expansive}
Suppose that $s\in \Gamma$ has infinite order. Denote by $\Gamma'$ the subgroup of $\Gamma$ generated by $s$.
For any nonzero $f\in \Cb\Gamma'$ and nonzero $g\in C_0(\Gamma')$, one has $gf\neq 0$ \cite[Theorem 5.1]{Linnell98}.
Using the cosets decomposition of $\Gamma$, it follows that for any nonzero $f\in \Cb\Gamma'$ and nonzero $g\in C_0(\Gamma)$, one has $gf\neq 0$.
Thus,
for any nonzero $f\in \Zb\Gamma'$ the action $\alpha_f$ is $p$-expansive for all $1\le p<+\infty$.
Note that $s-1$ is not invertible in $\ell^1(\Gamma)$, thus $\alpha_{s-1}$ is not expansive by  \cite[Theorem 3.2]{DS}.
\end{example}

\begin{example} \label{Ex- Z^d}
Let $\Gamma=\Zb^d$ for some $d\in \Nb$ with $d\ge 2$. One may identify $\Zb\Gamma$ with the ring $\Zb[u_1^{\pm 1},\dots,  u_d^{\pm 1}]$ of Laurent polynomials with integer coefficients
in the variables $u_1, \dots, u_d$. For any nonzero $f\in \Cb\Gamma$ and nonzero $g\in \ell^{\frac{2d}{d-1}}(\Gamma)$, one has $gf\neq 0$ \cite[Theorem 2.1]{LP}. Thus $\alpha_f$ is $\frac{2d}{d-1}$-expansive for every nonzero $f\in \Zb\Gamma$. Set $h=2d-1-\sum_{j=1}^d(u_j+u_j^{-1})\in \Zb\Gamma$. Then there exists a nonzero $g\in \ell^\infty(\Gamma)$ such that $gh=0$ and $g\in \ell^p(\Gamma)$ for all $\frac{2d}{d-1}<p\le +\infty$ \cite[Section 7]{Puls}. Thus, for any $\frac{2d}{d-1}<p\le +\infty$, the action $\alpha_h$ is not $p$-expansive.
\end{example}

\begin{example}  \label{Ex-free group}
Let $\Gamma$ be a free group with canonical generators $s_1, \dots, s_d$, for $d\ge 2$.
For any nonzero $f\in \Cb\Gamma$, one has $gf\neq 0$ for every nonzero $g\in \ell^2_\Cb(\Gamma)$ \cite[Theorem 2]{Linnell92}. Thus
the action $\alpha_f$ is $2$-expansive for every nonzero $f\in \Zb\Gamma$. Endow $\Gamma$ with the word length with respect to $s_1, \dots, s_d$.
For each $n\in \Zb_{\ge 0}$ denote by $\chi_n$ the sum of the elements in $\Gamma$ with length $n$.
Set $g=\sum_{n=0}^\infty(-1)^n(2d-1)^{-n}\chi_{2n}\in \ell^\infty(\Gamma)$.
Then $g\chi_1=0$ and $g\in \ell^p(\Gamma)$ for all $2<p\le +\infty$ \cite[Example 6.5]{LP}. Thus, for any $2<p\le +\infty$,
the action $\alpha_{\chi_1}$ is not $p$-expansive.
\end{example}

\begin{example} \label{Ex- Z^d uniform}
Let $\Gamma=\Zb^d$ for some $d\in \Nb$ with $d\ge 2$. Denote by $P$ the natural projection $\Rb^d\rightarrow (\Rb/\Zb)^d=\widehat{\Gamma}$. For each $f\in \Cb\Gamma$, via the pairing between $\Gamma$ and $\widehat{\Gamma}$ we may think of $f$ as a function on $\widehat{\Gamma}$. Denote by $Z(f)$ the zero set of $f$ as a function on $\widehat{\Gamma}$. For $f=\sum_{s\in \Gamma}\lambda_ss\in \Cb\Gamma$, set $\bar{f}=\sum_{s\in \Gamma}\overline{\lambda_s}s$. The set $Z(\bar{f})$ is contained in the image of a finite union of  hyperplanes in $\Rb^d$ under $P$ if and only if $gf\neq 0$ for all nonzero $g\in C_0(\Gamma)$ \cite[Theorem 2.2]{LP}.
Thus, for $f\in \Zb\Gamma$, if $Z(f)$ is contained in the image of a finite union of  hyperplanes in $\Rb^d$ under $P$ (for instance when $Z(f)$ is a finite set), then so is $Z(f^*)$, and hence $\alpha_f$ is $p$-expansive for all $1\le p<+\infty$.
\end{example}

A finitely generated elementary amenable group either contains a nilpotent subgroup with finite index or has a free subsemigroup with two generators \cite[Theorem 3.2']{Chou}.

\begin{example} \label{Ex-principal action for free subsemigroup}
Suppose that $\Gamma$ has a free subsemigroup generated by two elements $s$ and $t$. Set $f=\pm 3-(1+s-s^2)t\in \Zb\Gamma$.
 Then $f$ is not invertible in
$\ell^1(\Gamma)$ \cite[Example A.1]{Li}, and thus by \cite[Theorem 3.2]{DS} $\alpha_f$ is not expansive. An argument similar to that in \cite[Example 7.2]{LS99}
shows that for any $g\in C_0(\Gamma)$, if $gf^*=0$, then $g=0$. Thus
$\alpha_f$ is $p$-expansive for all $1\le p<+\infty$.
\end{example}

It is well known that when $\Gamma$ is amenable, every continuous expansive $\Gamma$-action on a compact space has finite topological entropy (the case $\Gamma=\Zb$ is proved in \cite[Theorem 7.11]{Walters};  the proof there also works for general countable amenable groups $\Gamma$).
Next we show that
$1$-expansiveness and $2$-expansiveness characterize finite entropy for finitely presented algebraic actions of countable amenable groups.
The definition of entropy is recalled in Section~\ref{SS-entropy}.

\begin{theorem} \label{T-finite entropy vs 1-expansive}
Suppose that $\Gamma$ is  amenable. Let $\alpha$ be an action of $\Gamma$ on a compact abelian group $X$ by automorphisms.
Suppose that $\widehat{X}$ is a finitely presented left $\Zb\Gamma$-module, and write $\widehat{X}$ as $(\Zb\Gamma)^k/(\Zb\Gamma)^nA$ for
some $k, n\in \Nb$ and $A\in M_{n\times k}(\Zb\Gamma)$. Then the following are
equivalent:
\begin{enumerate}
\item $\rh(X)<+\infty$.

\item $\alpha$ is $1$-expansive.

\item $\alpha$ is $2$-expansive.

\item The linear map $(\Rb \Gamma)^k\rightarrow (\Rb\Gamma)^n$ sending $a$ to $aA^*$ is injective.

\item The additive map $(\Zb \Gamma)^k\rightarrow (\Zb\Gamma)^n$ sending $a$ to $aA^*$ is injective.
\end{enumerate}
\end{theorem}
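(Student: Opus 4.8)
The plan is to establish the cyclic chain of implications $(1)\Rightarrow(4)\Rightarrow(3)\Rightarrow(2)\Rightarrow(1)$ together with the easy equivalence $(4)\Leftrightarrow(5)$, thereby closing the loop. The equivalence $(4)\Leftrightarrow(5)$ should be the most elementary: the map $a\mapsto aA^*$ is defined by convolution with a matrix over $\Zb\Gamma$, and injectivity over the integers versus over the reals differs only by clearing denominators, since $(\Zb\Gamma)^k$ is cofinal in $(\Rb\Gamma)^k$ after scaling; a nonzero rational (hence real) kernel element can be rescaled to an integral one, so the two injectivity statements coincide. I would dispose of this first.

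For the analytic implications, the key tool is Proposition~\ref{P-basic p-expansive}.(5), which identifies $p$-expansiveness with injectivity of $a\mapsto aA^*$ on $(\ell^p(\Gamma))^k$. This immediately gives $(2)\Leftrightarrow$ injectivity on $(\ell^1)^k$ and $(3)\Leftrightarrow$ injectivity on $(\ell^2)^k$, and by part (1) of that proposition, $1$-expansiveness follows from $2$-expansiveness, yielding $(3)\Rightarrow(2)$ for free. The implication $(2)\Rightarrow(1)$ is where I expect to invoke the entropy/expansiveness relationship: $1$-expansiveness forces finite entropy. Here I would argue that a $1$-expansive action embeds, via the injective map $a\mapsto aA^*$, into a model whose entropy is controlled; concretely, $1$-expansiveness gives a uniform $\ell^1$-separation of orbits, which bounds the number of $(F,\varepsilon)$-separated points exponentially in $|F|$ and hence bounds $\rh(X)<+\infty$. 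The nontrivial direction $(1)\Rightarrow(4)$ is where Elek's theorem enters: I would use the fact that when $\Gamma$ is amenable, the real group algebra $\Rb\Gamma$ embeds in the group von Neumann algebra $\cL\Gamma$, which is a finite von Neumann algebra carrying a faithful trace, and apply the result of Elek~\cite{Elek03} on the analytic zero divisor conjecture to relate finiteness of entropy to injectivity of the induced operator.

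The hard part will be the implication $(1)\Rightarrow(4)$. The mechanism should be that the entropy $\rh(X)$ of the finitely presented algebraic action equals (or is controlled by) the Fuglede--Kadison determinant of the operator $A$ acting on $(\ell^2(\Gamma))^k$ through $\cL\Gamma$, and that finiteness of this entropy is equivalent to the von Neumann rank of $a\mapsto aA^*$ being maximal, i.e. to the map being injective on the $\ell^2$-completion and a fortiori on the dense subspace $(\Rb\Gamma)^k$. The subtlety is that injectivity on $(\Rb\Gamma)^k$ is weaker than injectivity on $(\ell^2)^k$, so I must be careful: I would argue via Elek's characterization that finite entropy forces the operator determined by $A^*$ to have trivial kernel intersected with $(\Rb\Gamma)^k$, which is exactly $(4)$. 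If a direct determinant computation proves awkward, the fallback is to contrapose: assuming $(4)$ fails, produce a nonzero real kernel element, generate from it a positive-dimensional family of points in $X$ that are mutually orbit-close, and thereby force infinite entropy, contradicting $(1)$. Managing the passage between the algebraic kernel over $\Rb\Gamma$ and the analytic kernel over $\ell^2(\Gamma)$—and ensuring Elek's hypotheses (amenability, faithful trace on $\cL\Gamma$) are correctly marshaled—is the crux of the argument.
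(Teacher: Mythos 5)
The chain you propose is $(1)\Rightarrow(4)\Rightarrow(3)\Rightarrow(2)\Rightarrow(1)$, but you never address $(4)\Rightarrow(3)$, and this step — not $(1)\Rightarrow(4)$ — is where Elek's theorem is needed. Elek's result converts a nonzero $\ell^2$-kernel element of a $\Cb\Gamma$-matrix into a nonzero $\Cb\Gamma$-kernel element; it says nothing about entropy. The paper's argument for $(4)\Rightarrow(3)$ runs: if some nonzero $a\in(\ell^2(\Gamma))^k$ satisfies $aA^*=0$, then $aA^*A=0$, so by Elek there is a nonzero $b\in(\Cb\Gamma)^k$ with $bA^*A=0$; after replacing $b$ by its real or imaginary part one may take $b\in(\Rb\Gamma)^k$, and then the positivity trick $\left<bA^*,bA^*\right>=\left<bA^*A,b\right>=0$ forces $bA^*=0$, contradicting (4). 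Without this, the passage from the finitely supported kernel condition (4) to the $\ell^2$ statement (3) — the "subtlety" you flag but do not resolve — remains open. Meanwhile your primary mechanism for $(1)\Rightarrow(4)$ via Fuglede--Kadison determinants is not available: Li's determinant formula for $\rh(X_f)$ requires $f$ to be invertible in $\cL\Gamma$, a far stronger hypothesis than injectivity, and no result equates finiteness of entropy with maximality of von Neumann rank.

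Two further steps fail as described. For $(2)\Rightarrow(1)$, a "uniform $\ell^1$-separation of orbits" does not bound $N_{F,\varepsilon}(X)$: the $\ell^1$ lower bound is a sum over all of $\Gamma$ and gives no control on a finite window $F$; indeed $1$-expansiveness is the \emph{weakest} notion in the hierarchy, strictly weaker than ordinary expansiveness, so the classical expansive-implies-finite-entropy argument does not apply. The paper instead proves $(4)\Rightarrow(1)$ (which suffices, since $(2)\Rightarrow(4)$ is immediate from $\Rb\Gamma\subseteq\ell^1(\Gamma)$ and Proposition~\ref{P-basic p-expansive}.(5)) by a genuine counting argument: to each point $x$ of an $(F,\varepsilon)$-separated set one assigns the $\Zb^n$-valued function $\tilde{x}A^*$ restricted to a shrunken window $(F')^{-1}$; hypothesis (4), applied to finitely supported vectors, bounds the kernel dimension of the finite-dimensional map $p_{(F')^{-1}}\circ\xi$ by a boundary term, and Lemma~\ref{L-volume to covering} then bounds each fiber by $(1+4/\varepsilon)^{n|F^{-1}K^{-1}\setminus(F')^{-1}|}$, yielding $\rh(X)\le n\log(2\|A\|_1+1)$. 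Finally, your fallback for $(1)\Rightarrow(4)$ starts correctly with a nonzero $a\in(\Rb\Gamma)^k$ killed by $A^*$, but the stated mechanism (a positive-dimensional family of "mutually orbit-close" points forcing infinite entropy) is backwards — close orbits produce no entropy. One must take $M$ distinct multiples $P(\lambda_1a),\dots,P(\lambda_Ma)\in X$, use Lemma~\ref{L-separated subset} to find $F_1\subseteq F$ of density at least $1/(2|KK^{-1}|+1)$ whose translates of $\supp(a)$ are pairwise disjoint, and superpose independent choices $x_\sigma=\sum_{s\in F_1}s^{-1}P(\lambda_{\sigma(s)}a)$ to obtain $M^{|F_1|}$ pairwise $(F,\varepsilon)$-separated points, giving $\rh(X)\ge\frac{\log M}{2|KK^{-1}|+1}$ for every $M$. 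Your $(4)\Leftrightarrow(5)$ argument, by clearing denominators and passing from a real to an integral solution of an integer-coefficient homogeneous system, is fine and agrees with the paper.
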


We need some preparation for the proof of Theorem~\ref{T-finite entropy vs 1-expansive}.
First we need the following result of Elek \cite{Elek03}. He assumed $\Gamma$ to be finitely generated and $k=1$, which are unnecessary.

\begin{theorem} \label{T-Elek}
 Suppose that $\Gamma$ is amenable. If $A\in M_k(\Cb \Gamma)$ for some $k\in \Nb$ and $aA=0$ for some nonzero $a\in (\ell^2_\Cb(\Gamma))^k$, then $bA=0$ for some nonzero $b\in (\Cb\Gamma)^k$.
\end{theorem}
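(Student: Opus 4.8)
The plan is to prove the contrapositive: assuming that the only $b\in(\Cb\Gamma)^k$ with $bA=0$ is $b=0$, I will show that the only $a\in(\ell^2_\Cb(\Gamma))^k$ with $aA=0$ is $a=0$. Two preliminary reductions come first. Since $A$ has finitely many entries, each of finite support, all of its action takes place inside the subgroup $\Gamma_0\le\Gamma$ generated by those supports; $\Gamma_0$ is finitely generated and, being a subgroup of an amenable group, is amenable. Right multiplication by $A$ preserves each block $(\ell^2_\Cb(C))^k$ indexed by a left coset $C\in\Gamma/\Gamma_0$, and each such block is isomorphic to $(\ell^2_\Cb(\Gamma_0))^k$ as a right $\Cb\Gamma_0$-module; hence a nonzero $\ell^2$-kernel vector over $\Gamma$ produces a nonzero $\ell^2$-kernel vector over $\Gamma_0$, and a nonzero algebraic kernel vector over $\Gamma_0$ lifts back to one over $\Gamma$. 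This reduces the statement to the case $\Gamma$ finitely generated. I also record that the matrix size $k$ enters everything only as an overall scaling factor, which is precisely why Elek's restriction to $k=1$ is inessential.

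The core of the argument is an \emph{additive rank function}. For a finitely generated $\Cb\Gamma$-module $M$ with a chosen finite generating set supported in a finite set $S\subseteq\Gamma$, and a left F{\o}lner sequence $\{F_n\}$, I would define $\rank_\Gamma(M)$ as the limit of $|F_n|^{-1}\dim_\Cb$ of the local finite-dimensional truncation of $M$ obtained by restricting the (translated) generators to $F_n$. The existence of this limit and its independence of the generating set and F{\o}lner sequence follow from the Ornstein--Weiss subadditivity lemma (\cite{OW}), exactly as in the theory of mean dimension and entropy already used in this paper. The three properties I need are: normalization $\rank_\Gamma(\Cb\Gamma)=1$; additivity on short exact sequences $0\to M'\to M\to M''\to 0$; and the compatibility identity
$$ \rank_\Gamma(M)=\dim_{\cN}\big(\,\overline{\,\ell^2\text{-completion of }M}\,\big), $$
where $\cN$ denotes the group von Neumann algebra of $\Gamma$ with its canonical trace $\tr$, and $\dim_{\cN}$ is the corresponding (faithful) von Neumann dimension.

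Granting these, the theorem follows quickly. Let $\phi\colon(\Cb\Gamma)^k\to(\Cb\Gamma)^k$, $b\mapsto bA$, which is a morphism of left $\Cb\Gamma$-modules because right multiplication by $A$ commutes with the left regular representation; let $R_A$ be its bounded extension to $(\ell^2_\Cb(\Gamma))^k$. By hypothesis $\phi$ is injective, so additivity and normalization give $\rank_\Gamma(\mathrm{im}\,\phi)=k-\rank_\Gamma(\ker\phi)=k$. The compatibility identity then yields $\dim_{\cN}\overline{\mathrm{im}\,R_A}=k$, and since $\dim_{\cN}$ is faithful with $\dim_{\cN}(\ell^2_\Cb(\Gamma))^k=k$, the range of $R_A$ is dense, i.e. $\ker R_{A}^{*}=\ker R_{A^*}=\{0\}$. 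Finally, the polar decomposition of $R_A$ makes the source and target projections of its partial isometry Murray--von Neumann equivalent, so $\dim_{\cN}\ker R_A=\dim_{\cN}\ker R_{A^*}=0$; faithfulness forces $\ker R_A=\{0\}$, which is the desired triviality of the analytic kernel.

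The main obstacle is the compatibility identity, namely matching the purely combinatorial F{\o}lner rank with the analytic von Neumann dimension. The heart of it is to show that the local dimension count on $F_n$ agrees, up to an error that is $o(|F_n|)$, with $|F_n|\,\tr(Q)$, where $Q$ is the projection in $M_k(\cN)$ onto the $\ell^2$-closure of the module; concretely one compares the compression of $Q$ to $\ell^2(F_n)^k$ with the truncated algebraic object and controls the discrepancy by a boundary term supported near $\partial_S F_n$. It is exactly here that amenability (the smallness of $|\partial_S F_n|/|F_n|$) and finite generation are used, and the estimate is uniform in the matrix size $k$, confirming that neither Elek's finite-generation assumption nor his restriction to $k=1$ is needed.
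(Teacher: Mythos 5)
Your proposal is correct and is essentially the argument the paper relies on: the paper does not reprove this statement but cites Elek's theorem, whose proof is exactly your F{\o}lner-rank versus von Neumann dimension scheme (consistent with the paper's remark that the proof uses the group von Neumann algebra of $\Gamma$), while merely asserting that Elek's finite-generation and $k=1$ hypotheses are unnecessary. Your coset-block reduction to the finitely generated subgroup $\Gamma_0$ and your direct treatment of general $k$ correctly supply those removals, and the remaining chain --- injectivity of $b\mapsto bA$ gives rank $k$ for the image, the compatibility identity gives $\dim_{\cN}\overline{\mathrm{im}\,R_A}=k$ and hence dense range by faithfulness, so $\ker R_{A^*}=\{0\}$, and polar decomposition in the commutant of the left regular representation transfers this to $\ker R_A=\{0\}$ --- is sound.
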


We remark that the proof of Theorem~\ref{T-Elek} uses the group von Neumann algebra of $\Gamma$.

\begin{lemma} \label{L-separated subset}
Let $K$ and $F$ be nonempty finite subsets of $\Gamma$. Then there exists a finite subset $F_1$ of $F$ with
$\frac{|F_1|}{|F|}\ge \frac{1}{2|K|+1}$ and $((F_1F_1^{-1})\setminus \{e_{\Gamma}\})\subseteq \Gamma\setminus K$.
\end{lemma}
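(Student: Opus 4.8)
The plan is to produce $F_1$ by a greedy, maximal‑independent‑set argument together with a packing (covering) count. First I would unwind what the conclusion asks for: the condition $((F_1F_1^{-1})\setminus\{e_\Gamma\})\subseteq \Gamma\setminus K$ says exactly that for every pair of distinct $a,b\in F_1$ one has $ab^{-1}\notin K$. Since $a,b$ range over all ordered distinct pairs, this simultaneously forbids $ab^{-1}\in K$ and $ba^{-1}\in K$, i.e. it is equivalent to the symmetric condition $ab^{-1}\notin K\cup K^{-1}$ for all distinct $a,b\in F_1$. So I am really looking for a subset of $F$ that is ``$(K\cup K^{-1})$‑separated'' in this sense and is as large as possible.

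Concretely, among all subsets $F'\subseteq F$ enjoying this separation property, choose $F_1$ of maximal cardinality; such subsets exist since any singleton works and $F$ is finite. The key claim is that maximality forces $F$ to be covered by the left translates of $K\cup K^{-1}\cup\{e_\Gamma\}$ centered at the points of $F_1$. To see this, take any $c\in F$. If $c\in F_1$, then trivially $c\in(K\cup K^{-1}\cup\{e_\Gamma\})c$. If $c\notin F_1$, then by maximality $F_1\cup\{c\}$ violates the separation property; the only new pairs involve $c$, so there is some $a\in F_1$ with $ca^{-1}\in K$ or $ac^{-1}\in K$, and in either case $ca^{-1}\in K\cup K^{-1}$, whence $c\in(K\cup K^{-1})a$. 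Thus in all cases
\[
F\subseteq\bigcup_{a\in F_1}(K\cup K^{-1}\cup\{e_\Gamma\})a .
\]

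Now I would simply count. Left translation is a bijection, so each set $(K\cup K^{-1}\cup\{e_\Gamma\})a$ has cardinality at most $|K\cup K^{-1}\cup\{e_\Gamma\}|\le 2|K|+1$. Hence $|F|\le (2|K|+1)\,|F_1|$, which rearranges to $\frac{|F_1|}{|F|}\ge\frac{1}{2|K|+1}$, as required.

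There is essentially no hard step here; this is a standard packing bound. The only points demanding a little care are the translation between the one‑sided hypothesis ``$ab^{-1}\notin K$ for all distinct $a,b$'' and the symmetric reformulation ``$ab^{-1}\notin K\cup K^{-1}$'', and the bookkeeping of keeping $e_\Gamma$ inside the covering sets so that the elements of $F_1$ themselves are accounted for — this is precisely what produces the ``$+1$'' in the denominator $2|K|+1$, via the estimate $|K\cup K^{-1}|\le 2|K|$.
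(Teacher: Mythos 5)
Your proof is correct and is essentially the paper's argument: the paper also takes $F_1\subseteq F$ maximal subject to $s'\notin Ks$ for distinct $s,s'\in F_1$, observes that maximality gives $F\subseteq(\{e_\Gamma\}\cup K\cup K^{-1})F_1$, and concludes $|F|\le(2|K|+1)|F_1|$ by the same counting. Your explicit symmetric reformulation of the hypothesis and the covering by translates $(K\cup K^{-1}\cup\{e_\Gamma\})a$ merely spell out what the paper's two-line proof leaves implicit.
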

\begin{proof} Let $F_1$ be a maximal subset of $F$ subject to the condition that $s'\not\in Ks$ for all distinct $s, s'\in F_1$.
Then $F\subseteq (\{e_\Gamma\}\cup K\cup K^{-1})F_1$. Thus $|F|\le |\{e_\Gamma\}\cup K\cup K^{-1}|\cdot |F_1| \le (2|K|+1)|F_1|$.
\end{proof}

\begin{lemma} \label{L-kernel to infinite entropy}
Suppose that $\Gamma$ is amenable.
Let $\Gamma$ act on a compact abelian group $X$ by automorphisms. Suppose that $\widehat{X}=(\Zb\Gamma)^k/J$ for some $k\in \Nb$ and some left $\Zb\Gamma$-submodule $J$ of $(\Zb\Gamma)^k$, and that there exists $0\neq a\in (\Rb \Gamma)^k$ satisfying $ab^*=0$ for all $b\in J$. Then
$\rh(X)=+\infty$.
\end{lemma}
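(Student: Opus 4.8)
The plan is to show that the hypotheses produce, for every $m \in \Nb$, a huge number of $(F,\varepsilon)$-separated points in $X$ for suitable F{\o}lner sets $F$, forcing $\rh(X) = +\infty$. The starting observation is that $0 \neq a \in (\Rb\Gamma)^k$ with $ab^* = 0$ for all $b \in J$ means that the image $P(\lambda a) \in X$ for every $\lambda \in \Rb$, where $P:\ell^\infty(\Gamma,\Rb^k)\to((\Rb/\Zb)^k)^\Gamma$ is the canonical projection. Since $a$ is \emph{finitely supported} (it lies in $(\Rb\Gamma)^k$, not merely $(\ell^\infty(\Gamma))^k$), all its translates $sa$ for $s \in \Gamma$ are supported on translates of the fixed finite support of $a$, and this finiteness is what lets the counting work.

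First I would fix $m \in \Nb$ and show how to build many separated points. The idea is to choose, for a F{\o}lner set $F$, a subset $F_1 \subseteq F$ that is sufficiently spread out so that the supports of the shifted copies $\{s a : s \in F_1\}$ are genuinely disjoint, or at least so that assigning independent real coefficients to each shift produces points that are pairwise far apart in the $(F,\varepsilon)$ sense. This is exactly where Lemma~\ref{L-separated subset} enters: applying it with a large enough $K$ (chosen so that $K$ dominates the difference set of the support of $a$ together with the region needed to detect separation) yields $F_1 \subseteq F$ with $|F_1| \ge |F|/(2|K|+1)$ and $(F_1 F_1^{-1} \setminus \{e_\Gamma\}) \subseteq \Gamma \setminus K$, so the translates by distinct elements of $F_1$ are ``independent'' on the relevant scale. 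Then for each function $\sigma : F_1 \to \{0, 1, \dots, m-1\}$ I would form the point $x_\sigma = P\bigl(\sum_{s \in F_1} \tfrac{\sigma(s)}{m}\, s a\bigr) \in X$, and argue that distinct $\sigma$ give $(F,\varepsilon)$-separated points for a fixed $\varepsilon > 0$ depending only on $a$ and $m$.

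The entropy count then follows: there are $m^{|F_1|}$ such points, so $N_{F,\varepsilon}(X) \ge m^{|F_1|} \ge m^{|F|/(2|K|+1)}$, giving
\begin{align*}
\frac{\log N_{F_n,\varepsilon}(X)}{|F_n|} \ge \frac{\log m}{2|K|+1}
\end{align*}
along the F{\o}lner sequence. Hence $\htopol(X) \ge \tfrac{\log m}{2|K|+1}$. Since $K$ is fixed once $a$ (and hence its support) is fixed, and $m$ is arbitrary, letting $m \to \infty$ forces $\htopol(X) = +\infty$, and by the coincidence of topological and measure entropy this gives $\rh(X) = +\infty$.

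The main obstacle I anticipate is the separation estimate: verifying that distinct coefficient functions $\sigma$ really produce $(F,\varepsilon)$-separated points with a \emph{uniform} $\varepsilon$. The subtlety is that the translates $sa$ need not have literally disjoint supports after projecting to $(\Rb/\Zb)^k$, so I must ensure that where two points $x_\sigma, x_{\sigma'}$ differ — say at some $s_0 \in F_1$ with $\sigma(s_0) \neq \sigma'(s_0)$ — there is at least one coordinate position, namely near a distinguished point of $\supp(a)$ shifted by $s_0$, where the discrepancy $\tfrac{\sigma(s_0)-\sigma'(s_0)}{m}$ times a nonzero entry of $a$ is not cancelled by contributions from other $s \in F_1$. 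The spreading condition $(F_1 F_1^{-1}\setminus\{e_\Gamma\}) \subseteq \Gamma\setminus K$ with $K \supseteq (\supp a)(\supp a)^{-1}$ is precisely designed to rule out such overlaps, but one must track carefully that the chosen coordinate lies in $F \cdot (\text{something})$ so it is actually tested by the $(F,\varepsilon)$-condition, and that the resulting separation is bounded below independently of $n$. Handling the passage from a real coordinate discrepancy to the torus metric $\rho_\infty$ (keeping the lift inside a fundamental domain so that $\tfrac{1}{m}$-scale differences survive) is the delicate bookkeeping step.
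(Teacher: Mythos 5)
Your proposal follows essentially the same route as the paper's proof: both place scaled translates of $a$ on a spread-out subset $F_1\subseteq F$ produced by Lemma~\ref{L-separated subset} (with $K$ related to the support of $a$), count $M^{|F_1|}$ configurations to get $N_{F,\varepsilon}(X)\ge M^{|F|/(2|K|+1)}$, and let the number of coefficient values tend to infinity. However, the step you flag as ``the main obstacle'' --- the uniform separation estimate --- is a genuine gap as written, and your specific coefficient choice $\sigma(s)/m$ compounds it: since the coefficients of $a$ are arbitrary reals, nothing prevents $\frac{j}{m}a$ and $\frac{j'}{m}a$ from projecting to the \emph{same} point of $((\Rb/\Zb)^k)^\Gamma$ for distinct $j,j'$ (e.g.\ if the entries of $a$ are integer multiples of $m$), so distinct $\sigma$ need not even give distinct points $x_\sigma$. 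The paper avoids this by taking arbitrary distinct $\lambda_1,\dots,\lambda_M\in(0,1/\|a\|_\infty)$, which forces $P(\lambda_i a)\neq P(\lambda_j a)$ because the lifts stay inside a fundamental domain --- exactly the normalization you gesture at in your last sentence but do not carry out.

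For the separation itself, the paper's mechanism is cleaner than chasing a distinguished coordinate near $s_0\cdot\supp(a)$: it defines the pairwise disjoint closed sets $Y_j=\{x\in X: x_s=(P(\lambda_j a))_s \text{ for all } s\in K\}$, where $K=\supp(a)$, and extracts by compactness a single $\varepsilon>0$ separating $Y_i$ from $Y_j$ for $i\neq j$; it then forms $x_\sigma=\sum_{s\in F_1}s^{-1}P(\lambda_{\sigma(s)}a)$, so that the pairwise disjointness of the sets $s^{-1}K$ for $s\in F_1$ (this is the spreading condition, applied with $KK^{-1}$ in place of your $K$, giving the denominator $2|KK^{-1}|+1$) yields $sx_\sigma\in Y_{\sigma(s)}$ for every $s\in F_1$. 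The separation between $x_\sigma$ and $x_{\sigma'}$ is thus witnessed exactly at a group element $s\in F_1\subseteq F$, which disposes of your worry about whether the witnessing coordinate ``is actually tested by the $(F,\varepsilon)$-condition'': it is built into the placement $s^{-1}P(\cdot)$ rather than your $sa$. With these two repairs --- restricting the coefficients to $(0,1/\|a\|_\infty)$ and running the $Y_j$/compactness argument --- your outline becomes the paper's proof, giving $\rh(X)\ge\frac{\log M}{2|KK^{-1}|+1}$ for every $M$ and hence $\rh(X)=+\infty$.
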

\begin{proof} Denote by $K$ the support of $a$ as a $\Rb^k$-valued function on $\Gamma$.
Fix a compatible metric $\rho$ on $X$. Denote by $P$ the natural projection $\ell^\infty(\Gamma, \Rb^k)\rightarrow ((\Rb/\Zb)^k)^\Gamma$.
Then $P(\lambda a)\in X$ for every $\lambda \in \Rb$. If $\lambda_1, \lambda_2\in (0, 1/\|a\|_\infty)$ are distinct, then $P(\lambda_1a)\neq P(\lambda_2 a)$.

Let $M\in \Nb$. Take distinct $\lambda_1, \dots, \lambda_M\in (0, 1/\|a\|_\infty)$. For each $1\le j\le M$, set
$$Y_j=\{x\in X: x_s=(P(\lambda_ja))_s \mbox{ for all } s\in K\}.$$ Then the sets $Y_1,\dots, Y_M$ are pairwise disjoint closed subsets of $X$.
Thus we can find $\varepsilon>0$ such
that if $x\in Y_i$ and $y\in Y_j$ for some $1\le i<j\le M$, then $\rho(x, y)>\varepsilon$.

Let $F$ be a nonempty finite subset of $\Gamma$. By Lemma~\ref{L-separated subset} we can find a
finite subset $F_1$ of $F$ with
$\frac{|F_1|}{|F|}\ge \frac{1}{2|KK^{-1}|+1}$ and $((F_1F_1^{-1})\setminus \{e_{\Gamma}\})\subseteq \Gamma\setminus KK^{-1}$.
Then the sets $s^{-1}K$ for $s\in F_1$ are pairwise disjoint. For each $\sigma\in \{1, \dots, M\}^{F_1}$, set
$$ x_\sigma=\sum_{s\in F_1}s^{-1}P(\lambda_{\sigma(s)}a).$$
Then $sx_\sigma \in Y_{\sigma(s)}$ for every $s\in F_1$. Thus the set $\{x_\sigma: \sigma \in \{1, \dots, M\}^{F_1}\}$ is $(\rho, F, \varepsilon)$-separated.
Therefore
$$ N_{\rho, F, \varepsilon}(X)\ge M^{|F_1|}\ge M^{|F|/(2|KK^{-1}|+1)}.$$
It follows that $\rh(X)\ge \frac{1}{2|KK^{-1}|+1}\log M$. Since $M$ is arbitrary, we get $\rh(X)=+\infty$.
\end{proof}

The following lemma is well known, and can be proved by a simple volume comparison argument (see for example the proof of \cite[Lemma 4.10]{Pisier}).

\begin{lemma} \label{L-volume to covering}
Let $V$ be a finite-dimensional normed space over $\Rb$. Let $\varepsilon>0$. Then any $\varepsilon$-separated subset of the unit ball of $V$ has cardinality at most
$(1+\frac{2}{\varepsilon})^{\dim V}$.
\end{lemma}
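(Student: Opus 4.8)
The plan is to run the classical volume-packing argument; the pointer to \cite[Lemma 4.10]{Pisier} in the statement signals exactly this. First I would transport the problem to $\Rb^n$ with $n=\dim V$: fixing a linear isomorphism $V\cong \Rb^n$, let $\mathrm{vol}$ denote the pullback of Lebesgue measure to $V$. The only features of $\mathrm{vol}$ I need are translation invariance and the scaling law $\mathrm{vol}(rU)=r^n\,\mathrm{vol}(U)$ for $r>0$, together with the fact that the closed unit ball $B$ of $V$ satisfies $0<\mathrm{vol}(B)<+\infty$ (it is bounded with nonempty interior).

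The heart of the argument is a disjointness-and-containment pair of observations. Let $S$ be an $\varepsilon$-separated subset of $B$, so that $\|x-x'\|>\varepsilon$ for distinct $x,x'\in S$. Then the translated balls $x+\tfrac{\varepsilon}{2}B$, for $x\in S$, are pairwise disjoint, since the sum of their radii is $\varepsilon$, strictly less than the distance between any two distinct centers. On the other hand, each such ball lies inside $(1+\tfrac{\varepsilon}{2})B$, because $\|x\|\le 1$ for $x\in S$.

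Comparing volumes then finishes the proof. Disjointness gives
\begin{align*}
\sum_{x\in S}\mathrm{vol}\Big(x+\tfrac{\varepsilon}{2}B\Big)=|S|\Big(\tfrac{\varepsilon}{2}\Big)^n\mathrm{vol}(B),
\end{align*}
while containment bounds the left-hand side by $\mathrm{vol}\big((1+\tfrac{\varepsilon}{2})B\big)=(1+\tfrac{\varepsilon}{2})^n\mathrm{vol}(B)$. Dividing by $(\tfrac{\varepsilon}{2})^n\mathrm{vol}(B)>0$ yields $|S|\le \big(\tfrac{1+\varepsilon/2}{\varepsilon/2}\big)^n=(1+\tfrac{2}{\varepsilon})^n$, as claimed.

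I do not expect any genuine obstacle here; this is a routine packing estimate. The only points requiring a modicum of care are the separation convention --- with the strict inequality $\|x-x'\|>\varepsilon$ matching the $(F,\varepsilon)$-separated convention used earlier in the paper, closed balls of radius $\varepsilon/2$ are indeed disjoint --- and the verification that $0<\mathrm{vol}(B)<+\infty$, which legitimizes the final division. Since only the upper bound is needed in the applications, these conventions do not affect the stated constant.
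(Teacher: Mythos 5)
Your proof is correct and is precisely the volume comparison argument that the paper invokes by reference to \cite[Lemma 4.10]{Pisier}: pack disjoint translates $x+\frac{\varepsilon}{2}B$ inside $(1+\frac{\varepsilon}{2})B$ and compare volumes, yielding $|S|\le (1+\frac{2}{\varepsilon})^{\dim V}$. Your attention to the strict separation convention (which makes the closed balls genuinely disjoint) and to $0<\mathrm{vol}(B)<+\infty$ is exactly the right amount of care, and nothing further is needed.
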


We are ready to prove Theorem~\ref{T-finite entropy vs 1-expansive}.

\begin{proof}[Proof of Theorem~\ref{T-finite entropy vs 1-expansive}]
(1)$\Rightarrow$(4) follows from Lemma~\ref{L-kernel to infinite entropy}.

(4)$\Rightarrow$(1): Note that the definitions of $(\rho, F, \varepsilon)$-separated sets and $N_{\rho, F, \varepsilon}(X)$ in Section~\ref{SS-entropy} extend directly to any continuous pseudometric $\rho$ on $X$. The equation \eqref{E-entropy} holds for any continuous pseudometric $\rho$ on $X$ which is {\it dynamically generating} in the sense that for any distinct $x, y\in X$ one has $\rho(sx, sy)>0$ for some $s\in \Gamma$ \cite[Proposition 2.3]{Den} \cite[Theorem 4.2]{Li}.
Recall the canonical metric $\rho_\infty$  on $(\Rb/\Zb)^k$ defined by the equation \eqref{E-metric on torus}.
Define a continuous pseudometric $\rho'$ on $X$ by $\rho'(x, y)=\rho_\infty(x_{e_\Gamma}, y_{e_\Gamma})$ for all $x, y\in X$. Clearly $\rho'$ is dynamically generating. 

Denote by $K$ the union of $\{e_\Gamma\}$ and the support of $A$ as a $M_{n\times k}(\Zb)$-valued function on $\Gamma$. Let $\varepsilon>0$ and $F$ be a nonempty finite subset of $\Gamma$. Take a $(\rho', F, \varepsilon)$-separated subset $E\subseteq X$ with
$|E|=N_{\rho', F, \varepsilon}(X)$. For each $x\in E$ denote by $\tilde{x}$ the element in $([0, 1)^k)^\Gamma$ such that $x$ is the image of $\tilde{x}$ under the natural map $([0, 1)^k)^\Gamma\rightarrow ((\Rb/\Zb)^k)^\Gamma$. Then $\tilde{x}A^*\in \ell^\infty(\Gamma, \Zb^n)$ and $\|\tilde{x}A^*\|_\infty\le \|\tilde{x}\|_\infty \|A^*\|_1\le \|A\|_1$.

For a finite subset $W$ of $\Gamma$, we shall identify $(\Rb^k)^W$ and $(\Rb^n)^W$ as linear subspaces of $\ell^\infty(\Gamma, \Rb^k)=(\ell^\infty(\Gamma))^k$
and $\ell^\infty(\Gamma, \Rb^n)$ respectively naturally, and denote by $p_W$ the restriction maps  $\ell^\infty(\Gamma, \Rb^k)\rightarrow (\Rb^k)^W$ and $\ell^\infty(\Gamma, \Rb^n)\rightarrow (\Rb^n)^W$.

Set $F'=\{s\in F: s^{-1}K\subseteq F^{-1}\}$.  We define a map $\psi: E\rightarrow ((\Zb \cap [-\|A\|_1, \|A\|_1])^n)^{(F')^{-1}}$ sending $x$ to
$p_{(F')^{-1}}(\tilde{x}A^*)$. Let $a\in ((\Zb \cap [-\|A\|_1, \|A\|_1])^n)^{(F')^{-1}}$. We shall give an upper bound for $|\psi^{-1}(a)|$.

Consider the linear map $\xi: (\Rb^k)^{F^{-1}}\rightarrow
 (\Rb^n)^{F^{-1}K^{-1}}$ sending $z$ to $zA^*$. By (4) $\xi$ is injective. Thus
 $$\dim \ker (p_{(F')^{-1}}\circ \xi)\le \dim ((\Rb^n)^{F^{-1}K^{-1}\setminus (F')^{-1}})=n|F^{-1}K^{-1}\setminus (F')^{-1}|.$$

 For each $x\in E$, set $x'=p_{F^{-1}}(\tilde{x})\in (\Rb^k)^{F^{-1}}$.
 Note that $\tilde{x}A^*=x'A^*$ on $(F')^{-1}$. Fix $y\in \psi^{-1}(a)$. Then $(x'-y')A^*=0$ on $(F')^{-1}$ for all $x\in \psi^{-1}(a)$.
 Thus $x'-y'\in \ker (p_{(F')^{-1}}\circ \xi)$ for all $x\in \psi^{-1}(a)$. Since $E$ is $(\rho', F, \varepsilon)$-separated, we see that the set $\{\frac{x'-y'}{2}: x\in \psi^{-1}(a)\}$ is
 $\frac{\varepsilon}{2}$-separated under the $\ell^\infty$-norm, and is clearly contained in the unit ball of $(\Rb^k)^{F^{-1}}$ with respect to the $\ell^\infty$-norm. By Lemma~\ref{L-volume to covering} we have
 $$ |\psi^{-1}(a)|\le (1+\frac{4}{\varepsilon})^{\dim \ker (p_{(F')^{-1}}\circ \xi)}\le (1+\frac{4}{\varepsilon})^{n|F^{-1}K^{-1}\setminus (F')^{-1}|}.$$
 Therefore
 \begin{align*}
  N_{\rho', F, \varepsilon}(X)=|E|&\le (2\|A\|_1+1)^{n|F'|}(1+\frac{4}{\varepsilon})^{n|F^{-1}K^{-1}\setminus (F')^{-1}|}\\
  &\le (2\|A\|_1+1)^{n|F|}(1+\frac{4}{\varepsilon})^{n|F^{-1}K^{-1}\setminus (F')^{-1}|}.
 \end{align*}
When $F$ becomes sufficiently left invariant, $|F^{-1}K^{-1}\setminus (F')^{-1}|/|F|$ becomes arbitrarily small.  It follows that
$$ \rh(X)\le n\log (2\|A\|_1+1)<+\infty.$$

(4)$\Rightarrow$(3): Suppose that (3) fails. By Proposition~\ref{P-basic p-expansive}.(5) we have $aA^*=0$ for some nonzero $a\in (\ell^2(\Gamma))^k$.
 Then $aA^*A=0$. By Theorem~\ref{T-Elek} we have $bA^*A=0$ for some nonzero $b\in (\Cb\Gamma)^k$. Replacing $b$ by its real part or imaginary part, we may assume that $b\in (\Rb\Gamma)^k$.

 Note that for any $w\in (\ell^2(\Gamma))^n$ and $w'\in (\ell^2(\Gamma))^k$ we have
$$ \left< wA,  w'\right>=\left<w, w'A^*\right>,$$
where we take $(\ell^2(\Gamma))^k$ and $(\ell^2(\Gamma))^n$ as the direct sum of copies of the Hilbert space $\ell^2(\Gamma)$.
Thus $\left<bA^*, bA^*\right>=\left<bA^*A, b\right>=0$, and hence $bA^*=0$. Therefore (4) fails.

(3)$\Rightarrow$(2)$\Rightarrow$(4) follows from Proposition~\ref{P-basic p-expansive}.

(4)$\Rightarrow$(5) is trivial.

(5)$\Rightarrow$(4): Suppose that (4) fails. Then $aA^*=0$ for some nonzero $a\in (\Rb \Gamma)^k$. Denote by $F$ (resp. $K$) the support of $a$ (resp. $A^*$) as a $\Rb^k$-valued (resp. $M_{k\times n}(\Zb)$-valued) function on $\Gamma$. Consider the equation $bA^*=0$ for $b\in (\Rb^F)^k$. One can interpret it as a system of integer-coefficients homogeneous linear equations indexed by $FK\times \{1, \dots, n\}$ which has variables indexed by $F\times \{1, \dots, k\}$. These linear equations have a nonzero solution given by $a$, thus has a nonzero integral solution. Therefore (5) fails.
\end{proof}

\section{P-Homoclinic Group} \label{S-p-homoclinic}

In this section we study $p$-homoclinic points. Throughout this section $\Gamma$ will be a countable discrete group.

When $\Gamma$ acts on a compact group $X$ by automorphisms, a point $x\in X$ is said to be {\it homoclinic} \cite{LS99} if $sx\to e_X$ as $\Gamma \ni s\to \infty$.
The set of all homoclinic points, denoted by $\Delta(X)$, is a $\Gamma$-invariant normal subgroup of $X$. Note that
when $X$ is abelian, a point $x\in X$ is homoclinic exactly when $\Psi_{x, \varphi}\in C_0(\Gamma)$ for every $\varphi \in \widehat{X}$,
where $\Psi_{x, \varphi}$ and $C_0(\Gamma)$ are defined in Notations~\ref{N-function} and \ref{N-principal} respectively.

\begin{definition} \label{D-p-homoclinic}
Let $\Gamma$ act on a compact abelian group $X$ by automorphisms. Let $1\le p<+\infty$. We say that $x\in X$ is {\it $p$-homoclinic} if
$\Psi_{x, \varphi}\in \ell^p(\Gamma)$ for every $\varphi\in \widehat{X}$. We denote by $\Delta^p(X)$ the set of all $p$-homoclinic points of $X$.
We also say $x\in X$ is {\it $\infty$-homoclinic} if it is homoclinic, and set $\Delta^\infty(X)=\Delta(X)$.
\end{definition}

The set $\Delta^1(X)$ was studied in \cite{SV, LSV} for algebraic $\Zb^d$-actions. In Section~\ref{S-duality} we shall see that both $\Delta^1(X)$ and $\Delta^2(X)$ play important roles in the study of  entropy theory for algebraic actions.

We collect some basic properties of the $p$-homoclinic group.

\begin{proposition} \label{P-basic p-homoclinic}
Let $\Gamma$ act on a compact abelian group $X$ by automorphisms. Let $1\le p\le +\infty$. Then the following hold:
\begin{enumerate}
\item For any $p<q\le +\infty$, one has $\Delta^p(X)\subseteq \Delta^q(X)$.

\item  $\Delta^p(X)$ is a $\Gamma$-invariant subgroup of $X$.

\item If $\Gamma$ acts on a compact abelian group $Y$ by automorphisms and $\Phi:X\rightarrow Y$ is a continuous $\Gamma$-equivariant group homomorphism, then
$\Phi(\Delta^p(X))\subseteq \Delta^p(Y)$.

\item If $\widehat{X}$ is a finitely generated left $\Zb\Gamma$-module and
we write $\widehat{X}$ as $(\Zb\Gamma)^k/J$ for some $k\in \Nb$ and
some left $\Zb\Gamma$-submodule $J$ of $(\Zb\Gamma)^k$,
then $\Delta^p(X)$ consists exactly of the elements $x\in X$
for which the function $s\mapsto \rho_\infty(x_s, 0_{(\Rb/\Zb)^k})$ on $\Gamma$ is in $\ell^p(\Gamma)$ when $p<+\infty$ or in $C_0(\Gamma)$ when $p=+\infty$,
where $\rho_\infty$ is the canonical  metric on $(\Rb/\Zb)^k$ defined by \eqref{E-metric on torus}.

\item If $\alpha$ is $p$-expansive, then $\Delta^p(X)$ is countable.

\item If $\Zb\Gamma$ is left Noetherian, and $\alpha$ is $p$-expansive, then $\Delta^p(X)$ is a finitely generated left $\Zb\Gamma$-module.
\end{enumerate}
\end{proposition}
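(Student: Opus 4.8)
The assertions (1)--(3) are formal. For (1), on the discrete group $\Gamma$ one has $\ell^p(\Gamma)\subseteq\ell^q(\Gamma)\subseteq C_0(\Gamma)$ for $p<q<+\infty$, so $\Psi_{x,\varphi}\in\ell^p(\Gamma)$ forces membership in every larger class, whence $\Delta^p(X)\subseteq\Delta^q(X)$. For (2), multiplicativity of the pairing $\left<\cdot,\cdot\right>$ gives the pointwise identity $\Psi_{x+y,\varphi}=\Psi_{x,\varphi}\Psi_{y,\varphi}+\Psi_{x,\varphi}+\Psi_{y,\varphi}$; since $\|\Psi_{x,\varphi}\|_\infty\le 2$, the product term lies in $\ell^p(\Gamma)$ whenever both factors do, so $\Delta^p(X)$ is closed under addition, and $\Psi_{-x,\varphi}=\overline{\Psi_{x,\varphi}}$ handles negation. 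Invariance follows from $\Psi_{tx,\varphi}(s)=\Psi_{x,\varphi}(st)$ together with the right-translation invariance of $\|\cdot\|_p$. For (3), with $\widehat{\Phi}\colon\widehat{Y}\to\widehat{X}$ the dual homomorphism, $\Gamma$-equivariance yields $\Psi_{\Phi(x),\psi}=\Psi_{x,\widehat{\Phi}(\psi)}$, whose right-hand side lies in $\ell^p(\Gamma)$ when $x\in\Delta^p(X)$. In each case the $p=+\infty$ version is identical, reading $C_0(\Gamma)$ for $\ell^p(\Gamma)$.

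For (4), take $W=\{e_1+J,\dots,e_k+J\}$. Since $W$ generates $\widehat{X}$, the inequality \eqref{E-basic p-expansive 2} shows that $\Psi_{x,\varphi}\in\ell^p(\Gamma)$ for all $\varphi\in W$ already forces $\Psi_{x,\psi}\in\ell^p(\Gamma)$ for all $\psi\in\widehat{X}$, so membership in $\Delta^p(X)$ is tested on $W$ alone; the two-sided estimate \eqref{E-basic p-expansive} then equates this with $\|\Phi_x\|_p<+\infty$. For (5), combine (4) with the embedding $x\mapsto(x_s)_s$ of $\Delta^p(X)$ into the separable metric space $\ell^p(\Gamma,(\Rb/\Zb)^k)$ (and $C_0(\Gamma,(\Rb/\Zb)^k)$ for $p=+\infty$), carrying the metric $D(x,y)=\|s\mapsto\rho_\infty(x_s,y_s)\|_p$. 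By Proposition~\ref{P-basic p-expansive}.(3) the finite set witnessing $p$-expansiveness may be taken to be $W$, and then \eqref{E-basic p-expansive} produces $\varepsilon'>0$ such that every nonzero $z\in X$ satisfies $\|\Phi_z\|_p\ge\varepsilon'$. As $\Delta^p(X)$ is a group and $\Phi_{x-y}(s)=\rho_\infty(x_s,y_s)$, distinct homoclinic points obey $D(x,y)\ge\varepsilon'$; a uniformly separated subset of a separable metric space is countable.

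Assertion (6) is the one requiring real work. As $\Zb\Gamma$ is left Noetherian and $\widehat{X}$ is finitely generated by Proposition~\ref{P-basic p-expansive}.(2), $\widehat{X}$ is finitely presented; write it as $(\Zb\Gamma)^k/(\Zb\Gamma)^nA$ with $A\in M_{n\times k}(\Zb\Gamma)$. Given $x\in\Delta^p(X)$, lift it coordinatewise to $\tilde{x}\in\ell^p(\Gamma,\Rb^k)$ with $\|\tilde{x}_s\|_\infty=\rho_\infty(x_s,0_{(\Rb/\Zb)^k})$, so that $\tilde{x}A^*$ is $\Zb^n$-valued and lies in $\ell^p(\Gamma,\Rb^n)$, hence is finitely supported: $\tilde{x}A^*\in(\Zb\Gamma)^n$. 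Set $L=\{\,\tilde{x}\in\ell^p(\Gamma,\Rb^k):\tilde{x}A^*\in(\Zb\Gamma)^n\,\}$, a left $\Zb\Gamma$-submodule containing $(\Zb\Gamma)^k$. The canonical projection $P$ maps $L$ onto $\Delta^p(X)$ with kernel exactly $(\Zb\Gamma)^k$, yielding a $\Zb\Gamma$-module isomorphism $\Delta^p(X)\cong L/(\Zb\Gamma)^k$. By Proposition~\ref{P-basic p-expansive}.(5), $p$-expansiveness makes $\tilde{x}\mapsto\tilde{x}A^*$ injective on $(\ell^p(\Gamma))^k$, so it embeds $L$ as a submodule of the Noetherian module $(\Zb\Gamma)^n$; thus $L$, and with it the quotient $\Delta^p(X)$, is finitely generated. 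For $p=+\infty$ one replaces $\ell^p$ by $C_0$ throughout.

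The main obstacle is the bookkeeping in (6): confirming that the coordinatewise lift stays in $\ell^p$, that $\tilde{x}A^*$ is genuinely finitely supported rather than merely bounded and integral, and that $P$ descends to the stated module isomorphism $\Delta^p(X)\cong L/(\Zb\Gamma)^k$. Once the submodule $L$ and the embedding $L\hookrightarrow(\Zb\Gamma)^n$ are in place, the injectivity supplied by $p$-expansiveness and the Noetherian hypothesis finish the argument mechanically. I note that the same embedding into the countable module $(\Zb\Gamma)^n$ reproves countability in the finitely presented case, but the separable-metric argument is genuinely needed for (5) in general, since without the Noetherian hypothesis the submodule $J$ may fail to be finitely generated and no such $A$ need exist.
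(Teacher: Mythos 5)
Your proof is correct, and on most of the proposition it coincides with the paper's: assertions (1)--(3) are handled by the same formal identities, (4) by testing on $W=\{e_1+J,\dots,e_k+J\}$ via \eqref{E-basic p-expansive 2} and \eqref{E-basic p-expansive}, and (5) by producing a uniformly separated image inside a separable metric space (the paper lifts to $\ell^p(\Gamma,\Rb^k)$ where you work directly with the quotient metric $D$ -- immaterial). For (6) with $p<+\infty$ your argument is exactly the paper's Lemma~\ref{L-p-homoclinic finitely generated} in slightly different packaging: your isomorphism $\Delta^p(X)\cong L/(\Zb\Gamma)^k$, composed with the map induced by $a\mapsto aA^*$, is precisely the paper's embedding $\Delta^p(X)\hookrightarrow(\Zb\Gamma)^n/(\Zb\Gamma)^kA^*$, and both then finish with the Noetherian property of $(\Zb\Gamma)^n$.

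The genuine divergence is at $p=+\infty$. There the paper does not run the lifting argument; for (5) it cites \cite[Lemma 3.2]{LS99}, and for (6) it routes through Theorem~\ref{T-algebraic characterization of expansive} and Lemma~\ref{L-key expansive homoclinic}: expansiveness realizes $X$ inside some $X_A$ with $A$ square and invertible in $M_k(\ell^1(\Gamma))$, the homoclinic group $\Delta(X_A)\cong(\Zb\Gamma)^k/(\Zb\Gamma)^kA^*$ is finitely generated, and $\Delta(X)$ sits inside it as a submodule. Your alternative -- replacing $\ell^p$ by $C_0$ throughout -- is valid and worth noting: an integer-valued function in $C_0(\Gamma)$ is finitely supported, so $C_0(\Gamma,\Zb^n)=(\Zb\Gamma)^n$ and $\tilde{x}A^*\in(\Zb\Gamma)^n$ (convolution against the finitely supported $A^*$ preserves $C_0$), while injectivity of $a\mapsto aA^*$ on $C_0(\Gamma,\Rb^k)$ is inherited from injectivity on $\ell^\infty$, which is exactly $\infty$-expansiveness by parts (4) and (5) of Proposition~\ref{P-basic p-expansive}. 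This makes your treatment uniform in $p$ and more elementary: you never need $A$ square or $\ell^1$-invertible, only the finite presentation supplied by the Noetherian hypothesis, and your unified separation argument also absorbs the $p=+\infty$ case of (5) without the external citation. What the paper's detour buys is the stronger structural content of Lemma~\ref{L-key expansive homoclinic} -- the explicit identification of $\Delta(X_A)$ together with its density in $X_A$ -- which is needed elsewhere (e.g.\ in Theorems~\ref{T-homoclinic are 1-homoclinic} and \ref{T-specification for Z^d}); for the present proposition alone, your shortcut suffices. Your closing remark is also accurate: $p$-expansiveness alone yields only finite generation of $\widehat{X}$, not finite presentation, so the matrix $A$ need not exist in (5) and the separable-metric argument is genuinely required there.
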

\begin{proof} The assertion (1) follows from the facts that $\ell^p(\Gamma)\subseteq C_0(\Gamma)$  and $\ell^p(\Gamma)\subseteq \ell^q(\Gamma)$ when $q<+\infty$. The assertions (2) and (3) are obvious. The assertion (4) follows from the inequalities \eqref{E-basic p-expansive 2}
and \eqref{E-basic p-expansive}.

Now we prove the assertion (5). The case $p=+\infty$ is \cite[Lemma 3.2]{LS99}. So we may assume $p<+\infty$. Suppose that $\alpha$ is $p$-expansive. By Proposition~\ref{P-basic p-expansive}.(2) we may write $\widehat{X}$
as $(\Zb\Gamma)^k/J$ for some $k\in \Nb$ and some left $\Zb\Gamma$-submodule $J$ of $(\Zb\Gamma)^k$. Denote by $P$ the canonical map $\ell^\infty(\Gamma, \Rb^k)\rightarrow ((\Rb/\Zb)^k)^\Gamma$. For each $x\in \Delta^p(X)$, by the inequalities \eqref{E-basic p-expansive} we can take $\tilde{x}\in \ell^p(\Gamma, \Rb^k)$ such that $P(\tilde{x})=x$. Since $\alpha$ is $p$-expansive, by Proposition~\ref{P-basic p-expansive}.(3) and the inequalities \eqref{E-basic p-expansive} we can find some $\varepsilon>0$ such
that if $x, y\in \Delta^p(X)$ are distinct, then $\|\tilde{x}-\tilde{y}\|_p>\varepsilon$. As $\ell^p(\Gamma, \Rb^k)$ is separable under the norm $\|\cdot \|_p$, any $\varepsilon$-separated subset of $\ell^p(\Gamma, \Rb^k)$ is countable. Therefore $\Delta^p(X)$ is countable.

To prove the assertion (6), we need the following two lemmas.

\begin{lemma} \label{L-p-homoclinic finitely generated}
Let $1\le p<+\infty$. Let $\Gamma$ act $p$-expansively on a compact abelian group $X$ by automorphisms. Assume that $\widehat{X}$ is a finitely presented left $\Zb\Gamma$-module, and write $\widehat{X}$ as $(\Zb\Gamma)^k/(\Zb\Gamma)^nA$ for some $A\in M_{n\times k}(\Zb\Gamma)$. Then $\Delta^p(X)$ is isomorphic to a $\Zb\Gamma$-submodule of
$(\Zb\Gamma)^n/(\Zb\Gamma)^kA^*$.
\end{lemma}
\begin{proof} Denote by $P$ the canonical map from $(\ell^{\infty}(\Gamma))^k=\ell^\infty(\Gamma, \Rb^k)$ to $((\Rb/\Zb)^\Gamma)^k$. For each $x\in \Delta^p(X)$, by the inequalities \eqref{E-basic p-expansive} we can take $\tilde{x}\in \ell^p(\Gamma, \Rb^k)$ such that $P(\tilde{x})=x$. Then $\tilde{x}A^*\in \ell^\infty(\Gamma, \Zb^n)\cap \ell^p(\Gamma, \Rb^n)=(\Zb\Gamma)^n$. Thus we can define a map $\varphi: \Delta^p(X)\rightarrow (\Zb\Gamma)^n/(\Zb\Gamma)^kA^*$ sending
$x$ to $\tilde{x}A^*+(\Zb\Gamma)^kA^*$.

If $a\in \ell^p(\Gamma, \Rb^k)$ satisfies $P(a)=x$, then $a-\tilde{x}\in \ell^\infty(\Gamma, \Zb^k)\cap \ell^p(\Gamma, \Rb^k)=(\Zb\Gamma)^k$, and
hence $aA^*+(\Zb\Gamma)^kA^*=\tilde{x}A^*+(\Zb\Gamma)^kA^*$. It follows easily that $\varphi$ is a $\Zb\Gamma$-module homomorphism.

Since the action is $p$-expansive, by Proposition~\ref{P-basic p-expansive}.(5) the linear map $(\ell^p(\Gamma))^k\rightarrow (\ell^p(\Gamma))^n$ sending
$a$ to $aA^*$ is injective. If $x\in \ker \varphi$, then $\tilde{x}A^*\in (\Zb\Gamma)^kA^*$, and hence $\tilde{x}\in (\Zb\Gamma)^k$,  which implies that
$x=P(\tilde{x})=e_X$. Thus $\varphi$ is injective.
\end{proof}

The next lemma is more than needed for the proof of (6), but will be useful later.

\begin{lemma} \label{L-key expansive homoclinic}
Let $k\in \Nb$, and  $A\in M_k(\Zb\Gamma)$ such that the linear map $T:(\ell^p(\Gamma))^k\rightarrow (\ell^p(\Gamma))^k$
sending $a$ to $aA^*$ is invertible for some $1\le p<+\infty$.
Set $X_A=\widehat{(\Zb\Gamma)^k/(\Zb\Gamma)^kA}$.
Then
$\Delta^p(X_A)=\Delta(X_A)=P(T^{-1}((\Zb\Gamma)^k))$ is dense in $X_A$,
where $P$ denotes the canonical map from $(\ell^{\infty}(\Gamma))^k=\ell^\infty(\Gamma, \Rb^k)$ to $((\Rb/\Zb)^\Gamma)^k$. Furthermore, $\Delta(X_A)$ is isomorphic to
$(\Zb\Gamma)^k/(\Zb\Gamma)^kA^*$ as left $\Zb\Gamma$-modules.
\end{lemma}
\begin{proof}
 From (1) and (4) of Proposition~\ref{P-basic p-homoclinic} we have $\Delta(X_A)\supseteq \Delta^p(X_A)\supseteq P(T^{-1}((\Zb\Gamma)^k))$. Let $x\in \Delta(X_A)$. Take $\tilde{x}\in ([-1/2, 1/2]^k)^\Gamma\subseteq (\ell^{\infty}(\Gamma))^k$ with $P(\tilde{x})=x$. Then the function $s \mapsto \|\tilde{x}_s\|_\infty$ on $\Gamma$ vanishes at infinity. Since $A^*\in M_k(\Zb \Gamma)$, it follows
that the function $s \mapsto \|(\tilde{x}A^*)_s\|_\infty$ on $\Gamma$ also vanishes at infinity. As $\tilde{x}A^*\in (\Zb^k)^\Gamma$, we conclude that $T(\tilde{x})=\tilde{x}A^*\in (\Zb\Gamma)^k$. Then $\tilde{x}\in T^{-1}((\Zb\Gamma)^k)$, and hence $x\in P(T^{-1}((\Zb\Gamma)^k))$.
Therefore $\Delta^p(X_A)=\Delta(X_A)=P(T^{-1}((\Zb\Gamma)^k))$.

Take $1<q\le +\infty$ with $p^{-1}+q^{-1}=1$. Note that $\ell^q(\Gamma, \Rb^k)$ can be identified with the dual space of $\ell^p(\Gamma, \Rb^k)$ naturally, as in the equation \eqref{E-pairing}.
Denote
by $T^*$ the bounded linear map $\ell^q(\Gamma, \Rb^k)\rightarrow \ell^q(\Gamma, \Rb^k)$ dual to $T$.
Let $\psi\in \widehat{X_A}$ such that $\left<\Delta(X_A), \psi\right>=1$. Write $\psi$ as $a+(\Zb\Gamma)^kA$ for some $a\in (\Zb\Gamma)^k$.
Then
\begin{align*}
(\tilde{y}((T^*)^{-1}(a))^*)_{e_\Gamma}&=(\tilde{y}((T^{-1})^*(a))^*)_{e_\Gamma}=\left<\tilde{y}, (T^{-1})^*(a)\right>\\
&=\left<T^{-1}(\tilde{y}), a\right>=(T^{-1}(\tilde{y})a^*)_{e_\Gamma}\in \Zb
\end{align*} for all $\tilde{y}\in (\Zb\Gamma)^k$. Replacing $\tilde{y}$ by $s\tilde{y}$ for all $s\in \Gamma$, we get $\tilde{y}((T^*)^{-1}(a))^*\in \ell^q(\Gamma, \Zb^k)$ for
all $\tilde{y}\in (\Zb\Gamma)^k$.
Taking $\tilde{y}$ to be the canonical basis
of $(\Zb\Gamma)^k$, we get
$(T^*)^{-1}(a)\in \ell^q(\Gamma, \Zb^k)$.
When $q<+\infty$, we get $(T^*)^{-1}a\in \ell^q(\Gamma, \Zb^k)=(\Zb\Gamma)^k$.
When $q=+\infty$, from the surjectivity of $T$ we see that $A^*$ has a left inverse in $M_k(\ell^1(\Gamma))$ and hence $A$ has a right inverse in
$M_k(\ell^1(\Gamma))$. Multiplying this right inverse to $a=T^*((T^*)^{-1}(a))=((T^*)^{-1}(a))A$, we conclude that $(T^*)^{-1}(a)\in \ell^1(\Gamma, \Rb^k)$.
Therefore we still have $ (T^*)^{-1}a\in (\Zb\Gamma)^k$.
Thus $a\in T^*((\Zb\Gamma)^k))=(\Zb\Gamma)^kA$ and hence $\psi=0_{\widehat{X_A}}$. By the Pontryagin duality $\Delta(X_A)$ is dense in
$X_A$.

Denote $\varphi$ by the map $(\Zb\Gamma)^k\rightarrow \Delta(X_A)$ sending $a$ to $P(T^{-1}(a))$.
Clearly $\varphi$ is a left $\Zb\Gamma$-module homomorphism, and $\ker \varphi\supseteq (\Zb\Gamma)^kA^*$.
If $a\in \ker \varphi$, then $T^{-1}(a)\in \ell^\infty(\Gamma, \Zb^k)\cap \ell^p(\Gamma, \Rb^k)=(\Zb\Gamma)^k$
and hence $a\in T((\Zb\Gamma)^k)=(\Zb\Gamma)^kA^*$. Therefore $\ker \varphi=(\Zb\Gamma)^kA^*$. By the first paragraph of the proof $\varphi$ is also surjective. Thus $\Delta(X_A)$ is isomorphic to $(\Zb\Gamma)^k/\ker \varphi=(\Zb\Gamma)^k/(\Zb\Gamma)^kA^*$ as left $\Zb\Gamma$-modules.
\end{proof}

When $p<+\infty$, the assertion (6) follows from Lemma~\ref{L-p-homoclinic finitely generated}, Proposition~\ref{P-basic p-expansive}.(2) and the fact that if a unital ring $R$ is left Noetherian, then every finitely generated left $R$-module is finitely presented \cite[Proposition 4.29]{Lam}. When $p=+\infty$,
the assertion (6) follows from Proposition~\ref{P-basic p-expansive}.(4), Theorem~\ref{T-algebraic characterization of expansive} and Lemma~\ref{L-key expansive homoclinic}. This finishes the proof
of Proposition~\ref{P-basic p-homoclinic}.
\end{proof}

\begin{example} \label{Ex-finite Z(f)}
Let $\Gamma, f$, and $Z(f)$ be as in Example~\ref{Ex- Z^d uniform}. When $f$ is irreducible in the factorial ring $\Zb\Gamma$ and $Z(f)$ is finite, $\Delta^1(X_f)$ is dense in $X_f$ \cite[Propositions 5.1 and 5.2, Lemma 6.3]{LSV}. Denote by $u_1, \dots, u_d$ the canonical basis of $\Zb^d$. The group
$\Delta^1(X_f)$ was calculated explicitly for $f=2d-\sum_{j=1}^d(u_j+u_j^{-1})$ in \cite[Theorem 2.4, Proposition 3.1]{SV} and for $f=2-u_1-u_2$ when $d=2$ in
\cite[Section 5]{LSV}.
\end{example}

Lind and Schmidt \cite{LS99} showed that when $\Gamma$ is finitely generated with sub-exponential growth, for any expansive action of $\Gamma$ on a compact abelian group $X$ by automorphisms, one has
$\Delta(X)=\Delta^1(X)$.
From Theorem~\ref{T-algebraic characterization of expansive}, Lemma~\ref{L-key expansive homoclinic}, and Proposition~\ref{P-basic p-homoclinic}.(1) we conclude that this holds for all $\Gamma$:

\begin{theorem} \label{T-homoclinic are 1-homoclinic}
Let $\Gamma$ act expansively on a compact abelian group $X$ by automorphisms. Then
$$ \Delta(X)=\Delta^1(X).$$
\end{theorem}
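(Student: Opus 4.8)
The plan is to prove the nontrivial inclusion $\Delta(X) \subseteq \Delta^1(X)$, since $\Delta^1(X) \subseteq \Delta(X)$ is immediate from Proposition~\ref{P-basic p-homoclinic}.(1). The natural strategy is to reduce the general expansive action to the model situation $X_A = \widehat{(\Zb\Gamma)^k/(\Zb\Gamma)^kA}$ handled in Lemma~\ref{L-key expansive homoclinic}, where the equality $\Delta^p(X_A) = \Delta(X_A)$ is already established, and then transport the conclusion back along an embedding.

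First I would invoke Theorem~\ref{T-algebraic characterization of expansive}: since the action is expansive, condition (3) there gives $k \in \Nb$, a left $\Zb\Gamma$-submodule $J \subseteq (\Zb\Gamma)^k$, and a matrix $A \in M_k(\Zb\Gamma)$ invertible in $M_k(\ell^1(\Gamma))$ with $\widehat{X} \cong (\Zb\Gamma)^k/J$ and the rows of $A$ contained in $J$. Because the rows of $A$ lie in $J$, we have $(\Zb\Gamma)^kA \subseteq J$, and Pontryagin duality turns the surjection $(\Zb\Gamma)^k/(\Zb\Gamma)^kA \twoheadrightarrow (\Zb\Gamma)^k/J = \widehat{X}$ into a closed $\Gamma$-equivariant embedding $X \hookrightarrow X_A$. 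The key observation now is that since $A$ is invertible in $M_k(\ell^1(\Gamma))$, the operator $T\colon (\ell^1(\Gamma))^k \to (\ell^1(\Gamma))^k$ sending $a$ to $aA^*$ is invertible (its inverse is right multiplication by $(A^*)^{-1} = (A^{-1})^*$). Thus Lemma~\ref{L-key expansive homoclinic} applies with $p=1$, yielding $\Delta^1(X_A) = \Delta(X_A)$.

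The final step is to push this equality down to $X$. Given a homoclinic point $x \in \Delta(X)$, view it as a point of $X_A$ under the embedding; it is homoclinic there as well, so $x \in \Delta(X_A) = \Delta^1(X_A)$. It then remains to check that a point of $X \subseteq X_A$ which is $1$-homoclinic in $X_A$ is also $1$-homoclinic as a point of $X$. This follows from the characterization in Proposition~\ref{P-basic p-homoclinic}.(4): both $1$-homoclinicity conditions are expressed through the same function $s \mapsto \rho_\infty(x_s, 0_{(\Rb/\Zb)^k})$ on $\Gamma$ lying in $\ell^1(\Gamma)$, using that $X$ and $X_A$ share the same coordinate description as subsets of $((\Rb/\Zb)^k)^\Gamma$. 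Hence $x \in \Delta^1(X)$, giving $\Delta(X) \subseteq \Delta^1(X)$.

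The main obstacle I anticipate is verifying cleanly that the notion of $1$-homoclinicity is intrinsic to the coordinates and does not depend on whether we regard $x$ as a point of $X$ or of $X_A$; this is where Proposition~\ref{P-basic p-homoclinic}.(4) is essential, since it frees the definition from reference to the full dual group $\widehat{X}$ (which differs for $X$ and $X_A$) and recasts it purely in terms of the function $\Phi_x(s) = \rho_\infty(x_s, 0_{(\Rb/\Zb)^k})$. Once this coordinate-intrinsic reformulation is in hand, the remaining pieces assemble routinely from the already-proven Theorem~\ref{T-algebraic characterization of expansive} and Lemma~\ref{L-key expansive homoclinic}.
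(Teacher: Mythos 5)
Your proposal is correct and follows essentially the same route as the paper, which deduces the theorem precisely from Theorem~\ref{T-algebraic characterization of expansive}, Lemma~\ref{L-key expansive homoclinic} (applied with $p=1$, using that invertibility of $A$ in $M_k(\ell^1(\Gamma))$ makes $a\mapsto aA^*$ invertible on $(\ell^1(\Gamma))^k$), and Proposition~\ref{P-basic p-homoclinic}.(1). Your use of Proposition~\ref{P-basic p-homoclinic}.(4) to see that $1$-homoclinicity is intrinsic to the coordinates of $((\Rb/\Zb)^k)^\Gamma$, hence passes between $X$ and $X_A$, is exactly the detail the paper leaves implicit.
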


We shall need the following result in Sections~\ref{S-specification} and \ref{S-duality}.

\begin{proposition} \label{P- 1-expansive summable under metric}
Let $\Gamma$ act on a compact abelian group $X$ by automorphisms. Suppose that $\widehat{X}$ is a finitely generated left $\Zb\Gamma$-module. Then there is a compatible translation-invariant metric $\rho$ on $X$ such that $\sum_{s\in \Gamma}\rho(0_X, sx)<+\infty$ for every $x\in \Delta^1(X)$.
\end{proposition}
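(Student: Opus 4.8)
The plan is to realize $X$ concretely as a shift-invariant subgroup of a product of tori and to build the metric by weighting the coordinate metrics with a fixed summable weight; the required summability then falls out of Tonelli's theorem together with the characterization of $\Delta^1(X)$ in Proposition~\ref{P-basic p-homoclinic}.(4).

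First I would use the finite generation of $\widehat{X}$ to write $\widehat{X}=(\Zb\Gamma)^k/J$ for some $k\in\Nb$ and some left $\Zb\Gamma$-submodule $J$ of $(\Zb\Gamma)^k$, and identify $X$ with the corresponding closed $\Gamma$-invariant subgroup of $((\Rb/\Zb)^k)^\Gamma$, on which $\Gamma$ acts by the (left) shift, so that $(sx)_t=x_{s^{-1}t}$. Next, since $\Gamma$ is countable, I would fix a weight $w:\Gamma\to(0,+\infty)$ with $\sum_{t\in\Gamma}w(t)<+\infty$ (for instance $w(t_n)=2^{-n}$ for an enumeration $\Gamma=\{t_n\}_{n\in\Nb}$), and define
$$ \rho(x,y)=\sum_{t\in\Gamma}w(t)\,\rho_\infty(x_t,y_t), \qquad x,y\in X, $$
where $\rho_\infty$ is the metric on $(\Rb/\Zb)^k$ from \eqref{E-metric on torus}. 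Because each $\rho_\infty(\cdot,\cdot)$ is bounded by $1/2$ and translation-invariant on the torus, $\rho$ is a finite translation-invariant metric on $X$ whose topology coincides with the product (hence subspace) topology; thus $\rho$ is compatible, and one has $\rho(0_X,z)=\sum_{t\in\Gamma}w(t)\,\rho_\infty(z_t,0_{(\Rb/\Zb)^k})$ for all $z\in X$.

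Then I would carry out the main computation. Fix $x\in\Delta^1(X)$ and set $\Phi_x(s)=\rho_\infty(x_s,0_{(\Rb/\Zb)^k})$. By Proposition~\ref{P-basic p-homoclinic}.(4) with $p=1$, one has $\Phi_x\in\ell^1(\Gamma)$. Using the shift formula and the fact that all summands are nonnegative, Tonelli's theorem gives
$$ \sum_{s\in\Gamma}\rho(0_X,sx)=\sum_{s\in\Gamma}\sum_{t\in\Gamma}w(t)\,\rho_\infty(x_{s^{-1}t},0_{(\Rb/\Zb)^k})=\sum_{t\in\Gamma}w(t)\sum_{s\in\Gamma}\Phi_x(s^{-1}t). $$
For each fixed $t$ the map $s\mapsto s^{-1}t$ is a bijection of $\Gamma$, so the inner sum equals $\|\Phi_x\|_1$, whence $\sum_{s\in\Gamma}\rho(0_X,sx)=\|\Phi_x\|_1\sum_{t\in\Gamma}w(t)<+\infty$, as desired.

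I expect no serious obstacle: the only points requiring care are verifying that the weighted metric is genuinely compatible (standard for a countable product of uniformly bounded metric spaces with summable positive weights) and that the reindexing above is legitimate, which is guaranteed by nonnegativity via Tonelli. The essential input is the identification of $\Delta^1(X)$ with the set of points whose coordinate-distance function lies in $\ell^1(\Gamma)$, furnished by Proposition~\ref{P-basic p-homoclinic}.(4), together with the fact that the $\Gamma$-action is realized by a shift of the index set.
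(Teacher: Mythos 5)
Your proposal is correct and is essentially the paper's own argument: the paper likewise defines a weighted metric $\rho(x,y)=\sum_{j=1}^\infty\sum_{\varphi\in W}2^{-j}|\Psi_{x-y,\varphi}(s_j)|$ over an enumeration of $\Gamma$ (for a finite generating set $W$ of $\widehat{X}$), verifies compatibility via the compact-to-Hausdorff continuous bijection trick, and performs the same nonnegative-term interchange with the reindexing $t\mapsto s_jt$ to get $\sum_{t\in\Gamma}\rho(0_X,tx)=\sum_{\varphi\in W}\|\Psi_{x,\varphi}\|_1<+\infty$. Your only (cosmetic) deviation is phrasing the coordinate distances via $\rho_\infty$ after identifying $\widehat{X}=(\Zb\Gamma)^k/J$ and invoking Proposition~\ref{P-basic p-homoclinic}.(4), which the paper's inequality \eqref{E-basic p-expansive} shows is equivalent to working with the $\Psi_{x,\varphi}$ directly.
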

\begin{proof}  The case $\Gamma$ is finite is trivial. Thus we may assume that $\Gamma$ is infinite. Take a finite set $W\subseteq \widehat{X}$ generating $\widehat{X}$ as a left $\Zb\Gamma$-module. List the elements of $\Gamma$ as $s_1, s_2, \dots$.
Define a continuous translation-invariant metric $\rho$ on $X$ by
$$ \rho(x, y)=\sum_{j=1}^\infty\sum_{\varphi \in W}2^{-j}|\Psi_{x-y, \varphi}(s_j)|$$
for all $x, y\in X$, where $\Psi_{x-y, \varphi}$ is defined in Notation~\ref{N-function}. If we denote by $\tau$ the original topology on $X$, and by $\tau'$ the topology on $X$ induced by $\rho$, then the identity map $\sigma:(X, \tau)\rightarrow (X, \tau')$ is continuous. Since $(X, \tau)$ is compact and $(X, \tau')$ is Hausdorff, $\sigma$ is a homeomorphism. Thus $\rho$ is compatible.
For any $x\in \Delta^1(X)$, one has
\begin{align*}
\sum_{t\in \Gamma}\rho(0_X, tx)&=\sum_{t\in \Gamma}\sum_{j=1}^\infty\sum_{\varphi \in W}2^{-j}|\Psi_{tx, \varphi}(s_j)|\\
&=\sum_{\varphi \in W}\sum_{j=1}^\infty2^{-j}\sum_{t\in \Gamma}|\Psi_{x, \varphi}(s_jt)|\\
&=\sum_{\varphi \in W}\sum_{j=1}^\infty2^{-j}\|\Psi_{x, \varphi}\|_1\\
&=\sum_{\varphi\in W}\|\Psi_{x, \varphi}\|_1<+\infty.
\end{align*}
\end{proof}

In the rest of this section we discuss pairs of algebraic actions. Let $G_1$ and $G_2$ be discrete left $\Zb\Gamma$-modules.
Consider a  map
$\Phi: G_1\times G_2\rightarrow \Tb$ which is {\it bi-additive} in the sense
that
$$ \Phi(\varphi_1+\psi_1, \varphi_2)=\Phi(\varphi_1, \varphi_2)\Phi(\psi_1, \varphi_2) \mbox{ and } \Phi(\varphi_1, \varphi_2+\psi_2)=\Phi(\varphi_1, \varphi_2)\Phi(\varphi_1, \psi_2)$$
for all $\varphi_1, \psi_1\in G_1$ and $\varphi_2, \psi_2\in G_2$, and {\it equivariant} in the sense that
$$ \Phi(s\varphi_1, s\varphi_2)=\Phi(\varphi_1, \varphi_2)$$
for all $\varphi_1\in G_1, \varphi_2\in G_2$, and $s\in \Gamma$.  Then $\Phi$ induces $\Gamma$-equivariant
group homomorphisms $\Phi_1: G_1\rightarrow \widehat{G_2}$ and $\Phi_2: G_2\rightarrow \widehat{G_1}$ such that
$$\left<\Phi_1(\varphi_1), \varphi_2\right>=\Phi(\varphi_1, \varphi_2)=\left<\varphi_1, \Phi_2(\varphi_2)\right>$$
for all $\varphi_1\in G_1$ and $\varphi_2\in G_2$.

\begin{lemma} \label{L-pairing}
Let $G_1, G_2, \Phi, \Phi_1$, and $\Phi_2$ be as above. Then the following hold:
\begin{enumerate}
\item $\Phi_1$ is injective if and only if $\Phi_2(G_2)$ is dense in $\widehat{G_1}$.

\item For any $1\le p\le +\infty$, $\Phi_1(G_1)\subseteq \Delta^p(\widehat{G_2})$ if and only if $\Phi_2(G_2)\subseteq \Delta^p(\widehat{G_1})$.
\end{enumerate}
\end{lemma}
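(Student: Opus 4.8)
The plan is to derive both parts from Pontryagin duality, the only real work being to keep the dual action straight.

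For (1), I would invoke the standard fact that a subgroup $H$ of a compact abelian group $K$ is dense if and only if its annihilator in $\widehat{K}$ is trivial. Applying this with $K=\widehat{G_1}$, so that $\widehat{K}=G_1$ by biduality, and $H=\Phi_2(G_2)$, I would compute the annihilator explicitly. An element $\varphi_1\in G_1$ lies in $\Phi_2(G_2)^\perp$ exactly when $\left<\varphi_1,\Phi_2(\varphi_2)\right>=1$ for all $\varphi_2\in G_2$. By the defining relation $\left<\varphi_1,\Phi_2(\varphi_2)\right>=\Phi(\varphi_1,\varphi_2)=\left<\Phi_1(\varphi_1),\varphi_2\right>$, this is equivalent to $\left<\Phi_1(\varphi_1),\varphi_2\right>=1$ for all $\varphi_2$, i.e.\ to $\Phi_1(\varphi_1)=0$ by nondegeneracy of the pairing $\widehat{G_2}\times G_2\to\Tb$. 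Hence $\Phi_2(G_2)^\perp=\ker\Phi_1$, so $\Phi_2(G_2)$ is dense precisely when $\ker\Phi_1=\{0\}$, that is, when $\Phi_1$ is injective.

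For (2), the key is the identity $\Psi_{\Phi_2(\varphi_2),\varphi_1}(s)=\Psi_{\Phi_1(\varphi_1),\varphi_2}(s^{-1})$ for all $s\in\Gamma$, $\varphi_1\in G_1$, $\varphi_2\in G_2$. I would derive it by unwinding $\Psi_{x,\varphi}(s)=\left<sx,\varphi\right>-1$ together with the dual action $(sx)(\varphi)=x(s^{-1}\varphi)$. Under the identification $\widehat{\widehat{G_2}}=G_2$ one gets $\left<s\Phi_1(\varphi_1),\varphi_2\right>=\Phi_1(\varphi_1)(s^{-1}\varphi_2)=\Phi(\varphi_1,s^{-1}\varphi_2)$, while on the other side $\left<s\Phi_2(\varphi_2),\varphi_1\right>=\Phi(s^{-1}\varphi_1,\varphi_2)$. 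Equivariance of $\Phi$ gives $\Phi(s^{-1}\varphi_1,\varphi_2)=\Phi(\varphi_1,s\varphi_2)$, which is exactly $\Phi(\varphi_1,(s^{-1})^{-1}\varphi_2)$, so the two families of functions are related by the inversion $s\mapsto s^{-1}$, as claimed. With this identity in hand I would invoke the elementary observation that inversion $g\mapsto\check{g}$, $\check{g}(s)=g(s^{-1})$, is an isometric bijection of $\ell^p(\Gamma)$ for every $1\le p\le+\infty$ and also preserves $C_0(\Gamma)$, since $s\mapsto s^{-1}$ is a bijection of $\Gamma$. Consequently $\Psi_{\Phi_1(\varphi_1),\varphi_2}\in\ell^p(\Gamma)$ (resp.\ $C_0(\Gamma)$) exactly when $\Psi_{\Phi_2(\varphi_2),\varphi_1}$ does. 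Since $\Phi_1(G_1)\subseteq\Delta^p(\widehat{G_2})$ means precisely that $\Psi_{\Phi_1(\varphi_1),\varphi_2}$ is in $\ell^p(\Gamma)$ (resp.\ $C_0(\Gamma)$) for every pair $(\varphi_1,\varphi_2)$, and $\Phi_2(G_2)\subseteq\Delta^p(\widehat{G_1})$ is the same condition on the partner functions ranging over the same set of pairs, the two containments are equivalent.

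All the individual steps are routine once conventions are fixed; the one place to be careful, and the main potential obstacle, is the dual-action bookkeeping in (2), namely making sure the $s^{-1}$ lands on the correct side so that it is the harmless inversion symmetry, rather than a genuine asymmetry between $G_1$ and $G_2$, that relates the two families $\Psi_{\Phi_1(\varphi_1),\varphi_2}$ and $\Psi_{\Phi_2(\varphi_2),\varphi_1}$.
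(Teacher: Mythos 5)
Your proposal is correct and is essentially the paper's own proof: part (1) via identifying the annihilator of $\Phi_2(G_2)$ in $G_1=\widehat{\widehat{G_1}}$ with $\ker\Phi_1$ and invoking Pontryagin duality, and part (2) via the identity $\Psi_{\Phi_2(\varphi_2),\varphi_1}(s)=\Psi_{\Phi_1(\varphi_1),\varphi_2}(s^{-1})$ together with the inversion invariance of $\ell^p(\Gamma)$ and $C_0(\Gamma)$. Your dual-action bookkeeping matches the paper's chain of equalities, so there is no gap.
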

\begin{proof}
(1). This follows from the Pontryagin duality and the fact that $\ker \Phi_1$ consists of exactly those $\varphi_1\in G_1$ satisfying
$\left<\varphi_1, \Phi_2(G_2)\right>=1$.

(2). Let $\varphi_1\in G_1$ and $\varphi_2\in G_2$. For each $s\in \Gamma$, we have
\begin{align*}
\Psi_{\Phi_2(\varphi_2), \varphi_1}(s)&=\left<s\Phi_2(\varphi_2), \varphi_1\right>-1\\
&=\left<\Phi_2(s\varphi_2), \varphi_1\right>-1\\
&=\left<s\varphi_2, \Phi_1(\varphi_1)\right>-1\\
&=\left<\varphi_2, s^{-1}\Phi_1(\varphi_1)\right>-1\\
&=\Psi_{\Phi_1(\varphi_1), \varphi_2}(s^{-1}).
\end{align*}
Thus $\Psi_{\Phi_2(\varphi_2), \varphi_1}\in C_0(\Gamma)$ exactly when $\Psi_{\Phi_1(\varphi_1), \varphi_2}\in C_0(\Gamma)$, and when $1\le p<+\infty$,
$\Psi_{\Phi_2(\varphi_2), \varphi_1}\in \ell^p(\Gamma)$ exactly when $\Psi_{\Phi_1(\varphi_1), \varphi_2}\in \ell^p(\Gamma)$.
It follows that $\Phi_1(G_1)\subseteq \Delta^p(\widehat{G_2})$ exactly when $\Phi_2(G_2)\subseteq \Delta^p(\widehat{G_1})$.
\end{proof}

The above pairing has been studied by Einsiedler and Schmidt \cite{ES, ES02} for algebraic actions of $\Gamma=\Zb^d$ with $d\in \Nb$ on $X$, in the case
$G_1=\widehat{X}$ and $G_2=\Delta(X)$.  In view of Lemma~\ref{L-pairing},
$(G_1, G_2)$ should be thought of as a dual pair,
and the dynamic properties of the $\Gamma$-actions on $\widehat{G_1}$ and $\widehat{G_2}$ are reflected in each other.
This point of view will play a central role in Section~\ref{S-duality}.

\section{Specification Properties and Dense Homoclinic Points} \label{S-specification}

In this section, using Theorem~\ref{T-algebraic characterization of expansive}, we discuss the relation between various specification properties and having dense homoclinic points for expansive algebraic actions.
Throughout this section $\Gamma$ will be a countable discrete group.

Specification is a strong orbit tracing property. Ruelle \cite{Ruelle} studied the extension of the notion to $\Zb^d$-actions.
We take the definition of various specification properties from \cite[Definition 5.1]{LS99} (see also \cite[Remark 5.6]{LS99}), modified to the general group case.

\begin{definition} \label{D-specification}
Let $\alpha$ be a continuous $\Gamma$-action on a compact space $X$. Let $\rho$ be a compatible metric on $X$.
\begin{enumerate}
\item The action $\alpha$ has {\it weak specification} if there exists, for every $\varepsilon>0$, a nonempty finite subset $F$
of $\Gamma$ with the following property: for any finite collection $F_1, \dots, F_m$ of finite subsets of $\Gamma$ with
\begin{align} \label{E-weak specification}
FF_i\cap F_j=\emptyset  \mbox{ for } 1\le i, j\le m, i\neq j,
\end{align}
and for any collection of points $x^{(1)}, \dots, x^{(m)}$ in $X$, there exists a point $y\in X$ with
\begin{align} \label{E-specification}
\rho(sx^{(j)}, sy)\le \varepsilon \mbox{ for all } s\in F_j, 1\le j\le m.
\end{align}

\item The action $\alpha$ has {\it strong specification} if there exists, for every $\varepsilon>0$, a nonempty finite subset $F$ of $\Gamma$ with the following property: for any finite collection $F_1, \dots, F_m$ of finite subsets of $\Gamma$ satisfying (\ref{E-weak specification}) and any subgroup $\Gamma'$ of $\Gamma$ with
\begin{align} \label{E-strong specification}
FF_i\cap F_j(\Gamma'\setminus \{e_\Gamma\})=\emptyset  \mbox{ for } 1\le i, j\le m,
\end{align}
and for any collection of points $x^{(1)}, \dots, x^{(m)}$ in $X$, there exists a point $y\in X$ satisfying (\ref{E-specification}) and $sy=y$ for all $s\in \Gamma'$.

\item When $X$ is a compact group and $\alpha$ is by automorphisms of $X$, the action $\alpha$ has {\it homoclinic specification} if there exists, for
every $\varepsilon>0$, a nonempty finite subset $F$ of $\Gamma$ with the following property: for any finite subset $F_1$ of $\Gamma$ and any $x\in X$,
there exists $y\in \Delta(X)$ with
\begin{align*}
\rho(sx, sy)\le \varepsilon \mbox{ for all } s\in F_1, \\
\rho(e_X, sy)\le \varepsilon \mbox{ for all } s\in \Gamma\setminus FF_1.
\end{align*}
\end{enumerate}
\end{definition}

The following lemma is a version of \cite[Lemma 1]{Bryant}. The same argument also appeared in the proof of \cite[Theorem 10.36]{GH}.

\begin{lemma} \label{L-expansive to asymptotic}
Let $\alpha$ be an expansive continuous action of $\Gamma$ on a compact metric space $(X, \rho)$. Let $d>0$ such that
if $x, y\in X$ satisfy $\sup_{s\in \Gamma}\rho(sx, sy)\le d$, then $x=y$. Let $x, y\in X$ satisfy $\rho(sx, sy)\le d$ for all but finitely many
$s\in \Gamma$. Then $(x, y)$ is an asymptotic pair in the sense that $\rho(sx, sy)\to 0$ as $\Gamma \ni s\to \infty$.
\end{lemma}
\begin{proof} Take a finite subset $W$ of $\Gamma$ such that $\rho(sx, sy)\le d$ for every
$s\in \Gamma\setminus W$.
Suppose that $(x, y)$ is not an asymptotic pair. Then there exist $\varepsilon>0$ and  a sequence of elements $\{s_n\}_{n\in \Nb}$ in $\Gamma$
such that $\rho(s_nx, s_ny)\ge \varepsilon$ for all $n\in \Nb$ and for any finite subset $F$ of $\Gamma$ one has $s_n\not \in F$ for all sufficiently large $n\in \Nb$. Passing to a subsequence of $\{s_n\}_{n\in \Nb}$, we may assume that $s_nx$ and $s_ny$ converge to $x'$ and $y'$ in $X$ respectively, as $n\to \infty$.
Then $\rho(x', y')\ge \varepsilon$ and hence $x'\neq y'$.

Let $s\in \Gamma$. When $n\in \Nb$ is sufficiently large, one has $s s_n\not \in W$ and hence $\rho(s s_nx, s s_ny)\le d$. Letting $n\to \infty$, we obtain
$\rho(s x', s y')\le d$. By the hypothesis on $d$ we conclude that $x'=y'$, which is a contradiction. Therefore $(x, y)$ is an asymptotic pair.
\end{proof}

\begin{theorem} \label{T-specification}
Let $\alpha$ be an expansive action of $\Gamma$ on a compact abelian group $X$ by automorphisms. Then the following are equivalent:
\begin{enumerate}
\item $\alpha$ has weak specification;

\item $\alpha$ has strong specification;

\item $\alpha$ has homoclinic specification.
\end{enumerate}
Furthermore, these conditions imply
\begin{enumerate}
\item[(4)] $\Delta(X)$ is dense in $X$.
\end{enumerate}
\end{theorem}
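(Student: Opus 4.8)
The plan is to fix, once and for all, the compatible translation-invariant metric $\rho$ furnished by Proposition~\ref{P- 1-expansive summable under metric}, so that $\sum_{s\in\Gamma}\rho(0_X,sx)<+\infty$ for every $x\in\Delta^1(X)$; since the action is expansive, Theorem~\ref{T-homoclinic are 1-homoclinic} gives $\Delta(X)=\Delta^1(X)$, so every homoclinic point is summable for this $\rho$. I would establish the cycle $(2)\Rightarrow(1)\Rightarrow(3)\Rightarrow(2)$ together with $(3)\Rightarrow(4)$. The implication $(2)\Rightarrow(1)$ is immediate: taking $\Gamma'=\{e_\Gamma\}$ in the definition of strong specification renders condition \eqref{E-strong specification} vacuous and the requirement $sy=y$ empty, so strong specialization collapses to weak specification. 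For $(1)\Rightarrow(3)$, fix $\varepsilon\in(0,d]$ with $d$ an expansive constant for $\rho$, let $F_0$ be the finite set given by weak specification for this $\varepsilon$, and set $F=F_0\cup F_0^{-1}\cup\{e_\Gamma\}$. Given a finite $F_1$ and a point $x$, I would pick finite sets $G_n\uparrow\Gamma$ and apply weak specification to the pair $F_1,\ G_n\setminus FF_1$ with points $x,\ e_X$; the symmetry of $F$ makes \eqref{E-weak specification} hold (if $fg\in F_1$ with $g\in G_n\setminus FF_1$ then $g\in F^{-1}F_1=FF_1$, a contradiction), producing $y^{(n)}$ with $\rho(sx,sy^{(n)})\le\varepsilon$ on $F_1$ and $\rho(e_X,sy^{(n)})\le\varepsilon$ on $G_n\setminus FF_1$. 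A convergent subsequence gives $y$ with $\rho(sx,sy)\le\varepsilon$ on $F_1$ and $\rho(e_X,sy)\le\varepsilon$ off the finite set $FF_1$, whence by Lemma~\ref{L-expansive to asymptotic} the pair $(e_X,y)$ is asymptotic, i.e. $y\in\Delta(X)$. This is exactly homoclinic specification.

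The implication $(3)\Rightarrow(4)$ then drops out by applying homoclinic specification with $F_1=\{e_\Gamma\}$: the resulting $y\in\Delta(X)$ satisfies $\rho(x,y)\le\varepsilon$, so $\Delta(X)$ is dense. The substantial step is $(3)\Rightarrow(2)$. For an instance $F_1,\dots,F_m,\Gamma'$ and points $x^{(1)},\dots,x^{(m)}$ obeying \eqref{E-weak specification} and \eqref{E-strong specification}, I would use homoclinic specification to produce, for each $j$, a point $w^{(j)}\in\Delta(X)$ tracking $x^{(j)}$ on $F_j$ and lying near $e_X$ off $FF_j$. Since $w^{(j)}$ is summable, its periodization $y^{(j)}:=\sum_{t\in\Gamma'}tw^{(j)}$ converges in $(X,\rho)$ and is exactly $\Gamma'$-invariant, so $y:=\sum_{j=1}^m y^{(j)}$ is $\Gamma'$-periodic. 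Writing additively and using translation invariance, for $s\in F_j$ one has
\[
\rho(sx^{(j)},sy)\le \rho(sx^{(j)},sw^{(j)})+\sum_{(j',t)\neq(j,e_\Gamma)}\rho\big(0_X,\,st\,w^{(j')}\big),
\]
the first term being the tracking error. The disjointness \eqref{E-weak specification} together with the subgroup condition \eqref{E-strong specification} guarantees that every $st$ appearing in the sum avoids the window $FF_{j'}$ on which $w^{(j')}$ is concentrated, so each summand is pointwise small.

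The hard part will be estimating that remaining sum. Because it ranges over the infinite set $s(\Gamma'\setminus\{e_\Gamma\})$ and the cosets $s\Gamma'$, the pointwise bound $\rho(0_X,rw^{(j')})\le\varepsilon$ valid off $FF_{j'}$ does not suffice: what is genuinely needed is that the full summable (i.e. $\ell^1$) tail of $w^{(j')}$ outside $FF_{j'}$ be small, \emph{uniformly in the instance}. I would address this by upgrading homoclinic specification to a summable form, asserting that for every $\delta>0$ there is a finite $F$ such that the produced $y\in\Delta(X)$ also satisfies $\sum_{s\in\Gamma\setminus FF_1}\rho(e_X,sy)\le\delta$. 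The natural source of such control is the algebraic structure: by Theorem~\ref{T-algebraic characterization of expansive} one embeds $X$ into some $X_A$ with $A\in M_k(\Zb\Gamma)$ invertible in $M_k(\ell^1(\Gamma))$, and, as in the proof of Lemma~\ref{L-key expansive homoclinic}, each homoclinic point lifts to $c\,(A^{*})^{-1}$ with $c$ integer-valued and finitely supported, its support confined to the window while the off-window $\ell^1$-mass is governed by the fixed tail of $(A^{*})^{-1}\in M_k(\ell^1(\Gamma))$. Choosing the gap $F$ to be a large ball should make this tail, integrated against $c$ over the cosets $s\Gamma'$ that \eqref{E-strong specification} forces away from $e_\Gamma$, as small as desired.

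I expect the most delicate point to be precisely this quantitative reconciliation: the gap $F$ must be selected \emph{before} the instance, yet the coset-tail bounds naively scale with $\|c\|_1$ (hence with $|F_j|$), so a careless estimate does not give instance-independent smallness. Getting a clean uniform bound — presumably by exploiting that, for each fixed $s$, the subgroup condition pushes the entire coset $s\Gamma'$ a bounded-below distance past the support of $c$, so that only the genuine $\ell^1$-tail of $(A^{*})^{-1}$ beyond the gap contributes — is where I anticipate the real work of the proof to lie. Once the summable homoclinic specification is in hand, the displayed inequality yields $\rho(sx^{(j)},sy)\le\varepsilon$ on each $F_j$ with $y$ exactly $\Gamma'$-periodic, completing $(3)\Rightarrow(2)$ and closing the equivalence.
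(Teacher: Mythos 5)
Your easy implications ($(2)\Rightarrow(1)$, $(1)\Rightarrow(3)$ via Lemma~\ref{L-expansive to asymptotic} and a compactness limit, $(3)\Rightarrow(4)$) coincide with the paper's proof and are fine. The genuine gap is in $(3)\Rightarrow(2)$, and it is exactly where you flag it: you reduce the implication to an unproven ``summable homoclinic specification'' --- for every $\delta>0$ a finite gap $F$, chosen \emph{before} the instance, such that the tracking point also satisfies $\sum_{s\in\Gamma\setminus FF_1}\rho(e_X,sy)\le\delta$ --- but this uniform statement is false in general, so the route cannot be completed as proposed. By your own lifting picture, a homoclinic point tracking $x$ on $F_1$ lifts to $c(A^*)^{-1}$ with $c$ integer-valued, $\|c\|_\infty$ bounded, and supported in (roughly) the window determined by $F_1$; the $\ell^1$-mass of $c(A^*)^{-1}$ outside $FF_1$ is then bounded only by a sum, over the support points of $c$, of tails of $(A^*)^{-1}$ beyond the gap, and this scales with $|F_1|$ (already with the boundary of $F_1$ for $\Gamma=\Zb^d$, $d\ge 2$, when the decay of $(A^*)^{-1}$ is not concentrated at a single nearest source). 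Since $F$ must be fixed before $F_1$ is given, no choice of gap makes this small uniformly over instances. And your displayed triangle-inequality bound genuinely needs this global coset tail $\sum_{t\in\Gamma'\setminus\{e_\Gamma\}}\rho(0_X,st\,w^{(j')})$, because the sum ranges over the infinite set $s\Gamma'$; pointwise smallness off the windows is useless there. So the obstruction you anticipate is not a technical wrinkle but a defect of periodizing the homoclinic points themselves in $(X,\rho)$.

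The paper's proof avoids the issue by periodizing on the other side of $A^*$, where integrality converts ``small tail'' into \emph{zero} tail. Choosing $\varepsilon'<\|A\|_1^{-1}$, the lift $\tilde y^{(j)}$ of the homoclinic point produced by (3) satisfies: $\tilde y^{(j)}A^*$ is integer-valued with $\|(\tilde y^{(j)}A^*)_s\|_\infty<1$ off the window $F_j^{-1}W_1W_2W_3^{-1}W$, hence $\tilde y^{(j)}A^*$ vanishes \emph{identically} there, i.e.\ it is a finitely supported element of $\Zb^k\Gamma$ with $\|\tilde y^{(j)}A^*\|_\infty\le\|A\|_1$. Conditions \eqref{E-weak specification} and \eqref{E-strong specification} then make the supports of $s\tilde y^{(j)}A^*$, for $s\in\Gamma'$ and $1\le j\le m$, pairwise disjoint, so $\tilde z=\sum_{s\in\Gamma'}\sum_{j}s\tilde y^{(j)}A^*$ is well defined with $\|\tilde z\|_\infty\le\|A\|_1$ and is exactly $\Gamma'$-invariant; one sets $y=P(\tilde z(A^*)^{-1})$ and checks $y\in X$ by a limit over finite $K\subseteq\Gamma'$. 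The tracking estimate is then purely local: for $s\in F_j^{-1}W_1$, $\tilde z$ and $\tilde y^{(j)}A^*$ agree on $sW_2$ and are both bounded by $\|A\|_1$, so after applying $(A^*)^{-1}$ the discrepancy at $s$ is at most $2\|A\|_1\sum_{t\in\Gamma\setminus W_2^{-1}}\|((A^*)^{-1})_t\|_1\le\varepsilon'$ --- the only tail ever used is the fixed $\ell^1$-tail of $(A^*)^{-1}$ beyond $W_2$, with no dependence on $|F_j|$, $m$, or $\Gamma'$. If you restructure your $(3)\Rightarrow(2)$ this way (summing the finitely supported images under $A^*$ rather than the homoclinic points, and dropping the summable metric of Proposition~\ref{P- 1-expansive summable under metric}, which is not needed for this theorem), the rest of your cycle closes as written.
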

\begin{proof} By Theorem~\ref{T-algebraic characterization of expansive} we
may assume that $X=\widehat{(\Zb\Gamma)^k/J}$ for some $k\in \Nb$ and some left $\Zb\Gamma$-submodule $J$ of $(\Zb\Gamma)^k$,
and find some $A\in M_k(\Zb\Gamma)$ being invertible in $M_k(\ell^1(\Gamma))$ such that the rows of $A$ are contained in $J$.
Denote by $W$ the union of $\{e_\Gamma\}$ and the support of $A^*$ as a $M_k(\Zb)$-valued function on $\Gamma$.
Recall the canonical metric $\rho_\infty$ on $(\Rb/\Zb)^k$ defined by \eqref{E-metric on torus} and the norm
$\|\cdot\|_\infty$ on $\ell^\infty(\Gamma, \Rb^k)$ defined by \eqref{E-infinity norm}. Let $\rho$ be a compatible metric on $X$.

(3)$\Rightarrow$(2): Let $\varepsilon>0$. Then we can find a nonempty finite subset $W_1$ of $\Gamma$ and $\|A\|_1^{-1}>\varepsilon'>0$ such that if $x, y\in X$ satisfy
$\max_{s\in W_1}\rho_\infty(x_s, y_s)\le 2\varepsilon'$, then $\rho(x, y)<\varepsilon$. Take a finite subset $W_2$ of $\Gamma$ containing $e_\Gamma$
such that $\sum_{s\in \Gamma\setminus W_2^{-1}}\|((A^*)^{-1})_s\|_1<\varepsilon'/(2\|A\|_1)$, where $\|B\|_1$ denotes the sum of the absolute values of the entries of $B$ for $B\in M_k(\Rb)$.
If $\tilde{x}, \tilde{y}\in \ell^\infty(\Gamma, \Rb^k)$ satisfy $\|\tilde{x}\|_\infty, \|\tilde{y}\|_\infty\le \|A\|_1$ and $\tilde{x}$ and $\tilde{y}$ are equal on $sW_2$ for some $s\in \Gamma$, then $\|(\tilde{x}(A^*)^{-1})_s-(\tilde{y}(A^*)^{-1})_s\|_\infty\le \varepsilon'$.
Take $\delta>0$ such that if $x, y\in X$ satisfy $\rho(x, y)\le \delta$, then $\rho_\infty(x_{e_\Gamma}, y_{e_\Gamma})\le \varepsilon'$.

By the condition (3) we can find a finite subset $W_3$ of $\Gamma$ containing $e_\Gamma$ with the following
property: for any finite subset $F_1$ of $\Gamma$ and any $x\in X$,
there exists $y\in \Delta(X)$ with
\begin{align*}
\max_{s\in F_1}\rho(sx, sy)\le \delta \mbox{ and } \sup_{s\in \Gamma \setminus W_3F_1}\rho(e_X, sy)\le \delta.
\end{align*}
By our choice of $\delta$, we then have
\begin{align*}
\max_{s\in F_1^{-1}}\rho_\infty(x_s, y_s)\le \varepsilon' \mbox{ and }
\sup_{s\in \Gamma\setminus (W_3F_1)^{-1}}\rho_\infty(0_{(\Rb/\Zb)^k}, y_s)\le \varepsilon'.
\end{align*}
Now set $F=(W_1W_2W_3^{-1}W)(W_1W_2W_3^{-1}W)^{-1}$.

Let $F_1, \dots, F_m$ be a finite collection of finite subsets of $\Gamma$ satisfying (\ref{E-weak specification}), $\Gamma'$ be a subgroup of $\Gamma$ satisfying
(\ref{E-strong specification}), and $x^{(1)}, \dots, x^{(m)}$ be points in $X$. Take $y^{(1)}, \dots, y^{(m)}\in \Delta(X)$ with
\begin{align*}
\max_{s\in F_j^{-1}W_1W_2}\rho_\infty(x^{(j)}_s, y^{(j)}_s)\le \varepsilon' \mbox{ and }
\sup_{s\in \Gamma\setminus (F_j^{-1}W_1W_2W_3^{-1})}\rho_\infty(0_{(\Rb/\Zb)^k}, y^{(j)}_s)\le \varepsilon'
\end{align*}
for each $1\le j\le m$.
Denote by $P$ the canonical map $\ell^{\infty}(\Gamma, \Rb^k)\rightarrow ((\Rb/\Zb)^k)^\Gamma$. For each $1\le j\le m$, take a lift $\tilde{x}^{(j)}$
and $\tilde{y}^{(j)}$ for $x^{(j)}$ and $y^{(j)}$ in $(([-1, 1])^k)^{\Gamma}$ respectively such that
\begin{align*}
\max_{s\in F_j^{-1}W_1W_2}\|\tilde{x}^{(j)}_s-\tilde{y}^{(j)}_s\|_\infty\le \varepsilon' \mbox{ and }
\sup_{s\in \Gamma\setminus (F_j^{-1}W_1W_2W_3^{-1})}\|\tilde{y}^{(j)}_s\|_\infty\le \varepsilon'.
\end{align*}
Then $\tilde{y}^{(j)}A^*$ belongs to $\ell^\infty(\Gamma, \Zb^k)$. Note that, for any  $s\in \Gamma\setminus (F_j^{-1}W_1W_2W_3^{-1}W)$, one has
$$\|(\tilde{y}^{(j)}A^*)_s\|_\infty\le \|A^*\|_1\cdot \sup_{t\in \Gamma\setminus (F_j^{-1}W_1W_2W_3^{-1})}\|\tilde{y}^{(j)}_t\|_\infty\le \|A\|_1\cdot \varepsilon'<1.$$
It follows that, as a $\Zb^k$-valued function on $\Gamma$, $\tilde{y}^{(j)}A^*$ has support contained in $F_j^{-1}W_1W_2W_3^{-1}W$.
We can rewrite (\ref{E-weak specification}) and (\ref{E-strong specification}) as
\begin{align*}
(F_i^{-1}W_1W_2W_3^{-1}W)\cap (F_j^{-1}W_1W_2W_3^{-1}W)=\emptyset  \mbox{ for } 1\le i, j\le m, i\neq j,
\end{align*}
and
\begin{align*}
(F_i^{-1}W_1W_2W_3^{-1}W)\cap (\Gamma'\setminus \{e_\Gamma\})(F_j^{-1}W_1W_2W_3^{-1}W)=\emptyset  \mbox{ for } 1\le i, j\le m,
\end{align*}
respectively.
Thus the elements $s\tilde{y}^{(j)}A^*$ of $\Zb^k\Gamma$ for $s\in \Gamma'$ and $1\le j\le m$ have pairwise disjoint support.
Then we have the element  $\tilde{z}:=\sum_{s\in \Gamma'} \sum_{j=1}^ms\tilde{y}^{(j)}A^*$ of $\ell^{\infty}(\Gamma, \Zb^k)$ with
$$\|\tilde{z}\|_\infty=\max_{1\le j\le m}\|\tilde{y}^{(j)}A^*\|_\infty\le \|A^*\|_1\cdot \max_{1\le j\le m}\|\tilde{y}^{(j)}\|_\infty\le \|A\|_1.$$
 Set $\tilde{y}=\tilde{z}(A^*)^{-1}\in \ell^\infty(\Gamma, \Rb^k)$
and $y=P(\tilde{y})\in ((\Rb/\Zb)^k)^\Gamma$.

We claim that $y\in X$. For each finite subset $K$ of $\Gamma'$, define $\tilde{z}_K=\sum_{s\in K} \sum_{j=1}^ms\tilde{y}^{(j)}A^*$,
$\tilde{y}_K=\tilde{z}_K(A^*)^{-1}$, and $y_K=P(\tilde{y}_K)$. Then $\|\tilde{z}_K\|_\infty\le \|A\|_1$ for every $K$.
For each $s\in \Gamma$, when $K\to \Gamma'$, we have $(\tilde{z}_K)_s\to \tilde{z}_s$, and hence
$(\tilde{y}_K)_s\to \tilde{y}_s$. It follows that, when $K\to \Gamma'$, $y_K$ converges to $y$.
Clearly $y_K=\sum_{s\in K}\sum_{j=1}^m sy^{(j)}\in X$ for each $K$. Therefore $y\in X$.

For each $s\in \Gamma'$, we have $s\tilde{z}=\tilde{z}$ and hence $sy=y$.

Fix $1\le j\le m$. Note that $\tilde{z}$ and $\tilde{y}^{(j)}A^*$ are equal on $F_j^{-1}W_1W_2W_3^{-1}W\supseteq F_j^{-1}W_1W_2$.
Since $\|\tilde{z}\|_\infty, \|\tilde{y}^{(j)}A^*\|_\infty\le \|A\|_1$, by our choice of $W_2$ we have
$\max_{s\in F_j^{-1}W_1}\|\tilde{y}_s-\tilde{y}^{(j)}_s\|_\infty\le \varepsilon'$. Thus $\max_{s\in F_j^{-1}W_1}\rho_\infty(y_s, y^{(j)}_s)\le \varepsilon'$,
and hence
\begin{eqnarray*}
\max_{s\in F_j, t\in W_1}\rho_\infty((sy)_{t}, (sx^{(j)})_{t})
&=&\max_{s\in F_j^{-1}W_1}\rho_\infty(y_s, x^{(j)}_s)\\
&\le &\max_{s\in F_j^{-1}W_1}\rho_\infty(y_s, y^{(j)}_s)+\max_{s\in F_j^{-1}W_1}\rho_\infty(y^{(j)}_s, x^{(j)}_s)\\
&\le &2\varepsilon'.
\end{eqnarray*}
By our choice of $W_1$ and $\varepsilon'$, we get $\max_{s\in F_j}\rho(sy, sx^{(j)})< \varepsilon$ as desired.

(2)$\Rightarrow$(1) and (3)$\Rightarrow$(4) are trivial.

(1)$\Rightarrow$(3): By Lemma~\ref{L-expansive to asymptotic}
there exists $\varepsilon'>0$ with the following property: if $y\in X$ satisfies $\rho(e_X, sy)\le \varepsilon'$
for all but finitely many $s\in \Gamma$, then $y\in \Delta(X)$.
 Let $\varepsilon'\ge \varepsilon>0$. Take $F$ as in the definition of weak specification. Replacing $F$ by $F\cup F^{-1}$ if necessary, we may assume that $F=F^{-1}$. Let $F_1$ be a finite subset of $\Gamma$ and $x\in X$. For each  finite subset $F_2$ of $\Gamma\setminus FF_1$, by the choice of $F$, taking $x^{(1)}=x$ and $x^{(2)}=e_X$ we can find
$y_{F_2}\in X$ such that $\max_{s\in F_1}\rho(s x, s y_{F_2})\le \varepsilon$ and $\max_{s\in F_2}\rho(s e_X, s y_{F_2})\le \varepsilon$. Note that the set of finite subsets of $\Gamma\setminus FF_1$ is partially ordered by inclusion. Take a limit point $y$ of the net $\{y_{F_2}\}_{F_2}$. Then $\max_{s\in F_1}\rho(s x, s y)\le \varepsilon$ and $\sup_{s\in \Gamma\setminus FF_1}\rho(se_X, s y)\le \varepsilon\le \varepsilon'$. By our choice of $\varepsilon'$, we conclude that $y\in \Delta(X)$.
\end{proof}

We give a new proof of the following result of Lind and Schmidt \cite[Theorem 5.2]{LS99}.

\begin{theorem} \label{T-specification for Z^d}
Suppose that $\Gamma=\Zb^d$ for some $d\in \Nb$.
Let $\alpha$ be an expansive action of $\Gamma$ on a compact abelian group $X$ by automorphisms.
Then the conditions (1), (2), (3) and (4) in Theorem~\ref{T-specification} are all equivalent.
\end{theorem}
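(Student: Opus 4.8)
The plan is to lean on Theorem~\ref{T-specification}, which already shows that conditions (1), (2), (3) are equivalent and jointly imply (4) for an \emph{arbitrary} countable group. Thus for $\Gamma=\Zb^d$ it suffices to prove the single implication (4)$\Rightarrow$(3): density of $\Delta(X)$ forces homoclinic specification; combined with the cycle of Theorem~\ref{T-specification} this closes all four conditions into one equivalence class. First I would fix a concrete model. Since $\Gamma$ is abelian, Corollary~\ref{C-characterization of expansive for abelian} yields $f\in\Zb\Gamma$ invertible in $\ell^1(\Gamma)$ with $f\widehat{X}=\{0\}$; writing $\widehat{X}=(\Zb\Gamma)^k/J$ with $f(\Zb\Gamma)^k\subseteq J$ realizes $X$ as a closed $\Gamma$-invariant subgroup of $(X_f)^k$, where $X_f=\widehat{\Zb\Gamma/\Zb\Gamma f}$. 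By Theorem~\ref{T-homoclinic are 1-homoclinic} and Proposition~\ref{P- 1-expansive summable under metric} there is a compatible translation-invariant metric under which every homoclinic point is $\ell^1$-summable, and since $\Zb[\Zb^d]$ is Noetherian, Proposition~\ref{P-basic p-homoclinic}.(6) shows $\Delta(X)$ is finitely generated over $\Zb\Gamma$, say by summable points $w_1,\dots,w_r$.

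The decisive feature special to $\Zb^d$ that I would isolate next is \emph{exponential} decay of homoclinic points. Viewing $\widehat f$ as a Laurent polynomial on $(\Cb^\times)^d$, invertibility of $f$ in $\ell^1(\Gamma)$ means (Wiener) that $\widehat f$ has no zeros on the torus $\Tb^d$; being a polynomial it then has no zeros on a neighborhood of $\Tb^d$, so $1/\widehat f$ is holomorphic there and the coefficients of $f^{-1}\in\ell^1(\Gamma)$ decay exponentially. Hence the fundamental homoclinic point $P((f^*)^{-1})$ of $X_f$ decays at a single rate $\lambda<1$ depending only on $X$. Via the inclusion $X\hookrightarrow(X_f)^k$ every $y\in\Delta(X)$ is represented by a vector whose entries have the form $P((f^*)^{-1}b_j)$ with $b_j\in\Zb\Gamma$, so $y$ is exponentially small outside any neighborhood of $\bigcup_j\supp(b_j)$; in particular the coordinates of a combination $\sum_i a_iw_i$ at a site $s$ are dominated by $\sum_i\sum_t|a_i(t)|\,\lambda^{\,|s-t|}$.

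With these ingredients the construction of the homoclinic specification point runs as follows. Given $\varepsilon>0$ I would fix, using the rate $\lambda$, a box $F\subseteq\Gamma$ so large that exponentially decaying tails summed outside $F$ stay below $\varepsilon$. Then, given a finite $F_1$ and a point $x$, I must produce $y\in\Delta(X)$ agreeing with $x$ within $\varepsilon$ on $F_1$ and within $\varepsilon$ of $e_X$ off $FF_1$. Density of $\Delta(X)$ supplies a finite combination $\sum_i a_iw_i$ matching $x$ on $F_1$; the point is that one can arrange the integer coefficients $a_i$ to be supported in a bounded neighborhood of $F_1$ and of bounded total mass, after which the estimate of the previous paragraph makes $\sum_i a_iw_i$ exponentially small, hence $\le\varepsilon$, at every site outside $FF_1$. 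Since this is a genuine finite $\Zb\Gamma$-combination of the $w_i$, it lies in $\Delta(X)$ by Proposition~\ref{P-basic p-homoclinic}.(2), so no further limiting step is needed.

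The step I expect to be the main obstacle is precisely the localization in the last paragraph: bare density of $\Delta(X)$ matches $x$ on $F_1$ but gives no control on where the coefficients $a_i$ sit or how large they are, whereas homoclinic specification demands a box $F$ uniform in both $F_1$ and $x$. To make this rigorous I would show a \emph{uniform realizability} statement — that for a fixed box $B$ (depending only on $\varepsilon$ and $f$) the homoclinic values $P((f^*)^{-1}b)$ with $\supp(b)\subseteq B$ are $\varepsilon$-dense in the single-site pattern torus — and then tile it over $F_1$ by translation. This reconciles the non-uniform density hypothesis with the exponential decay: compactness of $X$ and of the finite-dimensional pattern space $(\Rb/\Zb)^{k|F_1|}$ are what convert ``close on $F_1$ by some combination'' into ``close on $F_1$ by a combination with bounded, localized coefficients.'' Once this uniform tail control is secured, the remaining estimates are the same bookkeeping as in the proof of Theorem~\ref{T-specification}.
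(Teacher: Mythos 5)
You correctly reduce everything to (4)$\Rightarrow$(3), and your ingredients (an annihilator $f\in\Zb\Gamma$ invertible in $\ell^1(\Gamma)$, the summable metric from Theorem~\ref{T-homoclinic are 1-homoclinic} and Proposition~\ref{P- 1-expansive summable under metric}, and finite generation of $\Delta(X)$ from Noetherianity of $\Zb[\Zb^d]$) are exactly the paper's. But the step you yourself flag as the main obstacle --- upgrading ``$x$ is matched on $F_1$ by \emph{some} combination $\sum_i a_iw_i$'' to ``by one whose coefficients are uniformly bounded and supported near $F_1$'' --- is not closed by your proposed fix, and this is a genuine gap. First, your uniform realizability statement concerns the homoclinic points $P((f^*)^{-1}b)$ of the ambient group $(X_f)^k$; when $X$ is a proper closed subgroup, points obtained by tiling such patterns need not lie in $X$, let alone in $\Delta(X)$, and density of $\Delta(X)$ in $X$ is precisely the hypothesis (4) --- it cannot be re-derived from properties of $f$ alone. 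Second, even in the principal case $X=X_f$, translates of single-site realizations interfere: arranging the value at one site of $F_1$ perturbs the values already arranged at the others, so ``tile it over $F_1$ by translation'' does not yield simultaneous approximation on $F_1$ without an additional correction/iteration scheme that you have not supplied (exponential decay makes the interference small, not zero). Third, the compactness argument you invoke bounds coefficient support and mass only in terms of the pattern space $(\Rb/\Zb)^{k|F_1|}$, so the bound depends on $F_1$, whereas homoclinic specification demands a set $F$ uniform in both $F_1$ and $x$; this is the same circularity you set out to break.

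The missing idea --- for which you already have every tool in hand --- is the paper's reduction of coefficients modulo the annihilator. Your $f$ with $f\widehat{X}=\{0\}$ gives $f^*X=0$ (and $f^*$ is equally invertible in $\ell^1(\Gamma)$, with $\|f^*\|_1=\|f\|_1$), so $f^*$ kills the generators $z_j$ of $\Delta(X)$. Writing $x=\sum_j a_jz_j$, one may replace each $a_j$ by $a_j-b_jf^*$, where $b_j$ is the integral part of $a_j(f^*)^{-1}\in\ell^1(\Gamma)$: integrality plus summability force $b_j\in\Zb\Gamma$, the point $x$ is unchanged, and now $\|a_j\|_\infty\le\|f\|_1$ \emph{uniformly}, with no compactness and no dependence on $F_1$. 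With this $\ell^\infty$ bound, brutally truncating each $a_j$ to $F_1^{-1}W$, where $W$ is chosen once so that $\sum_j\sum_{s\in\Gamma\setminus W}\rho(0_X,sz_j)<\varepsilon/(2\|f\|_1)$, produces a genuine element of $\Delta(X)$ satisfying both specification estimates with $F=WW^{-1}$. In particular your Wiener/holomorphy observation (exponential decay of the coefficients of $f^{-1}$ for $\Gamma=\Zb^d$) is correct but superfluous: $\ell^1$-summability already suffices, which is also why the paper's argument is phrased so as to motivate the extension beyond $\Zb^d$ in Conjecture~\ref{Conj-specification}.
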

\begin{proof}We just need to show that (4)$\Rightarrow$(3). By Theorem~\ref{T-homoclinic are 1-homoclinic} and Propositions~\ref{P-expansive to subshift} and \ref{P- 1-expansive summable under metric}, we can find a compatible translation-invariant
metric $\rho$ on $X$ such that $\sum_{s\in \Gamma}\rho(0_X, sx)<+\infty$ for all $x\in \Delta(X)$.

Since $\Zb\Gamma$ is Noetherian \cite[Corollary IV.4.2]{Lang}, from Propositions~\ref{P-basic p-expansive}.(4) and \ref{P-basic p-homoclinic}.(6)  we see that $\Delta(X)$ is a finitely generated left $\Zb\Gamma$-module.
Take $z_1, \dots, z_n\in \Delta(X)$ such that $\Delta(X)=\sum_{j=1}^n\Zb\Gamma z_j$.

By Theorem~\ref{T-algebraic characterization of expansive} $X$ is a closed $\Gamma$-invariant subgroup of $X_A:=\widehat{(\Zb\Gamma)^k/(\Zb\Gamma)^kA}$ for some
$k\in \Nb$ and some $A\in \M_k(\Zb\Gamma)$ being invertible in $M_k(\ell^1(\Gamma))$.
Treat $\Delta(X_A)$ as a discrete abelian group and consider the induced $\Gamma$-action on the Pontryagin dual $\widehat{\Delta(X_A)}$.
By Lemmas~\ref{L-key expansive homoclinic} and \ref{L-key expansive} the $\Gamma$-action on $\widehat{\Delta(X_A)}$ is expansive. By Corollary~\ref{C-characterization of expansive for abelian} there exists $f\in \Zb\Gamma$ being invertible in $\ell^1(\Gamma)$ such that $f\Delta(X_A)=0$. In particular, $f\Delta(X)=0$.

Let $\varepsilon>0$. Take a nonempty finite subset $W$ of $\Gamma$ such that $\sum_{j=1}^n\sum_{s\in \Gamma\setminus W}\rho(0_X, sz_j)<\varepsilon/(2\|f\|_1)$. Set $F=WW^{-1}$.

Let $F_1$ be a finite subset of $\Gamma$ and $x'\in X$. By the condition (4) we can take some $x\in \Delta(X)$ with $\max_{s\in F_1}\rho(sx', sx)<\varepsilon/2$.

We have $x=\sum_ja_jz_j$ for some $a_1, \dots, a_n\in \Zb\Gamma$. Note that $a_jf^{-1}$ is in $\ell^1(\Gamma)$.
Let $b_j$ be the integral part of $a_jf^{-1}$. That is, $b_j\in \Rb^\Gamma$ has integral coefficients, and $a_jf^{-1}-b_j$ has coefficients in $[-1/2, 1/2)$. Then $\|b_j\|_1\le 2\|a_jf^{-1}\|_1<+\infty$, and hence $b_j\in \Zb\Gamma$. Note that $x=\sum_j(a_j-b_jf)z_j$, and
$$\|a_j-b_jf\|_\infty\le \|a_jf^{-1}-b_j\|_\infty\cdot \|f\|_1\le \|f\|_1.$$ Thus, replacing
$a_j$ by $a_j-b_jf$ if necessary, we may assume that $\|a_j\|_\infty\le \|f\|_1$ for all $1\le j\le n$.

For each $1\le j\le n$, define $c_j\in \Zb\Gamma$ to be the same as $a_j$ on $F_1^{-1}W$ and $0$ outside of
 $F_1^{-1}W$. Set $y=\sum_jc_jz_j\in \Delta(X)$. For each $s\in F_1$, since $\rho$ is translation-invariant, we have
\begin{align*}
\rho(sx, sy)&=\rho(\sum_js(a_j-c_j)z_j, 0_X)\\
&\le \sum_j\sum_{t\in \Gamma}|(s(a_j-c_j))_t|\rho(tz_j, 0_X)\\
&=\sum_j\sum_{t\in \Gamma\setminus W}|(a_j-c_j)_{s^{-1}t}|\rho(tz_j, 0_X)\\
&\le \sum_j\sum_{t\in \Gamma\setminus W}\|a_j\|_\infty\rho(tz_j, 0_X)\\
&\le \|f\|_1\sum_j\sum_{t\in \Gamma\setminus W}\rho(tz_j, 0_X)\le \varepsilon/2,
\end{align*}
and hence
$$ \rho(sx', sy)\le \rho(sx', sx)+\rho(sx, xy)\le \varepsilon.$$

For each $s\in \Gamma\setminus FF_1$, noting that $s^{-1}W\cap F_1^{-1}W=\emptyset$, we have
\begin{align*}
\rho(sy, 0_X)&\le \sum_j\sum_{t\in \Gamma}|(sc_j)_t|\rho(tz_j, 0_X)\\
&=\sum_j\sum_{t\in \Gamma\setminus W}|(c_j)_{s^{-1}t}|\rho(tz_j, 0_X)\\
&\le \sum_j\sum_{t\in \Gamma\setminus W}\|c_j\|_\infty\rho(tz_j, 0_X)\\
&\le \|f\|_1\sum_j\sum_{t\in \Gamma\setminus W}\rho(tz_j, 0_X)\le \varepsilon/2.
\end{align*}
\end{proof}

In general, we have

\begin{conjecture} \label{Conj-specification}
Suppose that $\Gamma$ is amenable and $\Zb\Gamma$ is left Noetherian. Let $\alpha$ be an expansive action of $\Gamma$ on a compact abelian group $X$ by automorphisms.
Then the conditions (1), (2), (3) and (4) in Theorem~\ref{T-specification} are all equivalent.
\end{conjecture}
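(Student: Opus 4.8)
The plan is to adapt the proof of Theorem~\ref{T-specification for Z^d}, isolating the single place where commutativity of $\Gamma$ is used. By Theorem~\ref{T-specification} the conditions (1), (2), (3) are already equivalent for an arbitrary countable $\Gamma$ and each implies (4), so it suffices to establish (4)$\Rightarrow$(3). I would first reproduce the opening of the $\Zb^d$ argument, which uses nothing about $\Zb^d$ beyond the Noetherian hypothesis: by Theorem~\ref{T-homoclinic are 1-homoclinic} together with Propositions~\ref{P-expansive to subshift} and \ref{P- 1-expansive summable under metric}, fix a compatible translation-invariant metric $\rho$ with $\sum_{s\in\Gamma}\rho(0_X, sx)<+\infty$ for all $x\in\Delta(X)$; by Proposition~\ref{P-basic p-expansive}.(4) and Proposition~\ref{P-basic p-homoclinic}.(6) (here the Noetherian assumption enters), the group $\Delta(X)$ is a finitely generated left $\Zb\Gamma$-module, say $\Delta(X)=\sum_{j=1}^n\Zb\Gamma z_j$; and by Theorem~\ref{T-algebraic characterization of expansive} realize $X$ as a closed $\Gamma$-invariant subgroup of $X_A=\widehat{(\Zb\Gamma)^k/(\Zb\Gamma)^kA}$ with $A$ invertible in $M_k(\ell^1(\Gamma))$, so that by Lemma~\ref{L-key expansive homoclinic} one has $\Delta(X_A)\cong(\Zb\Gamma)^k/(\Zb\Gamma)^kA^*$ and $\Delta(X)\subseteq\Delta(X_A)$.

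The only step of the $\Zb^d$ proof invoking commutativity is the one producing, via Corollary~\ref{C-characterization of expansive for abelian}, an element $f\in\Zb\Gamma$ invertible in $\ell^1(\Gamma)$ with $f\Delta(X)=0$, which is then used to replace any expansion $x=\sum_j a_j z_j$ by one with $\|a_j\|_\infty\le\|f\|_1$. For nonabelian $\Gamma$ no such scalar need exist and the determinant trick is unavailable; the correct replacement is a matrix reduction. Write $\Delta(X)=(\Zb\Gamma)^n/R$ with $R=\ker(e_j\mapsto z_j)$, finitely generated by Noetherianity. If one knows the $\Gamma$-action on $\widehat{\Delta(X)}$ is expansive, then by Theorem~\ref{T-algebraic characterization of expansive} there is $M\in M_n(\Zb\Gamma)$, invertible in $M_n(\ell^1(\Gamma))$, with $(\Zb\Gamma)^nM\subseteq R$; reducing any $a\in(\Zb\Gamma)^n$ modulo $(\Zb\Gamma)^nM$ (subtract $bM$, where $b\in(\Zb\Gamma)^n$ is the integral part of $aM^{-1}\in(\ell^1(\Gamma))^n$) yields $a'\equiv a\pmod R$ with $\|a'\|_\infty$ bounded by a constant depending only on $M$. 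This furnishes, for every $x\in\Delta(X)$, an expansion $x=\sum_j a_j z_j$ with the $\|a_j\|_\infty$ uniformly bounded, exactly the property the $f$-reduction provided. Granting this, the remainder of the proof of Theorem~\ref{T-specification for Z^d} (truncate each $a_j$ to $F_1^{-1}W$, set $y=\sum_j c_j z_j\in\Delta(X)$, and estimate $\rho(sx,sy)$ and $\rho(sy,0_X)$ using translation invariance and the summability of $s\mapsto\rho(sz_j,0_X)$) carries over verbatim, since every set-theoretic identity there (e.g. $s^{-1}W\cap F_1^{-1}W=\emptyset$ for $s\notin WW^{-1}F_1$) and every metric estimate remains valid for an arbitrary group $\Gamma$ acting on an abelian $X$.

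Thus the entire difficulty is concentrated in one assertion: that the $\Gamma$-action on $\widehat{\Delta(X)}$ is expansive. Since $\Delta(X)$ is a submodule of $\Delta(X_A)$, dualizing exhibits $\widehat{\Delta(X)}$ as a quotient of $X_{A^*}=\widehat{\Delta(X_A)}$, which is expansive by Lemma~\ref{L-key expansive}, and $\widehat{\Delta(X)}$ has finitely generated dual $\Delta(X)$. Hence the needed statement is precisely the abelian case of Conjecture~\ref{Conj-expansive quotient} (expansiveness passing to this quotient), which for $\Gamma=\Zb^d$ is Corollary~\ref{C-expansive quotient} but is open in general; I expect this to be the main obstacle. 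Homologically, using condition (4) of Theorem~\ref{T-algebraic characterization of expansive} the goal is $\ell^1(\Gamma)\otimes_{\Zb\Gamma}\Delta(X)=\{0\}$, and from the short exact sequence $0\to\Delta(X)\to\Delta(X_A)\to\Delta(X_A)/\Delta(X)\to 0$ together with $\ell^1(\Gamma)\otimes_{\Zb\Gamma}\Delta(X_A)=\{0\}$ one sees that $\ell^1(\Gamma)\otimes_{\Zb\Gamma}\Delta(X)$ is a quotient of $\mathrm{Tor}_1^{\Zb\Gamma}(\ell^1(\Gamma),\Delta(X_A)/\Delta(X))$. The crux is therefore a flatness-type vanishing for $\ell^1(\Gamma)$ over the Noetherian ring $\Zb\Gamma$, which the commutative case supplies automatically but which appears to require genuinely new input for nonabelian amenable $\Gamma$.
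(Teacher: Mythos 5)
You have not produced a proof, and indeed you could not have: the statement is Conjecture~\ref{Conj-specification} of the paper, stated without proof, so there is no argument of the authors to compare against. Your proposal is explicitly conditional, and the condition on which it hangs is the genuine gap: everything reduces to showing that the induced action $\Gamma\curvearrowright\widehat{\Delta(X)}$ is expansive, equivalently (by condition (4) of Theorem~\ref{T-algebraic characterization of expansive}, since left Noetherianity makes $\Delta(X)$ a finitely generated $\Zb\Gamma$-module) that $\ell^1(\Gamma)\otimes_{\Zb\Gamma}\Delta(X)=\{0\}$. As you note, $\widehat{\Delta(X)}$ is a quotient of the expansive system $X_{A^*}\cong\widehat{\Delta(X_A)}$ by a closed invariant subgroup, so the needed statement is exactly the abelian case of Conjecture~\ref{Conj-expansive quotient} --- which the paper also leaves open, remarking after its statement that the abelian case suffices. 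Without this, no matrix $M\in M_n(\Zb\Gamma)$ invertible in $M_n(\ell^1(\Gamma))$ with rows in the relation module $R$ is available, and your bounded-coefficient reduction has no starting point; no substitute for it is supplied.

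That said, the conditional skeleton is correct and diagnoses precisely where the proof of Theorem~\ref{T-specification for Z^d} uses commutativity: only in Corollary~\ref{C-characterization of expansive for abelian}, whose determinant argument over the commutative Banach algebra $\ell^1(\Zb^d)$ converts the invertible matrix into a scalar annihilator $f$ of $\Delta(X_A)$. Your matrix replacement of the $f$-reduction is sound: if such an $M$ exists, then for $a\in(\Zb\Gamma)^n$ the integral part $b$ of $aM^{-1}$ has integer coefficients and finite $\ell^1$-norm, hence lies in $(\Zb\Gamma)^n$; $a-bM$ represents the same point of $\Delta(X)=(\Zb\Gamma)^n/R$ since $(\Zb\Gamma)^nM\subseteq R$; and $\|a-bM\|_\infty=\|(aM^{-1}-b)M\|_\infty\le\tfrac12\|M\|_1$, a constant depending only on $M$. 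You are also right that the truncation estimates use nothing beyond translation invariance of $\rho$ and summability of $s\mapsto\rho(sz_j,0_X)$ (Theorem~\ref{T-homoclinic are 1-homoclinic} with Proposition~\ref{P- 1-expansive summable under metric}), so they carry over verbatim, and that Theorem~\ref{T-specification} already disposes of everything except (4)$\Rightarrow$(3). The homological reformulation is likewise correct: $\ell^1(\Gamma)\otimes_{\Zb\Gamma}\Delta(X_A)=\{0\}$ by Lemmas~\ref{L-key expansive homoclinic} and \ref{L-torsion} applied to $A^*$, so the long exact sequence for $0\to\Delta(X)\to\Delta(X_A)\to\Delta(X_A)/\Delta(X)\to 0$ exhibits $\ell^1(\Gamma)\otimes_{\Zb\Gamma}\Delta(X)$ as a quotient of $\mathrm{Tor}_1^{\Zb\Gamma}\bigl(\ell^1(\Gamma),\Delta(X_A)/\Delta(X)\bigr)$. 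In short: a correct and well-aligned reduction of one open conjecture of the paper to the other, but not a proof of either; the flatness-type vanishing you isolate is the new input that remains missing.
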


\section{IE Group} \label{S-IE}

In this section we study the local entropy theory for $\Gamma$-actions on compact groups by automorphisms.
The basics of local entropy theory are recalled in Section~\ref{SS-local}.
Throughout this section $\Gamma$ will be a countable amenable group, unless specified.

For a continuous action of $\Gamma$ on a compact space $X$,
we denote by $\cM(X, \Gamma)$ the set
of all $\Gamma$-invariant Borel probability measures on $X$.
For a compact group $X$, we denote by $\mu_X$ the normalized Haar measure on $X$, and by $e_X$ the identity element
of $X$.

\begin{lemma} \label{L-product factor}
Let $\Gamma$ act on a compact  group $X$ by automorphisms,
and let $\nu\in \cM(X, \Gamma)$. Then the product map $X\times X\rightarrow X$ sending
$(x, y)$ to $xy$ is a $\Gamma$-equivariant surjective continuous map, for $X\times X$ equipped with
the product action, and sends the $\Gamma$-invariant measures $\mu_X\times \nu$ and $\nu \times \mu_X$ to $\mu_X$.
\end{lemma}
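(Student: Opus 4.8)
The plan is to dispatch the topological properties directly and then reduce the two measure identities to the invariance of Haar measure. Write $m\colon X\times X\to X$ for the product map $m(x,y)=xy$. Continuity of $m$ is immediate, since multiplication in a topological group is continuous, and surjectivity follows at once from $z=m(z,e_X)$ for every $z\in X$. For $\Gamma$-equivariance I would use that $\Gamma$ acts by automorphisms: for $s\in\Gamma$ one has $s(xy)=(sx)(sy)$, so with the product action $s(x,y)=(sx,sy)$ on $X\times X$ we get $m(s(x,y))=(sx)(sy)=s\,m(x,y)$. This already shows that $m$ carries $\Gamma$-invariant measures to $\Gamma$-invariant measures.

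For the push-forward statements I would test against an arbitrary $f\in C(X)$ and invoke Fubini's theorem, which applies since all measures in sight are probability measures and hence finite. For $m_*(\mu_X\times\nu)$ one computes
\[
\int_X f\,d\bigl(m_*(\mu_X\times\nu)\bigr)=\int_X\int_X f(xy)\,d\mu_X(x)\,d\nu(y).
\]
The key point is the \emph{right} translation invariance of the Haar measure $\mu_X$: for each fixed $y$ one has $\int_X f(xy)\,d\mu_X(x)=\int_X f\,d\mu_X$. Since this inner value no longer depends on $y$ and $\nu$ is a probability measure, the outer integration merely multiplies it by $\nu(X)=1$, yielding $\int_X f\,d\mu_X$. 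As $f\in C(X)$ was arbitrary, $m_*(\mu_X\times\nu)=\mu_X$.

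The case $m_*(\nu\times\mu_X)$ is entirely symmetric, using instead the \emph{left} translation invariance of $\mu_X$: for each fixed $x$ one has $\int_X f(xy)\,d\mu_X(y)=\int_X f\,d\mu_X$, after which integrating against $\nu$ in the variable $x$ again contributes only a factor $1$. Both computations rely on $X$ being compact, hence unimodular, so that $\mu_X$ is simultaneously left and right invariant.

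There is no serious obstacle here; the statement is essentially bookkeeping, combining the invariance of Haar measure with Fubini. The only point demanding a little care is to match the correct side of invariance to each factor—right invariance absorbs the $\mu_X$ sitting in the first (left) slot of $\mu_X\times\nu$, while left invariance absorbs the one sitting in the second (right) slot of $\nu\times\mu_X$. It is worth remarking that the $\Gamma$-invariance of $\nu$ plays no role in the push-forward identities themselves; it is needed only so that the product measures are $\Gamma$-invariant, consistent with the equivariance of $m$ established in the first step.
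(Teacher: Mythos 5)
Your proof is correct, but it takes a genuinely different route from the paper's. The paper integrates nothing: it observes that $\pi_*(\mu_X\times\nu)$ is a Borel probability measure on $X$ which inherits left-translation invariance from $\mu_X$ (translating the first coordinate by $z\in X$ preserves $\mu_X\times\nu$ and turns $xy$ into $zxy$), and then concludes $\pi_*(\mu_X\times\nu)=\mu_X$ by uniqueness of the Haar probability measure on a compact group, handling $\nu\times\mu_X$ ``similarly'' and dismissing the topological and equivariance claims as obvious, much as you do. Your route instead tests against an arbitrary $f\in C(X)$, applies Fubini, and absorbs the inner integral by right (resp.\ left) invariance of $\mu_X$, invoking unimodularity of compact groups to have both invariances at hand. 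What the paper's argument buys is brevity and economy of hypotheses: only left invariance of $\mu_X$ together with uniqueness of Haar measure is needed, with no Fubini and no explicit appeal to unimodularity for the first identity. What yours buys is a self-contained computation that never quotes uniqueness of Haar measure and that makes explicit exactly which side of invariance does the work in each push-forward identity---a point you correctly flag, along with the accurate observation that the $\Gamma$-invariance of $\nu$ is irrelevant to the push-forward identities themselves and matters only for the $\Gamma$-invariance of the product measures. Note also that in your own scheme only the $\mu_X\times\nu$ case truly needs right invariance; your computation for $\nu\times\mu_X$ already uses left invariance alone.
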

\begin{proof} Denote the product map by $\pi$. Then $\pi_*(\mu_X\times \nu)$ is a Borel probability measure on $X$. Since $\mu_X$ is left-shift invariant,
so is $\pi_*(\mu_X\times \nu)$. Thus $\pi_*(\mu_X\times \nu)=\mu_X$. Similarly $\pi_*(\nu \times \mu_X)=\mu_X$. The other parts of the lemma are obvious.
\end{proof}

\begin{definition} \label{D-IE group}
Let $\Gamma$ act on a compact group $X$ by automorphisms. We say that a point $x\in X$ is an {\it IE-point} if $(x, e_X)\in \IE_2(X)$. We denote the set of all $\IE$-points by $\IE(X)$.
\end{definition}

\begin{theorem} \label{T-mIE=IE}
Let $\Gamma$ act on a compact group $X$ by automorphisms. Then the following hold:
\begin{enumerate}
\item  The set $\IE(X)$ is a $\Gamma$-invariant closed normal subgroup of $X$.

\item
For any $k\in \Nb$, the set $\IE_k(X)$ is a $\Gamma$-invariant (under the product action of $\Gamma$ on $X^k$) closed subgroup of the group $X^k$, and
\begin{eqnarray*}
\IE_k(X)&=&\{(x_1y, \dots, x_ky): x_1, \dots, x_k\in \IE(X), y\in X\} \\
&=&\{(yx_1, \dots, yx_k): x_1, \dots, x_k\in \IE(X), y\in X\}.
\end{eqnarray*}

\item For any $\nu\in \cM(X, \Gamma)$ and $k\in \Nb$, one has
 $\IE^{\nu}_k(X)\subseteq \IE^{\mu_X}_k(X)=\IE_k(X)$.

\item $\rh(X)>0$ if and only if $\IE(X)$ is nontrivial.

\item  Let $Y$ be a closed $\Gamma$-invariant normal subgroup of $X$ and denote by $q$ the quotient
map $X\rightarrow X/Y$. Consider the induced $\Gamma$-action on $X/Y$. Then $q(\IE(X))=\IE(X/Y)$.
\end{enumerate}
\end{theorem}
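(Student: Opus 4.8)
The plan is to extract from Lemma~\ref{L-product factor} a single structural principle and then read off all five assertions from it. Writing $m\colon X\times X\to X$ for the product map, Lemma~\ref{L-product factor} says $m$ is a $\Gamma$-equivariant continuous surjection; applying Theorem~\ref{T-basic IE}(4) to $m$ and then Theorem~\ref{T-basic IE}(5) to identify $(X\times X)^k$ with $X^k\times X^k$, I obtain
\[
\IE_k(X)=(m\times\cdots\times m)\big(\IE_k(X)\times\IE_k(X)\big)=\{(a_1b_1,\dots,a_kb_k): (a_i)_i,(b_i)_i\in\IE_k(X)\}.
\]
In particular $\IE_k(X)$ is closed under coordinatewise multiplication (call this multiplicativity). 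Applying Theorem~\ref{T-basic IE}(4) to the inversion map $x\mapsto x^{-1}$, which is an equivariant homeomorphism because the $\Gamma$-action is by automorphisms, shows $\IE_k(X)$ is closed under coordinatewise inversion. Two elementary facts from the definition of independence sets complete the toolkit: first, since $\mu_X$ is an invariant measure of full support, Theorem~\ref{T-basic IE}(3) gives $\IE_1(X)=X$; second, because an independence set for a tuple $(U_{\sigma(s)})$ only sees the distinct entries, repeating or reordering coordinates (in particular padding a tuple with copies of an existing entry) preserves membership in $\IE_k$, and the constant tuple $(x,\dots,x)$ lies in $\IE_k(X)$ iff $x\in\IE_1(X)$.

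With these in hand I would prove (1) and (2) together. For (1), the identity lies in $\IE(X)$ because $(e_X,e_X)\in\IE_2(X)$ (as $e_X\in\IE_1(X)=X$), and multiplicativity and inversion show $\IE(X)$ is a subgroup; it is closed as the preimage of the closed set $\IE_2(X)$ under $x\mapsto(x,e_X)$, and $\Gamma$-invariant since $se_X=e_X$. Normality follows by conjugating: for $y\in X=\IE_1(X)$ both $(y,y)$ and $(y^{-1},y^{-1})$ lie in $\IE_2(X)$, so multiplicativity sends $(x,e_X)$ to $(yxy^{-1},e_X)$. For (2), multiplicativity, inversion and the identity make $\IE_k(X)$ a closed $\Gamma$-invariant subgroup of $X^k$; for the explicit description, given $(z_1,\dots,z_k)\in\IE_k(X)$ I multiply by the diagonal tuple $(z_k^{-1},\dots,z_k^{-1})$ (which lies in $\IE_k(X)$ since $z_k^{-1}\in\IE_1(X)$) and then project to the coordinates $(i,k)$ to see $z_iz_k^{-1}\in\IE(X)$, giving $z_i=(z_iz_k^{-1})z_k$. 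Conversely, if each $x_i\in\IE(X)$ then each padded tuple $(e_X,\dots,x_i,\dots,e_X)$ lies in $\IE_k(X)$ by the coordinate-repetition fact, their product is $(x_1,\dots,x_k)$, and multiplying by the diagonal tuple $(y,\dots,y)$ yields $(x_1y,\dots,x_ky)$.

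Assertions (4) and (5) are then short. For (4), Theorem~\ref{T-basic IE}(2) reduces positivity of entropy to the existence of a non-diagonal $(u,v)\in\IE_2(X)$; multiplying by $(v^{-1},v^{-1})$ gives $uv^{-1}\in\IE(X)\setminus\{e_X\}$, and conversely any $a\in\IE(X)\setminus\{e_X\}$ produces the non-diagonal pair $(a,e_X)$. For (5), the quotient map $q\colon X\to X/Y$ is an equivariant continuous surjection, so Theorem~\ref{T-basic IE}(4) gives $(q\times q)(\IE_2(X))=\IE_2(X/Y)$; the inclusion $q(\IE(X))\subseteq\IE(X/Y)$ is immediate, and for the reverse I lift a pair $(x,x')\in\IE_2(X)$ over $(w,e_{X/Y})$ (so $x'\in Y$) and use multiplicativity to replace it by $(x(x')^{-1},e_X)$, whence $x(x')^{-1}\in\IE(X)$ maps to $w$. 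For the measure statement (3), I run the same product-map argument with measures: Lemma~\ref{L-product factor} gives $m_*(\nu\times\mu_X)=\mu_X$, so Theorem~\ref{T-basic measure IE}(4),(5) yield $\IE_k^{\mu_X}(X)=(m\times\cdots\times m)(\IE_k^\nu(X)\times\IE_k^{\mu_X}(X))$; inserting the constant tuple $(e_X,\dots,e_X)$, which lies in $\IE_k^{\mu_X}(X)$ because $\IE_1^{\mu_X}(X)=\supp\mu_X=X$, gives $\IE_k^\nu(X)\subseteq\IE_k^{\mu_X}(X)$ for every $\nu$. Then Theorem~\ref{T-basic measure IE}(6), together with the closedness of $\IE_k^{\mu_X}(X)$ from Theorem~\ref{T-basic measure IE}(1), gives $\IE_k(X)=\overline{\bigcup_\nu\IE_k^\nu(X)}\subseteq\IE_k^{\mu_X}(X)\subseteq\IE_k(X)$, so all three coincide.

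The step I expect to require the most care is the coordinate-repetition/padding fact underlying (2): although elementary, one must verify at the level of product neighborhoods that an independence set for a tuple depends only on its set of distinct entries, and that refining the neighborhoods (using monotonicity of independence sets in the sets $A_i$) lets one pass between product neighborhoods of $(x_1,\dots,x_k)$ and of its padded or repeated versions. Everything else is a formal consequence of multiplicativity, which Lemma~\ref{L-product factor} together with the product formulas in Theorems~\ref{T-basic IE} and \ref{T-basic measure IE} hands us directly.
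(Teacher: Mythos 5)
Your proposal is correct and follows essentially the same route as the paper's proof: multiplicativity of $\IE_k(X)$ via Lemma~\ref{L-product factor} together with parts (4) and (5) of Theorem~\ref{T-basic IE} (and their measure analogues in Theorem~\ref{T-basic measure IE} for assertion (3)), inversion via Theorem~\ref{T-basic IE}.(4), $\IE_1(X)=X$ from full support of $\mu_X$, and the padding/restriction facts for tuples, with normality by conjugating with diagonal pairs. The only deviations are cosmetic (you normalize by the last coordinate $z_k$ where the paper uses $z_1$, and you use $\nu\times\mu_X$ where the paper uses $\mu_X\times\nu$), and the padding verification you flag as delicate is exactly the step the paper also dispatches as immediate from the definition.
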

\begin{proof} Let $k\in \Nb$.

(1) and (2).
Since $\supp(\mu_X)=X$, from Theorem~\ref{T-basic IE}.(3) we have $\IE_1(X)=X$. It is clear from the definition of IE-tuples that if $1\le m<k$ and $(x_1, \dots, x_m)\in \IE_m(X)$,
then $(x_1, \dots, x_m, x_{m+1}, \dots, x_k)\in \IE_k(X)$ for $x_{m+1}=\dots=x_k=x_1$. It follows that
the length $k$ diagonal element $(x, \dots, x)$ is in $\IE_k(X)$ for every $x\in X$. In particular, $\IE_k(X)$ contains the
identity element of the group $X^k$.

Consider the product action of $\Gamma$ on $X\times X$. Denote by $\pi$ the product map
$X\times X\rightarrow X$, and by $\pi^k$ its $k$-fold $(X\times X)^k\rightarrow X^k$. Note that $\pi^k$ can be identified with the product map
of the group $X^k$.
By Theorem~\ref{T-basic IE}.(5) one has $\IE_k(X\times X)=\IE_k(X)\times \IE_k(X)$.
From Lemma~\ref{L-product factor} and Theorem~\ref{T-basic IE}.(4), one gets $\pi^k(\IE_k(X\times X))=\IE_k(X)$.
Thus, $\IE_k(X)\cdot \IE_k(X)=\IE_k(X)$. Also applying
Theorem~\ref{T-basic IE}.(4) to the inverse map $X\rightarrow X$, one gets $(\IE_k(X))^{-1}=\IE_k(X)$.
Therefore
$\IE_k(X)$ is a subgroup of $X^k$.

By Theorem~\ref{T-basic IE}.(1), the set $\IE_k(X)$ is $\Gamma$-invariant and closed.

Since $\IE_2(X)$ is a $\Gamma$-invariant closed subgroup of $X^2$, clearly $\IE(X)$ is a $\Gamma$-invariant closed subgroup of $X$.
If $x\in \IE(X)$ and $y\in X$, then $(y, y)$, $(y^{-1}, y^{-1})$ and $(x, e_X)$ are all in $\IE_2(X)$,
and hence $(yxy^{-1}, e_X)=(y, y)\cdot (x, e_X)\cdot (y^{-1}, y^{-1})\in \IE_2(X)$. Therefore $\IE(X)$ is a normal subgroup
of $X$.

Now we show that $\IE_k(X)=\{(x_1y, \dots, x_ky): x_1, \dots, x_k\in \IE(X), y\in X\}$. The case $k=1$ follows from $\IE_1(X)=X$.
Assume that $k\ge 2$. Note that, by the definition of IE-tuples, $\IE_k(X)$ is closed under taking permutation.
If $x_1, \dots, x_k\in \IE(X)$ and $y\in X$, then the $k$-tuples $(x_1, e_X, \dots, e_X), (e_X, x_2, \dots, e_X), \dots,
(e_X, e_X, \dots, x_k)$ and $(y, \dots, y)$ are all in $\IE_k(X)$, and hence
$$(x_1y, x_2y, \dots, x_ky)=(x_1, e_X, \dots, e_X)\cdot (e_X, x_2, \dots, e_X)\cdot  \dots \cdot
(e_X, e_X, \dots, x_k)\cdot (y, \dots, y)$$
 is in $\IE_k(X)$.

Note that, by the definition of IE-tuples,
if $(x_1, \dots, x_k)\in \IE_k(X)$ and $1\le m\le k$, then $(x_1, \dots, x_m)\in \IE_m(X)$.
Suppose that $(y_1, \dots, y_k)\in \IE_k(X)$. Let $2\le j\le k$. Then $(y_1, y_j)\in \IE_2(X)$, and hence
$(e_X, y_jy_1^{-1})=(y_1, y_j)\cdot (y_1^{-1}, y_1^{-1})\in \IE_2(X)$. Thus $y_jy_1^{-1}\in \IE(X)$.
Set $x_1=e_X$, $x_j=y_jy_1^{-1}$ for all $2\le j\le k$, and $y=y_1$. Then $(y_1, \dots,y_k)=(x_1y, \dots, x_ky)$.
This proves    $\IE_k(X)=\{(x_1y, \dots, x_ky): x_1, \dots, x_k\in \IE(X), y\in X\}$.
Similarly, one has $\IE_k(X)=\{(yx_1, \dots, yx_k): x_1, \dots, x_k\in \IE(X), y\in X\}$.

(3).
Let $\nu\in \cM(X, \Gamma)$.
From Theorem~\ref{T-basic measure IE}.(5) one gets $\IE^{\mu_X\times \nu}_k(X\times X)=\IE^{\mu_X}_k(X)\times \IE^{\nu}_k(X)$.
By Lemma~\ref{L-product factor} and Theorem~\ref{T-basic measure IE}.(4), one has $\pi^k(\IE^{\mu_X\times \nu}_k(X\times X))=
\IE^{\pi_*(\mu_X\times \nu)}_k(X)=\IE^{\mu_X}_k(X)$. Thus, for any
$\ox\in \IE^{\mu_X}_k(X)$ and $\oy\in \IE^{\nu}_k(X)$, one has
$\ox\cdot\oy \in \IE^{\mu_X}_k(X)$.

By Theorem~\ref{T-basic measure IE}.(3), we have $\IE^{\mu_X}_1(X)=\supp(\mu_X)=X$.
It is clear from the definition of $\mu_X$-IE-tuples that if $1\le m<k$ and $(x_1, \dots, x_m)\in \IE^{\mu_X}_m(X)$,
then $(x_1, \dots, x_m, x_{m+1}, \dots, x_k)\in \IE^{\mu_X}_k(X)$ for $x_{m+1}=\dots=x_k=x_1$. It follows that
the length $k$ diagonal element $(x, \dots, x)$ is in $\IE^{\mu_X}_k(X)$ for every $x\in X$.
In particular, $\IE^{\mu_X}_k(X)$ contains the identity element $e_{X^k}$ of $X^k$.

For any $\oy\in \IE^{\nu}_k(X)$, we have $\oy=e_{X^k}\cdot \oy\in \IE^{\mu_X}_k(X)$. This proves
$\IE^{\nu}_k(X)\subseteq \IE^{\mu_X}_k(X)$.

Now from parts (1) and (6) of Theorem~\ref{T-basic measure IE} we conclude $\IE_k(X)=\IE^{\mu_X}_k(X)$.

(4). This follows from Theorem~\ref{T-basic IE}.(2).

(5). By Theorem~\ref{T-basic IE}.(4) we have $(q\times q)(\IE_2(X))=\IE_2(X/Y)$. It follows that
$q(\IE(X))\subseteq \IE(X/Y)$. Furthermore, for any $z\in \IE(X/Y)$, there exists $(x, y)\in \IE_2(X)$
with $q(x)=z$ and $q(y)=e_{X/Y}$. Then $y\in Y$. By Assertion (2) we have
$xy^{-1}\in \IE(X)$. Thus $z=q(xy^{-1})\in q(\IE(X))$, and hence $\IE(X/Y)\subseteq q(\IE(X))$.
\end{proof}

Under the assumptions of Theorem~\ref{T-mIE=IE},
$\Gamma$ has an induced action on $X/\IE(X)$ by automorphisms.
Under the quotient map $X\rightarrow X/\IE(X)$, $X/\IE(X)$ is a topological factor of $X$.

\begin{theorem} \label{T-topological Pinsker}
Let $\Gamma$ act on a compact group $X$ by automorphisms. Denote by $q$ the quotient map $X\rightarrow X/\IE(X)$. Then the following hold:
\begin{enumerate}
\item
$X/\IE(X)$ is the largest topological
factor $Y$ of $X$ satisfying $\htopol(Y)=0$, in the sense that $\htopol(  X/\IE(X))=0$ and if $Y$ is a topological factor
of $X$ with $\htopol(Y)=0$, then there is a topological factor map $X/\IE(X)\rightarrow Y$ such that the diagram
\begin{eqnarray*}
\xymatrixcolsep{5pc}
\xymatrix{
X \ar[rd] \ar[r]^q
&X/\IE(X) \ar[d]\\
&Y}
\end{eqnarray*}
commutes.

\item $X/\IE(X)$ is also the largest topological
factor $Y$ of $X$ satisfying $\rh_{\pi_*(\mu_X)}(Y)=0$ for $\pi:X\rightarrow Y$ denoting the factor map,
in the sense that $\rh_{q_*(\mu_X)}(X/\IE(X))=0$
and if $Y$ is a topological factor
of $X$ with $\rh_{\pi_*(\mu_X)}(Y)=0$
, then there is a topological factor map $X/\IE(X)\rightarrow Y$ such that the above diagram in (1) commutes.
\end{enumerate}
\end{theorem}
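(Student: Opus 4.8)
The plan is to reduce both assertions to a single mechanism: any factor map onto a zero-entropy factor must collapse every IE-pair to the diagonal, and the group structure of $\IE_2(X)$ recorded in Theorem~\ref{T-mIE=IE}.(2) upgrades this to the statement that the map is constant on the cosets of $\IE(X)$, hence descends through $q$. I would treat (1) and (2) in parallel, using topological IE-tuples for the former and measure IE-tuples for the latter.

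First I would check that the candidate factor genuinely has zero entropy. Applying Theorem~\ref{T-mIE=IE}.(5) with the normal subgroup $\IE(X)$ in the role of $Y$ gives $\IE(X/\IE(X)) = q(\IE(X)) = \{e_{X/\IE(X)}\}$, so the IE group of the quotient is trivial; Theorem~\ref{T-mIE=IE}.(4) then forces $\rh(X/\IE(X)) = 0$. Since the induced action on the compact group $X/\IE(X)$ is by automorphisms, its topological and Haar-measure entropies coincide, so $\htopol(X/\IE(X)) = 0$; and because $q_*(\mu_X)$ is the normalized Haar measure of $X/\IE(X)$, we also get $\rh_{q_*(\mu_X)}(X/\IE(X)) = 0$. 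This disposes of the first half of each of (1) and (2).

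For the universal property in (1), let $\pi : X \to Y$ be a factor map with $\htopol(Y) = 0$. By Theorem~\ref{T-basic IE}.(4) we have $(\pi\times\pi)(\IE_2(X)) = \IE_2(Y)$, and by Theorem~\ref{T-basic IE}.(2) the vanishing of $\htopol(Y)$ forces $\IE_2(Y)$ to be contained in the diagonal $\{(y,y):y\in Y\}$; hence $\pi(x)=\pi(x')$ whenever $(x,x')\in\IE_2(X)$. I would then use the description $\IE_2(X) = \{(x_1y, x_2y) : x_1,x_2\in\IE(X),\, y\in X\}$ from Theorem~\ref{T-mIE=IE}.(2): taking $x_1 = w\in\IE(X)$, $x_2 = e_X$ and $y = x$ shows $(wx, x)\in\IE_2(X)$ for every $x\in X$ and $w\in\IE(X)$, so $\pi(wx)=\pi(x)$ and $\pi$ is constant on each coset of $\IE(X)$. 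Because $q$ is a quotient map, $\pi$ descends to a continuous, $\Gamma$-equivariant surjection $\bar\pi : X/\IE(X)\to Y$ with $\bar\pi\circ q = \pi$, which is exactly the factor map making the diagram commute.

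The universal property in (2) runs verbatim once topological IE-tuples are replaced by measure ones: for $\nu=\pi_*(\mu_X)$, Theorem~\ref{T-basic measure IE}.(4) gives $(\pi\times\pi)(\IE_2^{\mu_X}(X)) = \IE_2^\nu(Y)$, the hypothesis $\rh_\nu(Y)=0$ places $\IE_2^\nu(Y)$ in the diagonal by Theorem~\ref{T-basic measure IE}.(2), and the identity $\IE_2^{\mu_X}(X)=\IE_2(X)$ from Theorem~\ref{T-mIE=IE}.(3) lets me rerun the coset argument. I expect no serious obstacle beyond bookkeeping; the one genuinely load-bearing point is the zero-entropy step, where the self-referential use of Theorem~\ref{T-mIE=IE}.(5) (the quotient's IE group is the image of $\IE(X)$, hence trivial) combined with (4) is what produces a bona fide zero-entropy \emph{factor} rather than a mere continuous image, and where one must remember that the topological, Haar-measure, and $q_*(\mu_X)$-entropies of the quotient group all agree.
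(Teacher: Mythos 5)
Your proposal is correct and follows essentially the same route as the paper: both rest on lifting/pushing IE-pairs (Theorems~\ref{T-basic IE}.(4) and \ref{T-basic measure IE}.(4)), the diagonality of $\IE_2$ under zero entropy, and the coset description of $\IE_2(X)$ from Theorem~\ref{T-mIE=IE}.(2) to show the factor map collapses $\IE(X)$-cosets, with part (2) obtained by the same argument in the measure setting via $\IE_2^{\mu_X}(X)=\IE_2(X)$. Your zero-entropy step citing Theorem~\ref{T-mIE=IE}.(5) together with (4) is just a repackaging of the paper's direct lifting of IE-pairs of the quotient (which is exactly how (5) is proved), and your observation that $q_*(\mu_X)$ is the Haar measure of $X/\IE(X)$ correctly supplies the $\rh_{q_*(\mu_X)}$ claim in (2).
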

\begin{proof}
(1).
Let $(x, y)\in \IE_2(X/\IE(X))$. By Theorem~\ref{T-basic IE}.(4) we can find $(\tilde{x}, \tilde{y})\in \IE_2(X)$
with $q(\tilde{x})=x$ and $q(\tilde{y})=y$. By Theorem~\ref{T-mIE=IE} we have $\tilde{x}\tilde{y}^{-1}\in \IE(X)$. Thus
$x=q(\tilde{x})=q(\tilde{x}\tilde{y}^{-1})q(\tilde{y})=y$. Therefore $\IE_2(X/\IE(X))$ consists of only diagonal elements, and hence
by Theorem~\ref{T-basic IE}.(2) one has $\htopol(X/\IE(X))=0$.

Now let $Y$ be a topological factor of $X$ such that $\htopol(Y)=0$. Denote by $\pi$ the factor map $X\rightarrow Y$. To show that
there is a topological factor map $X/\IE(X)\rightarrow Y$ making the diagram in Assertion (1) commute, it suffices to show
for any $x', y'\in X$ with $q(x')=q(y')$ one has $\pi(x')=\pi(y')$.

Let $x_1\in \IE(X)$ and $y_1\in X$. From Theorem~\ref{T-mIE=IE} we get $(y_1, x_1y_1)\in \IE_2(X)$. By Theorem~\ref{T-basic IE}.(4)
we have $(\pi\times \pi)(\IE_2(X))=\IE_2(Y)$. Thus $(\pi(y_1), \pi(x_1y_1))\in \IE_2(Y)$. Since $\htopol(Y)=0$, by Theorem~\ref{T-basic IE}.(2) the set
$\IE_2(Y)$ consists of only diagonal elements. Thus
$\pi(y_1)=\pi(x_1y_1)$.
If $x', y'\in X$ and $q(x')=q(y')$,
then $x'(y')^{-1}\in \IE(X)$ and  hence $\pi(x')=\pi((x'(y')^{-1})y')=\pi(y')$.

(2). This can be proved using arguments similar to that for Assertion (1), using Theorem~\ref{T-basic measure IE} instead of Theorem~\ref{T-basic IE}.
\end{proof}

From Theorem~\ref{T-topological Pinsker} we get

\begin{corollary} \label{C-topological CPE}
Let $\Gamma$ act on a compact group $X$ by automorphisms. Then the following are equivalent:
\begin{enumerate}
\item $\IE(X)=X$.

\item The only topological factor $Y$ of $X$ with $\htopol(Y)=0$ is the trivial factor consisting of one point.

\item The only topological factor $Y$ of $X$ with $\rh_{\pi_*(\mu_X)}(Y)=0$ for $\pi: X\rightarrow Y$ denoting the factor map is
the trivial factor consisting of one point.
\end{enumerate}
\end{corollary}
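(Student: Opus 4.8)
The plan is to derive the corollary directly from Theorem~\ref{T-topological Pinsker}, which identifies $X/\IE(X)$ simultaneously as the largest topological factor of $X$ with zero topological entropy (part (1)) and as the largest topological factor with vanishing measure entropy for the pushforward of $\mu_X$ (part (2)). The whole point is that both statements say $X/\IE(X)$ dominates every factor in the relevant zero-entropy class, so the triviality of $X/\IE(X)$ is equivalent to the triviality of that entire class.

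First I would establish the equivalence (1)$\Leftrightarrow$(2) using part (1) of Theorem~\ref{T-topological Pinsker}. The key observation is that $X/\IE(X)$ is itself a topological factor of $X$ with $\htopol(X/\IE(X))=0$. If $\IE(X)=X$, then this factor consists of a single point; since part (1) provides, for every topological factor $Y$ with $\htopol(Y)=0$, a surjective factor map $X/\IE(X)\rightarrow Y$, and the surjective continuous image of a one-point space is a one-point space, every such $Y$ is trivial, which is precisely (2). Conversely, if (2) holds, then applying it to the zero-entropy factor $Y=X/\IE(X)$ forces $X/\IE(X)$ to be a single point, i.e.\ $\IE(X)=X$, giving (1).

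Next, the equivalence (1)$\Leftrightarrow$(3) follows by the identical argument, with part (2) of Theorem~\ref{T-topological Pinsker} replacing part (1). Here one uses that $X/\IE(X)$ is a topological factor satisfying $\rh_{q_*(\mu_X)}(X/\IE(X))=0$, where $q$ is the quotient map, and that every topological factor $Y$ with $\rh_{\pi_*(\mu_X)}(Y)=0$ (for $\pi:X\rightarrow Y$ the factor map) receives a surjective factor map from $X/\IE(X)$. Thus the same one-point reasoning collapses (1) and (3) into each other.

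I expect no genuine analytic obstacle at this stage, since all the substantive content has already been built into Theorem~\ref{T-topological Pinsker}. The only point deserving a moment's care is the elementary fact that a continuous surjective $\Gamma$-equivariant image of a one-point system is again a one-point system, which is exactly what lets the ``largest factor'' formulation collapse to the stated triviality dichotomy. Accordingly I would keep the proof short, simply chaining the two equivalences through parts (1) and (2) of the Pinsker theorem, respectively, rather than running a three-way implication cycle.
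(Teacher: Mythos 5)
Your proof is correct and takes exactly the route the paper intends: the paper derives Corollary~\ref{C-topological CPE} directly from Theorem~\ref{T-topological Pinsker} without further argument, and your write-up simply makes explicit the two-way collapse (the largest zero-entropy factor $X/\IE(X)$ is trivial iff every zero-entropy factor is trivial, in both the topological and measure settings). Nothing is missing; the surjectivity of the induced map $X/\IE(X)\rightarrow Y$ is indeed the only small point to note, and you handle it correctly.
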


Next we show that taking the $\IE$ group is an idempotent operation.

\begin{lemma} \label{L-IE is normal}
Let $\Gamma$ act on a compact  group $X$ by automorphisms.
Let $Y$ be a closed $\Gamma$-invariant normal subgroup of $X$. Then $\IE(Y)$ is a normal subgroup of $X$.
\end{lemma}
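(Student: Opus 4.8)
The statement to prove is that $\IE(Y)$ is a normal subgroup of $X$. Note first that $\IE(Y)$ is, by definition, the $\IE$ group of the restricted action $\Gamma\curvearrowright Y$, so Theorem~\ref{T-mIE=IE}.(1) already tells us $\IE(Y)$ is a $\Gamma$-invariant closed normal subgroup of $Y$; in particular it is a subgroup of $X$. Thus everything reduces to showing that $\IE(Y)$ is invariant under conjugation by arbitrary elements of $X$. The main obstacle is precisely that, for a fixed $x\in X$, the inner automorphism $\Ad x\colon Y\to Y$ sending $z$ to $xzx^{-1}$ is a homeomorphism of $Y$ but is \emph{not} $\Gamma$-equivariant: one has $s(xzx^{-1})=(sx)(sz)(sx)^{-1}$, which is conjugation by $sx$ rather than by $x$, so the relation $(z,e_Y)\in\IE_2(Y)$ cannot be transported through $\Ad x$ directly via Theorem~\ref{T-basic IE}.(4).

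The plan is to absorb the conjugating element into a product space so as to restore equivariance. I would consider the product action of $\Gamma$ on $X\times Y$ and define $\beta\colon X\times Y\to Y$ by $\beta(x,z)=xzx^{-1}$, which lands in $Y$ because $Y$ is normal in $X$. Since each $s\in\Gamma$ acts by a group automorphism, $\beta(sx,sz)=(sx)(sz)(sx)^{-1}=s(xzx^{-1})=s\beta(x,z)$, so $\beta$ is a continuous $\Gamma$-equivariant map, and it is surjective already because $\beta(e_X,\cdot)$ is the identity on $Y$. This is the one genuine idea in the argument.

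Next I would feed in the correct IE-tuple. Using Theorem~\ref{T-basic IE}.(5) together with the natural identification $(X\times Y)^2\cong X^2\times Y^2$, one has $\IE_2(X\times Y)=\IE_2(X)\times\IE_2(Y)$. Fix $x\in X$ and $z\in\IE(Y)$. The diagonal pair $(x,x)$ lies in $\IE_2(X)$ (diagonal elements are always IE-tuples, as recorded in the proof of Theorem~\ref{T-mIE=IE}), and $(z,e_Y)\in\IE_2(Y)$ by the definition of $\IE(Y)$; hence the point $\bigl((x,z),(x,e_Y)\bigr)$ of $(X\times Y)^2$ lies in $\IE_2(X\times Y)$. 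Applying $\beta\times\beta$ and the forward inclusion $(\beta\times\beta)(\IE_2(X\times Y))\subseteq\IE_2(Y)$ from Theorem~\ref{T-basic IE}.(4), I get $(xzx^{-1},e_Y)=(\beta\times\beta)\bigl((x,z),(x,e_Y)\bigr)\in\IE_2(Y)$, that is, $xzx^{-1}\in\IE(Y)$.

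Since $x\in X$ and $z\in\IE(Y)$ were arbitrary, $\IE(Y)$ is invariant under conjugation by every element of $X$, and being a subgroup of $Y\subseteq X$ it is therefore a normal subgroup of $X$. I expect the remaining details to be purely routine verifications (continuity and surjectivity of $\beta$, the diagonal-IE fact), with the product-space equivariance trick being the only step requiring thought.
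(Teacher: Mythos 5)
Your proposal is correct and is essentially identical to the paper's own proof: the paper also introduces the conjugation map $\pi\colon X\times Y\rightarrow Y$, $(x,y)\mapsto xyx^{-1}$, observes it is surjective and $\Gamma$-equivariant for the product action, and applies parts (4) and (5) of Theorem~\ref{T-basic IE} to the pair $\bigl((x,x),(y,e_Y)\bigr)\in \IE_2(X)\times\IE_2(Y)=\IE_2(X\times Y)$ to conclude $(xyx^{-1},e_Y)\in\IE_2(Y)$. Nothing in your argument deviates from or falls short of the paper's reasoning.
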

\begin{proof} Consider the conjugation map $\pi: X\times Y\rightarrow Y$ sending $(x, y)$ to $xyx^{-1}$.
Clearly $\pi$ is surjective and $\Gamma$-equivariant for $X\times Y$ equipped with the product action.
By (4) and (5) of Theorem~\ref{T-basic IE}  we have
$(\pi\times \pi)(\IE_2(X)\times \IE_2(Y))=(\pi\times \pi) (\IE_2(X\times Y))=\IE_2(Y)$.
Let $x\in X$ and $y\in \IE(Y)$. Then $(x, x)\in \IE_2(X)$ and $(y, e_Y)\in \IE_2(Y)$.
Thus
$$(xyx^{-1}, e_Y)=(xyx^{-1}, xe_Yx^{-1})=(\pi\times \pi)((x, x)\times (y, e_Y))\in \IE_2(Y),$$
and
hence $xyx^{-1}\in \IE(Y)$.
\end{proof}

\begin{proposition} \label{P-IE of IE}
Let $\Gamma$ act on a compact group $X$ by automorphisms.
Then $\IE(\IE(X))=\IE(X)$.
\end{proposition}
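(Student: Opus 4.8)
The plan is to avoid arguing directly inside the subsystem $Z:=\IE(X)$ and instead pass to a quotient of $X$ where Theorems~\ref{T-mIE=IE} and~\ref{T-topological Pinsker}, together with the Yuzvinski\u{i} addition formula, pin down the answer. The inclusion $\IE(Z)\subseteq Z$ is immediate, since $\IE(Z)$ is by definition a subgroup of $Z$; the whole content is the reverse inclusion $Z\subseteq \IE(Z)$.

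First I would record that $\IE(Z)$ is a closed $\Gamma$-invariant normal subgroup of $X$: it is a closed $\Gamma$-invariant subgroup of $Z$ by Theorem~\ref{T-mIE=IE}.(1) applied to the system $Z$, and it is normal in $X$ by Lemma~\ref{L-IE is normal} (with $Y=Z$, which is a closed $\Gamma$-invariant normal subgroup of $X$ by Theorem~\ref{T-mIE=IE}.(1)). Thus I may form the compact group $X/\IE(Z)$, on which $\Gamma$ acts by automorphisms, and let $\overline{q}\colon X\to X/\IE(Z)$ be the quotient map. Applying Theorem~\ref{T-mIE=IE}.(5) to $\overline q$ and using $\IE(X)=Z$ gives
\[
\IE\bigl(X/\IE(Z)\bigr)=\overline q(\IE(X))=\overline q(Z)=Z/\IE(Z).
\]

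Next I would show $\rh(X/\IE(Z))=0$. The subgroup $Z/\IE(Z)$ is a closed $\Gamma$-invariant normal subgroup of $X/\IE(Z)$ with quotient $(X/\IE(Z))/(Z/\IE(Z))\cong X/Z$, so the addition formula (Proposition~\ref{P-addition formula}) yields
\[
\rh(X/\IE(Z))=\rh(Z/\IE(Z))+\rh(X/Z).
\]
Both summands vanish: $X/Z=X/\IE(X)$ and $Z/\IE(Z)$ are the topological Pinsker factors of $X$ and of $Z$ respectively, so $\htopol(X/Z)=0$ and $\htopol(Z/\IE(Z))=0$ by Theorem~\ref{T-topological Pinsker}.(1), and for these group-automorphism actions $\rh$ coincides with $\htopol$. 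Hence $\rh(X/\IE(Z))=0$, and Theorem~\ref{T-mIE=IE}.(4) forces $\IE(X/\IE(Z))$ to be trivial. Combined with the displayed identity above, this gives $Z/\IE(Z)=\{e\}$, i.e. $Z=\IE(Z)$, which is exactly the desired equality $\IE(X)=\IE(\IE(X))$.

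The only real obstacle is to resist attacking the statement head-on. A direct approach would try to promote an IE-pair $(z,e_X)\in \IE_2(X)$ to an IE-pair of the subsystem $Z$, but the witnessing points for the relevant independence sets live in $X$ and, since $Z$ typically has Haar measure zero in $X$, the measure-IE description of Theorem~\ref{T-basic measure IE} cannot keep them inside $Z$; likewise the surjection-and-product manipulations used in Lemma~\ref{L-IE is normal} only ever reproduce $\IE_2(Z)$ in terms of itself, yielding no new information. Routing everything through the quotient $X/\IE(Z)$ sidesteps this entirely, turning the problem into the entropy bookkeeping above, whose only nontrivial inputs are the factor behaviour of $\IE$ (Theorem~\ref{T-mIE=IE}.(5)), the equivalence of positive entropy with nontriviality of $\IE$ (Theorem~\ref{T-mIE=IE}.(4)), and additivity of entropy.
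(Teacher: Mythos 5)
Your proof is correct and follows essentially the same route as the paper's: both first establish that $\IE(\IE(X))$ is closed and normal in $X$ via Theorem~\ref{T-mIE=IE}.(1) and Lemma~\ref{L-IE is normal}, and both use Theorem~\ref{T-topological Pinsker} for the vanishing of $\rh(X/\IE(X))$ and $\rh(\IE(X)/\IE(\IE(X)))$ together with the addition formula (Proposition~\ref{P-addition formula}) to get $\rh(X/\IE(\IE(X)))=0$. The only cosmetic difference is the final inference: the paper invokes the maximality of the zero-entropy factor in Theorem~\ref{T-topological Pinsker} to conclude $\IE(X)\subseteq\IE(\IE(X))$, whereas you route the same conclusion through Theorem~\ref{T-mIE=IE}.(5) and (4), which is equally valid.
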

\begin{proof}
By Theorem~\ref{T-mIE=IE}.(1) and Lemma~\ref{L-IE is normal} the group $\IE(\IE(X))$ is closed and normal in $X$. By Theorem~\ref{T-topological Pinsker} we have
$\rh(X/\IE(X))=\rh(\IE(X)/\IE(\IE(X)))=0$. In virtue of
Proposition~\ref{P-addition formula} one has
$$\rh(X/\IE(\IE(X)))=\rh(X/\IE(X))+\rh(\IE(X)/\IE(\IE(X)))=0.$$ By Theorem~\ref{T-topological Pinsker} we
get $\IE(\IE(X))\supseteq \IE(X)$. Thus $\IE(\IE(X))=\IE(X)$.
\end{proof}

Now we describe the relation between $\Delta^1(X)$ and $\IE(X)$ for algebraic actions.

\begin{theorem} \label{T-homoclinic to IE}
Let $\Gamma$ act on a compact abelian group $X$ by automorphisms. Suppose that $\widehat{X}$ is a finitely generated left $\Zb\Gamma$-module. Then $\Delta^1(X)\subseteq \IE(X)$.
\end{theorem}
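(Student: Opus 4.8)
The plan is to verify directly that $(x, e_X) \in \IE_2(X)$ for every $x \in \Delta^1(X)$, by producing, for each pair of neighborhoods $U_1 \ni x$ and $U_2 \ni e_X$, independence sets of positive density inside arbitrary finite subsets of $\Gamma$. First I would pass to the concrete model: since $\widehat{X}$ is finitely generated, write $\widehat{X} = (\Zb\Gamma)^k/J$ and identify $X$ with a closed shift-invariant subgroup of $((\Rb/\Zb)^k)^\Gamma$. By Proposition~\ref{P-basic p-homoclinic}.(4) the hypothesis $x \in \Delta^1(X)$ says exactly that $\Phi_x \in \ell^1(\Gamma)$, where $\Phi_x(s) = \rho_\infty(x_s, 0_{(\Rb/\Zb)^k})$. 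Shrinking the neighborhoods, I may assume there are a finite window $W \subseteq \Gamma$ and $\delta > 0$ with $\{y : \rho_\infty(y_w, x_w) < \delta \text{ for all } w \in W\} \subseteq U_1$ and $\{y : \rho_\infty(y_w, 0_{(\Rb/\Zb)^k}) < \delta \text{ for all } w \in W\} \subseteq U_2$.

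The point witnessing independence is explicit: given a finite $F' \subseteq \Gamma$ and $\sigma : F' \to \{1, 2\}$, set $S = \sigma^{-1}(1)$ and $z_\sigma = \sum_{u \in S} u^{-1} x \in X$. A direct computation gives $(sz_\sigma)_w = \sum_{u \in S} x_{us^{-1}w}$, in which the diagonal term $u = s$ (present exactly when $\sigma(s)=1$) contributes $x_w$. Using the translation invariance and subadditivity of $\rho_\infty$, the distance from $(sz_\sigma)_w$ to its intended target ($x_w$ if $\sigma(s) = 1$, and $0$ if $\sigma(s) = 2$) is at most the \emph{interaction} $\sum_{u \in F', u \ne s} \Phi_x(us^{-1}w)$, since every off-diagonal index $us^{-1}$ is $\neq e_\Gamma$. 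Hence $z_\sigma \in \bigcap_{s \in F'} s^{-1} U_{\sigma(s)}$ as soon as $\sum_{u \in F', u \ne s} \Phi_x(us^{-1}w) < \delta$ for all $s \in F'$ and $w \in W$, so it suffices to find, inside any finite $F$, a large $F'$ for which all these interaction sums fall below $\delta$.

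The main obstacle is that the total interaction $\sum_{s \ne u \in F} \sum_{w \in W} \Phi_x(us^{-1}w)$ is only bounded by $|F|\,|W|\,\|\Phi_x\|_1$ (grouping pairs by their difference $g = us^{-1}$ and using $|F \cap g^{-1}F| \le |F|$), which makes the per-element interaction of order $\|\Phi_x\|_1$, not small. I would overcome this by random sparsification: form $F' \subseteq F$ by including each element independently with probability $p$. Then the expected total interaction carries a factor $p^2$ while $\E|F'| = p|F|$, so the expected number of \emph{bad} elements (those $s$ with interaction $\ge \delta$) is at most $p^2 |F|\,|W|\,\|\Phi_x\|_1/\delta$. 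Choosing $p = \delta/(2|W|\,\|\Phi_x\|_1)$ yields a realization in which $|F'|$ minus the number of bad elements is at least $c|F|$ with $c = \delta/(4|W|\,\|\Phi_x\|_1) > 0$; deleting the bad elements produces $F'' \subseteq F$ with $|F''| \ge c|F|$, and since the interaction sums only decrease under passage to a subset, every remaining $s$ meets the required bound. Because $c$ is independent of $F$ and the estimate $|F \cap g^{-1}F| \le |F|$ needs no invariance, this gives an independence set of density $c$ for \emph{every} finite $F$, so taking $K = \{e_\Gamma\}$ in Definition~\ref{D-IE} yields $(x, e_X) \in \IE_2(X)$, i.e. $x \in \IE(X)$. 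The degenerate case $\Phi_x = 0$, meaning $x = e_X$, is trivial.
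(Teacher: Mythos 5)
Your argument is correct, and while the overall skeleton (an explicit witness built by superposing translates of the homoclinic point, with $\ell^1$-summability controlling the interference) matches the paper's, the mechanism by which you extract the positive-density independence set is genuinely different. The paper factors the theorem through Proposition~\ref{P- 1-expansive summable under metric}, which converts the hypothesis into a compatible translation-invariant metric $\rho$ with $\sum_{s\in \Gamma}\rho(sx, e_X)<+\infty$, and then Proposition~\ref{P-l1 to IE}, whose witness $y_\sigma=(s_1^{-1}x)^{\sigma(s_1)}\cdots(s_{|F_1|}^{-1}x)^{\sigma(s_{|F_1|})}$ is the non-abelian analogue of your $z_\sigma$; there the density step is deterministic: one chooses a finite $K$ carrying all but $\varepsilon$ of the summable mass, and Lemma~\ref{L-separated subset} (a greedy maximal-packing argument) yields $F_1\subseteq F$ of density at least $\frac{1}{2|K|+1}$ with $(F_1F_1^{-1})\setminus\{e_\Gamma\}\subseteq \Gamma\setminus K$, so each interaction sum is automatically dominated by the $\ell^1$ tail. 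You skip both the metric abstraction (working in the coordinate model, which is equivalent via Proposition~\ref{P-basic p-homoclinic}.(4)) and the tail truncation, replacing the packing lemma by a first-moment sparsification: the total interaction scales like $p^2$ while the size scales like $p$, and deleting bad elements gives density $c=\delta/(4|W|\,\|\Phi_x\|_1)$. Both arguments work for \emph{every} finite $F$ with no F{\o}lner condition, so both effectively take $K=\{e_\Gamma\}$ in Definition~\ref{D-IE}. What the paper's route buys is the extra generality of Proposition~\ref{P-l1 to IE}, which holds for actions on arbitrary (possibly non-abelian) compact groups and is stated so that it can be reused; what yours buys is an explicit density depending only on $\delta$, $|W|$ and $\|\Phi_x\|_1$, whereas the paper's density $\frac{1}{2|K|+1}$ depends on how slowly the tail of $\Phi_x$ decays. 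Two cosmetic repairs: your $p$ must be capped at $1$ (harmless, since if $\delta>2|W|\,\|\Phi_x\|_1$ then $F'=F$ already works, each per-element interaction being at most $|W|\,\|\Phi_x\|_1$ because the differences $us^{-1}$, $u\in F\setminus\{s\}$, are distinct), and the subadditivity step should be recorded as $\rho_\infty\bigl(\sum_i a_i, \sum_i b_i\bigr)\le \sum_i \rho_\infty(a_i, b_i)$, valid by translation invariance of $\rho_\infty$.
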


Before giving the proof of Theorem~\ref{T-homoclinic to IE}, we use it to give a partial answer to a question of Deninger.
For any countable discrete (not necessarily amenable) group $\Gamma$, and any invertible element $f$ in the group von Neumann algebra $\cL\Gamma$ of $\Gamma$,
the Fuglede-Kadison determinant $\det_{\cL\Gamma}f$ is defined \cite{FK}, which is a positive real number.  We refer the reader to \cite{Den09, Luck}
and \cite[Section 2.2]{Li} for background on $\cL\Gamma$ and $\det_{\cL\Gamma} f$. Deninger asked \cite[Question 26]{Den09} if $f\in \Zb\Gamma$ is invertible in
$\ell^1(\Gamma)$ and has no left inverse in $\Zb\Gamma$, then whether $\det_{\cL\Gamma} f>1$. This was answered affirmatively by Deninger and Schmidt \cite[Corollary 6.7]{DS}
in the case $\Gamma$ is residually finite and amenable. Now we answer Deninger's question for all countable amenable groups:

\begin{corollary} \label{C-answer to Deninger}
Suppose that $f\in \Zb\Gamma$ is invertible in
$\ell^1(\Gamma)$ and has no left inverse in $\Zb\Gamma$. Then $\det_{\cL\Gamma} f>1$.
\end{corollary}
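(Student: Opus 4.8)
The plan is to realize $f$ via the principal algebraic action $\alpha_f$ on $X_f=\widehat{\Zb\Gamma/\Zb\Gamma f}$ and to show that the hypothesis forces this action to have positive entropy; the numerical conclusion then follows from the identification of entropy with the Fuglede--Kadison determinant. First I would note that, since $f$ is invertible in $\ell^1(\Gamma)$, Lemma~\ref{L-key expansive} shows $\alpha_f$ is expansive, and $f^*$ is likewise invertible in $\ell^1(\Gamma)$, so the operator $T\colon \ell^p(\Gamma)\to\ell^p(\Gamma)$, $a\mapsto af^*$, is invertible for every $1\le p<+\infty$ (it is right convolution by an invertible element of $\ell^1(\Gamma)$). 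Hence Lemma~\ref{L-key expansive homoclinic} applies with $k=1$ and $A=f$, giving a left $\Zb\Gamma$-module isomorphism $\Delta(X_f)\cong \Zb\Gamma/\Zb\Gamma f^*$.

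Next I would translate the hypothesis into a statement about this homoclinic group. The module $\Zb\Gamma/\Zb\Gamma f^*$ is trivial exactly when $\Zb\Gamma f^*=\Zb\Gamma$, i.e.\ when $f^*$ has a left inverse in $\Zb\Gamma$. Because $f$ (and hence $f^*$) is invertible in $\ell^1(\Gamma)$ with a unique inverse there, any one-sided inverse of $f^*$ in $\Zb\Gamma$ must coincide with $(f^*)^{-1}=(f^{-1})^*$; thus $f^*$ admits a left inverse in $\Zb\Gamma$ if and only if $f^{-1}\in\Zb\Gamma$, which is the same as $f$ having a left inverse in $\Zb\Gamma$. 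Therefore the assumption that $f$ has no left inverse in $\Zb\Gamma$ is equivalent to $\Delta(X_f)\neq\{0\}$, so under our hypotheses $\Delta(X_f)$ is nontrivial.

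I would then feed this into the local entropy machinery. By Theorem~\ref{T-homoclinic are 1-homoclinic} one has $\Delta^1(X_f)=\Delta(X_f)\neq\{0\}$, and since $\widehat{X_f}=\Zb\Gamma/\Zb\Gamma f$ is finitely generated, Theorem~\ref{T-homoclinic to IE} gives $\Delta^1(X_f)\subseteq\IE(X_f)$; hence $\IE(X_f)$ is nontrivial. Theorem~\ref{T-mIE=IE}.(4) then yields $\rh(X_f)>0$. Finally, invoking the entropy formula for principal algebraic actions of countable amenable groups from \cite{Li}, namely $\rh(X_f)=\log\det_{\cL\Gamma}f$ (the determinant being defined since $f$ is invertible in $\ell^1(\Gamma)$, hence in $\cL\Gamma$), I conclude $\log\det_{\cL\Gamma}f>0$, that is $\det_{\cL\Gamma}f>1$.

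The genuinely substantive ingredients are external or already established: the passage from a nontrivial homoclinic group to positive entropy rests entirely on Theorem~\ref{T-homoclinic to IE} together with Theorem~\ref{T-mIE=IE}.(4), and the final numerical step rests on the determinant formula $\rh(X_f)=\log\det_{\cL\Gamma}f$ of \cite{Li}. Thus the only self-contained work is the elementary bookkeeping of the second paragraph, identifying triviality of $\Delta(X_f)\cong\Zb\Gamma/\Zb\Gamma f^*$ with the existence of a left inverse of $f$; the main conceptual obstacle lies not in this corollary but in Theorem~\ref{T-homoclinic to IE}, on which everything hinges.
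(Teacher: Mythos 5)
Your proof is correct and takes essentially the same route as the paper: produce a nontrivial homoclinic group via Lemma~\ref{L-key expansive homoclinic}, feed it through Theorem~\ref{T-homoclinic are 1-homoclinic} and Theorem~\ref{T-homoclinic to IE} to get $\IE(X_f)$ nontrivial and hence $\rh(X_f)>0$, and conclude by the formula $\rh(X_f)=\log\det_{\cL\Gamma}f$ from \cite{Li}. The only (harmless) variation is in the bookkeeping step: you deduce nontriviality of $\Delta(X_f)$ from the module isomorphism $\Delta(X_f)\cong\Zb\Gamma/\Zb\Gamma f^*$ plus the involution argument that $f^*$ has no left inverse in $\Zb\Gamma$ either, whereas the paper gets it more directly by noting $X_f$ itself is nontrivial (since $\Zb\Gamma/\Zb\Gamma f\neq\{0\}$) and that $\Delta^1(X_f)$ is dense in $X_f$ by the same lemma.
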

\begin{proof}
Let $X_f$ be as in Notation~\ref{N-principal}. Since $f$ has no left inverse in $\Zb\Gamma$, the left $\Zb\Gamma$-module $\Zb\Gamma/\Zb\Gamma f$ is nontrivial, and hence $X_f$ consists of more than one point. As $f$ is invertible in $\ell^1(\Gamma)$, by Lemma~\ref{L-key expansive homoclinic} $\Delta^1(X_f)$ is dense in $X_f$ and hence is nontrivial. In virtue of Theorem~\ref{T-homoclinic to IE}, $\IE(X_f)$ is nontrivial. By Theorem~\ref{T-topological Pinsker} one has
$\rh(X_f)>0$. Theorem 1.1 of \cite{Li} states that for any $g\in \Zb\Gamma$ being invertible in $\cL\Gamma$, one has
$\rh(X_g)=\log \det_{\cL\Gamma}g$. As invertibility in $\ell^1(\Gamma)$ implies invertibility in $\cL\Gamma$, we get
$\log \det_{\cL\Gamma}f=\rh(X_f)>0$. Therefore $\det_{\cL\Gamma} f>1$.
\end{proof}

Theorem~\ref{T-homoclinic to IE} follows from Proposition~\ref{P- 1-expansive summable under metric} and the next result, which
 is inspired by the proof of \cite[Theorem 5.1]{DS}.

\begin{proposition} \label{P-l1 to IE}
Let $\Gamma$ act on a compact group $X$ by automorphisms.
Let $x\in X$ such that
$\sum_{s \in \Gamma} \rho(s x, e_X)<+\infty$ for some
compatible translation-invariant metric $\rho$ on $X$.
Then $x\in \IE(X)$.
\end{proposition}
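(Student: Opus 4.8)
The plan is to verify directly that $(x,e_X)\in\IE_2(X)$ by producing, for each pair of neighborhoods, large independence sets inside arbitrary finite subsets of $\Gamma$; no approximate invariance will be needed, so Lemma~\ref{L-separated subset} alone will supply the independence sets.

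First I would fix a product neighborhood $U_1\times U_2$ of $(x,e_X)$ and choose $\delta>0$ so that $\rho(y,x)<\delta$ forces $y\in U_1$ and $\rho(y,e_X)<\delta$ forces $y\in U_2$. Writing $a_s:=\rho(sx,e_X)$, the hypothesis reads $\sum_{s\in\Gamma}a_s<+\infty$, so I can pick a finite set $K_0\subseteq\Gamma$ with $e_\Gamma\in K_0$ and $\sum_{s\in\Gamma\setminus K_0}a_s<\delta$. For the definition of IE-tuple I then take the finite set to be $\{e_\Gamma\}$, $\varepsilon=1$, and $c=(2|K_0|+1)^{-1}$; the condition $|\{e_\Gamma\}F\setminus F|\le\varepsilon|F|$ holds for every nonempty finite $F\subseteq\Gamma$. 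Given such an $F$, Lemma~\ref{L-separated subset} (applied with $K=K_0$) yields $F'\subseteq F$ with $|F'|\ge(2|K_0|+1)^{-1}|F|=c|F|$ and $(F'(F')^{-1})\setminus\{e_\Gamma\}\subseteq\Gamma\setminus K_0$, i.e. $st^{-1}\notin K_0$ for all distinct $s,t\in F'$.

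It remains to check that $F'$ is an independence set for $(U_1,U_2)$. Given $\sigma:F'\to\{1,2\}$, set $S=\sigma^{-1}(1)$ and let $z=\prod_{t\in S}t^{-1}x$, a finite product in $X$ taken in some fixed order. Since each element of $\Gamma$ acts by an automorphism, $sz=\prod_{t\in S}(st^{-1})x$ for every $s\in\Gamma$. Here I would use the elementary inequality $\rho(g_1\cdots g_n,h_1\cdots h_n)\le\sum_{i}\rho(g_i,h_i)$, valid because $\rho$ is invariant under both left and right translations. For $s\in S$, comparing $sz$ with the product in which the $t=s$ factor is kept (it equals $x$) and every other factor is replaced by $e_X$ gives $\rho(sz,x)\le\sum_{t\in S,\,t\ne s}a_{st^{-1}}$; for $s\in F'\setminus S$, comparing with the all-$e_X$ product gives $\rho(sz,e_X)\le\sum_{t\in S}a_{st^{-1}}$. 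In both cases the indices $st^{-1}$ run over distinct elements of $\Gamma\setminus K_0$ (distinct because $t\mapsto st^{-1}$ is injective, and outside $K_0$ because $s,t\in F'$ are distinct), so each sum is at most $\sum_{g\in\Gamma\setminus K_0}a_g<\delta$. Hence $sz\in U_{\sigma(s)}$ for all $s\in F'$, that is $z\in\bigcap_{s\in F'}s^{-1}U_{\sigma(s)}$, so this intersection is nonempty. Thus $F'$ is an independence set for $(U_1,U_2)$, and since $U_1,U_2$ were arbitrary we conclude $(x,e_X)\in\IE_2(X)$, i.e. $x\in\IE(X)$.

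The only real difficulty is the non-commutativity of $X$: a point whose orbit is summable is built as a product $z=\prod_{t\in S}t^{-1}x$, and the estimates for $sz$ must survive the fact that the distinguished factor $x$ sits in the middle of the product. This is handled by the automorphism property (which lets $s$ pass through the product) together with the product-comparison inequality coming from translation-invariance of $\rho$. A secondary point worth stressing is that, unlike typical IE arguments, no F{\o}lner condition on $F$ is used: the density of the independence set comes entirely from the combinatorial separation in Lemma~\ref{L-separated subset}, which is why summability (rather than mere homoclinicity) of the orbit is exactly the right hypothesis.
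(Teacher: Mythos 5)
Your argument is correct and is essentially the paper's own proof: the same choice of a finite set $K_0$ capturing all but $\delta$ of the summable orbit, the same use of Lemma~\ref{L-separated subset} to extract $F'\subseteq F$ of density $(2|K_0|+1)^{-1}$ with $(F'(F')^{-1})\setminus\{e_\Gamma\}\subseteq\Gamma\setminus K_0$, and the same candidate point (your $z=\prod_{t\in S}t^{-1}x$ is exactly the paper's $y_\sigma=\prod_j (s_j^{-1}x)^{\sigma(s_j)}$), estimated via the telescoping inequality $\rho(w_1\cdots w_k,z_1\cdots z_k)\le\sum_i\rho(w_i,z_i)$ from bi-invariance of $\rho$. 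Your closing observations --- that no F{\o}lner condition on $F$ is needed and that distinctness of the indices $st^{-1}$ outside $K_0$ drives the bound --- are accurate and implicit in the paper's version as well.
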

\begin{proof}
Let $U_1$ and $U_0$ be neighborhoods of $x$ and $e_X$ in $X$ respectively. Then
there exists
$\varepsilon>0$ such
that $U_1\supseteq \{y\in X: \rho(y, x)<\varepsilon\}$
and $ U_0\supseteq \{y\in X: \rho(y, e_X)<\varepsilon\}$.
Since $\sum_{s \in \Gamma} \rho(s x, e_X)<+\infty$,
we can find a nonempty finite subset
$K$ of $\Gamma$ such that $\sum_{s\in \Gamma\setminus K}\rho(s x, e_X)<\varepsilon$.

Let $F$ be a nonempty finite subset of $\Gamma$. By Lemma~\ref{L-separated subset} there exists $F_1\subseteq F$ with
$\frac{|F_1|}{|F|}\ge \frac{1}{2|K|+1}$ and $((F_1F_1^{-1})\setminus \{e_{\Gamma}\})\subseteq \Gamma\setminus K$. Say, $F_1=\{s_1, \dots, s_{|F_1|}\}$. For each $\sigma\in \{0, 1\}^{F_1}$, set
$$y_{\sigma}=(s_1^{-1}x)^{\sigma(s_1)}(s_2^{-1}x)^{\sigma(s_2)}\cdots (s_{|F_1|}^{-1}x)^{\sigma(s_{|F_1|})}.$$
We claim that  $s(y_{\sigma})\in U_{\sigma(s)}$ for every $s \in F_1$. Let $s \in F_1$.  Since $\rho$ is translation invariant, we have
\begin{align*}
\rho(w_1w_2\cdots w_k, z_1z_2\cdots z_k)\le \sum_{i=1}^k\rho(w_i, z_i)
\end{align*}
for all $k\in \Nb$ and $w_1, \dots, w_k, z_1, \dots, z_k\in X$.
Thus
\begin{eqnarray*}
\rho(s(y_{\sigma}), x^{\sigma(s)})
&\le & \rho((ss^{-1}x)^{\sigma(s)}, x^{\sigma(s)})+\sum_{s'\in F_1\setminus \{s\}}\rho((s(s')^{-1}x)^{\sigma(s')}, e_X)\\
&=& \sum_{s'\in F_1\setminus \{s\}}\rho((s(s')^{-1}x)^{\sigma(s')}, e_X).
\end{eqnarray*}
Since $s(s')^{-1}\in \Gamma\setminus K$ for every $s'\in F_1\setminus \{s\}$, we get
$$\rho(s(y_{\sigma}), x^{\sigma(s)})
\le  \sum_{s''\in \Gamma\setminus K}\rho(s''x, e_X)<\varepsilon.$$
Therefore $s(y_{\sigma})\in U_{\sigma(s)}$. This proves the claim. Thus $F_1$ is an independence set for $(U_1, U_0)$.
Then $(x, e_X)\in \IE_2(X)$ and hence $x\in \IE(X)$.
\end{proof}

\section{IE Group and Pinsker Algebra} \label{S-Pinsker}

Throughout this section $\Gamma$ will be a countable amenable group.

Let $(X, \cB_X, \mu)$ be a standard probability space. That is, $(X, \cB_X)$ is a standard Borel space \cite[Section 12]{Kechris} and
$\mu$ is a probability measure on $\cB_X$. Let $\Gamma$ act on $(X, \cB_X, \mu)$ via measure-preserving automorphisms.
The {\it Pinsker algebra} of this action, denoted by $\Pi(X)$ or $\Pi(X, \cB_X, \mu)$, is the $\sigma$-algebra on $X$ consisting of
$A\in \cB_X$ such that $\rh_{\mu}(\{A, X\setminus A\})=0$. For two sub-$\sigma$-algebras $\cB_1$ and $\cB_2$ of
$\cB_X$, we write $\cB_1=\cB_2 \mod \mu$ if for every $A_1\in \cB_1$ there exists $A_2\in \cB_2$ with
$\mu(A_1\Delta A_2)=0$, and vice versa.

For a compact space $X$ (recall that all compact spaces are assumed to be metrizable), we denote by $\cB_X$ the $\sigma$-algebra of Borel subsets of $X$. Note that if $X$ is a
compact space and $\mu$ is a probability measure on $\cB_X$, then $(X, \cB_X, \mu)$ is a standard probability space.

Recall that we denote by $\mu_X$ the normalized Haar measure of a compact group $X$.
Also recall that when $\Gamma$ acts on a compact space $X$ continuously, we denote by $\cM(X, \Gamma)$ the set of all $\Gamma$-invariant Borel probability measures on $X$.

The following theorem is the main result of this section, saying that $\IE(X)$ determines the Pinsker algebra with respect to $\mu_X$.

\begin{theorem} \label{T-IE to Pinsker}
Let $\Gamma$ act on a compact group $X$ by automorphisms. Denote by $q$ the quotient map
$X\rightarrow X/\IE(X)$. Then the following hold:
\begin{enumerate}
\item $q^{-1}(\cB_{X/\IE(X)})=\Pi(X, \cB_X, \mu_X) \mod \mu_X$.

\item For any $\nu \in \cM(X, \Gamma)$, one has $q^{-1}(\cB_{X/\IE(X)})\subseteq \Pi(X, \cB_X, \mu_X)\subseteq \Pi(X, \cB_X, \nu)$.
\end{enumerate}
\end{theorem}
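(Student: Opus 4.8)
The plan is to prove the two inclusions of (1) separately and then derive (2) from (1) together with Theorem~\ref{T-mIE=IE}.(3). For the inclusion $q^{-1}(\cB_{X/\IE(X)})\subseteq \Pi(X,\cB_X,\mu_X)$, I would argue directly: by Theorem~\ref{T-topological Pinsker} the factor $X/\IE(X)$ has zero entropy, so $\rh_{q_*(\mu_X)}(X/\IE(X))=0$ with $q_*(\mu_X)=\mu_{X/\IE(X)}$; hence for every $B\in\cB_{X/\IE(X)}$ the partition $\{q^{-1}(B),X\setminus q^{-1}(B)\}$ is pulled back from a zero-entropy factor and has zero entropy, so $q^{-1}(B)\in\Pi(X,\cB_X,\mu_X)$. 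The same computation, applied with $\nu$ in place of $\mu_X$, will feed into (2).

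The substance is the reverse inclusion $\Pi(X,\cB_X,\mu_X)\subseteq q^{-1}(\cB_{X/\IE(X)})\ (\mathrm{mod}\ \mu_X)$. I would reformulate it as the assertion that $(X,\mu_X)$ is \emph{relatively completely positive entropy} over the factor $q\colon X\to X/\IE(X)$. Indeed, if this holds then, since $X/\IE(X)$ has zero entropy and $q^{-1}(\cB_{X/\IE(X)})\subseteq\Pi(X,\cB_X,\mu_X)$ with $\rh_{\mu_X}(\Pi\mid q^{-1}\cB_{X/\IE(X)})=0$ by the addition formula for relative entropy, relative CPE forces $\Pi(X,\cB_X,\mu_X)=q^{-1}(\cB_{X/\IE(X)})\ (\mathrm{mod}\ \mu_X)$. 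Because $q$ is a group extension whose fibre is the closed $\Gamma$-invariant normal subgroup $\IE(X)$ carrying the conditional Haar measures $\mu_{\IE(X)}$, relative CPE over the base reduces, via the relative/fibred entropy theory for such extensions (the relative counterpart of Proposition~\ref{P-addition formula}, which already yields $\rh(\IE(X))=\rh(X)$ since $\rh(X/\IE(X))=0$), to the assertion that the fibre action $\Gamma\curvearrowright(\IE(X),\mu_{\IE(X)})$ has completely positive entropy.

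To establish this fibre CPE I would compute the measure IE-pairs of the fibre. Applying Theorem~\ref{T-mIE=IE} to the compact group $\IE(X)$, which is a genuine $\Gamma$-group by Theorem~\ref{T-mIE=IE}.(1): part (3) gives $\IE_2^{\mu_{\IE(X)}}(\IE(X))=\IE_2(\IE(X))$, and part (2) combined with Proposition~\ref{P-IE of IE} (so that $\IE(\IE(X))=\IE(X)$) identifies this with $\{(a,b):ab^{-1}\in\IE(X)\}=\IE(X)\times\IE(X)$. Thus every pair is a $\mu_{\IE(X)}$-IE pair, and by the characterization of completely positive entropy through fullness of the measure IE-pairs from \cite{MInd} (the measurable analogue of uniform positive entropy; contrapositively, a nontrivial zero-entropy factor would by Theorem~\ref{T-basic measure IE}.(2),(4) collapse all IE-pairs onto the diagonal), the system $(\IE(X),\mu_{\IE(X)})$ is CPE, completing (1). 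For (2), the first inclusion is the $\subseteq$ half of (1); for $\Pi(X,\cB_X,\mu_X)\subseteq\Pi(X,\cB_X,\nu)$, Theorem~\ref{T-mIE=IE}.(3) gives $\IE_2^\nu(X)\subseteq\IE_2^{\mu_X}(X)=\{(x,y):q(x)=q(y)\}$, so Theorem~\ref{T-basic measure IE}.(2),(4) force $X/\IE(X)$ to have zero $\nu$-entropy and hence $q^{-1}(\cB_{X/\IE(X)})\subseteq\Pi(X,\cB_X,\nu)$; combined with (1) this yields the claim.

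The main obstacle will be the two places where topological and combinatorial IE-data must be converted into statements about the \emph{measurable} Pinsker $\sigma$-algebra: making rigorous the reduction of relative CPE of the group extension $\IE(X)\to X\to X/\IE(X)$ to CPE of the fibre (the relative Pinsker formula for a group extension, for a possibly non-abelian fibre and a general amenable acting group), and the passage between the measurable Pinsker factor and the continuous factor maps that the cited IE-properties require. Everything else is bookkeeping with the results already established in Theorems~\ref{T-mIE=IE}, \ref{T-topological Pinsker} and \ref{T-basic measure IE} and Proposition~\ref{P-IE of IE}.
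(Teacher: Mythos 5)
The easy half of (1) and your route to $q^{-1}(\cB_{X/\IE(X)})\subseteq\Pi(X,\cB_X,\nu)$ (collapsing $\IE_2^\nu(X)\subseteq\IE_2^{\mu_X}(X)$ under $q$ and applying parts (2),(4) of Theorem~\ref{T-basic measure IE}) are fine. But the core of your argument has a genuine gap at exactly the point you flag as ``the main obstacle'': the passage from fullness of measure IE-pairs of the fibre to CPE. Your contrapositive --- ``a nontrivial zero-entropy factor would collapse all IE-pairs onto the diagonal'' --- uses Theorem~\ref{T-basic measure IE}.(4), which requires a $\Gamma$-equivariant \emph{continuous} surjection between compact models, whereas CPE quantifies over measurable factors: nothing guarantees a priori that the Pinsker factor of the given topological model admits a continuous realization. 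No characterization ``every pair is a $\mu$-IE pair $\Rightarrow$ CPE'' is proved in \cite{MInd}, and it cannot hold in general, since $\mu$-IE tuples depend on the topological model while CPE does not, and the relatively independent self-joining over the Pinsker algebra can have full support without the Pinsker algebra being trivial. Establishing that for compact group systems the Pinsker factor \emph{is} continuously (indeed algebraically) realized is precisely the content of the theorem: the paper first proves translation-invariance of $\Pi(X,\cB_X,\mu_X)$ (Lemma~\ref{L-Pinsker rotation}, resting on Danilenko's product Pinsker theorem, Theorem~\ref{T-Pinsker of product}), then invokes \cite[Lemma 20.4]{Schmidt} to write $\Pi(X,\cB_X,\mu_X)=q_1^{-1}(\cB_{X/Y})\bmod\mu_X$ for a closed $\Gamma$-invariant normal subgroup $Y$, and finally shows by a Borel cross-section and a Fubini computation with Haar measure that the induced map $X/\IE(X)\rightarrow X/Y$ is an isomorphism. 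Your reduction also leans on an unproven ``relative counterpart of Proposition~\ref{P-addition formula}'' (fibre CPE implies relative CPE of the group extension), for which no relative Pinsker theory is developed or cited; and note that CPE of $\Gamma\curvearrowright(\IE(X),\mu_{\IE(X)})$ is exactly Corollary~\ref{C-CPE} applied to $\IE(X)$ (via Proposition~\ref{P-IE of IE}), which the paper \emph{deduces from} this theorem --- so without an independent proof of fibre CPE your scheme is circular.

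There is a second, smaller gap in your part (2): the final step ``combined with (1) this yields the claim'' does not deliver the literal inclusion $\Pi(X,\cB_X,\mu_X)\subseteq\Pi(X,\cB_X,\nu)$. Assertion (1) is an identity only modulo $\mu_X$-null sets, and a $\mu_X$-null set belongs to $\Pi(X,\cB_X,\mu_X)$ but need not belong to $\Pi(X,\cB_X,\nu)$ --- it can have positive $\nu$-measure strictly between $0$ and $1$ and generate positive $\nu$-entropy. The paper proves this inclusion set-by-set as the $x=e_X$ case of Lemma~\ref{L-Pinsker rotation}: one applies Danilenko's theorem to the system $(X\times X,\cB_X\times\cB_X,\mu_X\times\nu)$ with the multiplication map, uses Fubini to place $x^{-1}A$ in $\Pi(X,\cB_X,\nu)$ for $\mu_X$-almost every $x$, and then approximates $e_X$ by translates $x_n$ outside the exceptional null set. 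You would need this (or some substitute for it) rather than assertion (1) to complete part (2).
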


We shall need the following result of Danilenko \cite[Theorem 0.4]{Danilenko}, which was proved first by
Glasner, Thouvenot, and Weiss \cite[Theorem 4]{GTW} in the case that the actions of $\Gamma$ on both $(X, \cB_X, \mu_X)$ and $(Y, \cB_Y, \mu_Y)$
are free and ergodic. Though Danilenko assumed $\Gamma$ to be infinite in \cite{Danilenko}, the following result holds trivially when $\Gamma$ is finite, since in such case the Pinsker algebra consists of measurable sets with measure $0$ or $1$.

\begin{theorem} \label{T-Pinsker of product}
Let $\Gamma$ act on two standard probability spaces $(X, \cB_X, \mu_X)$ and $(Y, \cB_Y, \mu_Y)$ via measure-preserving automorphisms.
For the product action of $\Gamma$ on $(X\times Y, \cB_X\times \cB_Y, \mu_X\times \mu_Y)$, one has
$\Pi(X\times Y)=\Pi(X)\times \Pi(Y) \mod \mu_X\times \mu_Y$.
\end{theorem}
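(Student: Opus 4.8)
The plan is to prove the two inclusions separately, using additivity of entropy for products of amenable actions together with the theory of relative entropy over a factor. Write $\pi_X\colon X\times Y\to X$ and $\pi_Y\colon X\times Y\to Y$ for the coordinate factor maps, and set $\mathcal{X}=\pi_X^{-1}(\cB_X)$ and $\mathcal{Y}=\pi_Y^{-1}(\cB_Y)$, so that $\mathcal{X}$ and $\mathcal{Y}$ are $\Gamma$-invariant sub-$\sigma$-algebras of $\cB_X\times\cB_Y$ that are independent under $\mu_X\times\mu_Y$ and generate it. Abbreviate $\Pi_X=\Pi(X)$, $\Pi_Y=\Pi(Y)$, $\Pi=\Pi(X\times Y)$, and write $\rh(\cdot)$ for Kolmogorov--Sinai entropy of a $\Gamma$-invariant sub-$\sigma$-algebra with respect to $\mu_X\times\mu_Y$ and $\rh(\cdot\mid\cdot)$ for relative entropy. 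With this notation the product $\sigma$-algebra in the statement is $\pi_X^{-1}(\Pi_X)\vee\pi_Y^{-1}(\Pi_Y)$.

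The easy inclusion $\pi_X^{-1}(\Pi_X)\vee\pi_Y^{-1}(\Pi_Y)\subseteq\Pi$ comes from observing that this $\sigma$-algebra is the factor of $X\times Y$ isomorphic to the product of the Pinsker factors of $X$ and of $Y$; since each Pinsker factor has zero entropy and entropy is additive for products of measure-preserving amenable actions, this product factor has entropy $0$, hence is contained in $\Pi$.

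For the reverse inclusion I would first establish the relative Pinsker identity $\Pi_{\mathcal{Y}}(X\times Y)=\pi_X^{-1}(\Pi_X)\vee\mathcal{Y} \mod \mu_X\times\mu_Y$, where $\Pi_{\mathcal{Y}}(X\times Y)$ denotes the largest sub-$\sigma$-algebra containing $\mathcal{Y}$ whose relative entropy over $\mathcal{Y}$ vanishes. The inclusion $\supseteq$ uses that, by independence of the coordinates in a product action, the relative entropy over $\mathcal{Y}$ of any $\Gamma$-invariant $\mathcal{A}\subseteq\cB_X$ equals its absolute entropy, which is $0$ when $\mathcal{A}=\Pi_X$. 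Granting this identity, let $\mathcal{C}\subseteq\Pi$ be any factor; from the Abramov--Rokhlin additivity of entropy over a factor (valid for amenable $\Gamma$) and subadditivity I get $\rh(\mathcal{C}\vee\mathcal{Y})\le\rh(\mathcal{C})+\rh(\mathcal{Y})=\rh(\mathcal{Y})$ and $\rh(\mathcal{C}\vee\mathcal{Y})=\rh(\mathcal{Y})+\rh(\mathcal{C}\vee\mathcal{Y}\mid\mathcal{Y})$, while monotonicity gives $\rh(\mathcal{C}\vee\mathcal{Y})\ge\rh(\mathcal{Y})$; hence $\rh(\mathcal{C}\vee\mathcal{Y}\mid\mathcal{Y})=0$, so $\mathcal{C}\vee\mathcal{Y}\subseteq\Pi_{\mathcal{Y}}(X\times Y)=\pi_X^{-1}(\Pi_X)\vee\mathcal{Y}$ and in particular $\mathcal{C}\subseteq\pi_X^{-1}(\Pi_X)\vee\mathcal{Y}$. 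By symmetry $\mathcal{C}\subseteq\mathcal{X}\vee\pi_Y^{-1}(\Pi_Y)$.

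Finally I would invoke the measure-theoretic intersection lemma for product $\sigma$-algebras: if $\mathcal{A}\subseteq\cB_X$ and $\mathcal{B}\subseteq\cB_Y$, then $(\pi_X^{-1}(\mathcal{A})\vee\mathcal{Y})\cap(\mathcal{X}\vee\pi_Y^{-1}(\mathcal{B}))=\pi_X^{-1}(\mathcal{A})\vee\pi_Y^{-1}(\mathcal{B}) \mod \mu_X\times\mu_Y$, which follows from a commuting conditional expectation (martingale) argument exploiting the product structure. Applying it with $\mathcal{A}=\Pi_X$ and $\mathcal{B}=\Pi_Y$ yields $\mathcal{C}\subseteq\pi_X^{-1}(\Pi_X)\vee\pi_Y^{-1}(\Pi_Y)$ for every factor $\mathcal{C}\subseteq\Pi$, i.e. $\Pi\subseteq\pi_X^{-1}(\Pi_X)\vee\pi_Y^{-1}(\Pi_Y)$, completing the proof. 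The main obstacle is the relative Pinsker identity, and specifically its nontrivial inclusion $\subseteq$: one must rule out that a relatively-zero-entropy extension of $\mathcal{Y}$ secretly encodes $X$-information depending measurably on the $Y$-coordinate. Overcoming this requires the full relative entropy and relative Pinsker machinery for amenable group actions and is exactly where the Glasner--Thouvenot--Weiss and Danilenko arguments do the real work; the product and relative entropy additivity facts and the intersection lemma used above are standard.
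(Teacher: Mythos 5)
Your proposal does not actually prove the theorem: the load-bearing step---the inclusion $\Pi_{\mathcal{Y}}(X\times Y)\subseteq \pi_X^{-1}(\Pi(X))\vee\mathcal{Y}$ of your relative Pinsker identity---is essentially a restatement of the theorem itself, and you explicitly defer it to ``the full relative entropy and relative Pinsker machinery'' of Glasner--Thouvenot--Weiss and Danilenko. For comparison, the paper offers no proof at all: it imports the statement as \cite[Theorem 0.4]{Danilenko} (first proved in \cite[Theorem 4]{GTW} for free ergodic actions), adding only the remark that for finite $\Gamma$ the result is trivial because the Pinsker algebra then consists of sets of measure $0$ or $1$. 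So everything in your sketch that you actually establish is correct but peripheral: the easy inclusion via additivity of entropy for product actions, the intersection lemma $(\pi_X^{-1}(\mathcal{A})\vee\mathcal{Y})\cap(\mathcal{X}\vee\pi_Y^{-1}(\mathcal{B}))=\pi_X^{-1}(\mathcal{A})\vee\pi_Y^{-1}(\mathcal{B})\bmod \mu_X\times\mu_Y$ (valid, since the two conditional expectations commute on a product space), and the reduction of the theorem to the relative identity. The one step you do not prove---ruling out a relatively-zero-entropy extension of $\mathcal{Y}$ that encodes positive-entropy $X$-information in a $Y$-dependent measurable way---is where the entire content lies, and it requires the decreasing-sequence-of-partitions/relative disjointness arguments of \cite{GTW} and the orbital techniques of \cite{Danilenko}; a blind proof that stops there has a genuine gap, not a routine verification left to the reader.

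There is also a concrete step in your sketch that would fail as written. To get $\rh(\mathcal{C}\vee\mathcal{Y}\mid\mathcal{Y})=0$ you subtract $\rh(\mathcal{Y})$ across the Abramov--Rokhlin identity, which is illegitimate when $\rh_{\mu_Y}(Y)=+\infty$---a case the theorem must cover, since no finiteness hypothesis is assumed (note that the paper is careful about exactly this kind of issue elsewhere, pointing out that the finiteness hypothesis is missing from the statement of Danilenko's Theorem 0.4 as used in Theorem~\ref{T-CPE to independent}). The subtraction is avoidable: for any finite partition $P$ measurable with respect to $\Pi(X\times Y)$, conditioning only decreases entropy, so $\rh_{\mu_X\times\mu_Y}(P\mid\mathcal{Y})\le \rh_{\mu_X\times\mu_Y}(P)=0$ directly, which yields $\Pi(X\times Y)\vee\mathcal{Y}\subseteq\Pi_{\mathcal{Y}}(X\times Y)$ with no additivity formula and no finiteness assumption. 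That repairs the soft half of your relative identity, but leaves the hard inclusion---and hence the theorem---still resting entirely on the cited results.
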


\begin{lemma} \label{L-Pinsker rotation}
Let $\Gamma$ act on a compact group $X$ by automorphisms. Let $\nu\in \cM(X, \Gamma)$. Then
$x\cdot \Pi(X, \cB_X, \mu_X), \Pi(X, \cB_X, \mu_X)\cdot x\subseteq \Pi(X, \cB_X, \nu)$ for  all $x\in X$.
\end{lemma}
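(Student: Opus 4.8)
The plan is to realise each translate of a Pinsker set as (essentially) a fibre of the multiplication map and to read off its entropy from the Pinsker algebra of a product system. Write $\pi\colon X\times X\to X$, $\pi(s,t)=st$. By Lemma~\ref{L-product factor}, $\pi$ is a $\Gamma$-equivariant continuous surjection for the product action and $\pi_*(\mu_X\times\nu)=\pi_*(\nu\times\mu_X)=\mu_X$; hence $\pi$ is a measure-preserving factor map both from $(X\times X,\mu_X\times\nu)$ and from $(X\times X,\nu\times\mu_X)$ onto $(X,\mu_X)$. Since a factor map pulls the Pinsker algebra of the target back into that of the source, for $A\in\Pi(X,\cB_X,\mu_X)$ we get $\pi^{-1}(A)\in\Pi(X\times X,\mu_X\times\nu)$ and $\pi^{-1}(A)\in\Pi(X\times X,\nu\times\mu_X)$.

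Next I would apply Theorem~\ref{T-Pinsker of product} to split these product Pinsker algebras: $\Pi(X\times X,\mu_X\times\nu)=\Pi(X,\mu_X)\times\Pi(X,\nu)\bmod(\mu_X\times\nu)$, and symmetrically $\Pi(X\times X,\nu\times\mu_X)=\Pi(X,\nu)\times\Pi(X,\mu_X)$. A set in a product $\sigma$-algebra has every one of its slices in the corresponding factor, so slicing $\pi^{-1}(A)=\{(s,t):st\in A\}$ in the first coordinate gives $s^{-1}A\in\Pi(X,\nu)$, while slicing the $\nu\times\mu_X$ version in the second coordinate gives $As^{-1}\in\Pi(X,\nu)$. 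Because $\mu_X$ is inversion-invariant, these yield $xA,\;Ax\in\Pi(X,\cB_X,\nu)$ for $\mu_X$-almost every $x$, establishing both inclusions off a $\mu_X$-null set of $x$.

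The remaining, genuinely delicate, point is to upgrade ``almost every $x$'' to ``every $x$'': the exceptional set depends on $A$, and since $\mu_X$-null sets need not be $\nu$-null, no soft measure-theoretic or continuity argument (e.g.\ continuity of $x\mapsto xA$ into the measure algebra of $\nu$) closes the gap. I would resolve it \emph{structurally}. By Theorem~\ref{T-mIE=IE}, $\IE(X)$ is a closed normal subgroup and the quotient map $q\colon X\to X/\IE(X)$ is an equivariant group homomorphism; by Theorem~\ref{T-topological Pinsker} the induced action on $X/\IE(X)$ has zero topological entropy, whence $\rh_{q_*\lambda}(X/\IE(X))=0$ and therefore $q^{-1}(\cB_{X/\IE(X)})\subseteq\Pi(X,\cB_X,\lambda)$ for \emph{every} $\lambda\in\cM(X,\Gamma)$, in particular for $\nu$ and $\mu_X$. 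Combining this with the identification $\Pi(X,\cB_X,\mu_X)=q^{-1}(\cB_{X/\IE(X)})\bmod\mu_X$ of Theorem~\ref{T-IE to Pinsker}(1), each $A\in\Pi(X,\cB_X,\mu_X)$ agrees modulo $\mu_X$ with a genuinely $\IE(X)$-saturated Borel set $q^{-1}(A')$; using normality and that $q$ is a homomorphism, $x\cdot q^{-1}(A')=q^{-1}(q(x)A')$ and $q^{-1}(A')\cdot x=q^{-1}(A'q(x))$ are again $\IE(X)$-saturated, hence lie in $\Pi(X,\cB_X,\nu)$ for all $x$. This matches the almost-everywhere output of the slicing and gives the inclusions for every $x$.

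The main obstacle, then, is precisely this a.e.-to-everywhere passage, which is why the structural input (the saturation description of the Pinsker algebra) is needed rather than pure measure theory. The one thing to verify for non-circularity is that Theorem~\ref{T-IE to Pinsker}(1) is derived from the measure IE-tuple theory of Theorem~\ref{T-basic measure IE} (together with the group structure of $\IE(X)$) independently of this lemma, so that it is legitimately available here; granting that, I expect the slicing computation of the second paragraph to be routine, with the saturation upgrade being the real content.
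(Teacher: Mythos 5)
Your first two paragraphs reproduce the paper's own opening move: the paper also composes Lemma~\ref{L-product factor} with Theorem~\ref{T-Pinsker of product}, regards $\chi_{\pi^{-1}(A)}$ as an element of $L^1(X\times X,\Pi(X,\cB_X,\mu_X)\times\Pi(X,\cB_X,\nu),\mu_X\times\nu)$, and applies Fubini to obtain exactly your almost-everywhere conclusion (it treats left translates and says ``similarly'' for right ones, rather than running $\nu\times\mu_X$ separately, but that is cosmetic). The genuine gap is in your third paragraph: the upgrade to \emph{all} $x$ via Theorem~\ref{T-IE to Pinsker}(1) is circular in precisely the way you flagged, and the condition you asked to be granted fails. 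In the paper, the proof of Theorem~\ref{T-IE to Pinsker}(1) \emph{begins} by invoking this very lemma (with $\nu=\mu_X$) in order to apply \cite[Lemma 20.4]{Schmidt}, whose hypothesis is exactly that the Pinsker algebra mod $\mu_X$ is invariant under all left and right translations; no derivation of the saturation description $q^{-1}(\cB_{X/\IE(X)})=\Pi(X,\cB_X,\mu_X)\bmod\mu_X$ from Theorem~\ref{T-basic measure IE} alone is given or apparent. (The non-circular ingredient, $q^{-1}(\cB_{X/\IE(X)})\subseteq\Pi(X,\cB_X,\lambda)$ for every $\lambda\in\cM(X,\Gamma)$ via Theorem~\ref{T-topological Pinsker}, is the easy inclusion and does not place $\Pi(X,\cB_X,\mu_X)$ inside the saturated algebra.)

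Moreover, even granting Theorem~\ref{T-IE to Pinsker}(1), your final step founders on the very distinction you raised: $A$ agrees with the saturated set $q^{-1}(A')$ only modulo a $\mu_X$-null set $N$, so $xA$ agrees with $q^{-1}(q(x)A')$ only modulo $xN$, which is again $\mu_X$-null but need not be $\nu$-null; since membership in $\Pi(X,\cB_X,\nu)$ is insensitive only to mod-$\nu$ modification, the structural route yields nothing beyond the almost-everywhere statement you already had. The paper's actual upgrade is the ``soft'' argument you dismissed: the conull set of good points is dense in $X$ because $\supp(\mu_X)=X$; given $x_0\in X$, one chooses good points $x_n\to x_0$, uses regularity of $\nu$ \cite[Theorem 6.1]{Walters} to get $\nu((x_n^{-1}A)\Delta(x_0^{-1}A))\to 0$, passes to a subsequence with summable errors, and represents $x_0^{-1}A$ mod $\nu$ as $\bigcap_{k\in\Nb}\bigcup_{n\ge k}A_n$ with $A_n\in\Pi(X,\cB_X,\nu)$, using that $\Pi(X,\cB_X,\nu)$ is a $\sigma$-algebra containing all $\nu$-null sets. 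Your instinct that translation-continuity in $\nu$-measure is delicate is not unreasonable and the paper's one-line justification of that convergence deserves scrutiny, but any repair must be by such a measure-theoretic device for the specific set $A$; routing through Theorem~\ref{T-IE to Pinsker} cannot work, since that theorem sits downstream of this lemma.
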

\begin{proof} Denote by $\pi$ the product map $X\times X\rightarrow X$ sending $(x, y)$ to $xy$. By Lemma~\ref{L-product factor} this is a measure-theoretic factor map
$(X\times X,  \cB_X\times \cB_X, \mu_X\times \nu)\rightarrow (X, \cB_X, \mu_X)$. Then $\pi^{-1}(\Pi(X, \cB_X, \mu_X))\subseteq \Pi(X\times X,  \cB_X\times \cB_X, \mu_X\times \nu)$. Let $A\in \Pi(X, \cB_X, \mu_X)$. By Theorem~\ref{T-Pinsker of product} we can find
$B\in \Pi(X, \cB_X, \mu_X)\times \Pi(X, \cB_X, \nu)$ with $(\mu_X\times \nu)(\pi^{-1}(A)\Delta B)=0$.
Denote by $\chi_{\pi^{-1}(A)}$ the characteristic function of $\pi^{-1}(A)$. Then $\chi_{\pi^{-1}(A)}\in L^1(X\times X,  \Pi(X, \cB_X, \mu_X)\times \Pi(X, \cB_X, \nu), \mu_X\times \nu)$. By the Fubini Theorem \cite[Theorem 8.8]{Rudin}, the function $y\mapsto \chi_{\pi^{-1}(A)}(x, y)=\chi_A(xy)=\chi_{x^{-1}A}(y)$ is in $L^1(X, \Pi(X, \cB_X, \nu), \nu)$ for $\mu_X$ almost
all $x\in X$. That is, there exists $E\in \cB_X$ with $\mu_X(E)=0$ such
that the function $\chi_{x^{-1}A}$ is in $L^1(X, \Pi(X, \cB_X, \nu), \nu)$ for every $x\in X\setminus E$.

Since $\supp(\mu_X)=X$, the set $X\setminus E$ is dense in $X$. Let $x_0\in X$. Since
$X$ is metrizable, we can find a sequence $\{x_n\}_{n\in \Nb}$ in $X\setminus E$ with
$x_n\to x_0$ as $n\to \infty$. For each $n\in \Nb$, since the function $\chi_{x_n^{-1}A}$ is in
$L^1(X, \Pi(X, \cB_X, \nu), \nu)$, we can find some $A_n\in \Pi(X, \cB_X, \nu)$ such that $\nu(A_n\Delta (x_n^{-1}A) )=0$.
When $n\to \infty$, since $x_n\to x_0$ and $\nu$ is regular \cite[Theorem 6.1]{Walters},
we have $\nu((x_n^{-1} A)\Delta (x_0^{-1} A))\to 0$ and hence $\nu(A_n\Delta (x_0^{-1}A))\to 0$.
Passing to a subsequence of $\{x_n\}_{n\in \Nb}$ if necessary, we may assume that $\sum_{n\in \Nb}\nu(A_n\Delta (x_0^{-1}A))<+\infty$.
It follows that $\lim_{k\to \infty}\nu((x_0^{-1}A)\Delta (\bigcup_{n\ge k}A_n))=0$ and hence $\nu((x_0^{-1}A)\Delta (\bigcap_{k\in \Nb}\bigcup_{n\ge k}A_n))=0$. Note that if $A'\in \cB_X$ satisfies $\nu(A')=0$, then $A'\in \Pi(X, \cB_X, \nu)$. Since $\bigcap_{k\in \Nb}\bigcup_{n\ge k}A_n$ is in
$\Pi(X, \cB_X, \nu)$, we conclude that $x_0^{-1}A$ is in $\Pi(X, \cB_X, \nu)$. This proves $x\cdot \Pi(X, \cB_X, \mu_X)\subseteq \Pi(X, \cB_X, \nu)$ for  all $x\in X$. Similarly, one has $\Pi(X, \cB_X, \mu_X)\cdot x\subseteq \Pi(X, \cB_X, \nu)$ for  all $x\in X$.
\end{proof}

We are ready to prove Theorem~\ref{T-IE to Pinsker}.

\begin{proof}[Proof of Theorem~\ref{T-IE to Pinsker}]
(1). By Lemma~\ref{L-Pinsker rotation} we have $x\cdot \Pi(X, \cB_X, \mu_X), \Pi(X, \cB_X, \mu_X)\cdot x\subseteq \Pi(X, \cB_X, \mu_X)$ for  all $x\in X$. Thus, by \cite[Lemma 20.4]{Schmidt}, there is a closed $\Gamma$-invariant normal subgroup
$Y$ of $X$ such that $q_1^{-1}(\cB_{X/Y})=\Pi(X, \cB_X, \mu_X) \mod \mu_X$, where $q_1$ denotes the quotient map
$X\rightarrow X/Y$. In particular, $\rh_{(q_1)_*(\mu_X)}(X/Y)=0$. By Theorem~\ref{T-topological Pinsker}, there is a surjective continuous
map $q': X/\IE(X)\rightarrow X/Y$ such that $q'\circ q=q_1$. Clearly $q'$ is a group homomorphism and hence is open.

Every continuous open surjective map between compact metrizable spaces has a Borel cross section \cite[Theorem 3.4.1]{Arveson}.
Thus we can find a Borel map
$\psi: X/Y\rightarrow X/\IE(X)$ such that $q' \circ \psi$ is the identity map on $X/Y$.
It is easily verified that the map $\phi: X/Y\times \ker q'\rightarrow X/\IE(X)$ sending $(z, y)$ to $\psi(z)y$
is an isomorphism from the measurable space $(X/Y\times \ker q', \cB_{X/Y}\times \cB_{\ker q'})$ onto
the measurable space $(X/\IE(X), \cB_{X/\IE(X)})$.
We claim that $\phi_*(\mu_{X/Y}\times \mu_{\ker q'})$ is left-translation invariant.
Let $A\in \cB_{X/Y}\times \cB_{\ker q'}$ and $(z_1, y_1)\in (X/Y)\times \ker q'$. For each $z\in X/Y$, denote by $A_z$ the set $\{y\in \ker q': (z, y)\in A\}$.
Note that $A_z\in \cB_{\ker q'}$ for every $z\in X/Y$. For any
$(z_2, y_2)\in (X/Y)\times \ker q'$, we have
$$ \phi(z_1, y_1)\phi(z_2, y_2)=\phi(z_1z_2, \psi(z_1z_2)^{-1}\psi(z_1)y_1\psi(z_2)y_2).$$
Thus, for any $z_2\in X/Y$, one has $(\phi^{-1}(\phi(z_1, y_1)\phi(A)))_{z_1z_2}=\psi(z_1z_2)^{-1}\psi(z_1)y_1\psi(z_2)A_{z_2}$ and hence
$\mu_{\ker q'}((\phi^{-1}(\phi(z_1, y_1)\phi(A)))_{z_1z_2})=\mu_{\ker q'}(A_{z_2})$. Therefore,
\begin{align*}
(\mu_{X/Y}\times \mu_{\ker q'})(\phi^{-1}(\phi(z_1, y_1)\phi(A)))&=\int_{X/Y}\mu_{\ker q'}((\phi^{-1}(\phi(z_1, y_1)\phi(A)))_{z_1z_2})\, d\mu_{X/Y}(z_1z_2)\\
&=\int_{X/Y}\mu_{\ker q'}(A_{z_2})\, d\mu_{X/Y}(z_2)\\
&=(\mu_{X/Y}\times \mu_{\ker q'})(A).
\end{align*}
This proves our claim. Therefore $\phi_*(\mu_{X/Y}\times \mu_{\ker q'})=\mu_{X/\IE(X)}$. Also note that the measures $(q_1)_*(\mu_X), (q')_*(\mu_{X/\IE(X)})$, and $q_*(\mu_X)$ are all translation invariant,
and hence $(q_1)_*(\mu_X)=(q')_*(\mu_{X/\IE(X)})=\mu_{X/Y}$ and $q_*(\mu_X)=\mu_{X/\IE(X)}$.

We claim that $q'$ is an isomorphism. Suppose that $q'$ is not injective. Then we can find disjoint nonempty open subsets $U$ and $V$ of $\ker q'$.
Since $\supp(\mu_{\ker q'})=\ker q'$, we have $0<\mu_{\ker q'}(U)<1$. By Theorem~\ref{T-topological Pinsker} we have $\rh_{q_*(\mu_X)}(X/\IE(X))=0$, and hence $q^{-1}(\cB_{X/\IE(X)})\subseteq \Pi(X, \cB_X, \mu_X)$.
Note that $(X/Y)\times U \in \cB_{X/Y}\times \cB_{\ker q'}$, and hence $\phi((X/Y)\times U)=\psi(X/Y)U$ is in $\cB_{X/\IE(X)}$.
As  $q_1^{-1}(\cB_{X/Y})=\Pi(X, \cB_X, \mu_X) \mod \mu_X$, we can find some $A\in \cB_{X/Y}$ with
$\mu_X(q_1^{-1}(A)\Delta q^{-1}(\psi(X/Y)U))=0$. Then
\begin{eqnarray} \label{E-IE to Pinsker}
\mu_{X/\IE(X)}((q')^{-1}(A)\Delta (\psi(X/Y)U))&=&q_*(\mu_X)((q')^{-1}(A)\Delta (\psi(X/Y)U))\\
\nonumber &=&\mu_X(q^{-1}((q')^{-1}(A))\Delta q^{-1}(\psi(X/Y)U))\\
\nonumber &=&\mu_X(q_1^{-1}(A)\Delta q^{-1}(\psi(X/Y)U))=0.
\end{eqnarray}
Note that
\begin{eqnarray*}
\mu_{X/\IE(X)}(\psi(X/Y)U)&=&\phi_*(\mu_{X/Y}\times \mu_{\ker q'})(\phi((X/Y)\times U))\\
&=&\mu_{X/Y}(X/Y)\cdot \mu_{\ker q'}(U)\\
&=& \mu_{\ker q'}(U)>0,
\end{eqnarray*}
and hence
\begin{eqnarray*}
\mu_{X/Y}(A)=q'_*(\mu_{X/\IE(X)})(A)=\mu_{X/\IE(X)}((q')^{-1}(A))=\mu_{X/\IE(X)}( \psi(X/Y)U)>0.
\end{eqnarray*}
Then
\begin{eqnarray*}
& & \mu_{X/\IE(X)}((q')^{-1}(A)\cap (\psi(X/Y)U))\\
&=& \mu_{X/\IE(X)}((\psi(A)\ker q')\cap (\psi(X/Y)U))\\
&=&\phi_*(\mu_{X/Y}\times \mu_{\ker q'})(\phi(A\times \ker q')\cap \phi((X/Y)\times U))\\
&=&\phi_*(\mu_{X/Y}\times \mu_{\ker q'})(\phi((A\times \ker q')\cap ((X/Y)\times U))\\
&=&\phi_*(\mu_{X/Y}\times \mu_{\ker q'})(\phi(A\times U))\\
&=&\mu_{X/Y}(A)\cdot \mu_{\ker q'}(U)\\
&<& \mu_{X/Y}(A)=\mu_{X/\IE(X)}( \psi(X/Y)U),
\end{eqnarray*}
contradict to the equality \eqref{E-IE to Pinsker}. Therefore $q'$ is an isomorphism.
Then $q^{-1}(\cB_{X/\IE(X)})=(q_1)^{-1}(\cB_{X/Y})=\Pi(X, \cB_X, \mu_X) \mod \mu_X$.

(2). Since $\Pi(X, \cB_X, \mu_X)$ contains all $A\in \cB_X$ with $\mu_X(A)=0$, from the assertion (1) we conclude that
$ q^{-1}(\cB_{X/\IE(X)})\subseteq \Pi(X, \cB_X, \mu_X)$. Taking $x=e_X$ in Lemma~\ref{L-Pinsker rotation}, we get
$\Pi(X, \cB_X, \mu_X)\subseteq \Pi(X, \cB_X, \nu)$.
\end{proof}

We say that an action of $\Gamma$ on a compact group $X$ by automorphisms has {\it CPE (completely positive entropy)} if
$\Pi(X, \cB_X, \mu_X)$ consists of Borel sets with $\mu_X$-measure $0$ or $1$. From Theorem~\ref{T-IE to Pinsker} we get

\begin{corollary} \label{C-CPE}
Let $\Gamma$ act on a compact group $X$ by automorphisms. Then
$\IE(X)=X$ if and only if the action has CPE.
\end{corollary}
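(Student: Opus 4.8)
The plan is to read off the statement directly from Theorem~\ref{T-IE to Pinsker}.(1), which identifies the Pinsker algebra with the pullback of the Borel algebra of the quotient. Write $q\colon X\rightarrow X/\IE(X)$ for the quotient map. By definition the action has CPE exactly when $\Pi(X,\cB_X,\mu_X)$ is trivial modulo $\mu_X$, i.e.\ every element has $\mu_X$-measure $0$ or $1$. Since Theorem~\ref{T-IE to Pinsker}.(1) gives $q^{-1}(\cB_{X/\IE(X)})=\Pi(X,\cB_X,\mu_X)\bmod \mu_X$, it suffices to prove that $q^{-1}(\cB_{X/\IE(X)})$ is trivial modulo $\mu_X$ if and only if $\IE(X)=X$.

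For the ``only if'' direction of the corollary I would argue as follows. Suppose $\IE(X)=X$. Then $X/\IE(X)$ is a one-point space, so $\cB_{X/\IE(X)}=\{\emptyset, X/\IE(X)\}$ and hence $q^{-1}(\cB_{X/\IE(X)})=\{\emptyset, X\}$. By Theorem~\ref{T-IE to Pinsker}.(1) this equals $\Pi(X,\cB_X,\mu_X)$ modulo $\mu_X$, so every Pinsker set has $\mu_X$-measure $0$ or $1$ and the action has CPE. Conversely, suppose $\IE(X)\neq X$, so that $X/\IE(X)$ is a nontrivial compact group. I would recall from the proof of Theorem~\ref{T-IE to Pinsker} that $q_*(\mu_X)=\mu_{X/\IE(X)}$, the Haar measure of $X/\IE(X)$, which has full support. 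Since the group $X/\IE(X)$ has more than one point, I can choose disjoint nonempty open subsets $U,V\subseteq X/\IE(X)$; by full support both have positive Haar measure, and disjointness yields $0<\mu_{X/\IE(X)}(U)\le 1-\mu_{X/\IE(X)}(V)<1$. Then $q^{-1}(U)\in q^{-1}(\cB_{X/\IE(X)})$ with $\mu_X(q^{-1}(U))=\mu_{X/\IE(X)}(U)\in(0,1)$, and by Theorem~\ref{T-IE to Pinsker}.(1) there is a Pinsker set $A$ with $\mu_X(q^{-1}(U)\Delta A)=0$, whence $\mu_X(A)\in(0,1)$. Thus the action fails to have CPE.

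The only nonformal point, and the one I expect to be the main obstacle, is the converse: producing a genuinely nontrivial Pinsker set when $\IE(X)$ is a proper subgroup. This rests on two inputs from the surrounding theory: that the pushforward $q_*(\mu_X)$ is exactly the Haar measure on the quotient group (extracted from inside the proof of Theorem~\ref{T-IE to Pinsker}), and that Haar measure on a nontrivial compact group has full support, so a pair of disjoint nonempty open sets forces a set of intermediate mass. Everything else is a formal unwinding of the definition of CPE against the identification of the Pinsker algebra in Theorem~\ref{T-IE to Pinsker}.
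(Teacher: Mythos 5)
Your proof is correct and takes essentially the same route as the paper, which derives Corollary~\ref{C-CPE} directly from Theorem~\ref{T-IE to Pinsker} without further argument. Your unwinding --- the quotient is a single point when $\IE(X)=X$, while if $\IE(X)\neq X$ the pushforward $q_*(\mu_X)=\mu_{X/\IE(X)}$ is Haar measure with full support on a nontrivial compact group, so disjoint nonempty open sets yield a set in $q^{-1}(\cB_{X/\IE(X)})$, hence (mod $\mu_X$) a Pinsker set, of measure strictly between $0$ and $1$ --- is precisely the routine verification the paper leaves implicit.
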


In the case $\Gamma=\Zb^d$ for some $d\in \Nb$ and $X$ is abelian, the following corollary was proved
by Lind, Schmidt, and Ward \cite[Corollary 6.6]{LSW} \cite[Corollary 20.9]{Schmidt}.

\begin{corollary} \label{C-CPE extension}
Let $\Gamma$ act on a compact group $X$ by automorphisms
and let $Y$ be a closed $\Gamma$-invariant normal subgroup of $X$. Suppose that both
the restriction of the action on $Y$
and the induced action
on $X/Y$
have CPE.
Then the action
itself has CPE.
\end{corollary}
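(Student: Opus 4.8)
The plan is to reduce everything to the group-theoretic characterization of CPE. By Corollary~\ref{C-CPE}, an action of $\Gamma$ on a compact group by automorphisms has CPE exactly when its IE group is the whole group. Thus the hypotheses become $\IE(Y)=Y$ and $\IE(X/Y)=X/Y$, and the statement to be proved is that $\IE(X)=X$.

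First I would use the behaviour of the IE group under quotients. Let $q\colon X\to X/Y$ be the quotient map. By Theorem~\ref{T-mIE=IE}.(5) we have $q(\IE(X))=\IE(X/Y)=X/Y$, which says precisely that $\IE(X)\cdot Y=X$. Since $\IE(X)$ is a closed normal subgroup of $X$ by Theorem~\ref{T-mIE=IE}.(1), we also get $Y\cdot \IE(X)=X$.

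Next I would exploit the CPE of the restricted action on $Y$. Let $p\colon X\to X/\IE(X)$ be the quotient map. Because $Y\cdot \IE(X)=X$ and $\IE(X)$ is normal, the restriction $p|_Y\colon Y\to X/\IE(X)$ is a surjective $\Gamma$-equivariant continuous homomorphism, so $X/\IE(X)$ is a topological factor of the system $Y$. By Theorem~\ref{T-topological Pinsker}.(1) one has $\htopol(X/\IE(X))=0$. On the other hand $\IE(Y)=Y$, so applying Corollary~\ref{C-topological CPE} to the system $Y$ shows that the only topological factor of $Y$ with vanishing topological entropy is the one-point system. Therefore $X/\IE(X)$ is trivial, i.e. $\IE(X)=X$, and then Corollary~\ref{C-CPE} gives that the action on $X$ has CPE.

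I do not expect a serious obstacle here; the content lies in lining up the right general facts rather than in any computation. The one point requiring care is the surjectivity of $p|_Y$, which depends on the identity $Y\cdot \IE(X)=X$ and hence on both the quotient formula of Theorem~\ref{T-mIE=IE}.(5) and the normality of $\IE(X)$; once this is in place the argument is immediate. An alternative route avoids Corollary~\ref{C-topological CPE} by checking directly from Definition~\ref{D-IE} that any IE-tuple of the subsystem $Y$ remains an IE-tuple of $X$ (an independence set computed inside $Y$ is automatically one in $X$, and the F{\o}lner condition $|KF\setminus F|\le\varepsilon|F|$ is intrinsic to $\Gamma$), giving $Y=\IE(Y)\subseteq\IE(X)$; combined with $\IE(X)\cdot Y=X$ this yields $\IE(X)=X$ at once.
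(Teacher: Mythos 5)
Your proposal is correct, and the ``alternative route'' you sketch at the end is precisely the paper's proof: the paper observes $\IE_2(Y)\subseteq \IE_2(X)$ directly from the definition of IE-tuples (an independence set computed inside the subsystem $Y$ is one in $X$), so $Y=\IE(Y)\subseteq \IE(X)$, and combines this with $q(\IE(X))=\IE(X/Y)=X/Y$ from Theorem~\ref{T-mIE=IE}.(5) to conclude $\IE(X)=X$. Your primary argument replaces the inclusion $Y\subseteq\IE(X)$ by a detour through the maximal zero-entropy factor: you realize $X/\IE(X)$ as a topological factor of the subsystem $Y$ via $p|_Y$ (surjectivity being exactly $Y\cdot\IE(X)=X$, which follows from Theorem~\ref{T-mIE=IE}.(5) together with normality of $Y$ or of $\IE(X)$), then use $\htopol(X/\IE(X))=0$ from Theorem~\ref{T-topological Pinsker}.(1) and apply Corollary~\ref{C-topological CPE} to $Y$. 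All steps check: $p|_Y$ is a continuous $\Gamma$-equivariant surjective homomorphism, and condition (2) of Corollary~\ref{C-topological CPE} covers arbitrary topological factors of $Y$, of which $X/\IE(X)$ is one. The trade-off is that the paper's route (your alternative) is the more elementary, using only monotonicity of IE-tuples under passage to subsystems plus the quotient formula, while your main route leans on the Pinsker-factor machinery of Theorem~\ref{T-topological Pinsker}, which is itself derived from the same IE facts; both yield the corollary in a few lines once the cited results are available.
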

\begin{proof} By Corollary~\ref{C-CPE} we have $\IE(Y)=Y$ and $\IE(X/Y)=X/Y$, and it suffices
to show that $\IE(X)=X$.
From the definition of IE tuples we have $\IE_2(Y)\subseteq \IE_2(X)$,
and hence $Y=\IE(Y)\subseteq \IE(X)$. By Theorem~\ref{T-mIE=IE}.(5) one has
$\IE(X)/Y=\IE(X/Y)=X/Y$. Therefore $\IE(X)=X$ as desired.
\end{proof}

In the rest of this section we discuss when a $\Gamma$-action on a compact group by automorphisms has a unique maximal measure.

\begin{theorem} \label{T-Berg}
Let $\Gamma$ act on a compact group $X$ by automorphisms.
Consider the following conditions:
\begin{enumerate}
\item the action has CPE;

\item $\rh_{\nu}(X)<\rh_{\mu_X}(X)$ for every $\nu\in \cM(X, \Gamma)$ not equal to $\mu_X$.
\end{enumerate}
Then (2)$\Rightarrow$(1). If furthermore $\rh(X)<\infty$, then (1)$\Leftrightarrow$(2).
\end{theorem}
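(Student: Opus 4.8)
The plan is to handle the two implications separately, noting first that $\mu_X$ is always a \emph{maximal} measure: by the variational principle together with the coincidence of topological and Haar-measure entropy recalled in Section~\ref{SS-entropy}, one has $\rh_{\mu_X}(X)=\htopol(X)=\sup_{\nu\in\cM(X,\Gamma)}\rh_\nu(X)$. Thus condition (2) asserts precisely that $\mu_X$ is the \emph{unique} measure of maximal entropy.

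For (2)$\Rightarrow$(1) I argue by contraposition, and this needs no finiteness hypothesis. If the action fails to have CPE then, by Corollary~\ref{C-CPE}, the closed $\Gamma$-invariant normal subgroup $Y:=\IE(X)$ is proper. Let $\mu_Y$ be the Haar measure of $Y$, viewed as a $\Gamma$-invariant Borel probability measure on $X$ supported on $Y$. Since the action restricts to $Y$ by automorphisms, $\rh_{\mu_Y}(X)=\rh_{\mu_Y}(Y)=\rh(Y)$; and by the Yuzvinski\u{i} addition formula (Proposition~\ref{P-addition formula}) together with $\htopol(X/\IE(X))=0$ from Theorem~\ref{T-topological Pinsker}, one has $\rh(X)=\rh(Y)+\rh(X/Y)=\rh(Y)$, so that $\rh_{\mu_Y}(X)=\rh(X)=\rh_{\mu_X}(X)$. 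Hence $\mu_Y$ is maximal, and since $\supp(\mu_Y)=Y\subsetneq X=\supp(\mu_X)$ we have $\mu_Y\neq\mu_X$; thus (2) fails.

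For (1)$\Rightarrow$(2), assume CPE and $\rh(X)<+\infty$, and let $\nu\in\cM(X,\Gamma)$ be maximal, i.e. $\rh_\nu(X)=\rh(X)$; I will show $\nu=\mu_X$. The device is the shear map $s\colon X\times X\to X\times X$, $s(x,y)=(xy,y)$, a homeomorphism with $s^{-1}(u,v)=(uv^{-1},v)$ that is $\Gamma$-equivariant for the product action because the action is by automorphisms, since $s(tx,ty)=(t(xy),ty)=t\cdot s(x,y)$. Put $\lambda:=s_*(\nu\times\mu_X)$. Its second marginal is $\mu_X$ and its first marginal is the image of $\nu\times\mu_X$ under $(x,y)\mapsto xy$, which equals $\mu_X$ by Lemma~\ref{L-product factor}, so $\lambda$ is a self-joining of $(X,\mu_X)$. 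As $s$ is an equivariant measure-isomorphism onto $(X\times X,\lambda)$, we get $\rh_\lambda(X\times X)=\rh_{\nu\times\mu_X}(X\times X)=\rh_\nu(X)+\rh_{\mu_X}(X)=2\rh(X)$, using additivity of entropy for product actions of amenable groups.

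The crux is to show $\lambda=\mu_X\times\mu_X$, which is where both CPE and finiteness of entropy enter and is the step I expect to be the main obstacle. Writing $\cA_1,\cA_2$ for the two coordinate factors, the identity $2\rh(X)=\rh_\lambda(\cA_1)+\rh_\lambda(\cA_2\mid\cA_1)$ with $\rh_\lambda(\cA_1)=\rh_\lambda(\cA_2)=\rh(X)$ forces $\rh_\lambda(\cA_2\mid\cA_1)=\rh_\lambda(\cA_2)$, so the second coordinate contributes its full entropy relative to the first. Because $(X,\mu_X)$ has completely positive entropy (trivial Pinsker algebra, via Theorem~\ref{T-IE to Pinsker} and Corollary~\ref{C-CPE}), I would upgrade this entropy-independence to genuine measure-theoretic independence of $\cA_1$ and $\cA_2$ under $\lambda$ — the relative disjointness of a finite-entropy CPE system from an arbitrary finite-entropy system in a joining of maximal entropy — yielding $\lambda=\mu_X\times\mu_X$. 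Applying $s^{-1}_*$ and noting that the shear $(u,v)\mapsto(uv^{-1},v)$ preserves $\mu_X\times\mu_X$ by invariance of Haar measure, I obtain $\nu\times\mu_X=\mu_X\times\mu_X$, and comparing first marginals gives $\nu=\mu_X$. The delicate point to supply in full is exactly the passage from equality of relative and absolute entropies to independence; I would establish it through the joining and disjointness theory of CPE (K-)systems, the finite-entropy hypothesis being essential for the conditional-entropy subtraction to be meaningful.
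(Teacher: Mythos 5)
Your proof is correct and follows essentially the same route as the paper: for (2)$\Rightarrow$(1) you give the contrapositive of the paper's direct argument via $\IE(X)$, Theorem~\ref{T-topological Pinsker} and Proposition~\ref{P-addition formula}, and for (1)$\Rightarrow$(2) the crux you flag --- upgrading entropy additivity in a maximal-entropy joining to genuine independence using CPE and finite entropy --- is precisely Danilenko's theorem, quoted in the paper as Theorem~\ref{T-CPE to independent} and applied there to the sub-$\sigma$-algebras $\cB_1=\pi_1^{-1}(\cB_X)$ and $\cB_2=\pi^{-1}(\cB_X)$ in $(X\times X,\cB_X\times\cB_X,\mu_X\times\nu)$. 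Your shear-map formulation is the mirror image of this (conjugating by $s(x,y)=(xy,y)$ turns the paper's pair of algebras into the two coordinate algebras under $\lambda=s_*(\nu\times\mu_X)$), with the one cosmetic difference that you conclude $\nu=\mu_X$ by the explicit computation $\nu\times\mu_X=s^{-1}_*(\mu_X\times\mu_X)=\mu_X\times\mu_X$ and comparison of marginals, where the paper instead cites Berg's Lemma~2.6 (or Schmidt's Lemma~20.17).
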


For the case $\Gamma=\Zb$, Theorem~\ref{T-Berg} was proved by Berg \cite{Berg}.
Yuzvinski\u{\i} \cite{Yuz1} showed that when $\Gamma=\Zb$ and the action has finite entropy,
the condition (1) is also equivalent to that $\mu_X$ is ergodic.

When $\Gamma=\Zb^d$ for some $d\in \Nb$, Lind, Schmidt, and Ward \cite[Theorem 6.14]{LSW} proved Theorem~\ref{T-Berg} for the case $X$ is abelian, and later Schmidt \cite[Theorem 20.15]{Schmidt} established the general case. Ledrappier \cite{Ledrappier} showed that, for $\Gamma=\Zb^2$,  the canonical $\Gamma$-action on $X=\widehat{\Zb\Gamma/J}$ is mixing with respect to $\mu_X$ and has zero entropy, where $J=2\Zb\Gamma+(1-u_1-u_2)\Zb\Gamma$ and
$u_1, u_2$ denote the canonical basis of $\Zb^2$.

For a standard probability space $(X, \cB_X, \mu)$, we say that two $\sigma$-algebras $\cB_1, \cB_2\subseteq \cB_X$ are {\it independent} if
$\mu(A\cap B)=\mu(A)\mu(B)$ for all $A\in \cB_1$ and $B\in \cB_2$. We need the following result of Danilenko \cite[Theorem 0.4]{Danilenko} (in the statement of Theorem 0.4 of
\cite{Danilenko}, the condition $\rh_\mu(X)<+\infty$ is missing).

\begin{theorem} \label{T-CPE to independent}
Let $\Gamma$ act on a standard probability space $(X, \cB_X, \mu)$ via measure-preserving automorphisms. Suppose that $\rh_\mu(X)<+\infty$.
Let $\cB_1$ and $\cB_1$ be  $\Gamma$-invariant sub-$\sigma$-algebras of $\cB_X$. Then $\cB_1$ and $\cB_2$ are independent
if and only if $\Pi(X, \cB_1, \mu)$ and $\Pi(X, \cB_2, \mu)$ are independent and
$$\rh_{\mu}(\cB_1\vee \cB_2)=\rh_{\mu}(\cB_1)+\rh_{\mu}(\cB_2).$$
\end{theorem}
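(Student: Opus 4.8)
The plan is to prove the two directions separately, reducing everything to the structure theory of joinings. Throughout write $\mu_1,\mu_2$ for the restrictions of $\mu$ to $\cB_1,\cB_2$, let $Y_i=(X,\cB_i,\mu_i)$ be the corresponding factor systems, and let $\pi_i:X\to Y_i$ be the identity map read through $\cB_i$. Then $\Phi=(\pi_1,\pi_2):X\to Y_1\times Y_2$ is $\Gamma$-equivariant and pushes $\mu$ forward to a joining $\lambda$ of $Y_1$ and $Y_2$ with marginals $\mu_1,\mu_2$. Since $\Phi^{-1}(\cB_1\otimes\cB_2)=\cB_1\vee\cB_2$ modulo $\mu$, the factor $(X,\cB_1\vee\cB_2,\mu)$ is isomorphic to $(Y_1\times Y_2,\lambda)$; in particular $\rh_\mu(\cB_1\vee\cB_2)=\rh_\lambda(Y_1\times Y_2)$ and $\rh_\mu(\cB_i)=\rh_{\mu_i}(Y_i)$. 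Under this dictionary, $\cB_1,\cB_2$ are $\mu$-independent exactly when $\lambda=\mu_1\times\mu_2$, and $\Pi(X,\cB_1,\mu),\Pi(X,\cB_2,\mu)$ are $\mu$-independent exactly when the two coordinate Pinsker algebras $\Pi_1:=\Pi(Y_1),\Pi_2:=\Pi(Y_2)$ are $\lambda$-independent. I will freely use the conditional entropy $\rh_\mu(\cdot\mid\cdot)$ of $\Gamma$-invariant sub-$\sigma$-algebras and its addition formula $\rh_\mu(\cB_1\vee\cB_2)=\rh_\mu(\cB_1)+\rh_\mu(\cB_2\mid\cB_1)$, part of the entropy theory of countable amenable groups \cite{JMO,OW}; all entropies are finite because $\rh_\mu(X)<+\infty$.

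For the forward direction, assume $\cB_1\perp\cB_2$. Then $\lambda=\mu_1\times\mu_2$, so $(X,\cB_1\vee\cB_2,\mu)$ is isomorphic to the direct product system $Y_1\times Y_2$ with the product measure, and the product formula for Kolmogorov--Sinai entropy gives $\rh_\mu(\cB_1\vee\cB_2)=\rh_{\mu_1}(Y_1)+\rh_{\mu_2}(Y_2)=\rh_\mu(\cB_1)+\rh_\mu(\cB_2)$. Moreover $\Pi(X,\cB_i,\mu)\subseteq\cB_i$, and independence is inherited by sub-$\sigma$-algebras, so the two Pinsker algebras are independent as well. This settles the ``only if'' part and uses nothing beyond the product entropy formula.

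For the converse I work with $\lambda$. The hypotheses say precisely that (i) $\rh_\lambda(Y_1\times Y_2)=\rh_{\mu_1}(Y_1)+\rh_{\mu_2}(Y_2)$, i.e. $\lambda$ is a joining of \emph{maximal entropy}, and (ii) $\Pi_1$ and $\Pi_2$ are $\lambda$-independent. The key structural input is that a maximal-entropy joining is \emph{relatively independent over the Pinsker factors}: for all $f\in L^2(Y_1,\mu_1)$ and $g\in L^2(Y_2,\mu_2)$,
\begin{align*}
\int fg\,d\lambda=\int \Eb_{\mu_1}[f\mid\Pi_1]\cdot\Eb_{\mu_2}[g\mid\Pi_2]\,d\lambda.
\end{align*}
Granting this, the converse closes at once: the integrand on the right is a product of a $\Pi_1$-measurable and a $\Pi_2$-measurable function, so (ii) yields $\int \Eb_{\mu_1}[f\mid\Pi_1]\Eb_{\mu_2}[g\mid\Pi_2]\,d\lambda=\int \Eb_{\mu_1}[f\mid\Pi_1]\,d\lambda\cdot\int \Eb_{\mu_2}[g\mid\Pi_2]\,d\lambda=\int f\,d\mu_1\cdot\int g\,d\mu_2$. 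Hence $\int fg\,d\lambda=(\int f\,d\mu_1)(\int g\,d\mu_2)$ for all such $f,g$, i.e. $\lambda=\mu_1\times\mu_2$, which is the desired independence of $\cB_1$ and $\cB_2$.

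The whole weight of the theorem therefore falls on the displayed structure statement, and this is the step I expect to be the main obstacle. I would establish it through relative entropy theory together with the product Pinsker factor theorem (Theorem~\ref{T-Pinsker of product}). Let $\cD=\Pi(Y_1\times Y_2,\lambda)$ and let $\cA_i$ be the $i$-th coordinate $\sigma$-algebra. Applying the addition formula over the zero-entropy base $\cD$, together with $\rh_\lambda(\cA_i\mid\cD)\le\rh_\lambda(\cA_i)=\rh_{\mu_i}(Y_i)$ and the reverse inequality $\rh_\lambda(\cA_i)\le\rh_\lambda(\cA_i\mid\cD)+\rh_\lambda(\cD)=\rh_\lambda(\cA_i\mid\cD)$, the maximal-entropy hypothesis forces $\rh_\lambda(\cA_2\mid\cA_1\vee\cD)=\rh_\lambda(\cA_2\mid\cD)$, i.e. no entropy is lost when $\cA_1$ is adjoined over $\cD$. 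The delicate point---the genuine content of Glasner--Thouvenot--Weiss and Danilenko \cite{GTW,Danilenko}---is that this relative ``no-loss'' condition forces $\cA_1$ and $\cA_2$ to be $\lambda$-independent over $\cD$, and that the $i$-th coordinate shadow of $\cD$ is exactly $\Pi_i$: a zero-entropy factor inside $\cA_i=Y_i$ must lie in $\Pi(Y_i)$, while $\Pi_i\subseteq\cD$ because, by (ii) and the forward direction, $\rh_\lambda(\Pi_1\vee\Pi_2)=0$. Combining conditional independence over $\cD$ with this fibered structure of $\cD$ over $(\Pi_1,\Pi_2)$ yields the displayed formula. Reconstructing this last implication rigorously for general countable amenable $\Gamma$, rather than merely quoting it, is where the real work lies, and the product Pinsker factor theorem is the tool that identifies $\cD$ and supplies the required conditional independence.
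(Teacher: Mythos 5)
First, a point of comparison: the paper contains no proof of this statement at all. It is quoted verbatim from Danilenko \cite[Theorem 0.4]{Danilenko} (with the remark that the finiteness hypothesis $\rh_\mu(X)<+\infty$ is missing from Danilenko's formulation), so there is no in-paper argument for your proposal to match; the relevant question is whether your proposal constitutes an independent proof. It does not, and you say so yourself. Your joining framework is set up correctly, the ``only if'' direction is complete (product measure, product entropy formula, independence inherited by the sub-$\sigma$-algebras $\Pi(X,\cB_i,\mu)\subseteq\cB_i$), and the deduction of independence from the displayed relative-independence formula via hypothesis (ii) is valid. But the displayed formula --- that a maximal-entropy joining is relatively independent over the joining of the Pinsker factors --- is the entire content of the theorem: given your (elementary) forward direction, relative independence over the Pinsker joining is in fact \emph{equivalent} to entropy additivity, so you have reduced the theorem to a statement at least as strong as itself, namely the Glasner--Thouvenot--Weiss/Danilenko theorem you were asked to prove.

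Your closing sketch of that lemma contains correct bookkeeping --- the identities $\rh_\lambda(\cA_i\mid\cD)=\rh_\lambda(\cA_i)$, the no-loss consequence $\rh_\lambda(\cA_2\mid\cA_1\vee\cD)=\rh_\lambda(\cA_2\mid\cD)$, the inclusions $\Pi_1\vee\Pi_2\subseteq\cD$ (via (ii) and the forward direction) and $\cD\cap\cA_i=\Pi_i$ --- but the pivotal step, that the no-loss identity forces $\cA_1$ and $\cA_2$ to be $\lambda$-independent over $\cD$, is asserted rather than proved, and it genuinely does not follow from the addition formula: equality of conditional entropies never yields conditional independence without extra structure (in the absolute case, $\rh(\cA_2\mid\cA_1)=\rh(\cA_2)$ fails to imply $\cA_1\perp\cA_2$, e.g.\ when $\cA_1=\cA_2$ has zero entropy; it is exactly the relative-CPE structure of the system over its Pinsker algebra $\cD$ that must be exploited). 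Moreover, the setting the paper needs is actions of a countable amenable group that are neither free nor ergodic (the theorem is applied to $\mu_X\times\nu$ in the proof of Theorem~\ref{T-Berg}), so even reconstructing the GTW argument would not suffice; one needs Danilenko's orbital/relative machinery, which is why the paper cites \cite{Danilenko} rather than \cite{GTW}. In short: correct framing, correct easy direction, but the decisive implication is quoted, not proved, so as a self-contained argument the proposal has a genuine gap --- one you candidly flag.
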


We are ready to prove Theorem~\ref{T-Berg}.

\begin{proof} Assume that the condition (2) holds. By Proposition~\ref{P-addition formula} and Theorem~\ref{T-topological Pinsker}
we have
$$\rh(X)=\rh(\IE(X))+\rh(X/\IE(X))=\rh(\IE(X))=\rh_{\mu_{\IE(X)}}(\IE(X))=\rh_{\mu_{\IE(X)}}(X).$$
Thus $\mu_{\IE(X)}=\mu_X$, and hence $\IE(X)=X$. By Corollary~\ref{C-CPE} the condition (1) holds.

Now assume that $\rh(X)<\infty$ and that the condition (1) holds. Let $\nu\in \cM(X, \Gamma)$ with $\rh_{\nu}(X)=\rh_{\mu_X}(X)$. We shall show that
$\nu=\mu_X$.
Denote by $\pi_1$ and $\pi$ the first coordinate
map $X\times X\rightarrow X$ sending $(x, y)$ to $x$ and the product map $X\times X\rightarrow X$ sending $(x, y)$ to $xy$ respectively.
Set $\cB_1=\pi^{-1}_1(\cB_X)$ and $\cB_2=\pi^{-1}(\cB_X)$.
As $X$ is compact metrizable, $\cB_{X\times X}=\cB_X\times \cB_X$.
By \cite[Lemma 1.2]{Berg},
both
$\cB_1$ and $\cB_2$ are $\Gamma$-invariant sub-$\sigma$-algebras of $\cB_{X\times X}$,
and $\cB_1\vee \cB_2=\cB_X\times \cB_X$.
The condition (1) says that $\Pi(X, \cB_X, \mu_X)$ consists of elements in $\cB_X$ with
$\mu_X$-measure $0$ or $1$. Then $\Pi(X\times X, \cB_1, \mu_X\times \nu)=\pi_1^{-1}(\Pi(X, \cB_X, \mu_X))$ consists of elements
in $\cB_{X\times X}$ with $\mu_X\times \nu$-measure $0$ or $1$. Thus
$\Pi(X\times X, \cB_1, \mu_X\times \nu)$ and $\Pi(X\times X, \cB_2, \mu_X\times \nu)$ are independent
under $\mu_X\times \nu$. Note that
$$ \rh_{\mu_X\times \nu}(\cB_1\vee \cB_2)=\rh_{\mu_X\times \nu}(\cB_X\times \cB_X)=\rh_{\mu_X}(\cB_X)+\rh_{\nu}(\cB_X)=2\rh_{\mu_X}(\cB_X),$$
and by Lemma~\ref{L-product factor},
$$ \rh_{\mu_X\times \nu}(\cB_1)+\rh_{\mu_X\times \nu}(\cB_2)=\rh_{\mu_X}(\cB_X)+\rh_{\mu_X}(\cB_X)=2\rh_{\mu_X}(\cB_X).$$
Thus $$\rh_{\mu_X\times \nu}(\cB_1\vee \cB_2)=\rh_{\mu_G\times \nu}(\cB_1)+\rh_{\mu_X\times \nu}(\cB_2).$$
By Theorem~\ref{T-CPE to independent} we see that $\cB_1$ and $\cB_2$ are independent with respect to $\mu_X\times \nu$.
From \cite[Lemma 2.6]{Berg} or \cite[Lemma 20.17]{Schmidt} we conclude that $\mu_X=\nu$.
\end{proof}

\section{Duality} \label{S-duality}

Throughout this section $\Gamma$ will be a countable amenable group.

Let $\Gamma$ act on a compact abelian group $X$ by automorphisms, and $1\le p\le +\infty$.
We shall treat $\Delta^p(X)$ and its $\Gamma$-invariant subgroups $G$ as discrete abelian groups, thus
consider the induced  $\Gamma$-action on the Pontryagin dual $\widehat{\Delta^p(X)}$ and $\widehat{G}$ by automorphisms.
The pair $(\widehat{X}, G)$ will be treated as a dual pair as at the end of Section~\ref{S-p-homoclinic}.

We  first give some conditions for  $\rh(X)$ to be  bounded below by $\rh(\widehat{\Delta^p(X)})$. The definition of entropy is recalled in Section~\ref{SS-entropy}.

\begin{theorem} \label{T-bounded below}
Let $k, n\in \Nb$ and $A\in M_{n\times k}(\Zb\Gamma)$. Let $X$ be a closed $\Gamma$-invariant subgroup of $\widehat{(\Zb\Gamma)^k/(\Zb\Gamma)^nA}$. Let $1\le p<+\infty$. Suppose that one of the following conditions holds:
\begin{enumerate}
\item $p=1$ and the linear map $(\ell^p(\Gamma))^k\rightarrow (\ell^p(\Gamma))^n$ sending $a$ to $aA^*$ is injective.

\item There exists $C>0$ such that $\|a\|_p\le C\|aA^*\|_p$ for all $a\in (\ell^p(\Gamma))^k$, where the norm $\|\cdot \|_p$ is defined by the equation \eqref{E-infinity norm}.
\end{enumerate}
Then
$$ \rh(X)\ge \rh(\widehat{\Delta^p(X)}).$$
\end{theorem}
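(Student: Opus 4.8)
The plan is to realize $\widehat{\Delta^p(X)}$ as a factor of the ``adjoint'' system and then sharpen the resulting entropy bound down to $\rh(X)$ by a direct counting argument. First I would imitate the construction in Lemma~\ref{L-p-homoclinic finitely generated}: for $x\in\Delta^p(X)$ choose, via Proposition~\ref{P-basic p-homoclinic}.(4), a lift $\tilde x\in\ell^p(\Gamma,\Rb^k)$ with $P(\tilde x)=x$; since $X\subseteq\widehat{(\Zb\Gamma)^k/(\Zb\Gamma)^nA}$ one has $xA^*=0_{((\Rb/\Zb)^n)^\Gamma}$, so $\tilde xA^*\in\ell^\infty(\Gamma,\Zb^n)\cap\ell^p(\Gamma,\Rb^n)=(\Zb\Gamma)^n$, and $x\mapsto\tilde xA^*+(\Zb\Gamma)^kA^*$ is a well-defined $\Gamma$-equivariant homomorphism $\psi\colon\Delta^p(X)\to(\Zb\Gamma)^n/(\Zb\Gamma)^kA^*$. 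Either hypothesis forces $a\mapsto aA^*$ on $(\ell^p(\Gamma))^k$ to be injective (for (2) this is immediate from $\|a\|_p\le C\|aA^*\|_p$; for (1) it is the standing assumption), and exactly as in Lemma~\ref{L-p-homoclinic finitely generated} this makes $\psi$ injective. Thus $\Delta^p(X)$ is isomorphic as a $\Zb\Gamma$-module to the submodule $H:=\psi(\Delta^p(X))$ of $(\Zb\Gamma)^n/(\Zb\Gamma)^kA^*$, and dualizing the inclusion gives a $\Gamma$-equivariant continuous surjection of $\widehat{(\Zb\Gamma)^n/(\Zb\Gamma)^kA^*}$ onto $\widehat{\Delta^p(X)}=\widehat H$.

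Because topological entropy is monotone under passing to factors and to closed subgroups (the latter via Proposition~\ref{P-addition formula}) and equals the supremum of the entropies of its factors, I would next reduce to finitely generated submodules: writing $\Delta^p(X)=\varinjlim_G G$ over its finitely generated $\Gamma$-submodules gives $\widehat{\Delta^p(X)}=\varprojlim_G\widehat G$ and $\rh(\widehat{\Delta^p(X)})=\sup_G\rh(\widehat G)$, so it suffices to prove $\rh(\widehat G)\le\rh(X)$ for each such $G$. Fixing generators $x^{(1)},\dots,x^{(m)}$ of $G$ with lifts $\tilde x^{(i)}$ and the finitely supported integer vectors $w^{(i)}:=\tilde x^{(i)}A^*\in(\Zb\Gamma)^n$ exhibits $\widehat G$ as a closed subgroup of $\widehat{(\Zb\Gamma)^n/(\Zb\Gamma)^kA^*}$ cut out by the $w^{(i)}$. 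Note, however, that the bound from the first step only yields $\rh(\widehat G)\le\rh(\widehat{(\Zb\Gamma)^n/(\Zb\Gamma)^kA^*})$, which is too weak when $X$ is a proper subgroup; the subtle point is that $H$ records the whole module $\widehat X$ and not merely the matrix $A$.

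The heart of the proof, and the step I expect to be the main obstacle, is therefore a direct comparison of $(F,\varepsilon)$-separated sets establishing $\rh(\widehat G)\le\rh(X)$ itself. The mechanism I would use mirrors Lemma~\ref{L-kernel to infinite entropy} and Proposition~\ref{P-l1 to IE}: the homoclinic decay of the $x^{(i)}$ lets one encode a configuration of $\widehat G$ over a F{\o}lner set $F$ into a genuine point of $X$ assembled from translates of the $x^{(i)}$, so that a maximal $(F,\varepsilon)$-separated family in $\widehat G$ produces a comparably large $(F',\varepsilon')$-separated family in $X$. The summability that keeps distinct $\widehat G$-patterns distinguishable inside $X$ is supplied for $p=1$ by Proposition~\ref{P- 1-expansive summable under metric} and in general by the lower bound $\|a\|_p\le C\|aA^*\|_p$ of hypothesis (2) (equivalently, by surjectivity of $z\mapsto zA$ on $(\ell^q(\Gamma))^n$ with $q$ conjugate to $p$), which controls each $\tilde x^{(i)}$ in terms of the finitely supported $w^{(i)}$ and hence bounds the overlap errors in the encoding. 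Carrying out this transfer carefully --- selecting $F'$ as a controlled enlargement of $F$, estimating the accumulated errors over $\Gamma\setminus F$ by the $\ell^p$-tails, and checking that the induced map between separated sets loses at most a subexponential factor --- is the technical crux; once it is in place, $\rh(X)\ge\rh(\widehat G)$ for every finitely generated $G$, and taking the supremum over $G$ gives $\rh(X)\ge\rh(\widehat{\Delta^p(X)})$.
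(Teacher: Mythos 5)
Your outline correctly isolates where the difficulty sits, but the step you defer as ``the technical crux'' is a genuine gap, and it is exactly the point where the paper invokes Peters' theorem (Theorem~\ref{T-Peters}). There is no direct transfer of $(F,\varepsilon)$-separated families from $\widehat{G}$ to $X$: points of $\widehat{G}$ are characters of the discrete group $G$ and do not correspond to points of $X$, and in general there is no factor map in either direction between the two systems --- for instance, for $X=X_A$ one has $\widehat{\Delta(X_A)}\cong X_{A^*}$ by Lemma~\ref{L-key expansive homoclinic}, and $X_A$, $X_{A^*}$ are typically non-conjugate systems that merely share the same entropy (Corollary~\ref{C-key p-expansive entropy}). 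What your ``assembly from translates of the $x^{(i)}$'' actually produces are elements of the sumsets $\sum_{s\in F}s^{-1}E$ with $E\subseteq G$ finite, which are subsets of $X$; but the identity $\rh(\widehat{G})=\sup_E\lim_F\frac{1}{|F|}\log\bigl|\sum_{s\in F}s^{-1}E\bigr|$ linking the growth of these sumsets to the topological entropy of the dual system is precisely Peters' theorem --- a nontrivial duality statement proved by harmonic analysis, not a routine encoding estimate --- and your sketch silently assumes it. The mechanism of Lemma~\ref{L-kernel to infinite entropy} and Proposition~\ref{P-l1 to IE} gives lower bounds on $\rh(X)$ from the size of homoclinic families inside $X$, but it says nothing about $\rh(\widehat{G})$ without this bridge.

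Once Peters' formula is in hand, the paper's argument runs in the opposite direction from your encoding: with $E'=E-E$, a maximal $(F,\varepsilon)$-separated subset of $\sum_{s\in F}s^{-1}E$ gives $\bigl|\sum_{s\in F}s^{-1}E\bigr|\le N_{F,\varepsilon}(X)\cdot\bigl|B_{F,\varepsilon}\cap\sum_{s\in F}s^{-1}E'\bigr|$, and the whole work is to show the second factor is subexponential in $|F|$: one lifts to $\ell^p(\Gamma,\Rb^k)$, applies $A^*$ to land in integer-valued functions supported in a boundary region $\bigl(F^{-1}(K_1\cup K^{-1})\bigr)\setminus (F')^{-1}$, and then counts --- under condition (1) the $\ell^1$-injectivity recovers the lift from its image so an $\ell^\infty$ bound over the small boundary set suffices, while under condition (2) one must count lattice points in an $\ell^p$-ball of radius $O(|F|^{1/p})$ via the volume estimate of Lemma~\ref{L-ball}; your appeal to ``$\ell^p$-tails'' is not a substitute for that estimate. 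Two smaller remarks: your step 1 (the embedding $\psi$ into $(\Zb\Gamma)^n/(\Zb\Gamma)^kA^*$), which you rightly flag as yielding only the too-weak bound by $\rh(X_{A^*})$, plays no role in the actual proof and can be discarded; and your step 2 reduction to finitely generated $G$ is correct but unnecessary, since Peters' formula lets one quantify over finite subsets $E\subseteq\Delta^p(X)$ directly.
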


To prove Theorem~\ref{T-bounded below}, we need the following lemma, of which the case $p=2$ appeared in \cite[Lemma 5.1]{Li}.

\begin{lemma} \label{L-ball}
Let $1\le p<+\infty$. There exists some universal constant $C_p>0$ such that for any
$\lambda>1$,
there is some $\delta>0$ so that for any nonempty finite
set $Y$, any positive integer $n$ with $|Y|\le \delta n$, and any $M\ge 1$ one has
$$ |\{x\in \Zb^Y: \|x\|_p\le M\cdot n^{1/p}\}|\le
C_p\lambda^{n}M^{|Y|}.$$
\end{lemma}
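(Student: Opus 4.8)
The plan is to count the lattice points as an $\ell^p$-ball volume via cube packing and then control that volume through Stirling's formula. Write $m=|Y|$ and $R=Mn^{1/p}$, and for $x\in\Zb^Y$ let $Q_x=\prod_{y\in Y}[x_y,x_y+1)$ be the associated half-open unit cube. These cubes tile $\Rb^Y\cong\Rb^m$ and have volume $1$; moreover, if $\|x\|_p\le R$ then every $z\in Q_x$ satisfies $\|z-x\|_p<m^{1/p}$, so $\|z\|_p<R+m^{1/p}$. Hence $\bigcup_{\|x\|_p\le R}Q_x$ is contained in the $\ell^p$-ball $B$ of radius $R+m^{1/p}$ about the origin, and comparing volumes gives
$$ N:=\bigl|\{x\in\Zb^Y:\|x\|_p\le R\}\bigr|\le \mathrm{vol}(B). $$

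The volume of the $\ell^p$-ball of radius $r$ in $\Rb^m$ equals $V_p(m)\,r^m$ with $V_p(m)=(2\,\Gamma_{\mathrm E}(1+1/p))^m/\Gamma_{\mathrm E}(1+m/p)$, where $\Gamma_{\mathrm E}$ denotes Euler's Gamma function. Using $m\le\delta n$ and $M\ge1$ one has $R+m^{1/p}\le n^{1/p}M(1+\delta^{1/p})$, whence
$$ N\le V_p(m)\,n^{m/p}M^m(1+\delta^{1/p})^m = M^m\cdot\frac{\bigl(2\,\Gamma_{\mathrm E}(1+1/p)(1+\delta^{1/p})\bigr)^m\,n^{m/p}}{\Gamma_{\mathrm E}(1+m/p)}. $$
Since $\Gamma_{\mathrm E}(1+1/p)\le1$ for $1\le p<+\infty$, and assuming $\delta\le1$, the base $\kappa:=2\,\Gamma_{\mathrm E}(1+1/p)(1+\delta^{1/p})$ is bounded by the universal constant $4$. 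It therefore suffices to prove $\kappa^m n^{m/p}/\Gamma_{\mathrm E}(1+m/p)\le C_p\lambda^n$.

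For this I would invoke the Stirling lower bound $\Gamma_{\mathrm E}(1+s)\ge c_0\,(s/e)^s$, valid for all $s\ge0$ with a universal $c_0>0$, applied at $s=m/p$; this yields $\kappa^m n^{m/p}/\Gamma_{\mathrm E}(1+m/p)\le c_0^{-1}\kappa^m(epn/m)^{m/p}$. Setting $t=m/n\in(0,\delta]$ and taking logarithms, the exponent on the right becomes $n\,g(t)$ with $g(t)=t\log\kappa+\tfrac{t}{p}\log\tfrac{ep}{t}$. Because $\kappa\le4$ uniformly in $\delta$, one has $g(t)\le\tilde g(t):=t\log4+\tfrac{t}{p}\log\tfrac{ep}{t}$, a function independent of $\delta$ that is continuous, increasing on $(0,p)$, and satisfies $\tilde g(t)\to0$ as $t\to0^+$ (the only delicate term, $-\tfrac{t}{p}\log t$, vanishes in the limit). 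Given $\lambda>1$, fix $\delta\in(0,\min\{1,p\}]$ small enough that $\tilde g(\delta)\le\log\lambda$; then $g(t)\le\log\lambda$ for all $t\in(0,\delta]$, and the required bound holds with $C_p=c_0^{-1}$, which may be taken to depend only on $p$.

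The substantive point, and the reason a crude exponential-moment estimate such as $N\le e^{\beta M^p n}\bigl(\sum_{k\in\Zb}e^{-\beta|k|^p}\bigr)^m$ is inadequate, is that the conclusion demands the \emph{correct} polynomial radius-dependence $M^{|Y|}$, which only the volume comparison supplies. Once that is secured, the real content is the elementary fact that the entropy-type quantity $t\log(1/t)$ tends to $0$ as the lattice density $t=|Y|/n$ tends to $0$; this is exactly what allows the combinatorial growth to be absorbed into $\lambda^n$ for $\lambda$ arbitrarily close to $1$, at the sole cost of taking $\delta$ small. The one bookkeeping obstacle is keeping $C_p$ genuinely independent of $\lambda$, which is handled by bounding $\kappa$ by $4$ uniformly before choosing $\delta$, so that all $\lambda$-dependence is confined to $\delta$.
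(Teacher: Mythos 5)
Your proof is correct and takes essentially the same route as the paper's: a unit-cube volume comparison reduces the lattice count to the volume of an $\ell^p$-ball of radius comparable to $Mn^{1/p}$, the exact ball-volume formula together with Stirling's lower bound for the Gamma function controls that volume, and $\delta$ is chosen via the vanishing of the entropy term $t\log(1/t)$ as $t\to 0^+$ (the paper's increasing function $\varsigma(t)=(n/t)^{t/p}$ plays exactly the role of your $\tilde g$). The remaining differences are cosmetic bookkeeping, e.g.\ your radius bound $Mn^{1/p}(1+\delta^{1/p})$ versus the paper's $2Mn^{1/p}$, and your explicit isolation of the $\lambda$-dependence in $\delta$ with $C_p=c_0^{-1}$ universal.
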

\begin{proof}
Let $\delta>0$ be a small number less than $e^{-1}$ which we
shall determine in a moment.
Let $Y$ be a nonempty finite set, $n$ be a positive integer
with $|Y|\le \delta n$, and  $M\ge 1$. For each $x\in \Zb^Y$, denote $\{z\in
\Rb^Y: 0\le z_{y}-x_{y}\le 1 \mbox{ for all } y \in Y\}$ by
$D_x$. Denote $\{x\in \Zb^Y: \|x\|_p\le M\cdot n^{1/p}\}$ by $S$
and denote the union of $D_x$ for all $x\in S$ by $D_S$. Then the
(Euclidean) volume of $D_S$ is equal to $|S|$. Note that for any $z\in D_S$, say $z\in D_x$, one has
$$\|z\|_p\le \|x\|_p+\|z-x\|_p\le M\cdot n^{1/p}+n^{1/p}\le 2Mn^{1/p}.$$

A simple calculation shows that the function
$\varsigma(t):=(n/t)^{t/p}$ is increasing for $0<t\le ne^{-1}$.
The volume of the unit ball of $\Rb^Y$ under $\| \cdot \|_p$ is $\frac{(2/p)^{|Y|}(\Gamma(1/p))^{|Y|}}{(|Y|/p)\Gamma(|Y|/p)}$
\cite[page 394]{Wang}, where $\Gamma$ denotes the gamma function. By Stirling's formula \cite[page 423]{Lang99} there exists some
constant $C'>0$ such that $\Gamma(t)\ge C'\sqrt{2\pi} t^{t-1/2}e^{-t}$ for
all $t\ge 1/p$. Thus the volume of $D_S$ is no bigger than
\begin{align*}
\frac{(2/p)^{|Y|}(\Gamma(1/p))^{|Y|}(2Mn^{1/p})^{|Y|}}{(|Y|/p)\Gamma(|Y|/p)}&\le \frac{(2/p)^{|Y|}(\Gamma(1/p))^{|Y|}(2Mn^{1/p})^{|Y|}}{(|Y|/p)C' \sqrt{2\pi} (|Y|/p)^{|Y|/p-1/2}e^{-|Y|/p}}\\
&\le |Y|^{-1/2}C_p\tilde{C}^{|Y|}(n/|Y|)^{|Y|/p}M^{|Y|}\\
&\le C_p\tilde{C}^{|Y|}\varsigma(|Y|)M^{|Y|}\\
&\le C_p\tilde{C}^{\delta n}\varsigma(\delta n) M^{|Y|}=C_p\tilde{C}^{\delta n}\delta^{-\delta n/p}M^{|Y|},
\end{align*}
where $C_p=\sqrt{p/(2\pi)}/C'$ and $\tilde{C}=\max(4 e^{1/p}p^{(1-p)/p}\Gamma(1/p), 1)$. Take $\delta>0$ so
small that $\tilde{C}^{\delta}\delta^{-\delta/p}\le \lambda$. Then the
volume of $D_S$ is no bigger than $C_p\lambda^{n}M^{|Y|}$. Consequently,
$|S|\le C_p\lambda^{n}M^{|Y|}$.
\end{proof}

Let $\Gamma$ act on a compact abelian group $X$ by automorphisms.
For any nonempty finite subset $E$ of $\widehat{X}$, the function $F\mapsto \log |\sum_{s\in F}s^{-1}E|$ defined on the set of nonempty finite subsets of $\Gamma$ satisfies
the conditions of the Ornstein-Weiss lemma \cite[Theorem 6.1]{LW}, thus $\frac{\log |\sum_{s\in F}s^{-1}E|}{|F|}$ converges to some real number $c$, denoted by $\lim_F\frac{\log |\sum_{s\in F}s^{-1}E|}{|F|}$, when $F$ becomes more and more left invariant. That is,
for any $\varepsilon>0$, there exist a nonempty finite subset $K$ of $\Gamma$ and $\delta>0$ such that for any
nonempty finite subset $F$ of $\Gamma$ satisfying $|KF\setminus F|\le \delta |F|$ one has
$|\frac{\log |\sum_{s\in F}s^{-1}E|}{|F|}-c|<\varepsilon$.
 We need the following beautiful result of Peters \cite[Theorem 6]{Peters}:

\begin{theorem} \label{T-Peters}
Let $\Gamma$ act on a compact abelian group $X$ by automorphisms. Then
$$ \rh(X)=\sup_E\lim_F\frac{\log |\sum_{s\in F}s^{-1}E|}{|F|},$$
where $E$ ranges over all nonempty finite subsets of $\widehat{X}$.
\end{theorem}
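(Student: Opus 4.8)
The plan is to prove the two inequalities $\rh(X)\ge\sup_E\lim_F(\cdots)$ and $\rh(X)\le\sup_E\lim_F(\cdots)$ separately, working with the separated-set formula $\rh(X)=\sup_{\varepsilon>0}\limsup_F\frac{\log N_{F,\varepsilon}(X)}{|F|}$ from Section~\ref{SS-entropy}. For each fixed finite $E\subseteq\widehat X$ the existence of $\lim_F\frac{\log|\sum_{s\in F}s^{-1}E|}{|F|}$ is already furnished by the Ornstein--Weiss lemma \cite{LW} recorded just above, so the entire content is to compare this dual growth rate with the separated-set growth of the action. No finiteness assumption on $\widehat X$ is needed, since compactness of $X$ and the fact that $\widehat X$ separates the points of $X$ will supply, at every scale, a finite subset of $\widehat X$ good enough for the estimates.

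The bridge between the two sides is a duality dictionary together with one crucial normalization. For finite $E\subseteq\widehat X$ and finite $F\subseteq\Gamma$, consider the continuous homomorphism $\Pi_{F,E}\colon X\to\Tb^{F\times E}$ given by $x\mapsto(\langle x,s^{-1}\varphi\rangle)_{s\in F,\,\varphi\in E}$; it is dual to the homomorphism $\Zb^{F\times E}\to\widehat X$ sending the basis vector indexed by $(s,\varphi)$ to $s^{-1}\varphi$, so its image $\Pi_{F,E}(X)$ is the closed subgroup of $\Tb^{F\times E}$ whose Pontryagin dual is the subgroup $\langle\bigcup_{s\in F}s^{-1}E\rangle$ of $\widehat X$. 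The normalization is to fix once and for all a single resolution constant, say $\eta=1/4$, and to push all the fineness of $\varepsilon$ into the choice of $E$: using that for a nonzero $z\in X$ some integer multiple $k\varphi$ of a separating character satisfies $|\langle z,k\varphi\rangle-1|\ge\eta$, a compactness argument produces, for each $\varepsilon>0$, a finite $E_\varepsilon\subseteq\widehat X$ such that $\rho(x,y)\ge\varepsilon$ forces $\max_{\varphi\in E_\varepsilon}|\langle x,\varphi\rangle-\langle y,\varphi\rangle|\ge\eta$. With $\eta$ fixed, the key claim is that for symmetric $E$ containing $0$ the number of $\eta$-separated (respectively $\eta$-spanning) points of $\Pi_{F,E}(X)$ in the ambient supremum metric agrees with the Minkowski-sum cardinality $|\sum_{s\in F}s^{-1}E|$ up to a factor subexponential in $|F|$: the characters resolved at the fixed scale $\eta$ are exactly the short sums $\sum_{s\in F}s^{-1}\varphi_s$ with $\varphi_s\in E$, i.e.\ the elements of $\sum_{s\in F}s^{-1}E$, while the discrepancy between separated points of the subgroup and these characters is the number of integer points of a box lying in the kernel lattice of $\Pi_{F,E}$, which is subexponential by Lemma~\ref{L-ball} and Lemma~\ref{L-volume to covering} and hence disappears after dividing by $|F|$.

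Granting the dictionary the two inequalities follow by the usual choices of $E$. For the lower bound I fix an arbitrary symmetric finite $E\ni0$ and use the $\eta$-spanning estimate to manufacture at least $|\sum_{s\in F}s^{-1}E|$ (up to subexponential loss) points of $X$ that are pairwise $(F,\varepsilon)$-separated for an $\varepsilon$ depending only on $E$; dividing by $|F|$ and passing to the Ornstein--Weiss limit yields $\rh(X)\ge\lim_F\frac{\log|\sum_{s\in F}s^{-1}E|}{|F|}$, and taking the supremum over $E$ gives one inequality. For the upper bound I take the resolving sets $E_\varepsilon$ from the normalization: every $(F,\varepsilon)$-separated subset of $X$ maps to an $\eta$-separated subset of $\Pi_{F,E_\varepsilon}(X)$, so $N_{F,\varepsilon}(X)$ is at most $|\sum_{s\in F}s^{-1}E_\varepsilon|$ times a subexponential factor, whence $\limsup_F\frac{\log N_{F,\varepsilon}(X)}{|F|}\le\lim_F\frac{\log|\sum_{s\in F}s^{-1}E_\varepsilon|}{|F|}\le\sup_E(\cdots)$; taking the supremum over $\varepsilon$ finishes. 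The hard part will be the two-sided comparison in the dictionary, namely turning the heuristic identification of $\eta$-separated points of the subgroup $\Pi_{F,E}(X)\subseteq\Tb^{F\times E}$ with the cardinality of $\sum_{s\in F}s^{-1}E$ into rigorous inequalities whose error is uniformly subexponential in $|F|$; this is exactly where the lattice-point counting of Lemma~\ref{L-ball} and Lemma~\ref{L-volume to covering} must be applied with care, and where the fixed-$\eta$ normalization is essential to prevent the overhead from contributing to the exponential rate.
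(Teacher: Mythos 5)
Your scaffolding is sound at the periphery: the Ornstein--Weiss lemma does give the existence of $\lim_F$, your construction of finite resolving sets $E_\varepsilon$ at a fixed scale $\eta$ works, the resulting inequality bounding $N_{F,\varepsilon}(X)$ by the maximal cardinality of an $\eta$-separated subset of $\Pi_{F,E_\varepsilon}(X)$ is correct, and pulling separated points of $\Pi_{F,E}(X)$ back to $(F,\varepsilon(E))$-separated points of $X$ is legitimate. But the entire content of the theorem has been moved into your ``dictionary'' --- that at the fixed scale $\eta$ the $\eta$-separated count of $\Pi_{F,E}(X)$ matches $|\sum_{s\in F}s^{-1}E|$ to within $e^{o(|F|)}$ --- which you defer as ``the hard part,'' and the mechanism you indicate for it is demonstrably wrong. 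The number of integer points of a bounded box lying in the kernel lattice of $\Zb^{F\times E}\rightarrow \widehat{X}$ is typically \emph{exponential} in $|F|$, not subexponential: for the identity action on $X=\Tb$ and $E=\{0,1,-1\}$, the kernel contains $e_{(s,0)}$ and $e_{(s,1)}+e_{(s,-1)}$ for every $s\in F$, hence meets $\{0,1\}^{F\times E}$ in at least $4^{|F|}$ points, and the fibers of the selection map onto $\sum_{s\in F}s^{-1}E$ have average size $3^{|F|}/(2|F|+1)$. Lemma~\ref{L-ball} yields subexponential counts only when the supporting index set is a vanishing proportion of $|F|$ --- which is exactly how the paper deploys it in the proof of Theorem~\ref{T-bounded below}, on the F{\o}lner boundary set $(F^{-1}(K_1\cup K^{-1}))\setminus (F')^{-1}$ --- whereas your kernel vectors are supported on all of $F\times E$, where that lemma and Lemma~\ref{L-volume to covering} only give bounds of the shape $C^{|F||E|}$. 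Moreover, both cited lemmas are upper bounds on counts; nothing in your plan can produce the \emph{lower} estimate on $\eta$-spanning numbers of $\Pi_{F,E}(X)$ by $|\sum_{s\in F}s^{-1}E|\,e^{-o(|F|)}$ that your entropy lower bound requires.

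What is missing is precisely the fixed-scale bridge, and it is not a technicality. The natural source of a lower bound --- the $|\sum_{s\in F}s^{-1}E|$ distinct characters are orthonormal in $L^2$ of the image group with respect to its Haar measure, which is $(\Pi_{F,E})_*\mu_X$, a fact your sketch never invokes --- forces many separated points only at scale of order $1/|F|$, since those characters arise from selection vectors of $\ell^1$-norm $|F|$ and so have Lipschitz constant of order $|F|$ on $\Tb^{F\times E}$; converting scale-$1/|F|$ information into fixed-scale-$\eta$ information by naive covering loses a factor exponential in $|F|$. Taming this loss is the actual theorem, so your two-sided dictionary at fixed $\eta$ is a restatement of the result rather than a lemma. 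Note also that the paper itself does not prove this statement: its ``proof'' is the citation of Peters \cite{Peters} together with the remark that his argument for $\Gamma=\Zb$ --- carried out on the measure side, using $\rh(X)=\rh_{\mu_X}(X)$ and harmonic analysis with respect to Haar measure --- goes through for amenable $\Gamma$ with F{\o}lner averaging. To complete your route you would need to import that measure-theoretic machinery (fixed partitions, Haar homogeneity, orthogonality of characters) or find a genuinely new fixed-scale comparison; as it stands, the proposal asserts the key comparison instead of proving it, and the one concrete tool offered for it fails.
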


In \cite{Peters}, Theorem~\ref{T-Peters} was stated and proved only for the case $\Gamma=\Zb$, but the proof there works for general countable amenable groups.

We are ready to prove Theorem~\ref{T-bounded below}.

\begin{proof}[Proof of Theorem~\ref{T-bounded below}]
Fix a compatible translation-invariant  metric $\rho$ on $X$. Denote by $K$ the support of $A$ as a $M_{n\times k}(\Zb)$-valued function on $\Gamma$.
When $\Gamma$ is finite and acts on a compact space $Y$ continuously, one has $\htopol(Y)=|Y|/|\Gamma|$ when $Y$ is a finite set and $\htopol(Y)=+\infty$ otherwise. Thus we may assume that $\Gamma$ is infinite.

By Theorem~\ref{T-Peters} it suffices to show
$$ \lim_F\frac{\log |\sum_{s\in F}s^{-1}E|}{|F|}\le \rh(X)+\delta$$
for every nonempty finite subset $E$ of $\Delta^p(X)$ and every $\delta>0$. Fix such $E$ and $\delta$.
Recall the canonical metric $\rho_\infty$ on $(\Rb/\Zb)^k$ defined in \eqref{E-metric on torus}.
Take $\varepsilon>0$ such that
for any $x\in X$ with $\rho(x, 0_X)\le \varepsilon$ one has $\rho_\infty(x_{e_\Gamma}, 0_{(\Rb/\Zb)^k})\le (2\|A\|_1)^{-1}$.
It suffices to show
$$|\sum_{s\in F}s^{-1}E|\le N_{\rho, F, \varepsilon}(X) \exp(\delta|F|)$$
for all sufficiently left invariant nonempty finite subsets $F$ of $\Gamma$.

Set $E'=E-E\subseteq \Delta^p(X)$. Denote by $B_{F, \varepsilon}$ the set of all $x\in X$ satisfying
$\max_{s\in F}\rho(sx, 0_X)\le \varepsilon$.
Take a maximal $(\rho, F, \varepsilon)$-separated subset $V_F$ of $\sum_{s\in F}s^{-1}E$. Then for any $x\in \sum_{s\in F}s^{-1}E$, since $\rho$ is translation-invariant, one can find some $y\in V_F$ with $x-y\in B_{F, \varepsilon}$. Note that
$x-y \in \sum_{s\in F}s^{-1}E'$. It follows that
$$ |\sum_{s\in F}s^{-1}E|\le |V_F| |B_{F, \varepsilon}\cap \sum_{s\in F}s^{-1}E'|\le N_{\rho, F, \varepsilon}(X)|B_{F, \varepsilon}\cap \sum_{s\in F}s^{-1}E'|.$$
Thus it suffices to show
\begin{align} \label{E-bounded below}
 |B_{F, \varepsilon}\cap \sum_{s\in F}s^{-1}E'|\le \exp(\delta|F|)
\end{align}
for all sufficiently left invariant nonempty finite subsets $F$ of $\Gamma$.

Denote by $P$ the canonical projection map $\ell^\infty(\Gamma, \Rb^k)\rightarrow ((\Rb/\Zb)^k)^\Gamma$.
For each $w\in E'$, take $\tilde{w}\in \ell^\infty(\Gamma, \Rb^k)$ with $P(\tilde{w})=w$ and
$\|\tilde{w}_s\|_\infty=\rho_\infty(w_s, 0_{(\Rb/\Zb)^k})$ for all $s\in \Gamma$.
Since $w\in \Delta^p(X)$, by Proposition~\ref{P-basic p-homoclinic}.(4) one has $\tilde{w}\in \ell^p(\Gamma, \Rb^k)$. Set $\tilde{E}=\{\tilde{w}: w\in E'\}$.
 For each $\tilde{w}\in \tilde{E}$, one has $\tilde{w}A^*\in \ell^\infty(\Gamma, \Zb^n)\cap \ell^p(\Gamma, \Rb^n)=\Zb^n \Gamma$. Denote by $K_1$ the finite subset $\bigcup_{\tilde{w}\in \tilde{E}}\supp(\tilde{w}A^*)$ of $\Gamma$. Note that for any nonempty finite subset $F$ of $\Gamma$ and any $\tilde{x}\in \sum_{s\in F}s^{-1}\tilde{E}$, one has
 $\supp(\tilde{x}A^*)\subseteq F^{-1}K_1$.

Let $F$ be a nonempty finite subset of $\Gamma$. Let $x\in B_{F, \varepsilon}\cap \sum_{s\in F}s^{-1}E'$. Take $x'\in \ell^\infty(\Gamma, \Rb^k)$ with $P(x')=x$ and $\|x'_s\|_\infty=\rho_\infty(x_s, 0_{(\Rb/\Zb)^k})$ for all $s\in \Gamma$. Since $x\in \Delta^p(X)$, by Proposition~\ref{P-basic p-homoclinic}.(4) one has $x'\in \ell^p(\Gamma, \Rb^k)$.
Set $F':=\{s\in F: s^{-1}K\subseteq F^{-1}\}$.
As $x\in B_{F, \varepsilon}$, by our choice of $\varepsilon$ one has $\|x'_{t}\|_\infty=\rho_\infty(x_t, 0_{(\Rb/\Zb)^k})\le (2\|A\|_1)^{-1}$ for every $t\in F^{-1}$, and hence $$\|(x'A^*)_{s}\|_\infty\le (\max_{t\in F^{-1}}\|x'_{t}\|_\infty) \|A^*\|_1\le 1/2$$
for all $s\in (F')^{-1}$. Since $x\in X$,  $x' A^*$ has integral coefficients. Thus $x'A^*=0$ on $(F')^{-1}$.

Since $P(\tilde{E})=E'$, we can find $\tilde{x}\in \sum_{s\in F}s^{-1}\tilde{E}$ with $P(\tilde{x})=x$.
Define $\bar{x}\in \ell^p(\Gamma, \Rb^k)$ to be the same as $x'$ on $F^{-1}$ and the same as $\tilde{x}$ on $\Gamma \setminus F^{-1}$.
Then $\bar{x}A^*=x'A^*=0$ on $(F')^{-1}$. Also, $\bar{x}A^*=\tilde{x}A^*$ on $\Gamma\setminus (F^{-1}K^{-1})$, and hence $\bar{x}A^*=\tilde{x}A^*=0$ on $\Gamma\setminus (F^{-1}(K_1\cup K^{-1}))$. Therefore $\supp(\bar{x}A^*)\subseteq (F^{-1}(K_1\cup K^{-1}))\setminus (F')^{-1}$.

Since
$P(\tilde{x})=P(x')=x$, we have $P(\bar{x})=x\in X$, and hence $\bar{x}A^*$ has integral coefficients.

Now we separate two cases.

Assume first that the condition (1) holds. Set $D=\sum_{\tilde{w}\in \tilde{E}}\|\tilde{w}\|_1$.
Note that $\|\tilde{x}\|_\infty\le D$ and hence
$$ \|\bar{x}\|_\infty\le \max(\|\tilde{x}\|_\infty, \|x'\|_\infty)\le D+1.$$
Thus
$$ \|\bar{x}A^*\|_\infty\le \|\bar{x}\|_\infty \cdot \|A\|_1\le (D+1)\|A\|_1.$$
Then the number of possible $\bar{x}A^*$ is at most $(2(D+1)\|A\|_1+1)^{n|(F^{-1}(K_1\cup K^{-1}))\setminus (F')^{-1}|}$. Since the map $(\ell^1(\Gamma))^k\rightarrow (\ell^1(\Gamma))^n$ sending $a$ to $aA^*$ is injective, the number of possible $\bar{x}$ is also bounded above by $(2(D+ 1)\|A\|_1+1)^{n|(F^{-1}(K_1\cup K^{-1}))\setminus (F')^{-1}|}$. As $P(\bar{x})=x$, we obtain
$$ |B_{F, \varepsilon}\cap \sum_{s\in F}s^{-1}E'|\le (2(D+1)\|A\|_1+1)^{n|(F^{-1}(K_1\cup K^{-1}))\setminus (F')^{-1}|}.$$

When $F$ is sufficiently left invariant, the right hand side of the above inequality is bounded above by $\exp(\delta |F|)$, and hence \eqref{E-bounded below} holds.

Next we assume that the condition (2) holds. Set $D'=\sum_{\tilde{w}\in \tilde{E}}\|\tilde{w}A^*\|_1$. Note that $\|\tilde{x}A^*\|_\infty\le D'$, and hence
$\|\tilde{x}\|_p\le C\|\tilde{x}A^*\|_p\le CD'|F^{-1}K_1|^{1/p}$. Define $\hat{x}\in \ell^p(\Gamma, \Rb^k)$ to be the same as $\bar{x}$ on $((F^{-1}(K_1\cup K^{-1}))\setminus (F')^{-1})K$ and $0$ on all other points of $\Gamma$. Then $\hat{x}A^*=\bar{x}A^*$ on $(F^{-1}(K_1\cup K^{-1}))\setminus (F')^{-1}$. Note that
\begin{align*}
\|\hat{x}\|_p&\le \|\tilde{x}\|_p+|((F^{-1}(K_1\cup K^{-1}))\setminus (F')^{-1})K|^{1/p}\\
& \le CD'|F^{-1}K_1|^{1/p}+|((F^{-1}(K_1\cup K^{-1}))\setminus (F')^{-1})K|^{1/p}.
\end{align*}
Since $\bar{x}A^*$ has support in $(F^{-1}(K_1\cup K^{-1}))\setminus (F')^{-1}$, we get
\begin{align*}
\|\bar{x}A^*\|_p&\le \|\hat{x}A^*\|_p\le \|\hat{x}\|_p \|A^*\|_1\\
&\le (CD'|F^{-1}K_1|^{1/p}+|((F^{-1}(K_1\cup K^{-1}))\setminus (F')^{-1})K|^{1/p})\|A\|_1\\
&\le (2CD'+1)\|A\|_1|F|^{1/p},
\end{align*}
when $F$ is sufficiently left invariant. By Lemma~\ref{L-ball}, for any $\lambda>1$,  when $F$ is sufficiently left invariant, the number of $y\in \Zb^n\Gamma$ with support in $(F^{-1}(K_1\cup K^{-1}))\setminus (F')^{-1}$ and $\|y\|_p\le (2CD'+1)\|A\|_1|F|^{1/p}$ is at most $$C_p^n\lambda^{n|F|}((2CD'+1)\|A\|_1)^{n|(F^{-1}(K_1\cup K^{-1}))\setminus (F')^{-1}|}.$$
Since $\Gamma$ is infinite, it follows that when $F$ is sufficiently left invariant, the number of $\bar{x}A^*$ is at most $\exp(\delta |F|)$.
As in the first case, one concludes that the inequality \eqref{E-bounded below} holds.
\end{proof}

\begin{question} \label{Q-p-expansive}
Could one weaken the conditions (1) and (2) of Theorem~\ref{T-bounded below} to that the linear map
$(\ell^p(\Gamma))^k\rightarrow (\ell^p(\Gamma))^n$ sending $a$ to $aA^*$ is injective?
\end{question}

From Theorems~\ref{T-finite entropy vs 1-expansive} and \ref{T-bounded below}, and Proposition~\ref{P-basic p-expansive}.(5) we get

\begin{corollary} \label{C-finitely presented to 1-homoclinic}
Let $\Gamma$ act on a compact abelian group $X$ by automorphisms such that $\widehat{X}$ is a finitely presented left $\Zb\Gamma$-module. Then
$$ \rh(X)\ge \rh(\widehat{\Delta^1(X)}).$$
\end{corollary}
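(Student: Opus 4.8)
The plan is to reduce the statement to Theorem~\ref{T-bounded below} with $p=1$ and condition (1), by first producing the required injectivity from the finiteness of the entropy. Since $\widehat{X}$ is finitely presented, I would begin by fixing a presentation $\widehat{X}=(\Zb\Gamma)^k/(\Zb\Gamma)^nA$ for some $k,n\in\Nb$ and $A\in M_{n\times k}(\Zb\Gamma)$, and correspondingly identify $X$ with $\widehat{(\Zb\Gamma)^k/(\Zb\Gamma)^nA}$. Note that $X$ is (trivially) a closed $\Gamma$-invariant subgroup of $\widehat{(\Zb\Gamma)^k/(\Zb\Gamma)^nA}$, so the hypotheses of Theorem~\ref{T-bounded below} are in place once the appropriate injectivity condition is verified.

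I would then split into two cases according to the value of $\rh(X)$. If $\rh(X)=+\infty$, the inequality $\rh(X)\ge \rh(\widehat{\Delta^1(X)})$ is immediate, since the entropy $\rh(\widehat{\Delta^1(X)})$ lies in $[0,+\infty]$.

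The substantive case is $\rh(X)<+\infty$. Here I would invoke the equivalence of conditions (1) and (2) in Theorem~\ref{T-finite entropy vs 1-expansive} to deduce that the action $\alpha$ is $1$-expansive. By Proposition~\ref{P-basic p-expansive}.(5) applied with $p=1$, this $1$-expansiveness is precisely equivalent to the injectivity of the linear map $(\ell^1(\Gamma))^k\rightarrow (\ell^1(\Gamma))^n$ sending $a$ to $aA^*$. This is exactly condition (1) of Theorem~\ref{T-bounded below} for $p=1$, so that theorem applies and yields $\rh(X)\ge \rh(\widehat{\Delta^1(X)})$, completing the proof.

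There is no serious obstacle: the corollary is a clean assembly of the three cited results, and the only real content is recognizing that the finite-entropy hypothesis is the bridge that supplies the $\ell^1$-injectivity needed to activate Theorem~\ref{T-bounded below}. If anything requires care, it is merely checking that the presentation chosen for the application of Theorem~\ref{T-finite entropy vs 1-expansive} is the same one fed into Theorem~\ref{T-bounded below}, so that the matrix $A$ and the maps $a\mapsto aA^*$ match across the two invocations; this is automatic once the presentation is fixed at the outset.
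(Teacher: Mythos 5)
Your proof is correct and is essentially the paper's own argument: the corollary is derived there by exactly the chain you describe, with Theorem~\ref{T-finite entropy vs 1-expansive} supplying $1$-expansiveness from finite entropy (the infinite-entropy case being trivial), Proposition~\ref{P-basic p-expansive}.(5) converting this into injectivity of $a\mapsto aA^*$ on $(\ell^1(\Gamma))^k$ for the fixed presentation, and Theorem~\ref{T-bounded below} with $p=1$ and condition (1) giving $\rh(X)\ge \rh(\widehat{\Delta^1(X)})$. Your observation about keeping the same presentation across the two invocations is the right (and only) point of care, and it is handled correctly.
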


From Theorems~\ref{T-algebraic characterization of expansive}, \ref{T-homoclinic are 1-homoclinic} and \ref{T-bounded below}, we get

\begin{corollary} \label{C-expansive dual}
Let $\Gamma$ act on a compact abelian group $X$ expansively by automorphisms.
Then
$$ \rh(X)\ge \rh(\widehat{\Delta(X)}).$$
\end{corollary}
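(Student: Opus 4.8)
The plan is to realize $X$ as a closed $\Gamma$-invariant subgroup of one of the ``principal-type'' systems $X_A$ and then quote Theorem~\ref{T-bounded below}. First I would invoke Theorem~\ref{T-algebraic characterization of expansive}: since the action is expansive, $\widehat{X}$ is a finitely generated left $\Zb\Gamma$-module, so I may identify $\widehat{X}$ with $(\Zb\Gamma)^k/J$ for some $k\in\Nb$ and some left $\Zb\Gamma$-submodule $J\subseteq(\Zb\Gamma)^k$, and condition (2) of that theorem supplies a matrix $A\in M_k(\Zb\Gamma)$ that is invertible in $M_k(\ell^1(\Gamma))$ and whose rows lie in $J$. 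Because $J$ is a left submodule and the $i$-th row of $A$ is $e_iA$, every element $bA=\sum_i b_i(e_iA)$ lies in $J$, i.e. $(\Zb\Gamma)^kA\subseteq J$. Dualizing the quotient map $(\Zb\Gamma)^k/(\Zb\Gamma)^kA\twoheadrightarrow(\Zb\Gamma)^k/J=\widehat{X}$ then exhibits $X$ as a closed $\Gamma$-invariant subgroup of $X_A=\widehat{(\Zb\Gamma)^k/(\Zb\Gamma)^kA}$.

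Next I would apply Theorem~\ref{T-bounded below} with $n=k$, the same $A$, and $p=1$. The only thing to check is hypothesis (1): that the linear map $(\ell^1(\Gamma))^k\to(\ell^1(\Gamma))^k$ sending $a$ to $aA^*$ is injective. This is immediate, since $A$ invertible in $M_k(\ell^1(\Gamma))$ forces $A^*$ to be invertible as well, so right multiplication by $A^*$ is in fact a bijection of $(\ell^1(\Gamma))^k$. Theorem~\ref{T-bounded below} therefore yields
$$\rh(X)\ge \rh(\widehat{\Delta^1(X)}).$$

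Finally I would remove the superscript. By Theorem~\ref{T-homoclinic are 1-homoclinic}, expansiveness gives $\Delta(X)=\Delta^1(X)$, so $\widehat{\Delta^1(X)}=\widehat{\Delta(X)}$ and hence $\rh(X)\ge\rh(\widehat{\Delta(X)})$, as desired. There is no genuine obstacle here: the substance is entirely carried by Theorem~\ref{T-bounded below}, and the only points requiring a moment's care are the two bookkeeping verifications above, namely that the rows of $A$ lying in $J$ really do produce the inclusion $X\subseteq X_A$, and that invertibility of $A$ over $\ell^1(\Gamma)$ supplies exactly the injectivity demanded by hypothesis (1) of that theorem.
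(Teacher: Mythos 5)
Your proposal is correct and follows exactly the route the paper intends: the paper derives this corollary directly from Theorems~\ref{T-algebraic characterization of expansive}, \ref{T-homoclinic are 1-homoclinic}, and \ref{T-bounded below}, and your write-up simply makes the two implicit verifications explicit (that the rows of $A$ lying in $J$ give $(\Zb\Gamma)^kA\subseteq J$ and hence $X\subseteq X_A$, and that invertibility of $A$ in $M_k(\ell^1(\Gamma))$ makes $a\mapsto aA^*$ injective, fulfilling hypothesis~(1) of Theorem~\ref{T-bounded below} with $p=1$). No gaps; this is the same argument as the paper's.
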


Let $A\in M_k(\Zb\Gamma)$ for some $k\in \Nb$. Let $1<p, q<+\infty$ with $p^{-1}+q^{-1}=1$. One may identify $\ell^q(\Gamma, \Rb^k)$ with the dual space
of $\ell^p(\Gamma, \Rb^k)$ naturally, as using the pairing in \eqref{E-pairing}.
For the bounded linear map $T:\ell^p(\Gamma, \Rb^k)\rightarrow \ell^p(\Gamma, \Rb^k)$ sending $a$ to $aA^*$, its dual $T^*: \ell^q(\Gamma, \Rb^k)\rightarrow \ell^q(\Gamma, \Rb^k)$ sends $b$ to $bA$. Thus $T$ is invertible exactly when $T^*$ is invertible.
From Theorem~\ref{T-bounded below} and Lemma~\ref{L-key expansive homoclinic} we get

\begin{corollary} \label{C-key p-expansive entropy}
Let $k\in \Nb$, and  $A\in M_k(\Zb\Gamma)$
such that the linear map $(\ell^p(\Gamma))^k\rightarrow (\ell^p(\Gamma))^k$
sending $a$ to $aA^*$ is invertible for some $1< p<+\infty$.
Set $X_A=\widehat{(\Zb\Gamma)^k/(\Zb\Gamma)^kA}$ and $X_{A^*}=\widehat{(\Zb\Gamma)^k/(\Zb\Gamma)^kA^*}$.
Then
$$ \rh(X_A)=\rh(\widehat{\Delta^p(X_A)})=\rh(X_{A^*}).$$
\end{corollary}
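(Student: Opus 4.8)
The plan is to read off both equalities from Lemma~\ref{L-key expansive homoclinic} together with Theorem~\ref{T-bounded below}, exploiting the symmetry between $A$ and $A^*$ afforded by the duality $p\leftrightarrow q$ explained just before the statement. Throughout, write $T$ for the invertible map $a\mapsto aA^*$ on $(\ell^p(\Gamma))^k$.

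First I would establish the middle equality $\rh(\widehat{\Delta^p(X_A)})=\rh(X_{A^*})$. Since $T$ is invertible, Lemma~\ref{L-key expansive homoclinic} applies and gives $\Delta^p(X_A)=\Delta(X_A)$ together with an isomorphism $\Delta^p(X_A)\cong (\Zb\Gamma)^k/(\Zb\Gamma)^kA^*$ of left $\Zb\Gamma$-modules. As this isomorphism is $\Gamma$-equivariant, applying Pontryagin duality identifies the induced $\Gamma$-action on $\widehat{\Delta^p(X_A)}$, up to topological conjugacy, with the canonical action on $X_{A^*}=\widehat{(\Zb\Gamma)^k/(\Zb\Gamma)^kA^*}$; hence $\rh(\widehat{\Delta^p(X_A)})=\rh(X_{A^*})$.

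Next I would obtain the two inequalities. Invertibility of $T$ gives $\|a\|_p\le \|T^{-1}\|\,\|aA^*\|_p$ for all $a\in(\ell^p(\Gamma))^k$, so condition (2) of Theorem~\ref{T-bounded below} holds for $X=X_A$ (with $n=k$ and $C=\|T^{-1}\|$), and its conclusion reads $\rh(X_A)\ge\rh(\widehat{\Delta^p(X_A)})=\rh(X_{A^*})$. For the reverse inequality I would run the same argument for $A^*$ with the conjugate exponent: as noted in the discussion preceding the statement, invertibility of $T$ forces invertibility of its dual $T^*\colon (\ell^q(\Gamma))^k\to(\ell^q(\Gamma))^k$, $b\mapsto bA=b(A^*)^*$. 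Applying Lemma~\ref{L-key expansive homoclinic} and Theorem~\ref{T-bounded below} to $A^*$ and the exponent $q$ yields $\Delta^q(X_{A^*})\cong (\Zb\Gamma)^k/(\Zb\Gamma)^kA$ and $\rh(X_{A^*})\ge\rh(\widehat{\Delta^q(X_{A^*})})=\rh(X_A)$. Combining the two inequalities forces $\rh(X_A)=\rh(X_{A^*})$, which together with the middle equality completes the chain $\rh(X_A)=\rh(\widehat{\Delta^p(X_A)})=\rh(X_{A^*})$.

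The only point requiring care, rather than a genuine obstacle, is verifying that the module isomorphism supplied by Lemma~\ref{L-key expansive homoclinic} is $\Gamma$-equivariant, so that passing to Pontryagin duals preserves entropy; everything else is a direct bookkeeping of the hypotheses of Theorem~\ref{T-bounded below} and of the $p\leftrightarrow q$ symmetry of invertibility.
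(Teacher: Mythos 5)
Your proposal is correct and follows essentially the same route as the paper: the authors derive the corollary precisely from the $T\leftrightarrow T^*$ duality between the exponents $p$ and $q$ (noted in the paragraph preceding the statement), condition (2) of Theorem~\ref{T-bounded below} applied to both $A$ on $\ell^p$ and $A^*$ on $\ell^q$, and the $\Zb\Gamma$-module isomorphism $\Delta^p(X_A)\cong(\Zb\Gamma)^k/(\Zb\Gamma)^kA^*$ from Lemma~\ref{L-key expansive homoclinic}. The equivariance you flag as needing care is already supplied by that lemma, which asserts the isomorphism is one of left $\Zb\Gamma$-modules, so dualizing yields a topological conjugacy and equality of entropies.
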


\begin{remark} \label{R-invertible}
Since $\Gamma$ is amenable, by a result of Herz \cite[Theorem C]{Herz71} \cite[Theorem 5]{Herz73}, for any $k\in \Nb$, and any $1\le p\le q\le 2$ or $2\le q\le p<+\infty$, every bounded linear map
$\ell^p(\Gamma, \Rb^k)\rightarrow \ell^p(\Gamma, \Rb^k)$ commuting with the left-shift action of $\Gamma$ can be thought of a bounded linear map
 $\ell^q(\Gamma, \Rb^k)\rightarrow \ell^q(\Gamma, \Rb^k)$. It follows that, for any $A\in M_k(\Zb\Gamma)$ and $1\le p\le q\le 2$ or $2\le q\le p<+\infty$,
 if the linear map  $\ell^p(\Gamma, \Rb^k)\rightarrow \ell^p(\Gamma, \Rb^k)$ sending $a$ to $aA^*$ is invertible, then so is the linear map
  $\ell^q(\Gamma, \Rb^k)\rightarrow \ell^q(\Gamma, \Rb^k)$ sending $a$ to $aA^*$.
\end{remark}

\begin{remark} \label{R-dual}
 For $A\in M_k(\Zb\Gamma)$, note that the linear map $(\ell^2(\Gamma))^k\rightarrow (\ell^2(\Gamma))^k$ sending $a$ to $aA^*$ is invertible exactly when $A$ is invertible in $M_k(\cL\Gamma)$, where $\cL\Gamma$ denotes the group von Neumann algebra of $\Gamma$.
In \cite{Li} it is shown that, when $f\in \Zb\Gamma$ is invertible in $\cL\Gamma$, $\rh(X_f)$ can be calculated using the Fuglede-Kadison determinant of $f$, and as a consequence, $\rh(X_f)=\rh(X_{f^*})$ \cite[Corollary 9.2]{Li}.
Corollary~\ref{C-key p-expansive entropy} yields a new proof of this consequence, and in turn a new proof of \cite[Theorem 1.1]{Li}.
\end{remark}

Recall our convention of CPE before Corollary~\ref{C-CPE}.

\begin{theorem} \label{T-dual entropy for Noetherian 1-homoclinic}
Suppose that $\Zb\Gamma$ is left Noetherian.
 Let $\Gamma$ act on a compact abelian group $Y$ $1$-expansively by automorphisms such that
 $\Delta^1(Y)$ is dense in $Y$. Let $X$ be a closed $\Gamma$-invariant subgroup of $Y$. Then the following hold:
\begin{enumerate}
\item For any $\Gamma$-invariant subgroup $G$ of $\Delta^1(X)$ with $\overline{G}=\overline{\Delta^1(X)}$, one has
$\rh(X)=\rh(\widehat{G})$.

\item $\Delta^1(X)$ is a dense subgroup of $\IE(X)$.

\item The action $\Gamma \curvearrowright X$ has positive entropy if and only if $\Delta^1(X)$ is nontrivial.

\item The action $\Gamma \curvearrowright X$ has CPE if and only if $\Delta^1(X)$ is dense in $X$.
\end{enumerate}
\end{theorem}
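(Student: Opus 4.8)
The plan is to reduce all four assertions to the single entropy identity $\rh(X)=\rh(\widehat{\Delta^1(X)})=\rh(\overline{\Delta^1(X)})$ together with the relation $\IE(X)=\overline{\Delta^1(X)}$, and to extract these from two ingredients: a ``double duality'' computation valid for any finitely‑presented algebraic action, and the Yuzvinski\u{\i} addition formula, into which the density hypothesis on $\Delta^1(Y)$ enters. First I would record the standing reductions. Since $Y$ is $1$-expansive, $\widehat Y$ is finitely generated (Proposition~\ref{P-basic p-expansive}.(2)), hence finitely presented as $\Zb\Gamma$ is left Noetherian; the same holds for $\widehat X$, $\widehat{Y/X}$ and every subgroup or quotient that appears, and by Theorem~\ref{T-finite entropy vs 1-expansive} all these actions have finite entropy. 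Restricting characters shows $1$-expansiveness passes to closed subgroups, so $X$ is $1$-expansive; a finitely-presented finite-entropy quotient is again $1$-expansive (Theorem~\ref{T-finite entropy vs 1-expansive}), so $Y/X$ and $Y/\overline{\Delta^1(X)}$ are $1$-expansive. I would verify the compatibility facts $\Delta^1(X)=\Delta^1(Y)\cap X$ and $\Delta^1(\overline{\Delta^1(X)})=\Delta^1(X)$ (a point of a subgroup is $1$-homoclinic there exactly when it is in the ambient group, since characters restrict), and that the image of the dense set $\Delta^1(Y)$ under $Y\to Y/X$ is dense and lies in $\Delta^1(Y/X)$, so $\Delta^1(Y/X)$ is dense in $Y/X$. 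Finally, by Proposition~\ref{P-basic p-homoclinic}.(6) each group $\Delta^1(\cdot)$ is a finitely generated $\Zb\Gamma$-module, so each dual action $\widehat{\Delta^1(\cdot)}$ again has finitely presented dual.

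The technical core is the identity $\rh(\widehat{\Delta^1(W)})=\rh(\overline{\Delta^1(W)})$ for any such $W$. Writing $Z_W=\overline{\Delta^1(W)}$, the inequality $\rh(Z_W)\ge\rh(\widehat{\Delta^1(Z_W)})=\rh(\widehat{\Delta^1(W)})$ is Corollary~\ref{C-finitely presented to 1-homoclinic} combined with $\Delta^1(Z_W)=\Delta^1(W)$. For the reverse I would apply Lemma~\ref{L-pairing} to the dual pair $(\widehat W,\Delta^1(W))$: the inclusion $\Delta^1(W)\hookrightarrow W=\widehat{\widehat W}$ forces the restriction map $\Phi_1\colon\widehat W\to\widehat{\Delta^1(W)}$ to have dense image (Lemma~\ref{L-pairing}.(1)), while $\Delta^1(W)\subseteq\Delta^1(W)$ forces $\Phi_1(\widehat W)\subseteq\Delta^1(\widehat{\Delta^1(W)})$ (Lemma~\ref{L-pairing}.(2)). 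Since $\Phi_1(\widehat W)\cong\widehat{Z_W}$ is a dense subgroup of $\widehat{\Delta^1(W)}$ sitting inside $\Delta^1(\widehat{\Delta^1(W)})$, applying Corollary~\ref{C-finitely presented to 1-homoclinic} once more to $\widehat{\Delta^1(W)}$ and using the surjection dual to $\Phi_1(\widehat W)\hookrightarrow\Delta^1(\widehat{\Delta^1(W)})$ gives $\rh(\widehat{\Delta^1(W)})\ge\rh(\widehat{\Delta^1(\widehat{\Delta^1(W)})})\ge\rh(Z_W)$, completing the identity.

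Next I would exploit the density of $\Delta^1(Y)$ through the addition formula (Proposition~\ref{P-addition formula}) for $0\to X\to Y\to Y/X\to 0$. Because $\Delta^1(Y)$ and $\Delta^1(Y/X)$ are dense, the identity gives $\rh(\widehat{\Delta^1(Y)})=\rh(Y)$ and $\rh(\widehat{\Delta^1(Y/X)})=\rh(Y/X)$. On the dual side, the injection $\Delta^1(Y)/\Delta^1(X)\hookrightarrow\Delta^1(Y/X)$ of discrete modules and the addition formula for $0\to\Delta^1(X)\to\Delta^1(Y)\to\Delta^1(Y)/\Delta^1(X)\to 0$ yield $\rh(\widehat{\Delta^1(Y)})\le\rh(\widehat{\Delta^1(X)})+\rh(\widehat{\Delta^1(Y/X)})$. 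Hence $\rh(Y)\le\rh(\widehat{\Delta^1(X)})+\rh(Y/X)$, whereas $\rh(Y)=\rh(X)+\rh(Y/X)$ and $\rh(\widehat{\Delta^1(X)})=\rh(\overline{\Delta^1(X)})\le\rh(X)$; as all entropies are finite I may cancel $\rh(Y/X)$ to conclude $\rh(X)=\rh(\widehat{\Delta^1(X)})=\rh(\overline{\Delta^1(X)})$, and therefore $\rh\bigl(X/\overline{\Delta^1(X)}\bigr)=0$.

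Finally I would deduce the assertions, writing $Z=\overline{\Delta^1(X)}$. Since $\Delta^1(X)\subseteq\IE(X)$ (Theorem~\ref{T-homoclinic to IE}) and $\IE(X)$ is closed (Theorem~\ref{T-mIE=IE}.(1)) we get $Z\subseteq\IE(X)$; conversely $\rh(X/Z)=0$ makes $\IE(X/Z)$ trivial (Theorem~\ref{T-mIE=IE}.(4)), so $\IE(X)\subseteq Z$ by Theorem~\ref{T-mIE=IE}.(5), giving $\IE(X)=Z$ and hence (2). For (1), running the pairing argument of the second paragraph with a general $\Gamma$-invariant $G\subseteq\Delta^1(X)$ satisfying $\overline G=Z$ (now $\Phi_1(\widehat X)$ is dense in $\widehat G$, lies in $\Delta^1(\widehat G)$, and $\Phi_1(\widehat X)\cong\widehat Z$) gives $\rh(\widehat G)\ge\rh(Z)=\rh(X)$, while the surjection $\widehat{\Delta^1(X)}\twoheadrightarrow\widehat G$ gives $\rh(\widehat G)\le\rh(\widehat{\Delta^1(X)})=\rh(X)$; thus $\rh(\widehat G)=\rh(X)$. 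Assertion (3) is then immediate from (2) and Theorem~\ref{T-mIE=IE}.(4), and (4) from (2) and Corollary~\ref{C-CPE}. The main obstacle is precisely the equality $\rh(X)=\rh(\overline{\Delta^1(X)})$, equivalently $\rh\bigl(X/\overline{\Delta^1(X)}\bigr)=0$: the available tools (Corollary~\ref{C-finitely presented to 1-homoclinic}, Theorem~\ref{T-bounded below}) only bound $\rh$ from below by the dual-homoclinic entropy, and securing the matching upper bound is exactly what compels the use of the ambient $Y$ with dense $\Delta^1(Y)$, through the interplay of the double-duality identity with the two addition formulas.
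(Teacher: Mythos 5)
Your proposal is correct and follows essentially the same route as the paper's proof: the double-duality argument via Lemma~\ref{L-pairing} and Corollary~\ref{C-finitely presented to 1-homoclinic} (with left Noetherianness supplying finite presentation and Theorem~\ref{T-finite entropy vs 1-expansive} supplying the finiteness needed for cancellation), the addition formula of Proposition~\ref{P-addition formula} applied both to $X\le Y$ and to the dual of $0\to\Delta^1(X)\to\Delta^1(Y)\to\Delta^1(Y)/\Delta^1(X)\to 0$, and finally Theorems~\ref{T-homoclinic to IE} and \ref{T-mIE=IE} together with Corollary~\ref{C-CPE}. Your minor repackagings --- isolating the identity $\rh(\widehat{\Delta^1(W)})=\rh(\overline{\Delta^1(W)})$ without a density hypothesis, invoking Theorem~\ref{T-mIE=IE}.(5) instead of Theorem~\ref{T-topological Pinsker}.(1) to obtain $\IE(X)\subseteq\overline{\Delta^1(X)}$, and rerunning the pairing directly for a general $G$ rather than reducing to the ambient group $\overline{G}$ --- are all equivalent to the paper's steps.
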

\begin{proof} (1). We show first $\rh(Y)=\rh(\widehat{G_1})$ for any $\Gamma$-invariant subgroup $G_1$ of $\Delta^1(Y)$ satisfying
$\overline{G_1}=Y$. Denote by $\Tb$ the unit circle in $\Cb$. The canonical pairing $Y\times \widehat{Y}\rightarrow \Tb$ restricts to
a pairing $G_1\times \widehat{Y}\rightarrow \Tb$ which is bi-additive and equivariant in the sense defined before Lemma~\ref{L-pairing}.
Since $G_1$ is dense in $Y$, by Lemma~\ref{L-pairing}, the induced $\Gamma$-equivariant group homomorphism $\Phi:\widehat{Y}\rightarrow \widehat{G_1}$ is injective and maps $\widehat{Y}$ into $\Delta^1(\widehat{G_1})$.

Since the $\Gamma$-action on $Y$ is $1$-expansive, by Proposition~\ref{P-basic p-expansive}.(2) and Proposition~\ref{P-basic p-homoclinic}.(6) both $\widehat{Y}$
and $\Delta^1(Y)$  are finitely generated left $\Zb\Gamma$-modules.
As $\Zb\Gamma$ is left Noetherian, every left finitely generated $\Zb\Gamma$-module is Noetherian and finitely presented \cite[Proposition 4.29]{Lam}.
Thus both $\widehat{Y}$ and $G_1$ are finitely presented left $\Zb\Gamma$-modules. In virtue of Corollary~\ref{C-finitely presented to 1-homoclinic},
we have
$$ \rh(Y)\ge \rh(\widehat{\Delta^1(Y)})\ge \rh(\widehat{G_1}),$$
and
$$ \rh(\widehat{G_1})\ge \rh(\widehat{\Delta^1(\widehat{G_1\quad})})\ge \rh(\widehat{\widehat{Y}})=\rh(Y).$$
Therefore $\rh(Y)=\rh(\widehat{\Delta^1(Y)})=\rh(\widehat{G_1})$ as desired.

Next we show  $\rh(X)=\rh(\widehat{\Delta^1(X)})$.
 As above, both $\widehat{X}$ and $\widehat{Y/X}$ are finitely presented left $\Zb\Gamma$-modules. By Proposition~\ref{P-basic p-homoclinic}.(3) the quotient map $Y\rightarrow Y/X$ induces an embedding $\Delta^1(Y)/\Delta^1(X)\hookrightarrow \Delta^1(Y/X)$. In virtue of Corollary~\ref{C-finitely presented to 1-homoclinic} we have
$$ \rh(X)\ge \rh(\widehat{\Delta^1(X)}),$$
and
$$ \rh(Y/X)\ge \rh(\widehat{\Delta^1(Y/X)})\ge \rh(\widehat{\Delta^1(Y)/\Delta^1(X)}).$$
From Proposition~\ref{P-addition formula} we then obtain
\begin{align*}
\rh(Y)=\rh(X)+\rh(Y/X)\ge \rh(\widehat{\Delta^1(X)})+\rh(\widehat{\Delta^1(Y)/\Delta^1(X)})=\rh(\widehat{\Delta^1(Y)}).
\end{align*}
From the last paragraph we have $\rh(Y)=\rh(\widehat{\Delta^1(Y)})$. Since the $\Gamma$-action on $Y$ is $1$-expansive and $\widehat{Y}$ is a finitely presented left $\Zb\Gamma$-module,
 by Theorem~\ref{T-finite entropy vs 1-expansive} one has $\rh(Y)<+\infty$. Thus we conclude that $\rh(X)= \rh(\widehat{\Delta^1(X)})$.

Finally we show $\rh(\widehat{\Delta^1(X)})=\rh(\widehat{G})$. For this purpose we may assume that $\overline{\Delta^1(X)}=\overline{G}=X$.
Since the $\Gamma$-action on $Y$ is $1$-expansive, its restriction on $X$ is also $1$-expansive.
From the first part of the proof we conclude that $\rh(\widehat{G})=\rh(X)=\rh(\widehat{\Delta^1(X)})$.

(2). By Theorem~\ref{T-homoclinic to IE} we have $\Delta^1(X)\subseteq \IE(X)$. From Assertion (1) we have
$$ \rh(X)=\rh(\widehat{\Delta^1(X)})=\rh(\overline{\Delta^1(X)}).$$
In the above we have seen that $\rh(X)\le \rh(Y)<+\infty$. Thus, by Proposition~\ref{P-addition formula} we have
$$ \rh(X/\overline{\Delta^1(X)})=\rh(X)-\rh(\overline{\Delta^1(X)})=0.$$
In virtue of Theorem~\ref{T-topological Pinsker}.(1), we conclude that $\IE(X)\subseteq \overline{\Delta^1(X)}$. Therefore $\IE(X)=\overline{\Delta^1(X)}$.

The assertion (3) follows from the assertion (2) and Theorem~\ref{T-mIE=IE}.(4). The assertion (4) follows from the assertion (2) and Corollary~\ref{C-CPE}.
\end{proof}

\begin{example} \label{Ex-dense 1-homoclinic}
Let $\Gamma=\Zb^d$ for some $d\in \Nb$ with $d\ge 2$. Let $f\in \Zb\Gamma$ be irreducible such that
$Z(f)$ (as defined in Examples~\ref{Ex- Z^d uniform}) is nonempty but finite. For instance, one may take $f$ as $2d-\sum_{j=1}^d(u_j+u_j^{-1})$
or $d-\sum_{j=1}^du_j$ for $u_1, \dots, u_d$ being the canonical basis of $\Zb^d$.
As pointed out in Examples~\ref{Ex- Z^d uniform} and \ref{Ex-finite Z(f)}, $\alpha_f$ (as defined in Notation~\ref{N-principal}) is $1$-expansive and $\Delta^1(X_f)$ is dense in $X_f$.
On the other hand, $\alpha_f$ is not expansive \cite[Theorem 3.9]{Schmidt90}.
\end{example}

From Theorems~\ref{T-algebraic characterization of expansive}, \ref{T-homoclinic are 1-homoclinic}, \ref{T-dual entropy for Noetherian 1-homoclinic}, parts (1) and (4) of Proposition~\ref{P-basic p-expansive}, and Lemma~\ref{L-key expansive homoclinic} we have

\begin{corollary} \label{C-homoclinic dense in IE for Noetherian}
Suppose that $\Zb\Gamma$ is left Noetherian.
 Let $\Gamma$ act on a compact abelian group $X$ expansively by automorphisms.
Then the following hold:
\begin{enumerate}
\item For any $\Gamma$-invariant subgroup $G$ of $\Delta(X)$ with $\overline{G}=\overline{\Delta(X)}$, one has
$\rh(X)=\rh(\widehat{G})$.

\item $\Delta(X)$ is a dense subgroup of $\IE(X)$.

\item The action has positive entropy if and only if $\Delta(X)$ is nontrivial.

\item The action has CPE if and only if $\Delta(X)$ is dense in $X$.
\end{enumerate}
\end{corollary}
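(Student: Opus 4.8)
The plan is to deduce the corollary directly from Theorem~\ref{T-dual entropy for Noetherian 1-homoclinic} by realizing $X$ as a closed $\Gamma$-invariant subgroup of a suitable $1$-expansive ambient group $Y$ whose $1$-homoclinic group is dense, and then translating the conclusions from $\Delta^1$ to $\Delta$. First I would invoke Theorem~\ref{T-algebraic characterization of expansive}: since the action on $X$ is expansive, $\widehat{X}$ is finitely generated and we may identify $\widehat{X}$ with $(\Zb\Gamma)^k/J$ for some $k\in\Nb$ and some left $\Zb\Gamma$-submodule $J$ of $(\Zb\Gamma)^k$, together with some $A\in M_k(\Zb\Gamma)$ invertible in $M_k(\ell^1(\Gamma))$ whose rows lie in $J$. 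Set $Y=X_A=\widehat{(\Zb\Gamma)^k/(\Zb\Gamma)^kA}$. Because $(\Zb\Gamma)^kA\subseteq J$, the quotient map $(\Zb\Gamma)^k/(\Zb\Gamma)^kA \twoheadrightarrow (\Zb\Gamma)^k/J=\widehat{X}$ dualizes to a $\Gamma$-equivariant closed embedding $X\hookrightarrow Y$, so $X$ is a closed $\Gamma$-invariant subgroup of $Y$. As the rows of $A$ trivially lie in $(\Zb\Gamma)^kA$, the submodule defining $\widehat{Y}$, the implication (3)$\Rightarrow$(1) of Theorem~\ref{T-algebraic characterization of expansive} shows $Y$ is expansive; by parts (4) and (1) of Proposition~\ref{P-basic p-expansive} (expansiveness equals $\infty$-expansiveness, and $\infty$-expansive implies $1$-expansive) it follows that $Y$ is $1$-expansive.

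Next I would verify that $\Delta^1(Y)$ is dense in $Y$. Since $A$ is invertible in $M_k(\ell^1(\Gamma))$, right multiplication $a\mapsto aA^*$ is an invertible bounded operator on $(\ell^1(\Gamma))^k$, so Lemma~\ref{L-key expansive homoclinic} applies with $p=1$ and gives $\Delta^1(X_A)=\Delta(X_A)$ dense in $X_A=Y$. All hypotheses of Theorem~\ref{T-dual entropy for Noetherian 1-homoclinic} are now in force, and its four conclusions hold for $X\subseteq Y$: namely $\rh(X)=\rh(\widehat{G})$ for every $\Gamma$-invariant subgroup $G$ of $\Delta^1(X)$ with $\overline{G}=\overline{\Delta^1(X)}$, the group $\Delta^1(X)$ is a dense subgroup of $\IE(X)$, the action has positive entropy exactly when $\Delta^1(X)$ is nontrivial, and the action has CPE exactly when $\Delta^1(X)$ is dense in $X$. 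Finally, since the action on $X$ is expansive, Theorem~\ref{T-homoclinic are 1-homoclinic} yields $\Delta(X)=\Delta^1(X)$; substituting this equality into the four statements above produces precisely assertions (1)--(4) of the corollary.

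The argument is in essence a bookkeeping assembly of earlier results, so there is no genuinely hard analytic step. The only point requiring real care is the verification that the hypotheses of Theorem~\ref{T-dual entropy for Noetherian 1-homoclinic} truly hold for the ambient $Y=X_A$ produced by Theorem~\ref{T-algebraic characterization of expansive}: that $X$ sits inside $Y$ as a closed $\Gamma$-invariant subgroup via the inclusion $(\Zb\Gamma)^kA\subseteq J$, and that $Y$ is simultaneously $1$-expansive and has $\Delta^1(Y)$ dense. Once $Y$ is correctly chosen, every remaining step is a direct citation combined with the identification $\Delta(X)=\Delta^1(X)$ for expansive actions.
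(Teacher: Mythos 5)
Your proof is correct and follows essentially the same route as the paper, which deduces the corollary from the same chain of citations: Theorem~\ref{T-algebraic characterization of expansive} to embed $X$ as a closed invariant subgroup of $Y=X_A$, Lemma~\ref{L-key expansive homoclinic} (with $p=1$) for density of $\Delta^1(Y)$, parts (1) and (4) of Proposition~\ref{P-basic p-expansive} for $1$-expansiveness of $Y$, Theorem~\ref{T-dual entropy for Noetherian 1-homoclinic} for the four conclusions, and Theorem~\ref{T-homoclinic are 1-homoclinic} to replace $\Delta^1(X)$ by $\Delta(X)$. Your careful verification of the hypotheses (the inclusion $(\Zb\Gamma)^kA\subseteq J$ giving the closed embedding, and the invertibility of $a\mapsto aA^*$ on $(\ell^1(\Gamma))^k$) is exactly the bookkeeping the paper leaves implicit.
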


Theorem~\ref{T-main} follows from Corollary~\ref{C-homoclinic dense in IE for Noetherian} and the fact that when $\Gamma$ is polycyclic-by-finite, $\Zb\Gamma$ is left Noetherian \cite{Hall} \cite[Theorem 10.2.7]{Passman}.


\end{document}